\documentclass{amsart}
\usepackage{amsmath, amsfonts, amssymb,amsbsy,eucal,mathrsfs}
\usepackage[all]{xy}
\usepackage{pstricks}
\usepackage{pst-plot}
\usepackage{graphicx}
\usepackage[utf8]{inputenc}
\usepackage[T1]{fontenc}
\usepackage{tikz}

\setcounter{secnumdepth}{3}

\newtheorem{thm}{Theorem}[subsection] 
\newtheorem{lemma}[thm]{Lemma}
\newtheorem{prop}[thm]{Proposition}
\newtheorem{cor}[thm]{Corollary}

\theoremstyle{definition}
\newtheorem{remark}[thm]{Remark}
\newtheorem{example}[thm]{Example}
\newtheorem{defn}[thm]{Definition}

\renewenvironment{proof}{{\flushleft \it Proof.}}{\hfill $\square$ \vspace{2mm}}

\newenvironment{proofofthm}{{\flushleft \it Proof of Theorem \ref{thm-mds1}.}}{\hfill $\square$ \vspace{2mm}}

\DeclareMathOperator{\Hom}{Hom}

\DeclareMathOperator{\codim}{codim}

\newcommand{\R}{{\mathbb R}}
\newcommand{\Z}{{\mathbb Z}}
\newcommand{\Q}{{\mathbb Q}}

\newcommand{\cF}{{\mathcal F}}
\newcommand{\cG}{{\mathcal G}}

\newcommand{\cO}{{\mathcal O}}
\newcommand{\cI}{{\mathcal I}}

\newcommand{\pic}[2]{\includegraphics[scale=#1]{#2}}

\newcommand{\ignore}[1]{}

\def\pic{{\rm Pic}}
\def\Supp{{\rm Supp}}

\def \co {{{\mathcal O}}}
\def\N{{\mathbb N}}

\def \ka {{\mathfrak{k}}}

\def \spec {{{\rm Spec}}}
\def \proj {{{\rm Proj}}}
\def \cF {{{\mathcal F}}}
\def \cox {{{{\mathcal C}_Y^\vee(X)}}}

\def \X {{{\mathbb{X}}}}
\def \a {{\alpha}}

\def \Hom {{{\rm Hom}}}

\def\aut{{\rm Aut}}

\def\s{\sigma}
  
\newcommand{\G}{\mathbb{G}}

\def \Xt {{\widetilde{X}}}
\def \Yt {{\widetilde{Y}}}

\newcommand{\bF}{\mathbb{F}}
\newcommand{\p}{\mathbb{P}}

\renewcommand{\a}{{\alpha}}

\newcommand{\rk}{{\rm rk}}

\def \div {{{\textrm{div}}}}

\newcommand{\F}{{\mathbb{F}}}

\newcommand{\scal}[1]{\langle #1 \rangle}

\def \cX {{{\mathfrak{X}}}}

\def \cC {{\mathcal C}}

\def \char {{{\textnormal{char}}}}
\newcommand{\g}{\mathfrak g}

\newcommand{\h}{\mathfrak h}

\newcommand{\supp}{{\rm Supp}}

\begin{document}

\title{On the geometry of spherical varieties}

\date{July 08, 2012}

\author{Nicolas Perrin}
\address{Mathematisches Institut, Heinrich-Heine-Universit{\"a}t,
  40204 D{\"u}sseldorf, Germany.}
\email{perrin@math.uni-duesseldorf.de}

\subjclass[2000]{Primary  14M27; Secondary  14-02, 14L30}

\begin{abstract}
This is a survey article on the geometry of spherical varieties.
\end{abstract}

\maketitle

\vskip -0.6 cm

\markboth{N.~PERRIN}
{\uppercase{On the geometry of spherical varieties}}

\setcounter{tocdepth}{2}
\tableofcontents

\vskip -1.4 cm

\


\section{Introduction}\label{sec:intro}

This paper is a survey on the geometry of spherical varieties.
The problem of classifying algebraic varieties is very natural. It
can be decomposed in two: classify algebraic varieties modulo
birational transformations and describe all the elements in a
birational class. These two problems are in general extremely hard
and it is natural to impose conditions on the varieties we want to
classify. The most natural conditions to impose are restrictions on
the singularities. It is also natural to impose additional
structures. In particular if $G$ is a reductive\footnote{Assuming
that $G$ is reductive has several implications which make this
choice important: finite generation properties, good representation
theory.} algebraic group we may consider $G$-varieties \emph{i.e.}
varieties with an action of $G$. Even with such restrictions the
classification problem stays very hard.

The action of $G$ however gives a measure of the complexity of the
varieties: the simplest $G$-varieties being the varieties with
finitely many orbits. From this perspective, the simplest class of
varieties we can consider is the following class.

\begin{defn}
A $G$-spherical variety is a normal $G$-variety such that all its
$G$-equivariant birational models have finitely many $G$-orbits.
\end{defn}

This definition is not the usual definition of spherical varieties
but we want to emphasize this more intrinsic and geometric
definition. We will give equivalent definitions in Section
\ref{section-char}. Spherical varieties generalise several classes
of ubiquitous algebraic varieties: toric varieties, projective
rational homogeneous spaces and symmetric varieties.

\medskip

As one can expect from their definition, spherical varieties have a
nice equivariant birational behaviour. In particular any projective
spherical variety is a Mori Dream Space (see Theorem \ref{thm-mds1})
and therefore Mori program runs perfectly over any characteristic
for spherical varieties (see Section \ref{section-mori}).
Furthermore the above classification program is almost completed:
the Luna-Vust theory (see Section \ref{section-class}) describes all
the birational models of a given spherical variety using colored
fans
--- a generalisations of fans used to classify toric varieties.
Furthermore, Luna proved that the description of all birational
classes is equivalent to the classification of wonderful varieties
and gave a conjectural classification. Two solutions for this
classification have been recently proposed (see Section
\ref{subsection-wonderful}).

\medskip

Spherical varieties have many other nice geometric properties. The
characteristics of the action of the group $G$ imply for example
that the Chow groups are equal to the invariant Chow groups and are
finitely generated. For smooth spherical varieties, they agree with
the homology groups (see Section \ref{section-FMSS}). The group
action also gives a strong control on the local structure of
spherical varieties. In particular all spherical varieties are
rational. In characteristic $0$, they have rational singularities
and therefore are Cohen-Macaulay. This in turn implies vanishing
results for nef line bundles (see Section \ref{section-struct}). For
spherical varieties in positive characteristic coming from reduction
of spherical varieties in characteristic $0$, we have Frobenius
splitting properties and the same regularity and vanishing results
hold (see Section \ref{sec-split})

\medskip

The classification of spherical varieties in the same birational
class via colored fans has many geometric applications. For example,
as in the case of toric varieties, there is an explicit description
of the Picard group and the group of Weil divisors in terms of
convex geometry (see Section \ref{sec-div}). Among other
consequences this yields criteria of $\Q$-factoriality,
quasi-projectivity, properness or affiness for spherical varieties
(see Sections \ref{section-class} and \ref{sec-div}). For projective
varieties, the fan geometry can partially be translated into convex
properties of the moment polytope associated to the moment mapping (see
Subsection \ref{section-convex}). More generally, Okounkov
\cite{okounkov1} and latter Lazarsfeld and Musta\c t\u a
\cite{lazarsfeld-mustata} and Kaveh and Khovanski\u\i\
\cite{KK-annals} defined the so called Okounkov bodies. Many
geometric properties of the variety are encoded in the Okounkov
body. This takes a particularly simple form for spherical varieties
and leads in characteristic $0$ to flat deformations of spherical
varieties to toric varieties, to a smoothness criterion, to formulas
for computing the degree of line bundles and to an explicit
presentation of the subring of the cohomology ring generated by the
first Chern classes of line bundles (see Section \ref{sec-conv}).

\medskip

In this paper we tried to concentrate on the geometric properties of
spherical varieties and therefore avoided as much as possible
considering more particular classes of spherical varieties such as
toric varieties, horospherical varieties or symmetric varieties. We
nevertheless discuss the specificities of toroidal and sober
varieties in more details.

Toroidal varieties arise naturally from the colored fan
classification point of view since by definition their fan do not
have colors. For this reason they are tightly connected to toric
varieties as their same suggests. Toroidal varieties are also natural
from the log-geometric point of view since the smooth toroidal
varieties are exactly the log-homogeneous varieties\footnote{For a
linear algebraic group.} (see \cite{brion-log}, \cite{brion-log2}
for more on the log-geometric point of view). Furthermore, the study
of toroidal varieties leads to the description of a $B$-stable
canonical divisor and to explicit equivariant resolutions of
singularities in characteristic $0$ of any spherical variety (see
Section \ref{section-toro}).

Sober varieties are the spherical varieties having a complete
toroidal variety with a unique closed orbit in their birational
class. This completion is unique in the birational class. Thanks to
results of Luna \cite{luna}, the classification of all birational
classes of spherical varieties amounts to classifying these complete
toroidal varieties with a unique closed orbit which are smooth. Such
varieties are called wonderful varieties and a classification has
recently been proposed by Cupit-Foutou and by Bravi and Pezzini (see
Section \ref{section-sober}).

\medskip

Inspired by projective rational homogeneous spaces, the study of
closures of $B$-orbits\footnote{For $B$ a Borel subgroup. Spherical
varieties have finitely many $B$-orbits, see Theorem
\ref{theo-char}.} in spherical varieties is an active and rich
subject. We give a brief tour and describe in particular a graph
whose vertices are the closures of $B$-orbits. In some special cases
the properties of this graph imply normality or non-normality
results for orbit closures. We also present an application of these
techniques proving regularity results on multiplicity-free
subvarieties of projective rational homogeneous spaces (see Section
\ref{sec-B-orb}).

We end the paper with a short list of further examples of spherical
varieties.

\addtocontents{toc}{\protect\setcounter{tocdepth}{1}}
\subsection*{Notation (groups)} In this paper $G$ will be a connected
reductive algebraic group. We will denote by $H$ a closed spherical
subgroup \emph{i.e.} such that $G/H$ is a spherical variety. We
denote by $T$ a maximal torus of $G$, by $W$ the associated Weyl
group and by $R$ the corresponding root system. We denote by $B$ a
Borel subgroup of $G$ and by $R^+$ and $R^-$ the corresponding sets
of positive roots and negative roots. We denote by $U$ the unipotent
radical of $B$. For $\Gamma$ a group, we denote by $\cX(\Gamma)$ the
group of characters and by $[\Gamma,\Gamma]$ the derived group. For
$\lambda$ a dominant character of $T$, we denote by $V(\lambda)$ the
highest weight module of highest weight $\lambda$. For $V$ a
representation of a group $\Gamma$, we denote by $V^\Gamma$ the
subspace of invariant and by $V^{(\Gamma)}_\lambda$ the subspace of
semiinvariants of weight $\lambda\in\cX(\Gamma)$. The sum of all
these subspaces of semiinvariants is denoted by $V^{(\Gamma)}$.

\subsection*{Notation (varieties)}
We work over an algebraically closed field $k$. Varieties will be
irreducible and reduced. A $G$-variety is a variety together with an
action of the group $G$. Let $H$ be a spherical subgroup of $G$. The
quotient $G/H$ is called a spherical homogeneous space. Any
spherical variety $X$ which is $G$-birational to $G/H$ is called an
embedding of $G/H$. For $\Gamma$ an algebraic group and $\Gamma'$ a
closed subgroup acting on a variety $X$, if the quotient of
$\Gamma\times X$ under the action of $\Gamma'$ given by
$\gamma'\cdot(\gamma,x)=(\gamma\gamma',{\gamma'}^{-1}\cdot x)$
exists, we call it the contracted product and write
$\Gamma\times^{\Gamma'}X$. Such a quotient exists for example if $X$
has ample $\Gamma'$-linearisable line bundle or if the quotient
$\Gamma\to\Gamma/\Gamma'$ is locally trivial for Zarisky topology (e.g. if
$\Gamma'$ is a parabolic subgroup of $\Gamma$). The contracted
product is a locally trivial fibration (for the {\'e}tale topology)
with fibers isomorphic to $X$ over $\Gamma/\Gamma'$.

\addtocontents{toc}{\protect\setcounter{tocdepth}{1}}
\subsection*{Foreword} This is a survey article. I tried
to gather the most important results on spherical varieties with an
emphasize on the geometry. I also tried to give sketch of proofs of
the results when this was not too long as well as to give precise
references to all the results presented. The presentation does not
always follow a logical order: I sometimes use results coming later
in the text but the reader will easily check the consistency of the
proofs.

\subsection*{Acknowledgement} I want to thank Michel Brion for many discussions and email exchanges on preliminary versions of this text. Part of this work was written while I was a BJF at the HCM in Bonn. This is the occasion to acknowlegde the wonderful working conditions I had there.


\addtocontents{toc}{\protect\setcounter{tocdepth}{2}}
\section{First geometric properties}


\subsection{Characterisations of spherical varieties}
\label{section-char}

In this section, we give several equivalent definitions of spherical
varieties.

\begin{defn} Let $X$ be a $G$-variety and $V$ a $G$-module.

(\i) The complexity $c(X)$ of $X$ is the minimal codimension of a
$B$-orbit in $X$.

(\i\i) $V$ is multiplicity-free if for any $\lambda\in\cX(T)$
dominant, $\dim\Hom_G(V(\lambda),V)\leq1$.
\end{defn}

\begin{thm}
\label{theo-char}
Let $X$ be a normal $G$-variety. The following are equivalent.

(\i) $k(X)^B=k$.

(\i\i) $c(X)=0$

(\i\i\i) $X$ has finitely many $B$-orbits.

(\i v) $X$ is spherical.

\noindent If $X$ is quasi-projective, then these conditions are
equivalent to:

(v) For $\mathcal{L}$ a $G$-linearised line bundle, the $G$-module
$H^0(X,\mathcal{L})$ is multiplicity-free.
\end{thm}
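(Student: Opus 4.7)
The plan is to combine Rosenlicht's theorem on rational quotients with the Brion--Vinberg finiteness theorem, bridge to the definition of spherical variety via the $G$-birational invariance of complexity, and finally handle the multiplicity-free condition separately in the quasi-projective case.

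First, for (\i) $\Leftrightarrow$ (\i\i) I would apply Rosenlicht's theorem to the solvable group $B$: there is a $B$-stable dense open $X^\circ \subset X$ with a geometric quotient $\pi\colon X^\circ \to Y$ satisfying $k(Y) = k(X)^B$. Then $\dim Y$ equals $\mathrm{tr.deg}_k\, k(X)^B$ on one hand and the codimension of the generic $B$-orbit on the other, so (\i) and (\i\i) match. The implication (\i\i\i) $\Rightarrow$ (\i\i) is immediate, since $X$ is irreducible and finitely many orbits must include a dense one.

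The main obstacle is (\i\i) $\Rightarrow$ (\i\i\i), the Brion--Vinberg finiteness theorem: a normal $G$-variety with an open $B$-orbit admits only finitely many $B$-orbits. I would sketch the proof by noetherian induction on $B$-stable closed subvarieties, using a local structure theorem to describe a $B$-stable affine chart as a product of a unipotent group with a Levi-stable slice of complexity $0$, and then reducing to lower-dimensional strata. The technical heart of the argument is controlling complexity upon passing to these $B$-stable subvarieties and their normalisations.

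Having (\i), (\i\i), (\i\i\i) equivalent, I obtain from (\i) that $c$ is a $G$-birational invariant, since any $G$-birational map identifies $B$-invariant rational function fields. Thus (\i\i) $\Rightarrow$ (\i v): if $c(X) = 0$, every $G$-equivariant birational model $X'$ satisfies $c(X') = 0$, hence has finitely many $B$-orbits by the previous step, and \emph{a fortiori} finitely many $G$-orbits. For (\i v) $\Rightarrow$ (\i\i), I would invoke Akhiezer's construction: whenever $c(X) \geq 1$ one can build a $G$-equivariant birational modification of $X$ carrying a $c(X)$-parameter family of $G$-stable prime divisors --- obtained as the closures of the $G$-saturations of the generic fibres of the $B$-rational quotient $X \dashrightarrow Y$. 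This yields infinitely many $G$-orbits, contradicting (\i v).

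Finally, assume $X$ is quasi-projective. For (\i) $\Rightarrow$ (v), let $\mathcal{L}$ be a $G$-linearised line bundle; any two $B$-semiinvariant sections $s_1, s_2 \in H^0(X, \mathcal{L})$ of the same weight $\lambda$ yield $s_1/s_2 \in k(X)^B = k$, so they are proportional. Hence the weight-$\lambda$ subspace of $B$-semiinvariants in $H^0(X, \mathcal{L})$ is at most one-dimensional, which (since the $U$-invariants of weight $\lambda$ in $V(\lambda)^{\oplus m}$ form an $m$-dimensional space) bounds the multiplicity of $V(\lambda)$ in $H^0(X, \mathcal{L})$ by $1$. For (v) $\Rightarrow$ (\i), pick any $f \in k(X)^B$, embed $X$ using a $G$-linearised very ample $\mathcal{L}$, and write $f = s_1/s_2$ with $s_1, s_2 \in H^0(X, \mathcal{L}^{\otimes n})$; a straightforward averaging argument over $B$ reduces to the case where $s_1, s_2$ are $B$-semiinvariants of a common weight $\lambda$, whereupon multiplicity-freeness forces them to be proportional and $f \in k$.
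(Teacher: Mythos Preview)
Your proposal is correct and the overall logical skeleton matches the paper's: Rosenlicht for (\i)$\Leftrightarrow$(\i\i), a finiteness argument for (\i\i)$\Rightarrow$(\i\i\i), and the $B$-semiinvariant section trick for (v)$\Leftrightarrow$(\i). Your sketch of (\i\i)$\Rightarrow$(\i\i\i) is in the same spirit as the paper's (the paper raises a $B$-stable $Y$ by minimal parabolics to reduce to $G$-stable $Y$, then compares transcendence degrees via Sumihiro and lifting of $B$-eigenfunctions).

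The genuine difference is how you close the loop at (\i v). The paper proves (\i v)$\Rightarrow$(v) directly: if $H^0(X,\mathcal{L})$ were not multiplicity-free, two independent $B$-eigenvectors $u,v$ of the same weight are used to produce an explicit $G$-birational model (the closure of $G\cdot(x,u+v)$) with infinitely many orbits. You instead prove (\i v)$\Rightarrow$(\i\i) via Akhiezer's construction of a one-parameter family of $G$-stable divisors when $c(X)\geq 1$, and then handle (\i)$\Rightarrow$(v) separately by the elementary ratio-of-sections argument. Your route is a bit more modular and your (\i)$\Rightarrow$(v) step is the cleaner direction; the paper's route has the advantage of being entirely self-contained and not invoking Akhiezer's result as a black box. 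One minor quibble: in your (v)$\Rightarrow$(\i), ``averaging over $B$'' is not quite the right phrase (no Haar measure here); the actual mechanism is to pick $s_2$ a $B$-eigensection and observe that $s_1=fs_2$ is then automatically a $B$-eigensection of the same weight.
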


\begin{proof}
{\it (\i)}$\Leftrightarrow${\it (\i\i)} by Rosenlicht's Theorem
\cite{rosentlicht}.

{\it (\i\i)}$\Rightarrow${\it (\i\i\i)} Let $Y$ be a closed
$B$-stable subvariety of $X$ we want to prove $c(Y)=0$. Using the
map $P\times^BY\to PY$ which is generically finite for $P$ a minimal
parabolic subgroup containing $B$ with $PY\neq Y$, we get that
$c(PY)=c(P\times^BY)$ and the latter is easily proved to be greater
than $c(Y)$: the fibration $P\times^BY\to P/B$ is locally trivial
with fiber $Y$ so that $c(P\times^BY)$ is equal to the minimum of
the codimension of the orbits of a subgroup of $B$ in $Y$. Since the
minimal parabolic subgroups containing $B$ span $G$, we may assume
that $Y$ is $G$-stable. By classical representation theoretic
arguments and Sumihiro's local structure Theorem on normal
$G$-varieties (see Theorem \ref{sumihiro}), there exists an open
$B$-stable affine subset $X_0$ meeting $Y$ non trivially such that
any element in $k[X_0\cap Y]^{(B)}$ can be lifted to an element in
$k[X_0]^{(B)}$ (up to taking some higher power in positive
characteristic, see also \cite[Theorem 1.3]{knop}). As $f\in k(Y)^B$
can be written $u/v$ with $u,v\in k[X_0\cap Y]^{(B)}$ with the same
weight, there exists $n>0$ and $u',v'\in k[X_0]^{(B)}$ such that
$u'\vert_Y=u^n$ and $v'\vert_Y=v^n$. We get $(u'/v')\vert_Y=f^n$ and
the transcendence degree of $k(X)^B$ is bigger than the
transcendence degree of $k(Y)^B$.

{\it (\i\i\i)}$\Rightarrow${\it (\i v)} Follows from {\it
  (\i\i)}$\Rightarrow${\it (\i\i\i)}: $c(\Xt)=0$ for any $G$-birational
model $\Xt$.

{\it (\i v)}$\Rightarrow${\it (v)} We may assume that $X=G/H$ and that
$\mathcal{L}$ is $G$-linearised. If the multiplicity-free assumption
fails, there exist independent vectors $v,w$ in $k[G]^{(H)}$ of
the same weight. One can check that the closure of
$G\cdot(u+v)$ in $\p(V)$ where $V$ is the $G$-module spanned by $u$
and $v$ has infinitely many orbits. Taking the closure of
$G\cdot(x,u+v)$ for a general $x\in X$ gives a $G$-birational model with
infinitely many orbits.

{\it (v)}$\Rightarrow${\it (\i)}. Let $f\in k(X)^B$, there exists a
$G$-linearised line bundle $\mathcal{L}$ and sections $\s,\s'$ in
$H^0(X,\mathcal{L})^{(B)}$ with the same weight such that
$f=\s/\s'$. These sections are colinear and $f$ is constant.
\end{proof}

\begin{cor}
Spherical varieties are rational.
\end{cor}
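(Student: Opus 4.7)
The plan is to extract rationality from Theorem \ref{theo-char} via the existence of a dense $B$-orbit, combined with the classical fact that homogeneous spaces for connected solvable groups are rational.

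First I would observe that the equivalence $(i)\Leftrightarrow(ii)\Leftrightarrow(iii)$ of Theorem \ref{theo-char} produces a dense $B$-orbit in $X$. Indeed, by $(iii)$ there are only finitely many $B$-orbits, and since $X$ is irreducible, one of them must be open; alternatively, from $(ii)$ we have $c(X)=0$, meaning a $B$-orbit of codimension zero exists. Pick such a dense orbit and identify it with $B/B_x$, where $B_x$ is the isotropy group of a point $x$ in this orbit.

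Next I would invoke the structure theorem for the Borel subgroup: $B=T\ltimes U$ with $T$ a torus and $U$ a connected unipotent group. The key input is that for a connected solvable linear algebraic group over an algebraically closed field, every homogeneous space is a rational variety. This can be proved by induction on $\dim B$ using a normal one-parameter subgroup $N\subseteq B$ (either a copy of $\G_a$ inside $U$ or a one-dimensional subtorus of $T$): the fibration $B/B_x\to B/(B_xN)$ has rational base by induction and rational fiber ($\G_a$ or $\G_m$, or a point), so a suitable locally trivial trivialisation on a nonempty open subset of the base exhibits $B/B_x$ as birational to a product with a rational variety; alternatively, one appeals directly to the theorem of Rosenlicht that connected solvable groups are split as varieties into factors $\G_a$ and $\G_m$, and hence so are their homogeneous spaces generically.

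Combining these two steps, $X$ contains a dense open subset isomorphic to a rational variety $B/B_x$, so $X$ itself is rational. The only genuinely non-formal step is the rationality of $B/B_x$; the rest is a direct reading of Theorem \ref{theo-char}. I would expect the solvable-homogeneous-space step to be the point where one wants to cite the literature (e.g.\ Rosenlicht, or Borel's book on linear algebraic groups) rather than reprove it here, given the survey nature of the paper.
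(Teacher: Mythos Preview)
Your proposal is correct and follows essentially the same approach as the paper: extract a dense $B$-orbit from Theorem \ref{theo-char}, reduce to the rationality of $B/K$ for a closed subgroup $K$ of the connected solvable group $B$, and prove the latter by induction using a one-dimensional normal subgroup and the resulting fibration. The paper is slightly more careful on two points you leave implicit: the normal one-dimensional subgroup is chosen in the center of $U$ when $U\neq 1$ (an arbitrary $\G_a\subset U$ or subtorus of $T$ need not be normal in $B$), and the local triviality of the fibration $B/K\to B/KZ$ is justified by the fact that connected solvable groups are special in the sense of Serre.
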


\begin{proof}
Let $G/H$ be a spherical homogeneous space, by the previous result,
there exists a Borel subgroup $B$ such that $B/B\cap H$ is dense in
$G/H$. It is therefore enough to prove that a quotient $B/K$ is
rational for any connected solvable group $B$ and any subgroup
scheme $K$. We proceed by induction on $(\dim B,\dim B/K)$ with the
lexicographical order. Pick $Z$ a connected one dimensional normal
subgroup of $B$. Such a subgroup always exists: if the unipotent
radical of $B$ is trivial then $B$ is a torus and pick for $Z$ any one
dimensional subgroup. Otherwise, pick for $Z$ a one dimensional
subgroup of the center of the unipotent radical $U$ of $B$ (this
center is non trivial and such a subgroup exist for example by
\cite[Theorem 10.6(2)]{borelb}). If $Z\subset K$ then quotienting by
$Z$ we conclude by induction on $\dim B$. Otherwise, we have a
fibration $B/K\to B/KZ$ obtained after quotienting by the action of
$Z$ via left multiplication (recall that $Z$ is normal and that $KZ$
is the subgroup generated by $K$ and $Z$). The fiber of this fibration
is $Z/(Z\cap K)$ which is a connected group of dimension 1. Since any
connected solvable group is special (see \cite[Proposition 14]{serre})
the fibration $B/K\to B/KZ$ is locally trivial and since $B/KZ$ is rational
(by induction on $\dim B/K$) the result follows.
\end{proof}

\subsubsection{Comments}
Spherical varieties are usually defined by condition {\it (\i)} or
{\it (\i\i)} of the previous result. Brion \cite{brion-def-sphe}, Vinberg
and Kimel'fel'd \cite{vinberg} and Popov \cite{popov} prove the
equivalence with condition {\it (\i\i\i)}. The equivalence with the last
condition was proved in \cite{vinberg}, \cite{brion-rep} and
\cite{bien}. We refer to \cite[Chapter 5]{timashev} for other
equivalent definitions of spherical varieties.


\subsection{Chow groups and homology}
\label{section-FMSS}

As a first step into the geometry of spherical varieties, we
describe and compare Chow groups, homology groups and cohomology
groups of spherical varieties.

\vskip 0.3 cm

For a variety $X$, we denote by $Z_kX$ the free abelian group
generated by all $k$-dimensional closed subvarieties and by $R_kX$
the subgroup generated by divisor $[{\rm div}(f)]$ where $f\in
k(W)^\times$ with $W$ a $(k+1)$-dimensional subvariety in $X$. If
$X$ is a $B$-variety, we denote by $Z_k^BX$ the free abelian group
generated by the $B$-stable closed subvarieties of $X$ and by
$R_k^BX$ the subgroup generated by divisors of the form $[{\rm
div}(f)]$ where $f\in k(W)^{(B)}$ with $W$ a $(k+1)$-dimensional
subvariety in $X$. We denote by $A_kX$ and $A_k^BX$ the quotients
$Z_kX/R_kX$ and $Z_k^BX/R_k^BX$. We denote by $A_*X$ and $A_*^BX$
the direct sum of the groups $A_kX$ and $A_k^BX$ for all $k\geq0$.

\begin{thm}
\label{theo-chow}
Let $X$ be a spherical variety, then the canonical
homomorphism $A_*^BX\to A_*X$ is an isomorphism.
\end{thm}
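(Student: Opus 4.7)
The plan is to follow the Fulton--MacPherson--Sottile--Sturmfels ``degeneration by translation'' approach. The canonical map $A_*^B X \to A_* X$ is well-defined, since a $B$-stable cycle is a cycle and the divisor of a $B$-semi-invariant rational function on a $B$-stable subvariety is itself $B$-stable. The strategy is to prove bijectivity by realizing every cycle, and every rational equivalence, as the specialization of a $B$-translated family to its $B$-invariant limit; sphericity enters only to guarantee that both sides are finitely generated, not in the proof of the isomorphism itself.

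Since $B$ is connected and solvable, I first fix a chain $B = B_0 \supset B_1 \supset \cdots \supset B_n = \{e\}$ of connected normal subgroups with one-dimensional quotients $B_i/B_{i+1}$. Induction on $\dim B$ reduces both surjectivity and injectivity to the case of a one-dimensional acting group $H \cong \G_m$ or $H \cong \G_a$; the normality of $B_{i+1}$ in $B_i$ is exactly what is needed to keep the $B_{i+1}$-stability of cycles along translates by $B_i$. In this one-dimensional case, $H$ sits inside $\p^1$ as the complement of one or two $H$-fixed points.

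For surjectivity, given a closed subvariety $Y \subset X$, I consider the closure
\[
\Phi = \overline{\{(h, h \cdot Y) : h \in H\}} \subset \p^1 \times X,
\]
which is $H$-stable under the diagonal action. The projection $\Phi \to \p^1$ is dominant, so all its scheme-theoretic fibers are rationally equivalent as cycles in $X$. The fiber over $1 \in H$ equals $Y$, while the fibers over the boundary points in $\p^1 \setminus H$ are $H$-stable, since these base points are $H$-fixed and $\Phi$ is $H$-stable. Hence $[Y]$ is rationally equivalent to the class of an $H$-stable cycle.

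For injectivity, I apply the same construction to a potential rational equivalence: given $\alpha \in Z_*^B X$ with $\alpha = \sum [\div(f_j)]$ in $R_* X$ for $f_j \in k(W_j)^\times$ on subvarieties $W_j \subset X$, I form the families $\Phi_{W_j} \subset \p^1 \times X$, equip them with the rational functions $F_j(t, x) = f_j(t^{-1} \cdot x)$, and specialize $\div(F_j)$ to the $H$-stable boundary fibers. This rewrites $\alpha$ as a sum of divisors of functions on $H$-stable subvarieties which are automatically $H$-semi-invariant. The principal technical obstacle lies here: one must track the vertical components of $\div(F_j)$ lying over the boundary of $\p^1$ and verify that the limit functions are genuinely $H$-semi-invariant rather than merely $H$-equivariant. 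This bookkeeping is the substance of the FMSS calculation.
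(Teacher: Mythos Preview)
Your argument is correct and is indeed the direct FMSS degeneration proof, but it is not the route the paper sketches. The paper first invokes an equivariant Chow Lemma (via Sumihiro) together with the standard localisation exact sequences to reduce to the case where $X$ is projective, and then appeals to Hirschowitz's argument: apply the Borel fixed point theorem to the $B$-action on the Chow variety of $X$ to produce a $B$-fixed limit of any cycle class (surjectivity), and do the same to rational curves in the Chow variety to handle rational equivalences (injectivity). Your approach avoids both the reduction to the projective case and the use of Chow varieties altogether, working instead with explicit one-parameter families in $\p^1\times X$ and a composition series for $B$; this is more elementary and, as you note, works uniformly for any scheme with a connected solvable group action. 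The paper's route is shorter to state and conceptually clean once one accepts the Chow variety machinery, but it genuinely needs projectivity for the Chow variety to exist, whence the extra reduction step. One small point worth tightening in your sketch: in the inductive step the quotient $B_i/B_{i+1}$ does not itself act on $X$, so the family should be built from the $B_i$-action and then observed to descend to $B_i/B_{i+1}$ because $Y$ is already $B_{i+1}$-stable; you allude to this with the normality remark, but the phrasing ``one-dimensional acting group $H$'' is slightly misleading.
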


\begin{proof}
This result is proved in two steps. First using a result of H.
Sumihiro \cite{sumihiro} (see also Theorem \ref{sumihiro}) there is
an equivariant Chow Lemma and by standard exact sequences for Chow
groups (see \cite[Section 1]{FMSS}) one can reduce the proof to the
case where $X$ is projective. This was proved by A. Hischowitz
\cite{hirscho}. The surjectivity of the map comes from the Borel
fixed point Theorem applied to the Chow variety: the class in the
Chow variety of any subvariety in $X$ has a $B$-fixed point in the
closure of its $B$-orbit. Applying similar arguments to the rational
curves in the Chow varieties proves that $R_k^BX$ generates $R_kX$.
\end{proof}

\begin{cor}
\label{cor-polehedral}
Let $X$ be a complete spherical variety.

(\i) The groups $A_kX$ are of finite type for all $k\geq0$.

(\i\i) The cone of effective cycles in $A_kX\otimes_\Z\Q$ is a
polyhedral convex cone generated by the classes of the closures of the
$B$-orbits.

(\i\i\i) Rational and algebraic equivalences coincide on $X$.

(\i v) Rational and numerical equivalences coincide for Cartier
divisors on $X$.
\end{cor}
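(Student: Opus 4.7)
The plan is to deduce all four parts from Theorem~\ref{theo-chow}, which identifies $A_*^B X$ with $A_* X$, combined with the finiteness of $B$-orbits from Theorem~\ref{theo-char}(iii).

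Part (i) is then immediate: since $X$ has only finitely many $B$-orbits, and hence finitely many $B$-stable closed subvarieties, the group $Z_k^B X$ is finitely generated for every $k$, so is its quotient $A_k^B X$, and therefore so is $A_k X$.

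For (ii), I would mimic the Chow-variety argument sketched in the proof of Theorem~\ref{theo-chow}. Given a $k$-dimensional subvariety $Z \subset X$, view $[Z]$ as a point of the Chow variety $\mathcal{C}_k(X)$, which exists because $X$ is complete. By the Borel fixed point theorem, $\overline{B \cdot [Z]}$ contains a $B$-fixed point, and $B$-fixed points of $\mathcal{C}_k(X)$ are exactly the effective cycles of the form $\sum a_i [Y_i]$ with $a_i \in \Z_{\geq 0}$ and $Y_i$ a $B$-stable subvariety (that is, the closure of a $B$-orbit). To upgrade the resulting algebraic equivalence to a rational one, I would reach this fixed point through a sequence of orbit closures of one-parameter subgroups $\G_m \subset B$: each such closure is either a point or isomorphic to $\mathbb{P}^1$, and in the latter case provides an honest rational equivalence between effective cycles. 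Iterating places $[Z]$ in the $\Q_{\geq 0}$-span of the classes of $B$-orbit closures, which is the polyhedrality assertion.

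For (iii), the inclusion $\sim_{\rm rat}\, \subset\, \sim_{\rm alg}$ is automatic. Conversely, given $Z_0 \sim_{\rm alg} Z_1$ via a family parametrised by an irreducible curve, consider the induced morphism from that curve to $\mathcal{C}_k(X)$, act by a suitable one-parameter subgroup $\G_m \subset B$, and pass to the limit. Since $\mathcal{C}_k(X)^B$ consists of the (discrete) set of nonnegative integer combinations of the finitely many $B$-stable subvarieties, this limit is locally constant along the curve; hence $[Z_0]$ and $[Z_1]$ are both rationally equivalent, via the $\mathbb{P}^1$-compactifications of the $\G_m$-orbits as in (ii), to the same $B$-fixed effective cycle. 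So $[Z_0] = [Z_1]$ already in $A_k X$.

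For (iv), I would combine (iii) with the rationality of $X$ established in the previous corollary: any morphism from a rational variety to an abelian variety is constant, so $\mathrm{Pic}^0(X) = 0$, and therefore algebraic equivalence of Cartier divisors coincides with linear, hence rational, equivalence. By Matsusaka's theorem, numerical and algebraic equivalence on divisors agree up to torsion, so the remaining task is to rule out such torsion in the Picard group. This last torsion step, which I expect to be the main obstacle, can be handled by appealing to the torsion-freeness of $\mathrm{Pic}(X)$ for complete spherical varieties coming from the colored-fan description developed later (see Section~\ref{sec-div}), or more directly by analysing the weights of $B$-semi-invariant rational functions on $X$, which on a spherical variety determine the function up to scalar and so give an injection $k(X)^{(B)}/k^* \hookrightarrow \cX(B)$ with saturated image in the free group generated by the $B$-stable prime divisors.
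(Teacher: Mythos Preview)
Your arguments for (i), (ii), and (iv) are essentially those of the paper. One small point on (ii): the Chow variety is only available for projective $X$, so the paper first reduces to that case via the equivariant Chow Lemma; this is implicit in your reference to the proof of Theorem~\ref{theo-chow}, but worth making explicit. For (iv) the paper cites \cite[Section 19.3.3]{fulton} directly for the fact that homologically (hence numerically) trivial divisors modulo rational equivalence form a torsion group, and then invokes Corollary~\ref{pic-sans-tors} just as you do; your detour through $\pic^0(X)=0$ and Matsusaka is correct but unnecessary.

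The genuine difference is in (iii). The paper gives a one-line argument: the group of algebraically trivial cycles modulo rational equivalence is \emph{divisible} (\cite[Section 19.1.4]{fulton}), and by (i) it is also finitely generated, hence zero. This bypasses any further Chow-variety manipulation. Your degeneration-of-families approach can be made to work, but it needs more care than you indicate: one-parameter subgroups $\G_m\subset B$ alone only carry you to the $T$-fixed locus of the Chow variety, which need not be discrete (a spherical variety can have infinitely many $T$-stable subvarieties of a given dimension, e.g.\ $T$-stable curves in a closed orbit $G/P$). You must also flow along $\G_a$-subgroups of the unipotent radical to reach the $B$-fixed locus, which \emph{is} finite in each component of the Chow variety; only then does connectedness of the parametrising curve force constancy. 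The divisibility argument is both shorter and avoids these subtleties.
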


\begin{proof}
{\it (\i)} follows from the previous result. {\it (\i\i)} follows
from the proof of the previous result: by Chow's Lemma we may assume
$X$ projective. The class of an effective cycle has a $B$-fixed
limit in the Chow variety giving the result. For {\it (\i\i\i)}
recall from \cite[Section 19.1.4]{fulton} that the group of
algebraically trivial cycles modulo rational equivalence is
divisible. By {\it (\i)} it is finitely generated therefore trivial.
Finally {\it (\i v)} follows from the fact that $\pic(X)$ is torsion
free (see Corollary \ref{pic-sans-tors}) and the fact that the group
of homologically trivial divisors modulo rational equivalence is a
torsion group \cite[Section 19.3.3]{fulton}.
\end{proof}

\begin{remark}
The Chow groups may in general have torsion even for divisors. For
example, the affine quadratic cone $X$ in $\mathbb{A}^3$ over a
smooth conic in $\p^2$  is toric singular with $A_1X=\Z/2\Z$ (see
\cite[Example 2.3]{fulton-sturmfels} for more examples).
\end{remark}

When the variety is smooth and complete, we get a stronger result.

\begin{cor}
\label{cor-chow-lisse}
If $X$ is a smooth complete spherical
variety, then the canonical homomorphism $A_*X\to H_*X$ is an
isomorphism.
\end{cor}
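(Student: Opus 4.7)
The plan is to exhibit an affine paving of $X$, from which the isomorphism follows by standard arguments with Chow groups and Borel--Moore homology. The tool is the Bialynicki--Birula decomposition for a generic one-parameter subgroup of $T$; the spherical hypothesis enters only to guarantee that the $T$-fixed locus is finite.

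First I would verify that $X^T$ is finite. By Theorem \ref{theo-char}, $X$ has only finitely many $B$-orbits, and each of them is $T$-stable since $T \subset B$. Fixing a $B$-orbit $\mathcal{O} = B/B_x$, the $T$-fixed points in $\mathcal{O}$ correspond to the double cosets in $T\backslash B / B_x$, which form a finite set: this is a standard consequence of Lie--Kolchin applied to the action of the solvable group $B$ (one can also see it directly by combining a Bruhat-type factorisation $B = TU$ with the smoothness of $X$ at the $T$-fixed points, where $T$ acts on the tangent space with finitely many weight spaces). Summing over the finitely many $B$-orbits gives finiteness of $X^T$.

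Next I would choose $\lambda : \G_m \to T$ sufficiently generic so that $X^{\lambda(\G_m)} = X^T$, which is possible because $X^T$ is finite and one need only avoid the finitely many characters occurring in the $T$-action on the tangent spaces $T_xX$ for $x \in X^T$. Since $X$ is smooth and complete, the Bialynicki--Birula theorem then produces a decomposition
$$X = \bigsqcup_{x \in X^T} X_x^+, \qquad X_x^+ = \{y \in X : \lim_{t \to 0}\lambda(t)\cdot y = x\},$$
in which each stratum $X_x^+$ is isomorphic to an affine space. Ordering $X^T$ so that the closure of each cell lies in the union of the previously listed cells yields a filtration of $X$ by closed subvarieties whose successive complements are affine spaces.

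From such an affine paving, a routine induction using the localisation exact sequences for Chow groups and for singular (equivalently, Borel--Moore) homology, together with the facts that $A_*(\A^n)$ and $H_*(\A^n)$ are each concentrated in top degree and generated there by the fundamental class, shows that both $A_*X$ and $H_*X$ are free abelian groups with basis the classes of the cell closures, and that the cycle class map identifies them. The main obstacle is the finiteness of $X^T$; once this is secured, the rest of the argument is soft and in fact works for any smooth complete variety with isolated $T$-fixed points. It is in that first step that the spherical hypothesis is genuinely used.
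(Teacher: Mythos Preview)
Your approach via the Bia\l ynicki--Birula decomposition is sound and leads to a valid proof, but it is genuinely different from the paper's argument, and your justification of the key step is not correct as written.

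The paper does not construct an affine paving. Instead it first shows that for any $Y$ with trivial $B$-action the K\"unneth map $A_*X\otimes A_*Y\to A_*(X\times Y)$ is an isomorphism (a short combinatorial argument on $B$-orbits combined with Theorem~\ref{theo-chow}). Taking $Y=X$, the diagonal class lies in the image of $A_*X\otimes A_*X$, and the Ellingsrud--Str\o mme diagonal trick \cite{ellingsrud-stromme} then forces a finite collection of classes to generate both $A_*X$ and $H_*X$ compatibly. Your route avoids the K\"unneth statement but brings in the Bia\l ynicki--Birula machinery; note that the step ``order the cells so that each closure lies in the earlier ones'' is the \emph{filtrable} property, which for smooth complete (as opposed to projective) varieties is a separate result of Bia\l ynicki--Birula and should be invoked explicitly rather than asserted.

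The genuine gap is in your argument that $X^T$ is finite. The $T$-fixed points of $B/B_x$ do \emph{not} correspond to the double cosets in $T\backslash B/B_x$: double cosets parametrise $T$-orbits, not $T$-fixed points, and there are in general infinitely many of them (already for $B_x=T$ inside $\SL_3$, where they are the $T$-conjugacy classes in $U$). The parenthetical tangent-space remark is closer in spirit but still not a proof. The clean statement is that each $B$-orbit contains at most one $T$-fixed point: if $x\in X^T$ then $Bx=Ux$, and for the regular dominant one-parameter subgroup $\lambda=2\rho^\vee$ one has $\lim_{t\to 0}\lambda(t)\,u\,\lambda(t)^{-1}=e$ for every $u\in U$, since all $T$-weights on $\mathfrak{u}$ are positive roots. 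Hence $Ux$ contracts to $x$ under $\lambda$, so $x$ is the unique $T$-fixed point of its $B$-orbit, and finiteness of $X^T$ follows from the finiteness of the set of $B$-orbits. With this correction and the filtrable property supplied, your argument goes through.
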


\begin{proof}
By easy considerations on the $B$-orbits, one can prove that for any
variety $Y$ with a trivial $B$-action the equivariant Kunneth map
$A_*^BX\otimes A_*^BY\to A_*^B(X\times Y)$ is an isomorphism (see
\cite[Lemma 3]{FMSS}). Using Theorem \ref{theo-chow} we get that the
Kunneth map $A_*X\otimes A_*Y\to A_*(X\times Y)$ is an isomorphism.
From there, classical arguments of G. Ellingsrud and S.A. Str\o mme
\cite{ellingsrud-stromme} imply the
result: the class of the diagonal $[\Delta]\in A_*(X\times
X)$ is in the image of $A_*X\otimes A_*X$. In particular it is of
the form $[\Delta]=\sum_i a_i\otimes b_i$ for some $a_i,b_i\in A_*X$
therefore any class $c$ is of the form $c=\sum_i(a_i\cdot c)b_i$.
This proves that the classes $b_i$ generate $A_*X$ and $H_*X$ and
that numerical and rational equivalences coincide.
\end{proof}

\begin{remark}
 If $X$ is not smooth this result fails: there are examples of
singular toric varieties with non trivial odd homology groups, see
\cite{McConnell} for more details.
\end{remark}

\subsubsection{Comments}
Theorem \ref{theo-chow} and Corollary \ref{cor-chow-lisse} are taken
from \cite{FMSS}. In that paper the authors proves that the
conclusion of Theorem \ref{theo-chow} holds (with almost the same
proof) for any scheme $X$ with an action of a connected solvable
group $B$. The first results in this direction were obtained for
projective varieties by Hirschowitz \cite{hirscho}. Corollary
\ref{cor-polehedral} is taken from \cite[Section 1.3]{brion-mori}.
Note that the above results remain true in positive characteristic
if we replace singular homology by {\'e}tale homology. Note also
that for $X$ a complete spherical variety, Brion \cite{brion-aus}
proved that the following equality holds:
$\Hom_\Z(\pic(X),\Z)=A_1(X)$. Recall the definition of the Chow
cohomology group $A^1(X)$ from \cite[Chapter 17]{fulton}. The
equality $A^1(X)=\Hom_\Z(A_1(X),\Z)$ also holds, see \cite{FMSS}
thus $\pic(X)=A^1(X)$.


\subsection{Local structure of spherical varieties}
\label{section-struct}

In this section we describe the local structure of spherical
varieties. More precisely, for $X$ a spherical variety, we exhibit
open coverings of $X$ with good properties with respect to the
action of the group. We deduce regularity properties and cohomology
vanishing results for spherical varieties in characteristic $0$.

\subsubsection{Local structure}
Let us first recall a result of Sumihiro \cite{sumihiro}.

\begin{thm}
\label{sumihiro} Let $X$ be a normal $G$-variety and let $Y$ be a
$G$-orbit.

(\i) There exists a quasi-projective $G$-invariant open subset of
$X$ containing $Y$.

(\i\i) If $X$ is quasi-projective, there exists a $G$-linearised
line bundle $\mathcal L$ inducing a $G$-equivariant embedding in
$\p(H^0(X,\mathcal{L})^\vee)$.
\end{thm}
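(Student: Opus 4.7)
The plan is to prove (ii) using Mumford's theorem on $G$-linearizations of line bundles and to deduce (i) by a divisorial averaging argument that exploits normality of $X$.

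For (ii), let $\mathcal{L}_0$ be an ample line bundle on the quasi-projective $X$. I would show that some tensor power $\mathcal{L}=\mathcal{L}_0^{\otimes n}$ admits a $G$-linearization, which is Mumford's linearization theorem in this setting. The argument splits in two parts. First, because $G$ is connected, its action on $\pic(X)$ fixes each connected component, so $g^*\mathcal{L}_0\simeq\mathcal{L}_0\otimes\mathcal{M}_g$ with the $\mathcal{M}_g$ varying algebraically in $\pic^0(X)$; a standard argument using the algebraic-group structure of $\pic^0(X)$ then trivialises this family after passing to a suitable power. Second, the residual obstruction to assembling the resulting per-$g$ isomorphisms into a coherent group action lives in a group built out of $\cX(G)$ or $H^2(G,\G_m)$, and is likewise killed after one further power. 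Once $\mathcal{L}$ is $G$-linearized and ample, $H^0(X,\mathcal{L}^{\otimes m})$ is naturally a $G$-module for every $m$, and for $m$ large enough the associated closed embedding into $\p(H^0(X,\mathcal{L}^{\otimes m})^\vee)$ is $G$-equivariant.

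For (i), pick $y\in Y$ and an affine open $U_0\ni y$ in $X$. By noetherian quasi-compactness of $Y$ finitely many $G$-translates of $U_0$ already cover $Y$, so $V:=GU_0$ is a $G$-stable open containing $Y$. To upgrade $V$ to a quasi-projective open, I use normality of $X$ to represent the codimension-one part of $X\setminus U_0$ as an effective Weil divisor $D$; averaging the finitely many $G$-translates of its components that actually contribute to the boundary of $V$ (up to codimension $\geq 2$, which normality forgives) produces a $G$-invariant effective Weil divisor $E$ on $X$ with $Y\cap\Supp(E)=\emptyset$. The associated rank-one reflexive sheaf is locally free on a dense $G$-stable open $W\subseteq X\setminus\Supp(E)$ containing $Y$, and after a suitable tensor power admits a $G$-linearization and becomes ample on $W$, exhibiting $W$ as the required quasi-projective $G$-invariant open.

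The main obstacle is the divisorial averaging in (i): one must ensure that the $G$-orbit of the boundary components behaves well enough for the average to be a genuinely finite, effective, $G$-invariant Weil divisor, and that the associated reflexive sheaf promotes, after a power, to an ample $G$-linearized line bundle on a $G$-stable open neighborhood of $Y$. Connectedness of $G$ and normality of $X$ are both used crucially here, and Mumford's linearization theorem, the engine of (ii), is invoked as a subroutine inside the proof of (i) to deliver the final $G$-linearization on $W$.
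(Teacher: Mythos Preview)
The paper does not give its own proof of this theorem; it is quoted as a result of Sumihiro with a citation to \cite{sumihiro}. So there is nothing to compare against, and the question is only whether your sketch stands on its own.

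Your treatment of {\it (\i\i)} is the standard one and is fine: some power of an ample bundle is $G$-linearisable (Mumford), and then the projective embedding is equivariant.

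Your argument for {\it (\i)} has a genuine gap in the ``divisorial averaging'' step. You write $D=X\setminus U_0$ (which is indeed pure of codimension one by normality of $X$ and affineness of $U_0$) and then propose to ``average the finitely many $G$-translates of its components that actually contribute to the boundary of $V$'' to get a $G$-invariant effective divisor $E$ with $Y\cap\Supp(E)=\emptyset$. Two problems. First, it is not clear what this averaging means: $G$ is positive-dimensional, so there is no finite sum over $G$, and the phrase ``finitely many $G$-translates that contribute to the boundary'' is not made precise. If you instead mean to take the codimension-one part of the $G$-stable closed set $X\setminus GU_0$, that is $G$-invariant, but it may well be empty (e.g.\ when $X\setminus GU_0$ has codimension $\geq 2$), and then your construction produces $E=0$. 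Second, and more seriously, even when $E\neq 0$ you give no reason why $\cO_X(nE)$ should become \emph{ample} on a $G$-stable open neighbourhood $W$ of $Y$. Ampleness near $Y$ is exactly what is needed, and avoiding $Y$ in the support does not provide it.

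Sumihiro's actual mechanism is different: one keeps the non-invariant divisor $D=X\setminus U_0$, uses that $\cO_X(D)$ has a canonical section $s$ with $X_s=U_0$, and invokes the finiteness lemma for algebraic group actions to conclude that the $G$-span of $s$ inside the space of rational sections is a finite-dimensional $G$-module $V$. The induced $G$-equivariant rational map $X\dashrightarrow\p(V^\vee)$ is then defined on $GU_0$, and one checks it is a locally closed immersion there. The point is that finite-dimensionality of the $G$-span replaces any ``averaging'', and ampleness on $U_0$ (hence on $GU_0$ by equivariance) is built in because $U_0=X_s$. Your sketch is missing this key finiteness ingredient.
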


Because of this result, to give a Local Structure Theorem, we may
assume that $X$ is quasi-projective equivariantly embedded in
$\p(V)$ where $V$ is a rational $G$-module. Let $\sigma\in
{V^\vee}^{(B)}$ a semi-invariant linear form and let $P$ be the
stabliser of $[\sigma]\in\p(V^\vee)$. The group $P$ is a parabolic
subgroup containing $B$. Denote by $L$ the Levi subgroup of $P$
containing $T$ and by $P_u$ the unipotent radical of $P$. Denote by
$X_\sigma$ the open subset of $X$ where $\sigma$ does not vanish.

\begin{thm}
\label{theo-loc-str}
(\i) $P_u$ acts properly on $X_\sigma$, the quotient $X_\sigma/P_u$
exists and the morphism $\pi:X_\sigma\to X_\sigma/P_u$ is affine.

(\i\i) There is a $T$-stable closed subvariety $Z$ of $X_\sigma$
such that the morphisms $P_u\times Z\to X_\sigma$ and $Z\to
X_\sigma/P_u$ are finite and surjective.

(\i\i\i) If $\char k=0$, then $Z$ can be chosen $L$-stable and such
that morphisms $P_u\times Z\to X_\sigma$ and $Z\to X_\sigma/P_u$ are
isomorphisms.
\end{thm}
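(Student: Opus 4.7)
The plan is to treat (iii) in characteristic $0$ as the principal statement, with (i) and (ii) as progressively weaker versions valid in arbitrary characteristic. The common setup: $X_\sigma = X \cap \{\sigma \neq 0\}$ is $P$-stable (since $P$ fixes the line $k\sigma$) and affine (as the complement of a hyperplane section in $X \subset \p(V)$); moreover $P_u$ fixes $\sigma$ pointwise, as unipotent groups admit no nontrivial characters, so $P_u$ acts on the affine variety $X_\sigma$.

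For (i), orbits of the unipotent group $P_u$ on the affine variety $X_\sigma$ are closed by the Kostant--Rosenlicht theorem. Finite generation of $k[X_\sigma]^{P_u}$ is a standard consequence of sphericity (the multiplicity-free $P$-module structure given by Theorem~\ref{theo-char}(v) forces the invariants to form a finitely generated $L$-algebra). Hence the geometric quotient $X_\sigma / P_u = \mathrm{Spec}\, k[X_\sigma]^{P_u}$ exists as an affine variety and $\pi$ is affine; properness of the $P_u$-action follows from closedness of orbits together with finiteness of generic stabilizers.

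For (ii) and (iii), I would construct $Z$ as a transversal to the $P_u$-orbits with appropriate equivariance. In arbitrary characteristic, a $T$-stable $Z$ is produced by Sumihiro-type arguments on the $T$-action on $X_\sigma$: embed $X_\sigma$ $T$-equivariantly in a rational $T$-module, take a $T$-fixed component in the closure, and pull back along $\pi$. The morphism $P_u \times Z \to X_\sigma$ is then surjective by construction and finite by a dimension count, possibly up to a purely inseparable factor in positive characteristic. For (iii), complete reducibility of rational $L$-modules allows $Z$ to be chosen $L$-stable; the tangent space decomposition $T_z X_\sigma = \mathfrak{p}_u \cdot z \oplus T_z Z$, coming from $\mathfrak{p} = \mathfrak{p}_u \oplus \mathfrak{l}$, shows that $\mu: P_u \times Z \to X_\sigma$ is \'etale, and bijectivity on points follows from the stabilizer identity $P_x \cap P_u = \{1\}$ for $x \in Z$, giving an isomorphism.

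The main obstacle is (iii): upgrading the finite surjection of (ii) to an isomorphism with $L$-stable $Z$. The two essential characteristic-zero ingredients are complete reducibility of rational $L$-representations (to enforce $L$-equivariance on $Z$) and the exponential isomorphism $\exp: \mathfrak{p}_u \xrightarrow{\sim} P_u$ (to rule out inseparable obstructions when lifting infinitesimal transversality to a group-theoretic isomorphism).
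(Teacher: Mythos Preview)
Your outline has the right general shape, but part (ii) --- the heart of the argument --- is essentially content-free, and the paper's proof takes a quite different and much more concrete route.

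The paper first reduces to $X=\p(V)$ itself (if the statement holds there, intersect the resulting $Z$ with the original $X_\sigma$). It then picks a $B$-eigenvector $v\in V$ with $\sigma(v)=1$ and, after adjusting $V$, arranges that for a regular dominant cocharacter $\varpi^\vee:\G_m\to T$ the weight of $v$ dominates all other $T$-weights of $V$. Taking a $T$-stable complement $S$ of $kv$, one sets $Z=\p(S\oplus kv)\cap X_\sigma$. The key point is that $P_u\times Z\to X_\sigma$ is $\G_m$-equivariant for the action via $\varpi^\vee$, the only fixed points are $(1,[v])$ and $[v]$, and the fibre over $[v]$ is the single point $(1,[v])$; a standard contraction argument then forces the map to be finite and surjective. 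Part~(i) is \emph{deduced from} (ii): finiteness of $P_u\times Z\to X_\sigma$ shows $k[Z]$ is finite over $k[X_\sigma]^{P_u}$, hence the latter is finitely generated and the quotient exists. For (iii) one simply takes $S$ to be $L$-stable, available by complete reducibility.

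Your proposal misses this $\G_m$-contraction mechanism entirely. ``Sumihiro-type arguments'' do not produce transversals to $P_u$-orbits, and a ``dimension count'' gives at best generic finiteness, not finiteness. Your approach to (i) via Kostant--Rosenlicht plus finite generation from sphericity is also problematic: the theorem is proved for $X=\p(V)$, which is not spherical, so sphericity of the original $X$ cannot be invoked at that stage; and in any case the paper obtains (i) as a corollary of (ii), not independently. Your tangent-space argument for (iii) is a reasonable alternative to the paper's terse ``choose $S$ to be $L$-stable'', but without a working (ii) it has nothing to upgrade.
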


\begin{proof}
It is enough to prove the result for $X=\p(V)$. Representation
theory of $G$ gives a $B$-semiinvariant vector $v\in V^{(B)}$ such
that $\sigma(v)=1$. Modifying $V$ we may even assume that for any
regular dominant cocharacter $\varpi^\vee:\G_m\to T$, the weight
$\lambda_v$ of $v$ has value $\scal{\varpi^\vee,\lambda_v}$ larger
than all the other weights of $V$. Taking a $T$-stable complement
$S$ of $kv$ in $V$, we set $Z=\p(S\oplus kv)$. Our assumptions imply
that the morphism $P_u\times Z\to X_\sigma$ is $\G_m$-equivariant
(where $\G_m$ acts via $\varpi^\vee$), that the only fixed points
are $([v],1)$ and $[v]$, and that the fiber of $[v]$ is $([v],1)$.
This implies that the map is finite and surjective.

Since $P_u\times Z\to X_\sigma$ is finite, it is proper and we get
that $k[Z]=k[Z\times P_u]^{P_u}$ is a finite $k[X_\sigma]$-module
proving the finiteness of the latter. This implies the existence
of the quotient. The theorem follows from this. For $\char k=0$ choose
$S$ to be a $L$-stable complement.
\end{proof}

\begin{remark}
In characteristic $0$, the variety $Z$ is $L$-spherical.
\end{remark}

\subsubsection{Application to singularities and vanishing}
\label{section-sing-van}
 In this subsection we assume $\char k=0$.

\begin{cor}
\label{cor-sing-rat}
Let $X$ be a spherical variety, then any $G$-stable subvariety has
rational singularities, in particular it is Cohen-Macaulay.
\end{cor}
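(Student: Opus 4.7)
The plan is to combine the local structure theorem (Theorem \ref{theo-loc-str}) with equivariant resolution of singularities (available since $\char k=0$).

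First, by Theorem \ref{sumihiro} we may assume $X$ is quasi-projective and equivariantly embedded in some $\p(V)$. Let $Y\subseteq X$ be a $G$-stable subvariety and pick a point $y\in Y$ lying in some $G$-orbit $\cO_y$. Choose $\sigma\in (V^\vee)^{(B)}$ so that $\sigma(y)\ne 0$ and apply Theorem \ref{theo-loc-str}(iii): one gets an isomorphism $X_\sigma\cong P_u\times Z$ with $Z$ affine and $L$-spherical (by the remark following the local structure theorem). Since $Y$ is $G$-stable and $P_u\subset G$, the intersection $Y\cap X_\sigma$ is of the form $P_u\times(Y\cap Z)$, where $Y\cap Z$ is an $L$-stable closed subvariety of $Z$. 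Because $P_u$ is an affine space, the properties of being normal, Cohen-Macaulay and having rational singularities descend between $Y\cap X_\sigma$ and $Y\cap Z$ (smooth base change). Thus it suffices to prove the statement for $L$-stable subvarieties of affine $L$-spherical varieties, and induction on $\dim Y$ lets us assume that $Y$ itself is an affine (spherical for some Levi subgroup) $G$-variety.

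For normality of $Y$, one exploits the multiplicity-free decomposition of $k[Z]$ as a $G$-module (Theorem \ref{theo-char}(v)): the coordinate ring of $Y\cap Z$ inherits a one-dimensional $B$-weight-space decomposition, from which one reads off that $Y\cap Z$ is the affine spherical variety determined by a sub-weight-monoid, hence is normal. Given normality, $Y$ is itself a spherical variety. By equivariant resolution of singularities (Hironaka, applied to the $G$-action), there exists $\pi\colon\wt Y\to Y$ with $\wt Y$ smooth; since $\wt Y$ still carries a dense $B$-orbit, it is a smooth spherical variety. One may moreover arrange $\pi$ to factor through a toroidal resolution, i.e. choose $\wt Y$ to be a smooth toroidal embedding dominating $Y$ (this is the content of Section \ref{section-toro}).

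To conclude that $Y$ has rational singularities it remains to prove $\pi_*\cO_{\wt Y}=\cO_Y$ (which follows from normality of $Y$) and $R^i\pi_*\cO_{\wt Y}=0$ for $i>0$. This vanishing is the main obstacle. It is obtained by combining Grauert-Riemenschneider vanishing with the description of a $B$-stable canonical divisor on a smooth toroidal spherical variety: the exceptional divisor of $\pi$ is supported on the toroidal boundary, and the formula for $K_{\wt Y}$ together with the fact that the colored-fan combinatorics make $-K_{\wt Y}$ sufficiently positive on the fibers of $\pi$ gives the required vanishing. The Cohen-Macaulay statement is then automatic from rational singularities via Kempf's criterion.
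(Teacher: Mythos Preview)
Your reduction via the Local Structure Theorem is the same first move the paper makes, but after that your argument has two genuine gaps, and the paper takes a different route altogether.

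\textbf{The normality step.} You claim that because $k[Y\cap Z]$ inherits one-dimensional $B$-weight spaces, $Y\cap Z$ is ``the affine spherical variety determined by a sub-weight-monoid, hence is normal.'' This inference is false: a multiplicity-free weight decomposition does not force normality, because the sub-monoid of weights appearing in $k[Y\cap Z]$ need not be saturated. (Think of $k[t^2,t^3]\subset k[t]$ under the obvious $\G_m$-action: all weight spaces are one-dimensional, yet the ring is not integrally closed.) In the paper, normality of $G$-stable subvarieties is not proved independently; it is a \emph{consequence} of the rational-singularities statement, not an input to it.

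\textbf{The vanishing step.} Grauert--Riemenschneider gives $R^i\pi_*\omega_{\wt Y}=0$, not $R^i\pi_*\cO_{\wt Y}=0$. To obtain the latter from a relative Kawamata--Viehweg argument you would need $-K_{\wt Y}$ to be $\pi$-nef and $\pi$-big. The formula $-K_{\wt Y}=\partial\wt Y+H$ with $H$ globally generated (Theorem~\ref{theo-can}) does not give this: the boundary $\partial\wt Y$ contains the exceptional divisors, and there is no reason for it to be $\pi$-nef. Your phrase ``the colored-fan combinatorics make $-K_{\wt Y}$ sufficiently positive on the fibers'' is exactly the missing content. Note also that the paper's general vanishing result (Theorem~\ref{thm-vanish}), which is what one would like to invoke here, \emph{uses} Corollary~\ref{cor-sing-rat} in its proof; so appealing to it would be circular.

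\textbf{What the paper actually does.} After the local-structure reduction to the affine case, the paper does not resolve at all. Instead it constructs a flat $G$-equivariant degeneration of the affine spherical variety (and compatibly of its $G$-stable closed subvarieties) to a normal affine toric variety, using the string parametrisation of the dual canonical basis (Section~\ref{sec-def}). Toric varieties have rational singularities, and Elkik's theorem on stability of rational singularities under flat deformation transports this back to the original variety. This bypasses both the normality question and the higher-direct-image vanishing in one stroke. Your resolution-plus-vanishing strategy is closer in spirit to the Frobenius-splitting argument in Section~\ref{sec-split}, where positivity of $-K$ is used---but there the splitting itself supplies the vanishing, and no analogue of that mechanism is available in your characteristic-zero argument.
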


\begin{proof}
By the Local Structure Theorem, we may assume $X$ to be affine. The affine
case is proved by deformation see Section \ref{sec-def} and
Corollary \ref{cor-sing-rat-def}.
\end{proof}

\begin{thm}
\label{thm-vanish}
Let $\varphi:X\to X'$ be a proper $G$-equivariant morphism between
spherical varieties and let $\mathcal{L}$ be a globally generated line
bundle on $X$. Then $R^i\varphi_*\mathcal{L}=0$ for all $i>0$.
\end{thm}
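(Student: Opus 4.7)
\textit{Proof plan.} The vanishing $R^i\varphi_*\mathcal{L}=0$ can be checked locally on the base, so I would first shrink $X'$ and assume it affine; the problem then reduces to proving $H^i(X,\mathcal{L})=0$ for $i>0$. In characteristic zero every spherical variety admits a $G$-equivariant toroidal resolution of singularities (Section \ref{section-toro}), so let $\pi:\widetilde X\to X$ be such a resolution. By Corollary \ref{cor-sing-rat}, $X$ has rational singularities, so $\pi_*\mathcal{O}_{\widetilde X}=\mathcal{O}_X$ and $R^i\pi_*\mathcal{O}_{\widetilde X}=0$ for $i>0$. The projection formula gives $R\pi_*(\pi^*\mathcal{L})=\mathcal{L}$, and the Leray spectral sequence yields $H^i(X,\mathcal{L})=H^i(\widetilde X,\pi^*\mathcal{L})$. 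Since $\pi^*\mathcal{L}$ is still globally generated, the problem reduces to the case where $X$ is smooth and toroidal.

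For $X$ smooth toroidal I would exploit that $-K_X$ is the sum $D$ of all $B$-stable prime divisors, with $D$ a simple normal crossings divisor (Section \ref{section-toro}). Writing $\mathcal{L} = K_X + (\mathcal{L} + D)$ exhibits $\mathcal{L}$ as $K_X$ plus a nef bundle (since $\mathcal{L}$ is globally generated, hence nef) plus an effective SNC divisor. I would then apply the relative Kawamata-Viehweg vanishing theorem to the proper morphism $\varphi:X\to X'$, which in characteristic zero gives $R^i\varphi_*\mathcal{L} = 0$ for $i>0$ as long as $\mathcal{L} + D$ is both $\varphi$-nef (automatic) and $\varphi$-big.

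The hard part, and the main obstacle in this plan, is verifying $\varphi$-bigness of $\mathcal{L} + D=\mathcal{L}-K_X$. Since $\mathcal{L}$ is nef it suffices to show that $-K_X$ is big on a general fiber $F$ of $\varphi$; the fiber $F$ is itself a complete spherical variety whose $B$-stable boundary is exactly $D|_F$, so this amounts to the statement that the sum of all $B$-stable prime divisors on a complete smooth toroidal spherical variety is big. If this last point proves delicate (since not every such variety is Fano), I would fall back on the Local Structure Theorem (Theorem \ref{theo-loc-str}(iii)): on each $P$-stable affine chart $X_\sigma\cong P_u\times Z$ with $Z$ a lower-dimensional $L$-spherical variety, $\mathcal{L}$ restricts to a globally generated bundle essentially pulled back from $Z$; by induction on $\dim X$ this gives local vanishing, and a \v Cech argument over a $G$-stable affine cover then assembles the global result.
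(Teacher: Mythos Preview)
Your reduction to the smooth toroidal case via rational singularities is fine, but the Kawamata--Viehweg step has real gaps. First, the description of $-K_X$ is wrong: by Theorem~\ref{theo-can} one has $-K_X=\partial X+\sum_{D\in\mathcal D}a_D\overline D$ with integers $a_D\ge 1$ that are typically strictly bigger than~$1$, and only the $G$-stable part $\partial X$ is known to be simple normal crossing --- the colour closures $\overline D$ need not be smooth nor transverse to $\partial X$. So the divisor you call $D$ neither has the coefficients nor the SNC property you assume; in particular the pair $(X,D)$ is not even lc, and the textbook form of Kawamata--Viehweg does not apply. One can salvage a decomposition $\mathcal L=K_X+\partial X+(H+\mathcal L)$ with $(X,\partial X)$ log smooth and $H+\mathcal L$ nef, but then the pair has boundary coefficients equal to~$1$ and you need the lc refinement of Kawamata--Viehweg, together with $\varphi$-bigness of $H+\mathcal L$.

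The bigness problem you flag is genuine and your fallback does not close it. The general fibre of $\widetilde X\to X'$ is not itself a spherical variety in any evident sense (the $G$-action moves the base point), so the heuristic ``the $B$-stable boundary of a complete spherical variety is big'' cannot be applied fibrewise. The Local Structure charts $X_\sigma$ are only $B$-stable, not $G$-stable, and a \v Cech argument would require vanishing on all finite intersections, which does not reduce to lower-dimensional spherical varieties. The paper avoids all of this by a single trick: after making $X'$ affine, pass to the total space $L^\vee$ of $\mathcal L^\vee$, which is $G\times\G_m$-spherical and maps properly to $\Spec\bigoplus_n H^0(X,\mathcal L^{\otimes n})$; since $L^\vee\to X$ is affine this reduces the problem to $\mathcal L=\mathcal O_X$. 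Writing $X=\proj A$ over $\Spec A_0$ and comparing with the affine cone $Z=\Spec\bigoplus_n\mathcal O_X(n)$ gives a proper birational map $Z\to\Spec A$ between spherical varieties; rational singularities alone then force $R^i\pi_*\mathcal O_Z=0$, and the desired vanishing drops out without invoking any Kodaira-type theorem.
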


\begin{proof}
By the Local Structure Theorem, we may assume that $X'$ is affine. Then
one restricts to the case where $\mathcal{L}$ is trivial. Indeed,
let $L^\vee$ be the total space of the dual line bundle
$\mathcal{L}^\vee$. Since $\mathcal{L}$ is globally generated, we
have a proper morphism $\pi:L^\vee\to
Y=\spec\left(\oplus_nH^0(X,\mathcal{L}^n)\right)$. Up to replacing
$Y$ by its normalisation, these varieties are
$G\times\G_m$-spherical varieties.
Since $L^\vee\to X$
is affine it is enough to prove the result for $\pi$ and
$\co_{L^\vee}$ so that we can assume that $\mathcal{L}=\cO_X$. By an
application of Sumihiro's Theorem we may assume that the morphism is
projective.

We conclude by writing $X=\proj(A)$ for some normal graded algebra
$A=\oplus_nA_n$ such that $A$ is generated by $A_0$ and $A_1$ and
$X'=\spec(A_0)$. On $X=\proj(A)$ we have the line bundle $\cO_X(1)$.
and a commutative diagram
$$\xymatrix{Z=\spec\left( \oplus_n\cO_X(n)\right)\ar[r]^-\pi\ar[d]_\psi & \spec A\ar[d]\\
X\ar[r]^\varphi & X'\\}$$ where $\pi$ is proper and
\emph{birational}. The varieties on the top line are spherical for
$G\times\G_m$ thus have rational singularities. We get the vanishing
$R^i\pi_*\cO_Z=0$ for $i>0$ and thus the vanishing $H^i(Z,\cO_Z)=0$.
The result follows since $\psi$ is affine.
\end{proof}

\begin{cor}
Let $X$ be a spherical variety proper over an affine and let
$\mathcal{L}$ be a globally generated line bundle, then
$H^i(X,\mathcal{L})=0$ for $i>0$.
\end{cor}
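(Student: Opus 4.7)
The plan is to reduce the global cohomology vanishing to the relative vanishing in Theorem~\ref{thm-vanish} via the Leray spectral sequence. Since Theorem~\ref{thm-vanish} applies only when the target of the morphism is itself a spherical variety, the first task is to construct a $G$-equivariant proper morphism from $X$ to a spherical affine variety.

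Let $Y$ be an affine variety over which $X$ is proper. Then $\Gamma(X, \co_X)$ is a finite $k[Y]$-module, hence a finite type $k$-algebra, so $X' := \spec \Gamma(X, \co_X)$ is an affine variety, carrying a natural $G$-action induced from the $G$-action on $X$. The canonical morphism $X \to X'$ factors $X \to Y$ through the affine morphism $X' \to Y$, and is therefore proper; it is $G$-equivariant by construction, and surjective because its image is both closed (properness) and dense (universal property of $\spec$).

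Replacing $X'$ by its normalization $Z$ (which is $G$-stable, affine, and birational to $X'$) and using that $X$ is normal to lift $X \to X'$ through the finite map $Z \to X'$, I obtain a proper $G$-equivariant surjection $\pi \cc X \to Z$. Surjectivity of $\pi$ yields an inclusion $k(Z)^B \hookrightarrow k(X)^B = k$, where the equality is Theorem~\ref{theo-char}~{\it (\i)}; hence $Z$ is a spherical $G$-variety.

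Finally I would apply Theorem~\ref{thm-vanish} to $\pi$ and the globally generated line bundle $\mathcal L$ to obtain $R^i\pi_*\mathcal L = 0$ for all $i>0$. Since $Z$ is affine, higher cohomology of coherent sheaves on $Z$ vanishes, so the Leray spectral sequence degenerates to
\[
H^i(X, \mathcal L) \;=\; H^0(Z, R^i\pi_*\mathcal L) \;=\; 0 \quad \text{for all } i>0.
\]
The only nontrivial step is the construction of the spherical affine target $Z$, which is where the sphericity of $X$ enters crucially via Theorem~\ref{theo-char}; once $Z$ is in place the conclusion is immediate.
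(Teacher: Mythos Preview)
Your proof is correct and is precisely the argument the paper leaves implicit: the corollary is stated without proof immediately after Theorem~\ref{thm-vanish}, and the intended deduction is exactly to manufacture a proper $G$-equivariant morphism from $X$ to an affine spherical variety, apply the theorem, and conclude via Leray on an affine base. Your construction of $Z$ as the normalisation of $\spec\Gamma(X,\cO_X)$ is the natural way to carry this out, and the normalisation step is needed since the paper's definition of spherical requires normality before Theorem~\ref{thm-vanish} can be invoked.
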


The previous result is not true without the assumption ``proper over
an affine'' as the example of $\mathbb{A}^2\setminus\{0\}$ shows:
$H^1(\mathbb{A}^2\setminus\{0\},\cO)\neq0$. The next result is a
generalisation of both Kodaira and Kawamata-Viehweg vanishings. In
the next result, $\kappa(\mathcal{L})$ is the
  Iitaka dimension of $\mathcal{L}$ \emph{i.e.} the dimension of
  $\proj\oplus_nH^0(X,\mathcal{L}^n)$.

\begin{thm}
Let $\mathcal{L}$ be a globally generated line bundle on a complete
spherical variety $X$, then $H^i(X,\mathcal{L}^{-1})=0$ for
$i\neq\kappa(\mathcal{L})$.
\end{thm}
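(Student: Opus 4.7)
The plan is to reduce the statement to a Kodaira-type vanishing on a lower-dimensional spherical variety by means of the Iitaka fibration of $\mathcal{L}$. Since $\mathcal{L}$ is globally generated, the linear system $|\mathcal{L}|$ defines a morphism $\phi_\mathcal{L}\colon X\to\p(H^0(X,\mathcal{L})^\vee)$, and its Stein factorization $\phi_\mathcal{L}=f\circ\phi$ produces a morphism with connected fibers $\phi\colon X\to Y$ and a finite morphism $f\colon Y\to \bar Y$ onto the image. Then $Y$ is normal and projective, $\phi_*\cO_X=\cO_Y$, and $\mathcal{N}:=f^*\cO(1)$ is an ample line bundle on $Y$ with $\mathcal{L}=\phi^*\mathcal{N}$; moreover $\dim Y=\kappa(\mathcal{L})$ because $\mathcal{L}$ is globally generated. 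Replacing $\mathcal{L}$ by a $G$-linearizable positive power (which does not alter $Y$ as a $G$-variety nor the ampleness of $\mathcal{N}$) makes $\phi$ a $G$-equivariant morphism, and since the image of the dense $B$-orbit of $X$ is a dense $B$-orbit in $Y$, the variety $Y$ is again $G$-spherical.

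Applying Theorem \ref{thm-vanish} to the proper $G$-equivariant morphism $\phi$ and the (globally generated) sheaf $\cO_X$ yields $R^q\phi_*\cO_X=0$ for $q>0$. By the projection formula,
\[
R^q\phi_*\mathcal{L}^{-1}\;=\;R^q\phi_*\cO_X\otimes\mathcal{N}^{-1},
\]
so this vanishes for $q>0$ and equals $\mathcal{N}^{-1}$ for $q=0$. The Leray spectral sequence therefore collapses, giving $H^i(X,\mathcal{L}^{-1})=H^i(Y,\mathcal{N}^{-1})$ for every $i\geq 0$. Since $\dim Y=\kappa(\mathcal{L})$, the vanishing is automatic for $i>\kappa(\mathcal{L})$, and the theorem reduces to the Kodaira-type statement $H^i(Y,\mathcal{N}^{-1})=0$ for $0<i<\dim Y$ on the complete spherical variety $Y$ with the ample line bundle $\mathcal{N}$.

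For this remaining vanishing I would pick an equivariant resolution of singularities $\pi\colon\tilde Y\to Y$, available in characteristic $0$ (see Section \ref{section-toro} for toroidal equivariant resolutions of spherical varieties). By Corollary \ref{cor-sing-rat} the variety $Y$ has rational singularities, so $R\pi_*\cO_{\tilde Y}=\cO_Y$ and hence $H^i(Y,\mathcal{N}^{-1})=H^i(\tilde Y,\pi^*\mathcal{N}^{-1})$. On the smooth complete variety $\tilde Y$ the line bundle $\pi^*\mathcal{N}$ is nef (pullback of ample) and big (pullback of ample under a birational morphism), so Kawamata--Viehweg vanishing gives $H^i(\tilde Y,\pi^*\mathcal{N}^{-1})=0$ for $i<\dim\tilde Y=\dim Y$, which finishes the proof. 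The main subtlety lies in setting up the $G$-equivariant descent $\mathcal{L}=\phi^*\mathcal{N}$, which requires some care around the linearizability of a power of $\mathcal{L}$; modulo this, the argument is a clean reduction, via Theorem \ref{thm-vanish} and rational singularities, to the classical Kawamata--Viehweg vanishing on a smooth birational model.
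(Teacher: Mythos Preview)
Your argument is correct and complete. The reduction via Stein factorisation to an ample bundle $\mathcal{N}$ on the spherical variety $Y$ of dimension $\kappa(\mathcal{L})$, followed by Kawamata--Viehweg on a resolution of $Y$, works cleanly; the only point to phrase more carefully is that you do not actually replace $\mathcal{L}$ by a power --- you use a linearisable power to see that $\phi$ is $G$-equivariant (Stein factorisation being independent of the power), while $\mathcal{L}=\phi^*\mathcal{N}$ already holds for the original $\mathcal{L}$.

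The paper's proof is genuinely different. Instead of descending to the Iitaka base, it passes to the total space $L^\vee$ of $\mathcal{L}^\vee$ and the affine cone $\spec\bigl(\oplus_n H^0(X,\mathcal{L}^n)\bigr)$, both spherical for $G\times\G_m$. Theorem~\ref{thm-vanish} gives $R^i\pi_*\cO_{L^\vee}=0$, and then Kempf's theorem on higher direct images of dualising sheaves \cite{kempf} yields $R^i\pi_*\omega_{L^\vee}=0$ for $i\neq\dim X-\kappa(\mathcal{L})$; one concludes by Serre duality on $X$ (which is Cohen--Macaulay). Your route trades Kempf's theorem for Kawamata--Viehweg: the paper's approach stays entirely within the cone/pushforward formalism already set up for Theorem~\ref{thm-vanish}, while yours is perhaps more transparent geometrically and explains the role of $\kappa(\mathcal{L})$ as a dimension. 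Note also the paper's remark just after the theorem, which points out that Kawamata--Viehweg applied \emph{directly on $X$} only gives $i<\kappa(\mathcal{L})$; your reduction to $Y$ is exactly what upgrades this to the full statement.
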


\begin{proof}
As in the proof of the previous Theorem, let $L^\vee$ be the total
space of the dual line bundle $\mathcal{L}^\vee$. We have a proper
morphism $\pi:L^\vee\to
Y=\spec\left(\oplus_nH^0(X,\mathcal{L}^n)\right)$ and up to replacing
$Y$ by its normalisation, these varieties are $G\times\G_m$-spherical
varieties. Therefore we have $R^i\pi_*\cO_{L^\vee}=0$ for $i>0$ and by
a result of Kempf \cite[Theorem 5]{kempf} we get $R^{\dim
  X-\kappa(\mathcal{L})}\pi_*\omega_{L^\vee}=\omega_Y$ while
$R^i\pi_*\omega_{L^\vee}=0$ for $i\neq\dim
X-\kappa(\mathcal{L})$. Since $\psi:L^\vee\to X$ is affine and
$\omega_{L^\vee}=\psi^*(\omega_X\otimes\mathcal{L})$, the result
follows by projection formula and Serre duality (since $X$
has rational singularities, it is Cohen-Macaulay).
\end{proof}

\begin{remark}
In our setting, Kawamata-Viehweg vanishing Theorem would imply that
for $\mathcal{L}$ nef we have $H^i(X,\mathcal{L}^{-1})=0$ for
$i<\kappa(\mathcal{L})$. Note that on spherical varieties nef and
globally generated line bundles agree (see Corollary \ref{nef=gg}).
\end{remark}

\subsubsection{Comments}

The Local Structure Theorem was first proved in by Brion, Luna and
Vust \cite{BLV} over a field of characteristic $0$. It holds in a
weaker form for any $G$-variety. The positive characteristic version
of this result was proved by Knop in \cite{knop-bewert}. For some
specific spherical varieties, the Local Structure Theorem as stated
in characteristic $0$ holds true in positive characteristic. This is
the case for toroidal embeddings of the group $G$ seen as a $G\times
G$-spherical variety (see \cite{strickland}). This also holds for
toroidal embeddings of symmetric varieties in characteristic
different from $2$ by results of \cite{deconcini-springer}. Some
extensions of these results are obtained by Tange in \cite{tange}.

The regularity and vanishing results of Section \ref{section-sing-van}
are taken from \cite{brion-borel-weil}. We will recover these results
via Frobenius splitting techniques in Section \ref{sec-split}.


\section{Fans and geometry}


\subsection{Classification of embeddings}
\label{section-class}

Spherical varieties admit a simple combinatorial description of a given
equivariant birational class. This classification of all embeddings
extends the classification of toric varieties by fans (see
\cite{fulton-toric}, \cite{Oda}).

Note that there always exists a smallest element in a $G$-birational
class: the dense $G$-orbit $G/H$.

\begin{defn} Let $G/H$ a homogeneous spherical variety.

(\i) The set of $G$-invariant valuations\footnote{A valuation $\nu$ is
called invariant if $\nu(g\cdot f)=\nu(f)$ for all $g\in G$ and all
$f\in k(G/H)$.} of $k(G/H)$ is denoted by ${\mathcal V}(G/H)$.

(\i\i) The weight lattice of $G/H$ is $\X(G/H)=\{\chi\in \cX(T)\ /\
k(G/H)^{(B)}_\chi\neq0\}$.
It is a free abelian group of finite rank. Its rank is the rank of the
spherical variety. Write $\X^\vee(G/H)$ for the dual lattice and
$\X_\Q(G/H)$ and $\X_\Q^\vee(G/H)$ for their tensor product with
$\Q$.

(\i\i\i) The set of colors $\mathcal{D}(G/H)$ is the set of
$B$-stable divisors of $G/H$.
\end{defn}

When there is no possible confusion, we will write $\X$, $\X^\vee$,
${\mathcal V}$ and ${\mathcal D}$ for $\X(G/H)$, $\X^\vee(G/H)$,
${\mathcal V}(G/H)$ and ${\mathcal D}(G/H)$. Any valuation $\nu$
induces a homomorphism $\rho_\nu:\X \to\Q$ defined by
$\rho_\nu(\lambda)=\nu(f)$ with $f\in k(G/H)^{(B)}_\lambda$. This is
well defined since
$k(G/H)^{(B)}_\lambda$ is one-dimensional. For any $D\in\mathcal{D}$
there is an associated valuation $\nu_D$. We therefore have a map
$$\rho:{\mathcal V}\cup{\mathcal{D}}\to\X^\vee_\Q.$$
It is injective on $\mathcal{V}$ (see \cite[Corollary 1.8]{knop})
but not on $\mathcal{D}$ in general.

\subsubsection{Simple spherical varieties}

Let $X$ be an embedding of $G/H$ and let $Y$ be a $G$-orbit in $X$.
The subset $X_{Y,G}=\{x\in X\ /\ Y\subset\overline{Gx}\}$ is
$G$-stable, open in $X$. The $G$-orbit $Y$ is the unique closed
$G$-orbit of $X_{Y,G}$. In particular $X$ can be covered by
$G$-stable open subsets with a unique closed orbit.

\begin{defn}
  A spherical variety with a unique closed orbit is called simple.
\end{defn}

By the above discussion, spherical varieties are covered by simple
spherical varieties. Note also that Sumihiro's Theorem \ref{sumihiro}
implies that simple spherical varieties are quasi-projective.

\begin{defn}
Let $X$ be a spherical variety and $Y$ be a $G$-orbit in $X$.

(\i) Define $\mathcal{D}_Y(X)=\{D\in \mathcal{D}\ /\ Y\subset \overline{D}\}$.

(\i\i) Define $\mathcal{D}(X)=\cup_Y\mathcal{D}_Y(X)$ the set of
colors of $X$.

(\i\i\i) Define $\mathcal{V}(X)$ the set of $G$-stable irreducible divisors of $X$.

(\i v) Define $\mathcal{V}_Y(X)=\{D\in \mathcal{V}(X)\ /\ Y\subset
\overline{D}\}$.
\end{defn}

\begin{thm}
\label{theo-char-simple}
  A simple spherical embedding $X$ of $G/H$ with closed orbit $Y$ is
  completely determined by the pair $(\mathcal{V}_Y(X),\mathcal{D}_Y(X))$.
\end{thm}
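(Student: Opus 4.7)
The plan is to reconstruct the simple embedding $X$ from the pair $(\mathcal{V}_Y(X), \mathcal{D}_Y(X))$ by exhibiting a distinguished $B$-stable affine open subset $W \subset X$ with $X = G \cdot W$, and then showing that the coordinate ring $k[W]$ can be read off from the valuations associated to the divisors in the pair.

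First I would define $W \subset X$ to be the complement of the union of all $B$-stable prime divisors not containing $Y$. Since $X$ is simple with closed orbit $Y$, every $G$-stable divisor already contains $Y$, so only the colors outside $\mathcal{D}_Y(X)$ get removed; the resulting $W$ is open, $B$-stable, and meets $Y$ in a non-empty open set. By Sumihiro's Theorem \ref{sumihiro} the simple variety $X$ is quasi-projective, and one can pick a $B$-semi-invariant section $\sigma$ of a very ample $G$-linearised line bundle whose zero divisor is exactly the union of the removed colors; then $W = X_\sigma$, which is affine by the Local Structure Theorem \ref{theo-loc-str}. The saturation $G \cdot W$ is a non-empty $G$-stable open subset containing $Y$; its complement is $G$-stable and closed, hence, if non-empty, must contain the unique closed orbit $Y$ by simplicity of $X$, a contradiction. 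Therefore $X = G \cdot W$.

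It remains to recover $k[W]$ from the pair. A $B$-semi-invariant rational function $f \in k(G/H)^{(B)}_\chi$ is regular on $W$ precisely when $\nu_D(f) \geq 0$ for every $B$-stable prime divisor $D$ of $W$, and these divisors are exactly the elements of $\mathcal{V}_Y(X) \cup \mathcal{D}_Y(X)$. Since $X$ is spherical, each $k[W]^{(B)}_\chi$ is at most one-dimensional, so the set of characters with $k[W]^{(B)}_\chi \neq 0$ is $\{\chi \in \X : \rho_{\nu_D}(\chi) \geq 0 \text{ for all } D \in \mathcal{V}_Y(X) \cup \mathcal{D}_Y(X)\}$, a condition depending only on the pair through the map $\rho$. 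Since $W$ is stable under the parabolic subgroup $P$ stabilising $\mathcal{D}_Y(X)$, the Local Structure Theorem \ref{theo-loc-str} then allows one to recover $k[W]$ from $k[W]^U = k[W]^{(B)}$ as a $P$-algebra (via the splitting $W \cong P_u \times Z$ in characteristic $0$, and up to a finite surjection in positive characteristic). Consequently $W = \Spec k[W]$, and therefore $X = G \cdot W$, is determined by the pair.

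The main obstacle is producing the affine open $W$ and recovering $k[W]$ from its $U$-invariants: the combinatorial complement-of-divisors construction only manifestly yields an open set, and affineness genuinely requires the combination of Sumihiro's theorem and the Local Structure Theorem. The second step is immediate in characteristic $0$ via the $L$-equivariant splitting of Theorem \ref{theo-loc-str} (iii), but in positive characteristic only the finite version of Theorem \ref{theo-loc-str} (ii) is available, so the reconstruction is determined only up to a finite surjective cover — which nevertheless suffices to pin down the $G$-embedding up to isomorphism.
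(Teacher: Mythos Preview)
Your overall strategy matches the paper's: define the $B$-stable open subset $W$ as the complement in $X$ of the closures of the colors outside $\mathcal{D}_Y(X)$, observe that it is affine, show $X = G \cdot W$ by simplicity, and then recover $k[W]$ from the data. The divergence is in this last step, and that is where your argument has a gap.

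You attempt to reconstruct $k[W]$ by first determining $k[W]^{(B)}$ from the valuation inequalities and then invoking the Local Structure Theorem to pass from the $U$-invariants back to the full ring. But the Local Structure Theorem does not hand you $k[W]$ from $k[W]^U$: in characteristic $0$ it gives a product decomposition $W \cong P_u \times Z$, whence $k[W]^U \cong k[Z]^{U\cap L}$, and you are left with the same reconstruction problem one level down for the $L$-spherical variety $Z$. In positive characteristic only the finite-cover version is available, and your assertion that this ambiguity ``nevertheless suffices to pin down the $G$-embedding up to isomorphism'' is not justified.

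The paper bypasses this entirely with one observation you are missing: the open set
\[
X_0 \;=\; G/H \setminus \bigcup_{D \in \mathcal{D} \setminus \mathcal{D}_Y(X)} D
\]
sits inside the homogeneous space $G/H$ itself and is therefore completely determined by $\mathcal{D}_Y(X)$ alone --- not just its $B$-semi-invariants, but its full coordinate ring $k[X_0]$. Since $W \setminus X_0$ is precisely the union of the $G$-stable divisors of $W$, normality of $X$ gives directly
\[
k[W] \;=\; \{\, f \in k[X_0] \ :\ \nu(f) \geq 0 \ \text{for all}\ \nu \in \mathcal{V}_Y(X)\,\},
\]
recovering the whole ring in one stroke, uniformly in all characteristics, with no appeal to the Local Structure Theorem. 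The missing idea in your proof is exactly this intermediate open set $X_0$.
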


\begin{proof}
First the datum $\mathcal{D}_Y(X)$ determines two canonical
$B$-stable open subsets $X_{Y,B}$ and $X_0$ of $X$ and $G/H$
respectively by setting
$$X_{Y,B}=X\setminus\bigcup_{D\in \mathcal{D}\setminus
\mathcal{D}_Y(X)}\overline{D} \ \ \textrm{and}\ \
X_0=G/H\setminus\bigcup_{D\in \mathcal{D}\setminus
\mathcal{D}_Y(X)}D.$$ In particular $X_0$ can be recovered from
$\mathcal{D}_Y(X)$. Looking at the ring of functions we have
$k[X_{Y,B}]= \{f\in k[X_0]\ /\ \nu(f)\geq0\ \textrm{for all
$\nu\in\mathcal{V}_Y(X)$}\}$. Finally we recover $X$ from $X_{Y,B}$:
since $X$ is simple $X=GX_{Y,B}$.
\end{proof}

Let $X$ be a spherical variety and $Y$ a $G$-orbit.

\begin{defn}
Define $\cox\subset\X^\vee_\Q$ as the cone generated by
$\mathcal{V}_Y(X)$ and $\rho(\mathcal{D}_Y(X))$.
\end{defn}

\begin{defn} Let $G/H$ be a homogeneous spherical variety.

(\i) A colored cone for $G/H$, is a pair $(\cC,\cF)$ with $\cC\subset
\X^\vee_\Q$ and $\cF\subset \mathcal{D}(G/H)$ having the following properties.
\begin{itemize}
\item[(CC1)] $\cC$ is a cone generated by $\rho(\cF)$ and finitely many
elements of ${\mathcal V}$.
\item[(CC2)] The intersection $\cC^\circ\cap{\mathcal V}$ is non
  empty\footnote{Here $\cC^\circ$ denotes the relative interior of
    $\cC$.}.
\end{itemize}
The colored cone $(\cC,\cF)$ is called strictly convex if the
following condition holds.
\begin{itemize}
\item[(SCC)] The cone $\cC$ is strictly convex and $0\not\in\rho(\cF)$.
\end{itemize}

(\i\i) A colored face of $(\cC,\cF)$ is a pair $(\cC_0,\cF_0)$
satisfying is the following conditions.
\begin{itemize}
\item[(a)] The set $\cC_0$ is a face of the cone $\cC$.
\item[(b)] The intersection $\cC_0^\circ\cap {\mathcal V}$ is non empty.
\item[(c)] We have the equality $\cF_0=\cF\cap\cC_0$.
\end{itemize}
\end{defn}

\begin{thm}Let $G/H$ be a homogeneous spherical variety.
\label{theo-simple}

(\i) The map $X\mapsto (\cox,\mathcal{D}_Y(X))$ is a bijection
between the isomorphism classes of simple spherical embeddings $X$
of $G/H$ with closed orbit $Y$ and strictly convex colored cones.

(\i\i) Let $X$ be a spherical embedding of $G/H$ and let $Y$ be an
  orbit. Then there is a bijection $Z\mapsto (\cC_Z^\vee(X),\mathcal{D}_Z(X))$
  between the set of $G$-orbits in $X$ such that $\overline{Z}\supset
  Y$ and the set of faces of $(\cox,\mathcal{D}_Y(X))$.
\end{thm}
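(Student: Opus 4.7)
My plan is to leverage Theorem \ref{theo-char-simple}, which already says a simple spherical embedding with closed orbit $Y$ is determined by the pair $(\mathcal{V}_Y(X),\mathcal{D}_Y(X))$. The first step is to verify that $(\cox,\mathcal{D}_Y(X))$ is a strictly convex colored cone. Condition (CC1) is the definition of $\cox$. For (CC2), I would apply the Local Structure Theorem \ref{theo-loc-str} to a $B$-semiinvariant section $\sigma$ not vanishing on $Y$: the slice $Z$ meeting $Y$ carries a $T$-action whose generic one-parameter subgroup provides an invariant valuation of $k(G/H)$ lying in the relative interior of $\cox$. For (SCC), a line $\ell\subset\cox$ would yield a nonzero $\chi\in\X$ pairing to zero with every invariant valuation of $X$ through $Y$ and non-negatively with every color through $Y$, producing a non-constant $B$-semiinvariant regular invertible function on a $G$-stable neighborhood of $Y$; this contradicts either the uniqueness of $Y$ as closed orbit or $k(X)^B=k$. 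Injectivity is then immediate: $\mathcal{D}_Y(X)=\cF$, while $\mathcal{V}_Y(X)$ is recovered as the invariant valuations generating the extremal rays of $\cC$ not spanned by $\rho(\cF)$, so Theorem \ref{theo-char-simple} applies.

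The main work is the surjectivity in (i). Given a strictly convex colored cone $(\cC,\cF)$, I would first form the $B$-stable open
$$X_0=G/H\setminus\bigcup_{D\in\mathcal{D}(G/H)\setminus\cF}D,$$
which is affine by the local description of $G/H$, and then set
$$A_\cC=\{f\in k(G/H):f\text{ is regular on }X_0\text{ and }\nu(f)\geq 0\text{ for all }\nu\in\cC\cap\mathcal{V}\}.$$
I would define $X_{Y,B}=\Spec A_\cC$, a normal $B$-stable affine variety, and glue via Sumihiro's Theorem \ref{sumihiro} and the Local Structure Theorem to produce $X=G\cdot X_{Y,B}$ as a normal simple $G$-variety. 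The central technical difficulty, and the main obstacle of the proof, is to show that $A_\cC$ is finitely generated and that the colored cone of the resulting closed orbit $Y$ in $X$ is exactly $(\cC,\cF)$: the former relies on strict convexity of $\cC$ combined with finiteness of its extremal rays, while the latter follows by computing the divisors of the $B$-semiinvariant generators of $A_\cC$ and checking that these recover $\mathcal{V}_Y(X)$ and $\mathcal{D}_Y(X)$ as prescribed.

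For part (ii), I would cover $X$ by the simple open subsets $X_{Z,G}$ for each $G$-orbit $Z$. If $Y\subset\overline{Z}$, then $X_{Z,G}\subset X_{Y,G}$, so every $B$-stable divisor of $X$ containing $Z$ contains $Y$; consequently $\mathcal{V}_Z(X)\subset\mathcal{V}_Y(X)$ and $\mathcal{D}_Z(X)\subset\mathcal{D}_Y(X)$, hence $\cC_Z^\vee(X)$ is generated by a subset of the generators of $\cox$. To see it is a face in the sense of (a)--(c), I would identify $\cC_Z^\vee(X)$ with the locus in $\cox$ where certain $B$-semiinvariant regular functions on $X_{Y,B}$ vanishing on $Z$ attain value zero; condition (b) is then the (CC2) argument applied to $Z$, and (c) is direct from the construction. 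Conversely, a colored face $(\cC_0,\cF_0)$ of $(\cox,\mathcal{D}_Y(X))$ determines a simple embedding $X'$ of $G/H$ by part (i), and the identity on $G/H$ extends to a canonical $G$-equivariant dominant morphism $X'\to X_{Y,G}$; the image of the closed orbit of $X'$ is then the $G$-orbit of $X$ corresponding to $(\cC_0,\cF_0)$, providing the inverse to $Z\mapsto(\cC_Z^\vee(X),\mathcal{D}_Z(X))$.
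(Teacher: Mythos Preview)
Your proposal diverges from the paper precisely at the existence step in (\i), and the divergence matters. The paper builds the simple embedding \emph{projectively}: from (CC1) one chooses $B\times H$-semiinvariants $f_0,\dots,f_n\in k[G]$ with a common $H$-character such that $X_0=D(f_0)$; the $G$-span $W$ of the $f_i$ yields a $G$-equivariant morphism $\varphi:G/H\to\p(W^\vee)$; condition (SCC) forces $\varphi$ to have finite fibers, and (CC2) guarantees that the Stein factorisation $G/H\to X\to\p(W^\vee)$ produces a simple embedding with colored cone $(\cC,\cF)$. Because $X$ arises as the normalisation of a closure in projective space, it is automatically a separated normal $G$-variety: no gluing is required.

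Your route via $X_{Y,B}=\Spec A_\cC$ is natural and mirrors the uniqueness argument in Theorem~\ref{theo-char-simple}, but the sentence ``glue via Sumihiro's Theorem \ref{sumihiro} and the Local Structure Theorem to produce $X=G\cdot X_{Y,B}$'' is a genuine gap. Sumihiro's Theorem runs in the opposite direction: it extracts quasi-projective $G$-stable open subsets from a normal $G$-variety that is already given; it provides no mechanism for promoting a $B$-variety to a $G$-variety. Writing $X=G\cdot X_{Y,B}$ presupposes an ambient $G$-scheme in which the translates $gX_{Y,B}$ live and glue. One can make the affine approach work (this is closer to the original Luna--Vust treatment), but doing so requires either passing through a projective realisation after all, or explicitly gluing the translates along their pairwise intersections and then checking separatedness---neither of which follows from the results you invoke. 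The paper's projective construction, following Knop, was designed to sidestep exactly this difficulty.

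Your treatment of the verification that $(\cox,\mathcal{D}_Y(X))$ is a strictly convex colored cone, and your outline for (\i\i), are more detailed than what the paper offers (which simply defers to \cite{knop}); those parts are reasonable as sketches, though the (SCC) argument would need tightening.
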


\begin{proof}
The main point in the proof is the existence of a spherical
embedding with a given colored cone (uniqueness follows from Theorem
\ref{theo-char-simple}). We sketch the construction of a spherical
variety $X$ with colored cone $(\cC,\cF)$ and refer to \cite[Theorem
3.1]{knop} for a complete proof. Condition (CC1) allows to construct
elements $(f_i)_{i\in[0,n]}\in k[G]$ which are $B\times
H$-semiinvariants with the same $H$-character and such that the open
subset $X_0$ defined in the proof of Theorem \ref{theo-char-simple}
is given by $X_0=D(f_0)$. Write $W$ for the $G$-span of the
$(f_i)_{i\in[0,n]}$. These elements define a $G$-equivariant
morphism $\varphi:G/H\to\p(W^\vee)$. Condition (SCC) implies that
this morphism has finite fibers and condition (CC2) implies that the
Stein factorisation $G/H\to X\to \p(W^\vee)$ yields a simple
spherical variety $X$ with cone $(\cC,\cF)$.
\end{proof}

\subsubsection{General case}

Since spherical varieties have an open covering of simple spherical varieties,
we only have to \emph{glue} together the simple pieces to get the general classification.
For this we need the following notions.

\begin{defn} A colored fan $\bF$ in $\X^\vee_\Q$ is a finite collection of colored cones
  $(\cC,\cF)$ satisfying the properties.

(CF1) Every colored face of a colored cone $(\cC,\cF)$ of $\F$ is in
$\F$.

(CF2) For any $\nu\in{\mathcal V}$ there is at most one colored cone
$(\cC,\cF)\in\bF$ with $\nu\in\cC^\circ$.

A colored cone is called strictly convex if any colored cone in $\F$
is strictly convex. This is equivalent to $(0,\emptyset)\in\F$.
\end{defn}

\begin{defn}
  Let $X$ be an embedding of $G/H$ we define $\F(X)$ to be the set of
  all colored cones $(\cox,\mathcal{D}_Y(X))$ for $Y$ a $G$-orbit in $X$.
\end{defn}

\begin{thm}
\label{thm-class-fan}
  The map $X\mapsto \F(X)$ is a bijection between isomorphism classes
  of embeddings and strictly colored fans.
\end{thm}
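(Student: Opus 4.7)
The plan is to deduce Theorem \ref{thm-class-fan} from the simple case (Theorem \ref{theo-simple}) by a standard glueing argument, patterned on the classification of toric varieties by fans. I would proceed in four steps.

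First, I would verify that $\F(X)$ really is a strictly colored fan. Property (CF1) is essentially Theorem \ref{theo-simple}(ii): any colored face of $(\cox,\mathcal D_Y(X))$ comes from a $G$-orbit $Z$ with $\overline Z\supset Y$, and such a $Z$ produces a colored cone in $\F(X)$. For (CF2), I would argue that an invariant valuation $\nu\in\mathcal V$ lies in the relative interior of $(\cC_Y^\vee(X),\mathcal D_Y(X))$ if and only if the corresponding $G$-stable divisor of the simple open piece $X_{Y,G}$ has $Y$ as its unique closed orbit, so $Y$ is determined by $\nu$; uniqueness of $(\cC,\cF)$ follows. Strict convexity of each colored cone in $\F(X)$ is the content of the simple-case statement, and $(0,\emptyset)\in\F(X)$ corresponds to the dense orbit $G/H$ itself.

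Second, for injectivity, I would use the covering $X=\bigcup_Y X_{Y,G}$ by simple open $G$-stable pieces. By Theorem \ref{theo-simple}(i), each $X_{Y,G}$ is reconstructed from the single colored cone $(\cox,\mathcal D_Y(X))\in\F(X)$. Hence $\F(X)=\F(X')$ yields $G$-equivariant isomorphisms $X_{Y,G}\xrightarrow{\sim}X'_{Y',G}$ between corresponding simple pieces, all compatible with the common open subvariety $G/H$; they therefore patch to a $G$-equivariant isomorphism $X\cong X'$.

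Third, for surjectivity, given a strictly colored fan $\F$, I would construct $X$ by glueing. For each maximal colored cone $(\cC,\cF)\in\F$ apply Theorem \ref{theo-simple}(i) to obtain a simple embedding $X_{(\cC,\cF)}$ of $G/H$; by Theorem \ref{theo-simple}(ii) the colored faces of $(\cC,\cF)$ index exactly the $G$-orbits of $X_{(\cC,\cF)}$, and the open subvariety corresponding to a face $(\cC_0,\cF_0)$ is itself the simple embedding with colored cone $(\cC_0,\cF_0)$. Given two maximal cones $(\cC_1,\cF_1)$ and $(\cC_2,\cF_2)$ in $\F$, axiom (CF1) (plus the fact that $\mathcal V_Y(X)$ and $\mathcal D_Y(X)$ are read off from $\cC_Y^\vee(X)$ and $\mathcal D_Y(X)$) lets me identify the common open subvarieties corresponding to shared colored faces, producing a $G$-equivariant glueing datum over $G/H$.

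The main obstacle, and the step requiring the most care, is showing that this glueing datum produces a \emph{separated} variety, i.e.\ that the diagonal is closed; this is where axiom (CF2) does the real work, exactly as in the toric case. The cleanest way I would handle it is via a valuative criterion: a $G$-invariant discrete valuation $\nu\in\mathcal V$ that dominates both $X_{(\cC_1,\cF_1)}$ and $X_{(\cC_2,\cF_2)}$ would lie in the relative interiors of colored faces of both cones, contradicting (CF2) unless these faces coincide, in which case the two centres agree. Combining this with the standard fact that invariant valuations are enough to test separatedness for spherical varieties (since $G/H$ is dense) gives separatedness. Finally, $\F(X)$ for the variety $X$ so constructed equals $\F$ by construction, closing the bijection.
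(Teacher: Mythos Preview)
Your proposal is correct and follows essentially the same route as the paper: reduce to the simple case via Theorem \ref{theo-simple}, glue the simple pieces along common colored faces, and verify separatedness by the valuative criterion, with (CF2) doing the work. The paper's own proof is only a two-line sketch referring to \cite{knop}, so your outline is in fact a more detailed version of the same argument; the one point you flag as a ``standard fact''---that it suffices to test separatedness on $G$-invariant valuations---is exactly the technical lemma the paper defers to \cite[Theorem 3.3]{knop}.
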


\begin{proof}
This is a consequence of Theorem \ref{theo-simple}. The simple
models given by the cones glue together along the smaller simple
spherical embedding given by colored faces. One then has to check
that the variety obtained this way is separated. This is done thanks
to the valuative criterion of separatedness. We refer to
\cite[Theorem 3.3]{knop} for the proof of this fact.
\end{proof}

\subsubsection{Morphisms}

Let $\varphi:G/H\to G/H'$ be a surjective $G$-morphism
of homogeneous spherical varieties. We have a
a surjection $\varphi_*:\xymatrix{\X^\vee(G/H')\ar[r]&
\X^\vee(G/H)}$ and an equality $\varphi_*({\mathcal
V}(G/H))={\mathcal V}(G/H')$.

\begin{defn}
  (\i) Denote by $\cF_\varphi$ the set of $D\in \mathcal{D}(G/H)$ such
  that $\varphi$ maps $D$ surjectively to $G/H'$. Let $\cF_\varphi^c$
  be its complement in $\mathcal{D}(G/H)$.

(\i\i) Denote again by $\varphi_*:\cF_\varphi^c\to \mathcal{D}(G/H')$ the map
defined by $D\mapsto\varphi(D)$.
\end{defn}

\begin{defn}
  (\i) Let $(\cC,\cF)$ and $(\cC',\cF')$ be colored cones of $G/H$ and
  $G/H'$ respectively. We say that $(\cC,\cF)$ maps to $(\cC',\cF')$
  if the following conditions holds.

\begin{itemize}
\item[(CM1)] We have the inclusion $\varphi_*(\cC)\subset \cC'$.
\item[(CM2)] We have the inclusion $\varphi_*(\cF\setminus\cF_\varphi)\subset
\cF'$.
\end{itemize}

(\i\i) Let $\F$ and $\F'$ be colored fans of embeddings of $G/H$ and
$G/H'$ respectively. The colored fan $\F$ maps to $\F'$ if every
colored cone of $\F$ is mapped to a colored cone of $\F'$.
\end{defn}

Let $\varphi:G/H\to G/H'$ be a surjective morphism between spherical
homogeneous spaces. Let $X$ and $X'$ be embeddings of $G/H$ and
$G/H'$ respectively.

\begin{thm}
$\varphi$ extends to $X\to X'$  if and only if $\F(X)$ maps to
$\F(X')$.
\end{thm}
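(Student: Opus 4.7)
The plan is to prove both directions by reducing to the simple case via the open covering $X=\bigcup_Y X_{Y,G}$ and applying the description of simple spherical varieties from Theorem \ref{theo-char-simple} and Theorem \ref{theo-simple}.

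For the ``only if'' direction, suppose $\tilde\varphi:X\to X'$ extends $\varphi$. By $G$-equivariance, for each $G$-orbit $Y\subset X$ the image $\tilde\varphi(Y)$ is a single $G$-orbit $Y'\subset X'$, and I claim $(\cox,\mathcal{D}_Y(X))$ maps to $(\cC_{Y'}^\vee(X'),\mathcal{D}_{Y'}(X'))$. For (CM1), the cone $\cox$ is generated by $\mathcal{V}_Y(X)\cup \rho(\mathcal{D}_Y(X))$. Given a prime divisor $D\subset X$ containing $Y$, either $\tilde\varphi(\overline D)=X'$, in which case $\varphi_*\nu_D=0$, or (using that the fibration $G/H\to G/H'$ is equidimensional) $\tilde\varphi(\overline D)$ is a divisor of $X'$ containing $Y'$; this divisor is $G$-stable if $D$ is, and $B$-stable otherwise, so $\varphi_*\nu_D$ is a positive multiple of an element of $\mathcal{V}_{Y'}(X')$ or $\rho(\mathcal{D}_{Y'}(X'))$, hence lies in $\cC_{Y'}^\vee(X')$. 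For (CM2), the same dimension-count argument applied to a color $D\in \mathcal{D}_Y(X)\setminus \cF_\varphi$ shows $\varphi_*D=\varphi(D)$ is a color of $G/H'$ whose closure contains $Y'$.

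For the ``if'' direction, suppose $\F(X)$ maps to $\F(X')$. It suffices to construct, for each $G$-orbit $Y\subset X$, a $G$-equivariant extension $\tilde\varphi_Y:X_{Y,G}\to X'_{Y',G}$ of $\varphi$, where $Y'\subset X'$ is the orbit whose colored cone is the image of $(\cox,\mathcal{D}_Y(X))$ under the hypothesized map; these local extensions must agree on overlaps since they all restrict to $\varphi$ on the dense orbit $G/H$ and $X'$ is separated, so they glue to the desired global morphism. To construct $\tilde\varphi_Y$, by $G$-equivariance it is enough to extend $\varphi$ on the $B$-stable slice $X_{Y,B}$, whose coordinate ring is described in the proof of Theorem \ref{theo-char-simple} as
\[
k[X_{Y,B}]=\bigl\{f\in k[X_0]\ :\ \nu(f)\geq 0 \text{ for all } \nu\in\mathcal{V}_Y(X)\bigr\}.
\]
So I must check that $\varphi^*$ maps $k[X'_{Y',B}]$ into $k[X_{Y,B}]$. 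Regularity on $X_0$ follows from the contrapositive of (CM2): any pole of $\varphi^* f$ lies on a color $D$ with $\varphi_*D\notin \mathcal{D}_{Y'}(X')$, forcing $D\notin \mathcal{D}_Y(X)$, i.e., $D$ is already outside $X_0$. The valuation inequality follows from (CM1): for $\nu\in\mathcal{V}_Y(X)\subset \cox$, one has $\nu(\varphi^*f)=(\varphi_*\nu)(f)$, and $\varphi_*\nu\in \cC_{Y'}^\vee(X')$ is a non-negative combination of elements of $\mathcal{V}_{Y'}(X')$ (which are non-negative on $f$ by hypothesis) and $\rho(\mathcal{D}_{Y'}(X'))$ (which are non-negative on $f$ because $f\in k[X'_0]$).

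The main obstacle is matching the combinatorial operation $\varphi_*$ on valuations and colors with the geometric pushforward along $\tilde\varphi$; in both directions the crucial input is that $\varphi:G/H\to G/H'$ is an equidimensional (in fact, homogeneous) fibration, so that non-dominant $B$-stable divisors of $G/H$ map to $B$-stable divisors of $G/H'$. A secondary point, handled by separatedness of $X'$ together with density of $G/H$ in $X$, is the coherence of the local extensions across the simple pieces in the glueing step.
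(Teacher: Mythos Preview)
Your overall strategy---reduce to simple embeddings and work with the affine $B$-stable charts $X_{Y,B}$---is exactly the paper's approach, and the reduction and gluing arguments are fine. However, both directions contain a genuine gap at the same point: the interface between the cone $\cC_{Y'}^\vee(X')\subset\X^\vee_\Q$ and actual valuations of $k(G/H')$.

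In the ``only if'' direction, your claim that for a $G$-stable divisor $D\supset Y$ the image $\tilde\varphi(\overline D)$ is either all of $X'$ or a divisor of $X'$ is false. The equidimensionality of $G/H\to G/H'$ is irrelevant here, since $D$ lies in the boundary $X\setminus G/H$. Already in the toric case one sees the problem: if $X'=\A^2$ and $X$ is its blow-up at the origin, the exceptional divisor maps to the origin, which has codimension $2$. So $\varphi_*\nu_D$ need not be a multiple of any $\nu_{D'}$ for a divisor $D'\subset X'$.

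In the ``if'' direction, the sentence ``$\varphi_*\nu$ is a non-negative combination of elements of $\mathcal{V}_{Y'}(X')$ and $\rho(\mathcal{D}_{Y'}(X'))$'' does not make sense: valuations do not add, and the elements $\rho(\nu_{D'})\in\X^\vee_\Q$ pair only with weights in $\X$, not with arbitrary $f\in k[X'_{Y',B}]$. Since $k[X'_{Y',B}]$ is only a $B$-module (and is \emph{not} spanned by $B$-eigenfunctions in general), you cannot reduce to eigenfunctions.

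Both gaps are repaired by the same observation, which is really what the paper is invoking when it says ``using the characterisation of $X_{Y,B}$'': the $G$-invariant valuation $\mu=\varphi_*\nu_D$ has a center in $X'$ (namely the $G$-stable subvariety $\tilde\varphi(\overline D)\supset Y'$), hence a center in the simple piece $X'_{Y',G}$; being $G$-stable, this center is not contained in any color closure, so it lies in the affine open $X'_{Y',B}$, whence $\mu(f)\geq 0$ for all $f\in k[X'_{Y',B}]$ and in particular $\rho'(\mu)\in\cC_{Y'}^\vee(X')$. Conversely, the classification of simple embeddings (Theorem~\ref{theo-simple}) says that $\rho'(\mu)\in\cC_{Y'}^\vee(X')$ forces $\mu$ to have a center in $X'_{Y',G}$, and then the same argument gives $\mu\geq 0$ on $k[X'_{Y',B}]$. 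Replace your divisor-image and linear-combination arguments by this center argument and the proof goes through.
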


\begin{proof}
It is enough to assume that $X$ and $X'$are simple embeddings.
Consider the open subsets $X_0$, $X_{Y,B}$ in $X$ and $X'_0$,
$X'_{Y,B}$ in $X'$ as defined in the proof of Theorem
\ref{theo-char-simple}. The condition (CM2) is equivalent to the
fact that $\varphi$ restricts to $\varphi_0:X_0\to X'_0$. Using the
characterisation of $X_{Y,B}$ and $X'_{Y,B}$ given in Theorem
\ref{theo-char-simple}, condition (CM1) is equivalent to the
existence of a morphism $\varphi_{Y,B}:X_{Y,B}\to X'_{Y,B}$
extending $\varphi_0$.
\end{proof}

\begin{defn}Let $\F$ be a colored fan.
Define the support of $\F$ by
$$\supp(\F)=\mathcal{V}(G/H)
\bigcap
\bigcup_{(\cC,\cF)\in\F}\cC.
$$
\end{defn}

Let $\varphi:X\to X'$ be a dominant morphism extending a surjective
morphism $G/H\to G/H'$ between spherical embeddings.

\begin{thm}
$\varphi$ is proper if and only if
$\supp(\F(X))=\varphi_*^{-1}(\supp(\F(X')))$.

In particular $X$ is complete if and only if $\supp(\F(X))={\mathcal
V}(G/H)$.
\end{thm}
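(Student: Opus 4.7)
The plan is to invoke the valuative criterion of properness and translate it, via the Luna--Vust dictionary, into a statement about supports of colored fans. Throughout, I use that $\varphi:X\to X'$ extends the $G$-equivariant surjection $G/H\to G/H'$, so that we have the linear pushforward $\varphi_*:\X^\vee(G/H)_\Q\to\X^\vee(G/H')_\Q$.

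First, I would reduce the valuative criterion to $G$-invariant valuations. Given a DVR $R\subset k(X)=k(G/H)$ with center $y'\in X'$ via $\varphi$, one produces a $G$-invariant DVR of the same fraction field by an averaging argument over the $G$-action on the Riemann--Zariski space (see \cite[\S3]{knop} for this standard step in Luna--Vust theory). The existence of a center on $X$ for the original $R$ is equivalent to the existence of a center for its $G$-invariant companion, so $\varphi$ is proper iff the valuative criterion holds for every $G$-invariant DVR $\nu$ of $k(G/H)$; such a $\nu$ is identified with a ray in $\mathcal{V}(G/H)\subset\X^\vee_\Q$.

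Second, I would translate centers into fan data via Theorem \ref{theo-simple}(ii). A $G$-invariant valuation $\nu\in\mathcal{V}(G/H)$ has a (unique) center on a spherical embedding $Z$ of $G/H$ if and only if $\nu$ lies in the relative interior of some cone $\cC^\vee_Y(Z)$ of $\F(Z)$; equivalently, $\nu\in\supp(\F(Z))$. Since the restriction of $\nu$ to the subfield $k(G/H')$ is the pushforward $\varphi_*(\nu)$, the same statement applied to $X'$ says that $\nu$ has an image center on $X'$ iff $\varphi_*(\nu)\in\supp(\F(X'))$. The valuative criterion thus becomes: for every $\nu\in\mathcal{V}(G/H)$ with $\varphi_*(\nu)\in\supp(\F(X'))$, one must have $\nu\in\supp(\F(X))$. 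This is exactly the inclusion $\varphi_*^{-1}(\supp(\F(X')))\subset\supp(\F(X))$.

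Third, the reverse inclusion $\supp(\F(X))\subset\varphi_*^{-1}(\supp(\F(X')))$ is automatic once $\varphi$ is given: by the preceding theorem, $\F(X)$ maps to $\F(X')$, so every cone $\cC$ of $\F(X)$ satisfies $\varphi_*(\cC)\subset\cC'$ for some cone $\cC'$ of $\F(X')$, and using the equality $\varphi_*(\mathcal{V}(G/H))=\mathcal{V}(G/H')$ we conclude $\varphi_*(\cC\cap\mathcal{V}(G/H))\subset\supp(\F(X'))$. Combining both inclusions yields the desired equality. For the ``in particular'' statement, take $X'=\spec k$, the unique embedding of the trivial homogeneous space; then $\varphi_*$ is the zero map, $\supp(\F(X'))=\{0\}$, and $\varphi_*^{-1}(\supp(\F(X')))=\X^\vee_\Q$, so the criterion reduces to $\supp(\F(X))=\mathcal{V}(G/H)$. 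The main obstacle is the first step: one must know that testing the valuative criterion on general DVRs of $k(X)$ is equivalent to testing it on $G$-invariant ones, and this is the technical heart of the Luna--Vust classification rather than a formal consequence of the fan combinatorics.
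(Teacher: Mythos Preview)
Your proposal is correct and follows exactly the paper's approach: the valuative criterion of properness, reduced to $G$-invariant valuations, together with the observation (which the paper also singles out) that the inclusion $\supp(\F(X))\subset\varphi_*^{-1}(\supp(\F(X')))$ holds automatically once $\varphi$ exists. One small imprecision: in the final step you write $\varphi_*^{-1}(\{0\})=\X^\vee_\Q$, but supports are by definition subsets of $\mathcal{V}$, so the equality in the theorem is tacitly read inside $\mathcal{V}(G/H)$; this is why your conclusion $\supp(\F(X))=\mathcal{V}(G/H)$ is nonetheless the right one.
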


\begin{proof}
This is a classical application of the valuative criterion of
properness. Note that the inclusion
$\supp(\F(X))\subset\varphi_*^{-1}(\supp(\F(X')))$ is always
satisfied.
\end{proof}

\begin{remark}
Non strictly convex colored fans classify dominant $G$-morphisms
from $G/H$ with irreducible and reduced fibers, see \cite[Theorem
4.5]{knop}.
\end{remark}

\subsubsection{Structure of $G$-orbits}
Let $X$ be a spherical $G$-variety. Let us first note the following corollary of Theorems \ref{theo-simple} and \ref{thm-class-fan}.

\begin{cor}
There is a decreasing bijection between the set of $G$-orbits in $X$ and the set of colored faces of the fan of $X$.
\end{cor}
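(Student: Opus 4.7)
The plan is to globalize Theorem \ref{theo-simple}(ii) by sending each $G$-orbit $Z$ of $X$ to its colored cone $(\cC_Z^\vee(X),\mathcal{D}_Z(X))$, which belongs to $\F(X)$ by definition. I would then check that this is a bijection between the $G$-orbits and the colored faces of $\F(X)$, reversing closure containment on the orbit side and face inclusion on the cone side.

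For bijectivity, I would cover $X$ by the simple $G$-invariant opens $X_{Y_0,G}$, where $Y_0$ ranges over the closed $G$-orbits of $X$; each $X_{Y_0,G}$ is simple with unique closed orbit $Y_0$. Theorem \ref{theo-simple}(ii) applied to $X_{Y_0,G}$ (with $Y=Y_0$) gives a bijection between the orbits $Z$ with $Y_0 \subset \overline{Z}$ and the colored faces of $(\cC_{Y_0}^\vee(X),\mathcal{D}_{Y_0}(X))$. Since every orbit of $X$ has at least one closed orbit in its closure, the union of these local bijections exhausts the left hand side. On the right, $\F(X)$ is a colored fan by Theorem \ref{thm-class-fan}, so property (CF1) guarantees that every colored face of $\F(X)$ appears as a colored face of some maximal cone, the maximal cones being precisely those attached to closed orbits. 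The local bijections glue, since the colored cone $(\cC_Z^\vee(X),\mathcal{D}_Z(X))$ depends only on $Z$ and not on any choice of $Y_0 \subset \overline{Z}$.

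For the order-reversing property, given $Z_1 \subset \overline{Z_2}$, any closed orbit $Y_0 \subset \overline{Z_1}$ is also contained in $\overline{Z_2}$, so both orbits lie in $X_{Y_0,G}$; Theorem \ref{theo-simple}(ii) then identifies each $Z_i$ with a colored face of $(\cC_{Y_0}^\vee(X),\mathcal{D}_{Y_0}(X))$, and unwinding the constructions (using the description $X_{Z,B} = X \setminus \bigcup_{D \in \mathcal{D}\setminus\mathcal{D}_Z(X)} \overline{D}$ from the proof of Theorem \ref{theo-char-simple}) shows that $(\cC_{Z_2}^\vee(X),\mathcal{D}_{Z_2}(X))$ is a colored face of $(\cC_{Z_1}^\vee(X),\mathcal{D}_{Z_1}(X))$. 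In particular the closed orbits correspond to the maximal colored cones and the open orbit $G/H$ to the apex $(0,\emptyset)$. The main subtlety is really just the bookkeeping for gluing the local bijections across different simple opens, but this is automatic once one observes that the colored cone attached to an orbit is intrinsic to the orbit.
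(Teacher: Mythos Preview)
Your proposal is correct and matches the paper's intended argument: the paper gives no proof beyond declaring the statement an immediate corollary of Theorems \ref{theo-simple} and \ref{thm-class-fan}, and your write-up is precisely the natural unpacking of that claim---localize to the simple opens $X_{Y_0,G}$, apply Theorem \ref{theo-simple}(ii) there, and glue using (CF1) and the intrinsic nature of $(\cC_Z^\vee(X),\mathcal{D}_Z(X))$.
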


\begin{thm}
\label{theo-rk}
Let $Y$ be a $G$-orbit, then $\rk(Y)=\rk(X)-\dim\cox$.
\end{thm}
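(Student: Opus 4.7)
\emph{Strategy.} The plan is to establish the stronger identification
\[
\X_\Q(Y) \;=\; \cox^{\,\perp}
\]
inside $\X_\Q(G/H)$, where $\cox^{\,\perp}$ denotes the annihilator of the cone $\cox \subset \X^\vee_\Q(G/H)$ under the natural pairing. Because a convex cone has the same dimension as its linear span, this annihilator has dimension $\rk(X) - \dim\cox$, and the stated formula for $\rk(Y)$ follows. By replacing $X$ with the $G$-stable open $X_{Y,G}$ I may assume that $X$ is simple with $Y$ as its unique closed orbit. In this case every $G$-stable prime divisor of $X$ contains $Y$, so $\mathcal V(X) = \mathcal V_Y(X)$, and the $B$-stable prime divisors of $X$ containing $Y$ are precisely the elements of $\mathcal V_Y(X) \cup \mathcal D_Y(X)$, whose associated valuations generate $\cox$ under the map $\rho$.

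\emph{Key observation.} For any $\lambda \in \X(G/H)$ and nonzero $f \in k(G/H)^{(B)}_\lambda$, the principal divisor $\div(f)$ is $B$-stable and finitely supported, so no non-$B$-stable prime divisor can occur in it (its whole, infinite $B$-orbit of translates would have to occur with the same multiplicity). Thus only $B$-stable prime divisors contribute to $\div(f)$, and $f$ is a unit in the local ring $\cO_{X,\eta_Y}$ of the generic point of $Y$ if and only if $\nu_D(f) = \rho_{\nu_D}(\lambda) = 0$ for every $B$-stable prime divisor $D\supset Y$. By the preceding reduction this last condition is exactly $\lambda \in \cox^{\,\perp}$.

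\emph{The two inclusions.} If $\lambda \in \cox^{\,\perp}$, pick the (up-to-scalar unique) $f\in k(G/H)^{(B)}_\lambda$: the key observation gives that $f$ is a unit at $\eta_Y$, hence $f|_Y$ is a nonzero element of $k(Y)^{(B)}_\lambda$, so $\lambda \in \X(Y)$. Conversely, start with $\lambda \in \X(Y)$ witnessed by nonzero $\bar f \in k(Y)^{(B)}_\lambda$ and lift it, following the proof of Theorem \ref{theo-char}: Sumihiro's Theorem \ref{sumihiro} yields a $B$-stable quasi-projective open meeting $Y$, and the Local Structure Theorem \ref{theo-loc-str} (in characteristic zero, via reductivity of $L$), respectively the Frobenius-power refinement \cite[Theorem 1.3]{knop} (in positive characteristic), produces a $B$-stable affine open $X_0\subset X$ meeting $Y$ and an element $f\in k[X_0]^{(B)}_{n\lambda}$ with $f|_{X_0\cap Y}=\bar f^n|_{X_0\cap Y}$ for some $n\geq 1$. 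Since $f|_Y\neq 0$, the key observation applied to $n\lambda$ gives $n\lambda\in\cox^{\,\perp}$, and tensoring with $\Q$ yields $\lambda\in\cox^{\,\perp}$; this simultaneously proves $\X_\Q(Y)\subset\X_\Q(G/H)$, which is part of the statement of the identification. The one subtle point in the whole argument is precisely this lifting step: extracting a $B$-semi-invariant on $X$ that restricts to a power of $\bar f$ requires a well-chosen affine $B$-chart around $Y$ on which the restriction map on $B$-semi-invariants is surjective up to Frobenius powers, which is exactly what the Local Structure Theorem (plus Knop's extension in positive characteristic) provides.
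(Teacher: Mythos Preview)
Your proof is correct and follows essentially the same approach as the paper: identify $\X_\Q(Y)$ with $\cox^\perp$ by restricting $B$-semiinvariants of weight in $\cox^\perp$ (your key observation makes precise why such restrictions are nonzero), and lift $B$-semiinvariants from $Y$ to $X$ via the technique of \cite[Theorem~1.3]{knop} already used in the proof of Theorem~\ref{theo-char}. The paper's three-sentence proof suppresses exactly the details you spell out (the reduction to the simple case, the support argument for $\div(f)$, and the mechanism of the lifting), but the structure is the same.
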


\begin{proof}
Let $\Lambda$ be the set of characters vanishing on $\cox$.
Then restriction of $B$-semiinvariant functions gives
the inclusion $\Lambda\subset\X(Y)$. But, up to $p$-torsion, $B$-semiinvariant
rational functions on $Y$ can be extended to $B$-semiinvariant
rational functions on $X$ (see for example \cite[Theorem 1.3b]{knop}).
\end{proof}

\begin{thm}
Assume $\char k=0$. Any closed $G$-stable subvariety $Y$ of $X$ is a spherical variety.
\end{thm}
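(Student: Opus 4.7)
The plan is to verify for $Y$ the two equivalent characterisations of sphericality given in Theorem \ref{theo-char}: that $Y$ is a normal $G$-variety, and that $k(Y)^B = k$ (equivalently $c(Y) = 0$).

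For normality, I would appeal directly to Corollary \ref{cor-sing-rat}: in characteristic zero every closed $G$-stable subvariety of a spherical variety has rational singularities, and hence is Cohen-Macaulay and in particular normal. This is the only place where the hypothesis $\char k = 0$ is genuinely used.

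The second and main step is to prove $c(Y) = 0$. Since $Y$ is $G$-stable, hence $B$-stable, the argument carried out in the proof of (ii)$\Rightarrow$(iii) of Theorem \ref{theo-char} applies verbatim. Concretely, I would invoke Sumihiro's theorem \ref{sumihiro} to produce an affine $B$-stable open subset $X_0 \subset X$ meeting $Y$ nontrivially, with the key property that every element of $k[X_0 \cap Y]^{(B)}$ lifts, after passing to a suitable power if necessary, to an element of $k[X_0]^{(B)}$. A $B$-invariant rational function $f \in k(Y)^B$ can then be written as $u/v$ with $u,v \in k[X_0 \cap Y]^{(B)}$ of common weight; lifting to $u',v' \in k[X_0]^{(B)}$ produces a $B$-invariant rational function $u'/v' \in k(X)^B$ that restricts to $f^n$ on $Y$. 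Since $X$ is spherical, $k(X)^B = k$, so $f^n \in k$ and $f$ is algebraic over $k$; as $k$ is algebraically closed this forces $f \in k$. Hence $k(Y)^B = k$.

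Combining the two steps, Theorem \ref{theo-char} yields that $Y$ is a spherical variety. The main obstacle is the lifting of $B$-semi-invariants in the second step, which is a genuinely nontrivial input resting on Sumihiro's local structure theorem together with representation-theoretic properties of the coordinate ring of a $B$-stable affine subset of a normal $G$-variety. Once this lifting is in place the rest is essentially bookkeeping, and since the lifting argument itself is already executed in the proof of Theorem \ref{theo-char}, no new work is needed beyond the observation that it applies verbatim to our $G$-stable subvariety $Y$ and the appeal to Corollary \ref{cor-sing-rat} for normality.
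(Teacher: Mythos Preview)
Your proof is correct, and the first step (normality via Corollary \ref{cor-sing-rat}) matches the paper exactly. However, for the second step the paper takes a shorter route: since $X$ is spherical, Theorem \ref{theo-char} already tells us that $X$ has finitely many $B$-orbits; as $Y$ is $B$-stable and closed, its $B$-orbits are precisely those $B$-orbits of $X$ contained in $Y$, so $Y$ trivially has finitely many $B$-orbits and is therefore spherical by the characterisation (\i\i\i) of Theorem \ref{theo-char}. Your approach instead re-runs the lifting argument from the proof of (\i\i)$\Rightarrow$(\i\i\i) to establish $k(Y)^B=k$ directly; this is valid but unnecessary, since the finiteness of $B$-orbits on $X$ --- itself a consequence of that very lifting argument --- passes to $Y$ for free.
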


\begin{proof}
Normality follows from Corollary \ref{cor-sing-rat} or Corollary
\ref{cor-sing-rat-def}. Theorem \ref{theo-char} implies that $X$ and
thus $Y$ has finitely many $B$-orbits. Thus $Y$ is spherical.
\end{proof}

\subsubsection{Comments}
\label{comments-class} The classification of embeddings, also called
\emph{Luna-Vust theory} was first studied in a more general setting
by Luna and Vust in \cite{LV}. For spherical varieties \emph{i.e.}
varieties of complexity $0$, this was latter simplified and extended
to any characteristic by Knop in \cite{knop}. We follow the
presentation of Knop \cite{knop}. This classification of spherical
embeddings can be partially generalised to varieties of complexity
$1$, see \cite{knop-bewert}, \cite{timashev}, \cite{timashev1} and
\cite{timashev2}.

To complete the classification of spherical varieties, one needs to
classify the homogeneous spherical spaces \emph{i.e.} the spherical
subgroups of a reductive group $G$. Luna \cite{luna} conjectured
that these subgroups should be classified by combinatorial objects
called \emph{spherical
systems}. In \cite{losev2}, Losev proves that there is at most one
spherical subgroup for each spherical system. There are two recent
propositions of a complete proof of the conjecture \emph{i.e.} the
existence of a spherical subgroup for any spherical system. The
first approach it via combinatoric and follows Luna's proof of type
$A$ in \cite{luna}. Type $D$ is solved in \cite{bravi-pezz1} and
type $E$ in \cite{braviE}. In the papers \cite{bravi-pezza},
\cite{bravi-pezzb}, \cite{bravi-pezzc}, Bravi and Pezzini propose a
general solution. A more geometric approach via invariant Hilbert
schemes as been proposed in \cite{cupit} by Cupit-Foutou. See
Subsection \ref{subsection-wonderful}, \cite[Section
4.5]{brion-hilbert} and \cite[Section 30.11]{timashev} for more
details.

Several partial classifications have been achieved. For example,
Akhiezer in \cite{akhiezer} classifies the spherical varieties of
rank one while Cupit-Foutou \cite{cupit-2orbits} classifies the
varieties with two orbits. In \cite{ruzzi1}, \cite{ruzzi2}, Ruzzi
gives a complete classification of smooth symmetric varieties with
Picard number one. In \cite{pasquier}, Pasquier classifies the
smooth horospherical varieties with Picard number one.


\subsection{Divisors}
\label{sec-div}

In this section, we describe the Picard group of spherical varieties
in terms of their fans. We also describe ample, nef and globally
generated line bundles. Comparing with the Weil divisors described
in Section \ref{section-FMSS} this leads to a characterisation of
$\Q$-factorial spherical varieties. We also give characterisations
of quasi-projective and affine spherical varieties.

\subsubsection{The Picard group}

Recall from Theorem \ref{theo-chow} that any Weil divisor $\delta$ can be written in the form
$\delta=\sum_Dn_D[D]$ where the sum runs over all $B$-stable irreducible divisors in $X$.

\begin{thm}
\label{prop-cartier}
  The divisor $\delta$ is Cartier if and only if for any $G$-orbit $Y$ there exists $f_Y\in k(X)^{(B)}$ with $n_D=\nu_D(f_Y)$ for any
  $B$-stable divisor $D$ containing $Y$.
\end{thm}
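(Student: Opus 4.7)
The plan is to reduce to the case of a simple spherical variety and then combine the Local Structure Theorem \ref{theo-loc-str} with the $B$-linearization induced by $B$-stability of $\delta$.

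Reduction to the simple case. Being Cartier is local on $X$, and the simple open pieces $X_{Y,G}$ cover $X$ as $Y$ ranges over the $G$-orbits, while the existence of $f_Y$ is already phrased orbit by orbit. So I may assume $X$ is simple with closed orbit $Y$, and must show that $\delta=\sum_D n_D[D]$ is Cartier if and only if there exists $f\in k(X)^{(B)}$ with $n_D=\nu_D(f)$ for every $B$-stable prime divisor $D$ containing $Y$. The key structural observation is that in a simple spherical variety every $G$-stable prime divisor contains $Y$: the closure of any $G$-orbit contains the unique closed orbit $Y$. Consequently the only $B$-stable prime divisors that can fail to contain $Y$ are the colors in $\mathcal{D}(X)\setminus\mathcal{D}_Y(X)$.

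For the ``if'' direction, set $\epsilon=\delta-\div(f)$. As $f$ is $B$-semiinvariant, $\div(f)$ is supported on $B$-stable prime divisors; so is $\epsilon$, and by hypothesis $\epsilon$ has zero coefficient along every $B$-stable divisor containing $Y$. Thus $\epsilon$ is supported on the colors in $\mathcal{D}(X)\setminus\mathcal{D}_Y(X)$. Removing these colors yields the $B$-stable open $X_{Y,B}$ on which $\delta=\div(f)$ is principal, hence Cartier. To spread Cartierness to all of $X$, I apply the LST to cover $Y$ by $B$-stable affine opens $X_\sigma$: the same cancellation argument gives $\delta|_{X_\sigma}=\div(f|_{X_\sigma})$ on each, so $\delta$ is Cartier on a $G$-invariant open subset containing $Y$, which coincides with $X$ because $X$ is simple.

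For the ``only if'' direction, $B$-stability of $\delta$ canonically $B$-linearizes $\mathcal{O}_X(\delta)$, with the tautological section $s_\delta$ of divisor $\delta$ being $B$-semiinvariant. It then suffices to produce a $B$-equivariant trivialization of $\mathcal{O}_X(\delta)$ on some $B$-stable open neighborhood $U$ of $Y$, for then a $B$-semiinvariant trivializing section $t$ yields $f=s_\delta/t\in k(X)^{(B)}$ with $\div(f)=\delta$ on $U$, whence $n_D=\nu_D(f)$ for every $B$-stable $D$ containing $Y$. I produce $U$ and $t$ via the LST: choose an affine $B$-stable open $X_\sigma$ meeting $Y$, written (in characteristic zero) as $P_u\times Z$ with $Z$ affine spherical under the Levi $L$; the $B$-linearized bundle $\mathcal{O}_X(\delta)|_{X_\sigma}$ descends to a $T$-linearized line bundle on $Z$, and an affine spherical variety having trivial Picard after tensoring by a character, this descended bundle is equivariantly trivial, pulling back to the required $t$. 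The main obstacle is precisely this equivariant-triviality step on the LST base; once it is in hand, both implications follow by the cancellation of multiplicities recorded above.
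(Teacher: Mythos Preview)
Your proposal has genuine gaps in both directions, and in each case the paper takes a different (and simpler) route.

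\textbf{The ``if'' direction.} You correctly show that $\delta$ is principal on the $B$-stable open set $X_{Y,B}$. The problem is the next sentence: applying the Local Structure Theorem only produces further $B$-stable affine opens, so their union is still merely $B$-stable, and your conclusion ``$\delta$ is Cartier on a $G$-invariant open subset containing $Y$'' is unjustified. Since $\delta$ is only $B$-stable you cannot simply translate by $G$. The paper's argument is different: after subtracting $\div(f_Y)$ one is reduced to showing that each $B$-stable prime divisor $D$ not containing $Y$ is Cartier. The key point is that the reflexive sheaf $i_*\cO_{X^{\rm reg}}(D)$ can be $G$-linearised (after a finite cover of $G$), so its non-locally-free locus is $G$-stable; being closed, $G$-stable, and contained in $D$, it cannot contain the unique closed orbit $Y$ and is therefore empty. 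This $G$-stability of the non-Cartier locus is exactly the missing idea in your argument.

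\textbf{The ``only if'' direction.} You yourself flag that ``the main obstacle is precisely this equivariant-triviality step on the LST base'', and indeed this step is not established: there is no reason for an arbitrary affine $L$-spherical variety $Z$ to have trivial Picard group, even up to a character twist. Moreover your use of the product decomposition $X_\sigma\simeq P_u\times Z$ restricts the argument to characteristic zero. The paper avoids all of this: since $X$ is simple it is quasi-projective by Sumihiro, so $\delta$ is a difference of globally generated Cartier divisors; for a globally generated $B$-linearised line bundle there is a $B$-semiinvariant section $\sigma$ not vanishing on $Y$, and then $\delta$ is principal on the $B$-stable open $X_\sigma$, giving the required $f_Y$ directly. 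This is both shorter and characteristic-free.
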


\begin{proof}
Assume that $\delta$ is Cartier. We may assume that $X$ is simple
with closed orbit $Y$. By Sumihiro's Theorem $X$ is quasi-projective
and we may assume that $\delta$ is globally generated (any Cartier
divisor on a quasi-projective variety can be written as the
difference of two globally generated divisors). Since $\delta$ is
globally generated, there exists a section $\sigma\in
H^0(X,\cO_X(\delta))^{(B)}$ with $\sigma\vert_Y\neq0$ and $\delta$
is principal on $X_\sigma$.

Conversely, we may again assume that $X$ is simple with closed orbit $Y$. Replacing $\delta$ by
$\delta-\div(f_Y)$ we may assume that no component of $\delta$ contains $Y$. We may therefore replace $\delta$ by $D$
an irreducible $B$-stable divisor not containing $Y$.
Denote by $i:X^{\rm reg}\to X$ the inclusion of the regular locus.
The locus where the sheaf $i_*\cO_{X^{\rm reg}}(D\cap X^{\rm reg})$
is not locally free is a $G$-stable closed subvariety (up to taking
a fnite cover of $G$ so that the sheaf is $G$-linearised) and
contained in $D$. It is therefore empty (since $Y$ is the only
closed $G$-orbit).
\end{proof}

\begin{defn}Let $X$ be a spherical variety

(\i) Denote by $\cC^\vee(X)$ the union of the cones $\cox$ for $Y$ a $G$-orbit.

(\i\i) Define the set $PL(X)$ of piecewise linear functions as the
subgroup of functions $l$ on $\cC^\vee(X)$ such that
\begin{itemize}
  \item for any $G$-orbit $Y$, the restriction $l_Y$ to $\cox$ is the
    restriction of an element of $\X(G/H)$;
\item for any $G$-orbit $Z$ with $Z\subset\overline{Y}$, we have
  $l_Z\vert_{\cox}=l_Y$.
  \end{itemize}

(\i\i\i) Denote by $L(X)$ the group of linear functions on $\F(X)$.
\end{defn}

\begin{remark}
  The function $l$ only depends on its values on the maximal
  cones therefore on the cones of the closed orbits in
  $X$.
  Note however that if the maximal cones do not have the
  maximal dimension then the function $l_Y$ does not determine an element in $\X(G/H)$.
\end{remark}

\begin{thm}
There is an exact sequence
$$\cC^\vee(X)^\perp\cap\X\to\Z(\mathcal{D}\setminus\mathcal{D}(X))\to\pic(X)\to
PL(X)/L(X)\to 0.$$
\end{thm}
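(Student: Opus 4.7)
The plan is to produce the three arrows explicitly, verify the successive compositions vanish, and then establish exactness and surjectivity in turn. First, I would define the rightmost map $\pi\colon\pic(X)\to PL(X)/L(X)$ as follows. Given a Cartier divisor $\delta=\sum_D n_D D$ (sum over $B$-stable prime divisors), Theorem \ref{prop-cartier} provides, for each $G$-orbit $Y$, an $f_Y\in k(X)^{(B)}$ of some weight $\chi_Y\in\X$ with $n_D=\nu_D(f_Y)$ for every $B$-stable $D\supset Y$. I would set $l_Y=\chi_Y|_{\cox}$ and check the compatibility conditions defining $PL(X)$: if $Z\subset\overline{Y}$ then every $B$-stable divisor containing $Y$ contains $Z$, so $\chi_Y$ and $\chi_Z$ agree on the generators $\rho(\nu_D)$ of $\cox$. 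Replacing $\delta$ by $\delta+\div(g)$ forces $g$ to be $B$-semi-invariant (since $\div(g)$ is $B$-stable) of some weight $\eta$, which shifts $l$ by $\eta\in L(X)$; hence $\pi$ is well-defined. Next, the map $\iota\colon\Z(\mathcal{D}\setminus\mathcal{D}(X))\to\pic(X)$ sends $D$ to $[\overline{D}]$, which is Cartier by Theorem \ref{prop-cartier} with $f_Y=1$ for every $Y$ (since $\overline{D}$ contains no $G$-orbit). Finally, the leftmost map sends $\chi\in\cC^\vee(X)^\perp\cap\X$ to $\div(f)$ for $f\in k(G/H)^{(B)}_\chi$, and $\nu_D(f)=\chi(\rho(\nu_D))$ vanishes for $D\in\mathcal{V}(X)\cup\mathcal{D}(X)$ (whose $\rho$-images generate $\cC^\vee(X)$), so $\div(f)$ lands in $\Z(\mathcal{D}\setminus\mathcal{D}(X))$.

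The two intermediate compositions are immediately zero: the composition into $\pic(X)$ vanishes because principal divisors are trivial there, and $\pi\circ\iota=0$ because each $[\overline{D}]$ with $D\in\mathcal{D}\setminus\mathcal{D}(X)$ admits the choice $f_Y=1$ giving $l=0$. For surjectivity of $\pi$, given $l\in PL(X)$ I would pick, for each closed orbit $Y$, a function $f_Y\in k(G/H)^{(B)}$ whose weight extends $l_Y$ to an element of $\X$, define $n_D=\nu_D(f_{Y_D})$ for $D\in\mathcal{V}(X)\cup\mathcal{D}(X)$ (with $Y_D$ any closed orbit in $\overline{D}$, the value being independent of the choice by the PL compatibility across orbits), and $n_D=0$ for $D\in\mathcal{D}\setminus\mathcal{D}(X)$. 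The resulting divisor is Cartier by Theorem \ref{prop-cartier} and is sent to $l$.

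For exactness at $\pic(X)$, if $\pi(\delta)=0$ then $l=\eta|_{\cC^\vee(X)}$ for some $\eta\in\X$; subtracting $\div(f)$ for $f\in k(G/H)^{(B)}_\eta$ reduces to the case $l=0$, so every $f_Y$ can be chosen to be $1$, and Theorem \ref{prop-cartier} forces $n_D=0$ for every $B$-stable $D$ meeting some orbit. Since every $D\in\mathcal{V}(X)$ contains its dense $G$-orbit and every $D\in\mathcal{D}(X)$ contains an orbit by definition, the surviving terms lie in $\mathcal{D}\setminus\mathcal{D}(X)$, so $\delta\in\iota(\Z(\mathcal{D}\setminus\mathcal{D}(X)))$. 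For exactness at $\Z(\mathcal{D}\setminus\mathcal{D}(X))$, if $\sum n_D[\overline{D}]=\div(g)$ in $X$ then $g$ is $B$-semi-invariant of weight $\chi$, and for every generator $\rho(\nu_D)$ of $\cC^\vee(X)$ (indexed by $D\in\mathcal{V}(X)\cup\mathcal{D}(X)$) we have $\chi(\rho(\nu_D))=\nu_D(g)=0$, so $\chi\in\cC^\vee(X)^\perp\cap\X$ maps to the given element. I expect the main obstacle to be the construction of a Cartier divisor from a PL function, where consistency of the local data across overlapping orbit strata is precisely the content of Theorem \ref{prop-cartier} combined with the way the cones $\cox$ glue into the fan.
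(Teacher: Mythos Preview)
Your proposal is correct and follows the same route as the paper's proof: both build the map $\pic(X)\to PL(X)/L(X)$ directly from the local data $(f_Y)_Y$ supplied by Theorem~\ref{prop-cartier}, identify its kernel with classes supported on $\mathcal{D}\setminus\mathcal{D}(X)$, and trace the further kernel back to weights orthogonal to $\cC^\vee(X)$. The paper's argument is essentially a three-line sketch of exactly what you wrote out in full; your added care with the PL compatibility check, the well-definedness under linear equivalence, and the explicit construction of a Cartier preimage for a given $l\in PL(X)$ are the details the paper leaves implicit.
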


\begin{proof}
By Proposition \ref{prop-cartier}, we have a surjective map
$\pic(X)\to PL(X)/L(X)$ defined by sending $\delta$ to the
collection $(f_Y)_Y$. The kernel is given  by
divisors $\delta=\sum_Dn_D[D]$ such that $n_D=0$ for all $D\in
  \mathcal{D}_Y(X)$ for some orbit $Y$ thus coming from
  $\Z(\mathcal{D}\setminus\mathcal{D}(X))$.
If a Cartier divisor is principal its image lies in $L(X)$. If $f$
is such that $\div(f)$ is in the kernel, then for all $Y$ and $D$ a
$B$-stable divisor with $D\supset Y$, we have
$\scal{\rho(\nu_D),f}=0$ thus $f\in\cC^\vee(X)^\perp$.
\end{proof}

\begin{cor}
$\pic(X)$ is of finite rank.
\end{cor}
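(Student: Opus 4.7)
The plan is to deduce finite rank of $\pic(X)$ directly from the exact sequence
$$\cC^\vee(X)^\perp\cap\X\to\Z(\mathcal{D}\setminus\mathcal{D}(X))\to\pic(X)\to PL(X)/L(X)\to 0$$
by showing that both the middle term $\Z(\mathcal{D}\setminus\mathcal{D}(X))$ and the cokernel term $PL(X)/L(X)$ are finitely generated abelian groups, so that $\pic(X)$, sandwiched by a finitely generated group on the left and mapping onto a finitely generated group on the right, is itself finitely generated.

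For the group $\Z(\mathcal{D}\setminus\mathcal{D}(X))$, the point is that $\mathcal{D}=\mathcal{D}(G/H)$ is finite: a color is by definition a $B$-stable divisor of $G/H$, and since $G/H$ is spherical it has finitely many $B$-orbits by Theorem \ref{theo-char}, hence only finitely many of codimension one. Thus $\Z(\mathcal{D}\setminus\mathcal{D}(X))$ is a free abelian group of finite rank.

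For $PL(X)/L(X)$, I would use the observation in the preceding remark that a piecewise linear function $l$ is determined by its restrictions $l_Y$ to the maximal cones of $\F(X)$, corresponding to the closed $G$-orbits $Y$. By definition a colored fan consists of finitely many colored cones, so there are only finitely many such maximal cones. For each of them, $l_Y$ is the restriction of an element of the weight lattice $\X(G/H)$, which is itself a free abelian group of finite rank. Consequently $PL(X)$ embeds into a finite direct sum of copies of $\X(G/H)$, and is therefore finitely generated; its quotient $PL(X)/L(X)$ is then also finitely generated.

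Combining these two inputs with the exactness of the sequence finishes the argument: since $\pic(X)$ sits in a four-term exact sequence where the neighbors (and hence both the image of the middle map and the cokernel) are finitely generated, $\pic(X)$ itself has finite rank. I do not foresee a serious obstacle, provided one takes for granted the finiteness of the colored fan (built into the definition) and the finiteness of the set of colors (a standard consequence of Theorem \ref{theo-char}); these are the two places where the spherical hypothesis genuinely enters.
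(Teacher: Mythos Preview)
Your argument is correct and is exactly the reasoning the paper leaves implicit: the corollary is stated without proof, as an immediate consequence of the exact sequence, and you have simply spelled out why both $\Z(\mathcal{D}\setminus\mathcal{D}(X))$ and $PL(X)/L(X)$ are finitely generated. There is nothing to add.
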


\begin{cor}
\label{pic-sans-tors} $\pic(X)$ is torsion free if there exists a
complete $G$-orbit.
\end{cor}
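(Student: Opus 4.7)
The plan is to extract torsion-freeness from the four-term exact sequence of the previous theorem, using the existence of a complete $G$-orbit $Y_0$ twice. First I would show $\rk(Y_0)=0$: since $Y_0\cong G/P$ for some parabolic $P\supset B$ (after conjugation), the $B$-stabilizer of the base point is all of $B$, so any nonzero $f\in k(Y_0)^{(B)}_\chi$ forces $\chi=1$, whence $\X(Y_0)=0$. Theorem \ref{theo-rk} then gives $\dim\cC_{Y_0}^\vee(X)=\rk(X)$, so $\cC_{Y_0}^\vee(X)$ spans $\X^\vee_\Q$ and a fortiori $\cC^\vee(X)^\perp\cap\X=0$. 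The exact sequence of the previous theorem consequently reduces to the short exact sequence
$$0\to\Z(\mathcal{D}\setminus\mathcal{D}(X))\to\pic(X)\to PL(X)/L(X)\to 0,$$
whose left term is free abelian of finite rank.

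The main step is to prove $PL(X)/L(X)$ is torsion-free. Suppose $l\in PL(X)$ satisfies $nl\in L(X)$, so $nl_Y=\chi|_{\cox}$ for some $\chi\in\X$ and all orbits $Y$. Pick $\tilde\chi\in\X$ with $l_{Y_0}=\tilde\chi|_{\cC_{Y_0}^\vee(X)}$, which exists by the definition of $PL(X)$. Because $\cC_{Y_0}^\vee(X)$ is full-dimensional, the equality $n\tilde\chi|_{\cC_{Y_0}^\vee(X)}=\chi|_{\cC_{Y_0}^\vee(X)}$ upgrades to the equality $n\tilde\chi=\chi$ in the lattice $\X$ itself. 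For an arbitrary orbit $Y$ we then have $nl_Y=n\tilde\chi|_{\cox}$ as functions on $\cox$; since $\cox^\perp$ is a $\Q$-subspace of $\X_\Q$, the quotient $\X/(\cox^\perp\cap\X)$ is torsion-free, which forces $l_Y=\tilde\chi|_{\cox}$. Hence $l\in L(X)$.

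Both outer terms of the short exact sequence are therefore finitely generated and torsion-free, hence free, so the extension splits and $\pic(X)$ is itself free, in particular torsion-free. The crux of the argument is the full-dimensionality of $\cC_{Y_0}^\vee(X)$: without it, the lift $\tilde\chi$ of $l_{Y_0}$ would only be determined modulo a nontrivial perpendicular sublattice, and the integrality step $n\tilde\chi=\chi$ in $\X$ could fail, allowing $PL(X)/L(X)$ (and with it $\pic(X)$) to have torsion.
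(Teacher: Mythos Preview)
Your argument follows the same route as the paper's: use Theorem~\ref{theo-rk} to see that a complete orbit gives a full-dimensional cone, deduce $\cC^\vee(X)^\perp\cap\X=0$, and then use the exact sequence together with freeness of $PL(X)/L(X)$. You actually supply more than the paper does, since the paper simply asserts that $PL(X)/L(X)$ is free, whereas you prove torsion-freeness by exploiting the full-dimensional cone to pin down the integral lift $\tilde\chi$; this is a genuine improvement in exposition.

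One small wrinkle: your argument for $\rk(Y_0)=0$ evaluates a $B$-semiinvariant \emph{rational} function $f$ at the base point $eP$, but $f$ need not be regular there. The fix is either to work at a point of the open $B$-orbit (whose $B$-stabilizer contains $T$, forcing $\chi|_T=1$), or to observe that on the projective variety $G/P$ any nonconstant $f\in k(G/P)^{(B)}$ would have a $B$-stable principal divisor supported on Schubert divisors, which is impossible since their classes are independent in $\pic(G/P)$. Either way the conclusion $\X(Y_0)=0$ stands.
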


\begin{proof}
The quotient $PL(X)/L(X)$ is free. For $Y$ a complete $G$-orbit we
have $\dim\cox=\dim\X^\vee$ by Theorem \ref{theo-rk} thus
$\cC^\vee(X)^\perp=0$.
\end{proof}

\begin{remark}
If $X$ has no complete $G$-orbit then the Picard group may have torsion as shows the example
$X=G$ with $G$ semisimple not simply connected (this is a spherical variety for the action of $G\times G$).
\end{remark}

\subsubsection{Ample and globally generated line bundles}

If $X$ is complete, the function $l$ completely determines for each
closed $G$-orbit $Y$ an element $\chi_Y\in\X(G/H)$ such that
$l_Y=\chi_Y$ so that the value $l_Y(\rho(\nu_D))=\nu_D(\chi_Y)$ is
well defined even if $\rho(\nu_D)$ does not lie in $\cox$. This will
simplify the statement of the next result for which we will assume
that $X$ is complete. For a characterisation of globally generated
and ample line bundles in the general setting, see \cite[Theorem
17.3]{timashev}.

\begin{defn}
A function $l\in PL(X)$ is called convex if $l_Y\leq (l_Z)\vert_\cox$ for any orbit $Y$ and any closed
orbit $Z$. If the inequality is strict, then $l$ is called strictly convex.
\end{defn}

\begin{thm}
\label{theo-pic}
Assume $X$ complete and consider a Cartier $B$-stable divisor
$$\delta=\sum_{D\in \mathcal{D}\setminus\mathcal{D}(X)}n_DD+ \sum_{D\in \mathcal{D}(X)\cup\mathcal{V}(X)}l_Y(\rho(\nu_D)).$$

(\i) The divisor $\delta$ is globally generated if and only if $l$ is convex and the inequality $l_Y(\rho(\nu_D))\leq n_D$ holds
for all closed $G$-orbit $Y$ and all $D\in\mathcal{D}\setminus\mathcal{D}(X)$.

(\i\i) The divisor $\delta$ is ample if and only if $l$ is strictly convex and the inequality $l_Y(\rho(\nu_D))< n_D$ holds
for all closed $G$-orbit $Y$ and all $D\in\mathcal{D}\setminus\mathcal{D}(X)$.
\end{thm}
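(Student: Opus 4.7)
The plan is to reduce both statements to positivity properties of distinguished $B$-semiinvariant rational sections indexed by the closed $G$-orbits of $X$, and then to translate these into the stated inequalities by reading off the divisor of each such section.

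For part (i), the base locus of $\mathcal{O}_X(\delta)$ is a closed $G$-stable subset of the complete variety $X$, hence empty if and only if it avoids every closed $G$-orbit. Proposition \ref{prop-cartier} provides, for each closed orbit $Y$, a function $f_Y\in k(X)^{(B)}_{l_Y}$ such that $\delta=\mathrm{div}(f_Y)$ on the simple open subset $X_{Y,G}$. Hence the rational section $f_Y^{-1}$ of $\mathcal{O}_X(\delta)$ is regular and non-vanishing on $X_{Y,G}$; since a $B$-semiinvariant section of $\mathcal{O}_X(\delta)$ of weight $l_Y$ not vanishing on $Y$ is unique up to scalar, and the $G$-module $H^0(X,\mathcal{O}_X(\delta))$ decomposes under $B$ into eigenvectors, $\delta$ is globally generated at $Y$ iff $f_Y^{-1}$ extends to a global section. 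The extension condition $-\mathrm{div}(f_Y)+\delta\geq 0$ is automatic on non-$B$-stable primes (both terms vanish) and on $B$-stable primes reduces to $a_D\geq \nu_D(f_Y)=l_Y(\rho(\nu_D))$. For $D\in\mathcal{D}\setminus\mathcal{D}(X)$ this is exactly $n_D\geq l_Y(\rho(\nu_D))$; for $D\in\mathcal{D}(X)\cup\mathcal{V}(X)$, choosing a closed orbit $Z\subset\overline{D}$ gives $a_D=l_Z(\rho(\nu_D))$ and the inequality reads $l_Z(\rho(\nu_D))\geq l_Y(\rho(\nu_D))$. Since the $\rho(\nu_D)$ with $D\supset Z$ generate $\mathcal{C}_Z^\vee(X)$, letting $Y,Z$ vary over closed orbits translates this family of inequalities into the convexity of $l$.

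For part (ii), a standard argument shows that on a complete variety, $\delta$ is ample iff some multiple $m\delta$ is very ample, and by $G$-equivariance one may check very-ampleness by verifying that the $B$-semiinvariant sections distinguish distinct closed $G$-orbits and separate tangent directions along each. The strict inequalities $n_D>l_Y(\rho(\nu_D))$ ensure that $f_Y^{-m}$ vanishes strictly along every $D\in\mathcal{D}\setminus\mathcal{D}(X)$, while strict convexity of $l$ ensures that $f_Y^{-m}$ vanishes strictly along every $B$-stable divisor of $X$ separating $X_{Y,G}$ from a different simple piece $X_{Z,G}$, thereby distinguishing $X_{Y,G}$ from the other closed orbits. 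Combined with the local structure theorem \ref{theo-loc-str}, which describes an \'etale neighborhood of $Y$ as $P_u\times Z_Y$ with $Z_Y$ affine, the sections $f_Y^{-m}$ together with their $G$-translates provide enough independent global sections to give a closed immersion into $\mathbb{P}(H^0(X,\mathcal{O}_X(m\delta))^\vee)$. The converse is immediate: failure of any strict inequality produces a pair of orbits or a divisor outside the fan that the complete linear system cannot separate.

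\textbf{Main obstacle.} The forward direction in (ii) is the technical core: upgrading the orbit-wise non-vanishing of $f_Y^{-m}$ into a genuine closed embedding requires producing sufficiently many $B$-semiinvariant sections to separate tangent vectors at each closed orbit. This step rests essentially on Theorem \ref{theo-loc-str}, which reduces the tangential separation problem to an analysis on the affine slice $Z_Y$, and may require passing to a large multiple $m\delta$ so that products of the canonical sections $f_Y^{-1}$ generate enough of the section space. The corresponding direction in (i) is, by contrast, a direct bookkeeping argument once the canonical sections are identified.
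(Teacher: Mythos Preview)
Your treatment of part (\i) is correct and essentially matches the paper: reduce to finding, for each closed orbit $Y$, a $B$-eigenvector section not vanishing on $Y$, then read off the effectivity of $\delta-\div(f_Y)$ as the stated system of inequalities.

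Part (\i\i) is where your argument has a genuine gap. You propose to verify \emph{very}-ampleness of $m\delta$ by checking that $B$-semiinvariant sections separate closed $G$-orbits and tangent directions there, invoking Theorem~\ref{theo-loc-str} for the latter. Two problems: first, separating only closed orbits and their tangent spaces does not by itself give a closed immersion on all of $X$, and the passage from ``$G$-translates of the $f_Y^{-m}$ give enough sections'' to an actual embedding is precisely the content you leave unwritten. Second, your converse (``failure of a strict inequality means the linear system cannot separate'') conflates ampleness with very-ampleness: inability to separate a pair of points for a fixed $m$ does not preclude ampleness.

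The paper avoids both difficulties by a different and much shorter route. After using strict convexity to ensure $l_Y\neq l_Z$ for distinct closed orbits (so the sections $f_Y^{-1}$ already distinguish the simple pieces), it reduces to the simple case with closed orbit $Y$ and normalises $l_Y=0$. Then the strict inequalities $n_D>0$ for $D\in\mathcal{D}\setminus\mathcal{D}_Y(X)$ mean precisely that the zero locus of the canonical section $\eta$ of $\cO_X(\delta)$ is $\Delta_X\cup(\text{boundary of }X_{Y,G})$, so $X_\eta=X_{Y,B}$. This set is \emph{affine} by construction (Theorem~\ref{theo-char-simple}), and one checks that every regular function on $X_\eta$ is of the form $\sigma/\eta^n$ with $\sigma\in H^0(X,n\delta)$. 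Since the $G$-translates $X_{g\cdot\eta}$ cover $X$, the standard affine criterion gives ampleness directly---no tangent-vector analysis is needed. For the other implication the paper simply notes that if $\delta$ is ample then $n\delta-\sum_{D\in\mathcal{D}\setminus\mathcal{D}_Y(X)}D$ is globally generated for $n\gg0$, and part (\i) then forces the strict inequalities. This bypasses your separation argument entirely.
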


\begin{proof}
{\it (\i)} We may assume that $\delta$ is $G$-linearised. It is then
globally generated if and only for any closed $G$-orbit $Y$ here
exists $\s\in H^0(X,\delta)^{(B)}$ such that $\s\vert_Y\neq0$. This
in turn is equivalent to the fact that we can write
$\delta=A_Y+\div(f_Y)$ with $A_Y$ effective not containing $Y$ and
$f_Y$ a $B$-semiinvariant function. If $\chi_Y$ is the weight of
$f_Y$, then this is equivalent to the inequalities $l(\rho(\nu_D))\geq
\chi_Y(\rho(\nu_D))$ for all $D\in \mathcal{D}(X)\cup\mathcal{V}(X)$
with equality for $D\supset Y$ and $\chi_Y(\rho(\nu_D))\leq n_D$ for
all $D\in\mathcal{D}\setminus\mathcal{D}(X)$.

{\it (\i\i)} By the previous argument, a large multiple of $\delta$
separates closed orbits if and only if  $l_Y\neq l_Z$ for $Y$ and
$Z$ two distinct closed orbits. The proof therefore reduces to the
case of a simple spherical variety $X$ with closed orbit $Y$ in
which case we may choose $l_Y=0$. If $\delta$ is ample, then
$n\delta-\sum_{D\in \mathcal{D}\setminus\mathcal{D}_Y(X)}D$ is
globally generated for large $n$ thus $n_D>0$ by {\it (\i)}.
Conversely, assume $n_D>0$ for all $D\in
\mathcal{D}\setminus\mathcal{D}_Y(X)$ and let $\eta$ be the
canonical section of the effective divisor $\delta$. We have
$X_\eta=X_{Y,B}$ which is affine and one easily checks that any
$f\in k[X_\eta]$ is of the form $\sigma/\eta^n$ for some $n$ and
some $\sigma\in H^0(X,n\delta)$. Since $(X_{g\cdot\eta})_{g\in G}$
is a covering, $\delta$ is ample.
\end{proof}

\begin{cor}
For $X$ spherical and complete, any ample divisor is globally generated.
\end{cor}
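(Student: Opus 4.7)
The plan is to derive this directly from Theorem \ref{theo-pic}, which gives combinatorial characterizations of both globally generated and ample $B$-stable Cartier divisors on a complete spherical variety, and to observe that the ample conditions formally imply the globally generated ones.

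First I would reduce to the case of a $B$-stable Cartier divisor expressed in the standard form of Theorem \ref{theo-pic}. This reduction is legitimate because the exact sequence computing $\pic(X)$ shows that every line bundle class on $X$ is represented by such a divisor: the surjection onto $PL(X)/L(X)$ together with the image of $\Z(\mathcal{D}\setminus\mathcal{D}(X))$ realizes every Cartier class as $\sum n_D D + \sum l_Y(\rho(\nu_D))$ for suitable choices of $n_D$ and of a piecewise linear function $l$. Ampleness and global generation are invariants of the line bundle class, so it suffices to prove the implication for this representative.

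Once $\delta$ is in standard form and assumed ample, Theorem \ref{theo-pic}(\i\i) provides two pieces of data: the piecewise linear function $l$ is strictly convex, and the strict inequality $l_Y(\rho(\nu_D)) < n_D$ holds for every closed $G$-orbit $Y$ and every $D \in \mathcal{D}\setminus\mathcal{D}(X)$. Strict convexity is stronger than convexity (by the very definition, with the strict inequality relaxed to a non-strict one), and strict numerical inequality trivially implies non-strict numerical inequality. These are precisely the two hypotheses of Theorem \ref{theo-pic}(\i), so $\delta$ is globally generated.

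Since the argument is essentially a formal comparison of the two criteria in Theorem \ref{theo-pic}, there is no real obstacle; the only subtle point worth flagging is the initial reduction to a $B$-stable representative in standard form, which however is built into the computation of $\pic(X)$ recalled just above.
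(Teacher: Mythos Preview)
Your argument is correct and is exactly the intended one: the paper states this as a corollary with no proof precisely because the ample criterion in Theorem \ref{theo-pic}(\i\i) is visibly a strict version of the globally generated criterion in (\i), so the implication is immediate once one represents the class by a $B$-stable divisor in standard form.
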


\begin{cor}
\label{nef=gg}
For $X$ spherical and complete, nef and globally generated line bundles agree.
\end{cor}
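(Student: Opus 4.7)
First I would establish that globally generated implies nef: for an irreducible curve $C\subset X$, a global section of $\mathcal{L}$ not identically vanishing on $C$ restricts to an effective divisor on the normalisation of $C$, so $\mathcal{L}\cdot C\geq 0$. This argument is standard and does not use sphericity.

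For the converse I plan to handle first the case where $X$ is projective. By Theorem \ref{theo-pic}, the classes in $\pic(X)\otimes_\Z\R$ of globally generated Cartier divisors are exactly those satisfying finitely many non-strict linear inequalities ($l$ convex and $l_Y(\rho(\nu_D))\leq n_D$), so the globally generated cone is closed and polyhedral, while the corresponding strict inequalities cut out the ample cone. Since $X$ is projective the ample cone is non-empty, hence its closure coincides with the globally generated cone. The preceding corollary gives ample $\subseteq$ globally generated, and Kleiman's theorem applied to the projective $X$ identifies the nef cone with the closure of the ample cone (recall that for a complete spherical variety, rational and numerical equivalence agree on Cartier divisors by Corollary \ref{cor-polehedral} (iv), so these are truly the standard Kleiman cones in $\pic(X)\otimes_\Z\R$). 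Combining these facts gives nef $=$ closure(ample) $=$ globally generated.

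For a general complete spherical $X$, I plan to reduce to the projective case by the equivariant Chow lemma already invoked in the proof of Theorem \ref{theo-chow}, a consequence of Sumihiro's Theorem \ref{sumihiro}: there is a proper birational $G$-equivariant morphism $\pi\colon X'\to X$ with $X'$ projective and, after normalisation, spherical in the same birational class as $X$. If $\mathcal{L}$ is nef on $X$ then $\pi^*\mathcal{L}$ is nef on $X'$, since the pullback of a nef line bundle along a proper morphism is nef; by the projective case, $\pi^*\mathcal{L}$ is globally generated on $X'$. Because $\pi$ is proper birational and $X$ is normal, $\pi_*\mathcal{O}_{X'}=\mathcal{O}_X$, so the projection formula gives an isomorphism $H^0(X',\pi^*\mathcal{L})\cong H^0(X,\mathcal{L})$; for any $x\in X$, a section of $\pi^*\mathcal{L}$ non-vanishing at some $x'\in\pi^{-1}(x)$ (non-empty by properness) descends to a section of $\mathcal{L}$ non-vanishing at $x$, which proves $\mathcal{L}$ globally generated on $X$.

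The main obstacle I expect is the projective step, which requires checking that the closure of the strict inequalities of Theorem \ref{theo-pic} really fills out the non-strict system (guaranteed by projectivity, which provides non-emptiness of the ample cone) and then applying Kleiman's theorem. A more self-contained alternative would bypass Kleiman by producing, for each violated inequality in Theorem \ref{theo-pic}, an explicit $B$-invariant curve on which $\mathcal{L}$ has negative intersection, dualising Corollary \ref{cor-polehedral}; this is more intrinsic but relies on the combinatorial description of $B$-orbits in spherical varieties developed only later, in Section \ref{sec-B-orb}.
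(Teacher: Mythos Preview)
Your projective case is exactly the paper's argument: the paper also invokes Corollary~\ref{cor-polehedral}(\i v) to work in $N^1(X)$, reads off from Theorem~\ref{theo-pic} that the globally generated cone is the closure of the ample cone, and then cites Kleiman's theorem (\cite[Theorem~1.4.23]{Lazarsfeld}) to identify that closure with the nef cone.

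Your Chow--lemma reduction for complete but non-projective $X$ is an addition not present in the paper. The paper's proof, as written, tacitly needs the ample cone to be non-empty for both steps (closure of the strict system equals the non-strict system, and Kleiman's identification $\overline{\mathrm{Amp}}=\mathrm{Nef}$), so it literally covers only the projective case even though the corollary is stated for complete $X$. Your descent argument via $\pi_*\cO_{X'}=\cO_X$ and the projection formula is correct and cleanly fills this gap; it is worth keeping.
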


\begin{proof}
Recall from Corollary \ref{cor-polehedral}, that rational and
numerical equivalence agree for Cartier divisors. Theorem
\ref{theo-pic} implies that the cone of globally generated line
bundles is the closure of the ample cone. The latter is the nef cone
(see \cite[Theorem 1.4.23]{Lazarsfeld}).
\end{proof}

\subsubsection{Quasi-projective and affine spherical varieties}

The description of ample line bundles on spherical varieties leads
to the following characterisation of quasi-pro\-jective spherical
varieties.

\begin{cor}
A spherical variety $X$ is quasi-projective if and only if there
exists a strictly convex $\Q$-valued function on $\cC^\vee(X)$ which
is linear on each cone $\cox$.
\end{cor}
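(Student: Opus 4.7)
The plan is to adapt the proof of Theorem \ref{theo-pic}(ii) to the not-necessarily-complete case. In both directions the bridge is Proposition \ref{prop-cartier}: a $B$-stable Cartier divisor $\delta=\sum_D n_D D$ corresponds, orbit by orbit, to a choice of semiinvariant functions $f_Y\in k(X)^{(B)}$ with weights $\chi_Y\in\X(G/H)$ such that $n_D=\chi_Y(\rho(\nu_D))$ for every $B$-stable divisor $D$ containing $Y$, and collecting these $\chi_Y$ produces a function on $\cC^\vee(X)$ that is linear on each $\cox$.

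For the forward direction, I would assume $X$ is quasi-projective, pick an ample line bundle, replace it by a positive multiple so it becomes $G$-linearised, and write it as $\cO_X(\delta)$ for a $B$-stable Cartier divisor. The characters $\chi_Y$ furnished by Proposition \ref{prop-cartier} assemble into the desired function $l$ on $\cC^\vee(X)$. Strict convexity would then be extracted from ampleness exactly as in Theorem \ref{theo-pic}(ii): a large enough multiple of $\delta$ separates closed $G$-orbits, which forces the characters $\chi_Y$ attached to distinct closed orbits to differ strictly on the relevant cones, and restricts to an ample divisor on each simple open $X_{Y,B}$, which by the ampleness criterion on the simple pieces forces the strict inequalities between adjacent cones.

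For the converse, I would start with a strictly convex $l$ (rescaled to be $\Z$-valued), choose for each closed $G$-orbit $Y$ a character $\chi_Y\in\X(G/H)$ extending $l|_\cox$, and build
$$\delta=\sum_{D\in\mathcal{D}(X)\cup\mathcal{V}(X)}l(\rho(\nu_D))\,D,$$
which is Cartier by Proposition \ref{prop-cartier}. Ampleness of a high multiple of $\delta$ would then be established by running the argument of Theorem \ref{theo-pic}(ii) locally: on each simple open $X_{Y,B}$ the canonical section of $\delta$ has non-vanishing locus equal to this affine open, the $G$-translates of these sections cover $X$, and strict convexity guarantees that the resulting morphism to projective space is an open immersion. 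This produces the required quasi-projective structure on $X$.

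The main obstacle is the subtlety flagged in the remark after the definition of $PL(X)$: when $X$ is not complete, some $\cox$ attached to closed orbits may fail to be full-dimensional in $\X^\vee_\Q$, so $l|_\cox$ does not canonically determine a character $\chi_Y$. I would need to check that $\delta$ is independent, up to linear equivalence, of the choice of extension and that the ampleness argument only uses values of $l$ on $\cC^\vee(X)$ itself. A cleaner alternative I would consider is to embed $X$ as a $G$-stable open subset of a complete spherical variety $\overline{X}$ by enlarging the colored fan, extend $l$ to a strictly convex piecewise linear function on the fan of $\overline{X}$, and then apply Theorem \ref{theo-pic}(ii) directly to $\overline{X}$; the resulting ample divisor on $\overline{X}$ then restricts to an ample one on the open subset $X$.
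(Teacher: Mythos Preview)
The paper gives no proof of this corollary; it is stated as an immediate consequence of the description of ample line bundles, with a forward reference to \cite[Theorem 17.3]{timashev} for the ampleness criterion on not-necessarily-complete embeddings. Your approach is exactly the intended one: translate quasi-projectivity into the existence of an ample divisor, and translate ampleness into strict convexity of the associated piecewise linear function via Proposition~\ref{prop-cartier} and the argument of Theorem~\ref{theo-pic}(ii). The obstacle you flag about non-full-dimensional maximal cones is real and is precisely why the paper defers to Timashev for the general statement.

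There is however a gap in your converse. Your divisor $\delta=\sum_{D\in\mathcal{D}(X)\cup\mathcal{V}(X)}l(\rho(\nu_D))\,D$ assigns coefficient $0$ to every color $D\in\mathcal{D}\setminus\mathcal{D}(X)$. But the ampleness argument on a simple piece (proof of Theorem~\ref{theo-pic}(ii)) requires that, after subtracting $\div(f_Y)$, the resulting effective divisor have strictly positive coefficient along every $D\in\mathcal{D}\setminus\mathcal{D}_Y(X)$, in particular along every $D\in\mathcal{D}\setminus\mathcal{D}(X)$; there is no reason $\chi_Y(\rho(\nu_D))<0$ for such $D$, so $X_\eta$ need not equal $X_{Y,B}$. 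The fix is easy: add $\sum_{D\in\mathcal{D}\setminus\mathcal{D}(X)}n_D D$ with the $n_D$ chosen large enough, which does not affect the function on $\cC^\vee(X)$ since these colors lie in no cone of the fan.

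Your alternative via completion is cleaner in principle but hides a nontrivial step: an arbitrary completion $\overline{X}$ need not be projective, so you must simultaneously complete the colored fan \emph{and} extend $l$ to a strictly convex function on the enlarged fan. This is doable (subdivide $\mathcal{V}\setminus\cC^\vee(X)$ finely enough that the graph of $l$ extends to the boundary of a polytope), but it is a genuine convex-geometry argument, not an automatic consequence of the existence of completions.
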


In \cite{knop}, Knop gives a characterisation of affine spherical
varieties.

\begin{thm}
A spherical variety $X$ is affine if and only if $X$ is simple and
there exists $\chi\in\X$ such that $\chi\vert_{\mathcal{V}}\leq0$,
$\chi\vert_{\cC^\vee(X)}=0$ and
$\chi\vert_{\rho(\mathcal{D}\setminus\mathcal{D}(X))}>0$.
\end{thm}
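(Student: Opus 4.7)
The plan is to translate both directions into properties of a suitable $B$-semiinvariant regular function $f = f_\chi \in k[X]^{(B)}_\chi$ on $X$ whose zero locus cuts out $X \setminus X_{Y,B}$. Throughout, note that when $X$ is simple with unique closed $G$-orbit $Y$, every $G$-orbit of $X$ contains $Y$ in its closure, so $\mathcal{V}(X) = \mathcal{V}_Y(X)$, $\mathcal{D}(X) = \mathcal{D}_Y(X)$, and $\cC^\vee(X) = \cox$.

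For the direction $(\Leftarrow)$, assume $X$ is simple with closed orbit $Y$ and that $\chi$ satisfies the three conditions. Take $f = f_\chi \in k(G/H)^{(B)}_\chi$. For every $B$-stable prime divisor $D$ of $X$, the value $\nu_D(f) = \chi(\rho(\nu_D))$ is nonnegative: it vanishes whenever $\rho(\nu_D) \in \cC^\vee(X)$, which covers $D \in \mathcal{V}(X) \cup \mathcal{D}(X)$, and is strictly positive for $D \in \mathcal{D} \setminus \mathcal{D}(X)$. Hence $f \in k[X]$ and $X_f$ equals the affine open $X_{Y,B}$ of the proof of Theorem \ref{theo-char-simple} (affine by the Local Structure Theorem, Theorem \ref{theo-loc-str}). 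Since $X = G\cdot X_{Y,B}$ by simplicity, finitely many $G$-translates of $f$ cover $X$ by principal affine opens, and Serre's criterion for affineness yields that $X$ itself is affine.

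For the direction $(\Rightarrow)$, assume $X$ is affine. Multiplicity-freeness of $k[X]$ as a $G$-module (Theorem \ref{theo-char} (v)) forces $k[X]^G$ to be one-dimensional and hence equal to $k$; combined with the standard fact that closed orbits on an affine $G$-variety under a reductive group are separated by $G$-invariants, this forces a unique closed orbit $Y$, so $X$ is simple. The complement $X \setminus X_{Y,B}$ is a $B$-stable Weil divisor whose components are the closures of the colors in $\mathcal{D} \setminus \mathcal{D}(X)$; using the $B$-isotypic decomposition of $k[X]$ (multiplicity-free as a $B$-module), a suitable positive multiple of this divisor is the principal divisor of a $B$-semiinvariant function $f \in k[X]^{(B)}_\chi$. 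Reading off $\div(f)$ gives directly $\chi\vert_{\cC^\vee(X)} = 0$ and $\chi(\rho(\nu_D)) > 0$ for $D \in \mathcal{D} \setminus \mathcal{D}(X)$.

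The main obstacle is establishing $\chi\vert_\mathcal{V} \leq 0$, which is where the affineness of $X$ must be used in a non-trivial way. I would argue by contradiction using Luna-Vust theory (Theorem \ref{thm-class-fan}): a violation $\chi(\nu) > 0$ for some $\nu \in \mathcal{V}$ would allow one to adjoin the ray through $\nu$ to $\cox$ and produce a strictly larger simple spherical embedding $X' \supsetneq X$ of $G/H$ with a new $G$-stable divisor $E$ of valuation $\nu$. The function $f$ would extend regularly to $X'$ with $\nu_E(f) = \chi(\nu) > 0$, and a comparison of the $B$-isotypic decompositions of $k[X]$ and $k[X']$ under the affine identification $X = \Spec k[X]$ (together with the fact that $X \hookrightarrow X'$ is an open immersion preserving $X_{Y,B}$) would force $k[X'] = k[X]$ and hence $X = X'$, contradicting $E \subsetneq X' \setminus X$.
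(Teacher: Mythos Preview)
The paper itself does not prove this theorem; it only attributes it to Knop \cite{knop}. So there is no proof in the text to compare against, and I comment purely on the correctness of your argument.

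Your proof of $(\Leftarrow)$ has a genuine gap: it never invokes the hypothesis $\chi\vert_{\mathcal{V}}\leq 0$, and without that hypothesis the conclusion is false. Take $G=\SL_2$, $H=U$, so $G/H\cong\mathbb{A}^2\setminus\{0\}$. Here $\X\cong\Z$, there is a single color $D$ with $\rho(\nu_D)=1$, and (horospherical case) $\mathcal{V}=\X^\vee_\Q$. For $X=G/H$ one has $\cC^\vee(X)=\{0\}$, $\mathcal{D}(X)=\emptyset$, and $\chi=1$ satisfies the two conditions you actually use; then $f_\chi=y$, the set $X_f=\{y\neq 0\}$ is affine, and its $G$-translates cover $X$ --- yet $X$ is not affine. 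The error lies in your appeal to ``Serre's criterion'': a covering $X=\bigcup_i X_{g_if}$ by principal affine opens does \emph{not} force $X$ to be affine unless the $g_if$ generate the unit ideal in $\Gamma(X,\cO_X)$, and in the example $(x,y)$ is a proper ideal of $k[x,y]=\Gamma(X,\cO_X)$. The condition $\chi\vert_{\mathcal{V}}\leq 0$ is precisely what is needed to upgrade the covering to a generating family (equivalently, to show that the canonical morphism $X\to\Spec k[X]$ is an isomorphism), and your argument must be rewritten to use it.

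For $(\Rightarrow)$, your derivation of simplicity and of the conditions $\chi\vert_{\cC^\vee(X)}=0$ and $\chi\vert_{\rho(\mathcal{D}\setminus\mathcal{D}(X))}>0$ is correct. The contradiction argument for $\chi\vert_{\mathcal{V}}\leq 0$, however, does not close. Granting the construction of a simple $X'\supsetneq X$ with new $G$-stable divisor $E$ of valuation $\nu$, the key claim $k[X']=k[X]$ is equivalent to $\nu(h)\geq 0$ for every $h\in k[X]$, and this does not follow from $\nu(f)>0$; the phrase ``comparison of $B$-isotypic decompositions'' does not supply it. (If you could establish $k[X']=k[X]$, then indeed $X'=X$: the canonical morphism $X'\to\Spec k[X']=\Spec k[X]=X$ restricts to the identity on the dense open $X$, hence is the identity. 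So the step after the gap is fine.)
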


\subsubsection{A criterion for $\Q$-factoriality}

Comparing the group of Weil divisors with the Picard group gives the
following result.

\begin{thm} \label{thm-Q-fact}
Let $X$ be a spherical variety.

(\i) The variety $X$ is locally factorial if and only if for any
$G$-orbit $Y$, the elements $\rho(\nu_D)$ lying in $\cox$ for $D$
any $B$-stable divisor form a $\Z$-linearly independent subset of
$\X^\vee$.

(\i\i) The variety $X$ is locally $\Q$-factorial if and only if for
any $G$-orbit $Y$, the elements $\rho(\nu_D)$ lying in $\cox$ for
$D$ any $B$-stable divisor form a linearly independent subset of
$\X^\vee_\Q$.
\end{thm}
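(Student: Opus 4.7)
The plan is to reduce to the simple case via Sumihiro's Theorem \ref{sumihiro} and then translate the Cartier criterion of Proposition \ref{prop-cartier} into linear algebra on $\X^\vee$. Since both local factoriality and local $\Q$-factoriality are local properties, I will cover $X$ by the $G$-invariant simple quasi-projective open subsets $X_{Y,G}$ and treat each separately. So I may assume $X$ is simple with unique closed orbit $Y$. By Theorem \ref{theo-chow}, every Weil divisor class has a $B$-stable representative $\delta=\sum_D n_D [D]$ with $D$ running over the $B$-stable irreducible divisors of $X$; local factoriality at $Y$ then means every such $\delta$ is Cartier, and local $\Q$-factoriality means some positive multiple of every such $\delta$ is Cartier.

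Next, I apply Proposition \ref{prop-cartier}: $\delta$ is Cartier iff at every $G$-orbit $Y'$ of $X$ there exists $f_{Y'}\in k(X)^{(B)}$ with $\nu_D(f_{Y'})=n_D$ for every $B$-stable $D\supset Y'$. In a simple variety, $Y\subset\overline{Y'}$ for every orbit $Y'$, so any $B$-stable divisor containing $Y'$ contains $Y$ as well; hence the system of equations at $Y'$ is a sub-system of the one at $Y$, and a single $f_Y$ solving the equations at $Y$ serves simultaneously for every orbit. Writing $\chi\in\X$ for the $B$-weight of $f_Y$, so that $\nu_D(f_Y)=\rho(\nu_D)(\chi)$, Cartierness of $\delta$ reduces to the existence of $\chi\in\X$ with $\rho(\nu_D)(\chi)=n_D$ for every $D\in\mathcal{D}_Y(X)\cup\mathcal{V}_Y(X)$, which is precisely the set of $B$-stable divisors $D$ with $\rho(\nu_D)\in\cox$. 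The coefficients $n_D$ for the remaining colors $D\in\mathcal{D}\setminus\mathcal{D}_Y(X)$ are unconstrained by this condition.

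Consequently, local factoriality at $Y$ is equivalent to surjectivity of the evaluation map
\[
\X\longrightarrow \Z^{\mathcal{D}_Y(X)\cup\mathcal{V}_Y(X)}, \qquad \chi\mapsto (\rho(\nu_D)(\chi))_D,
\]
and by duality of finitely generated free abelian groups this amounts to the $\rho(\nu_D)$ (for $D\in\mathcal{D}_Y(X)\cup\mathcal{V}_Y(X)$) being $\Z$-linearly independent and extending to a $\Z$-basis of $\X^\vee$, which is (i). For (ii), surjectivity after $\otimes_\Z\Q$ is exactly $\Q$-linear independence of the same family in $\X^\vee_\Q$, giving the locally $\Q$-factorial criterion. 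The main technical point will be the reduction, in the simple case, from Cartierness at every orbit to the single condition at the closed orbit; this rests on the monotonicity of the Cartier equations under the closure relation $Y\subset\overline{Y'}$. Once that reduction is carried out, the translation to linear algebra on $\X^\vee$ is essentially immediate.
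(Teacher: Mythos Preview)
Your argument is correct and arrives at the same linear-algebraic criterion as the paper. The paper's proof takes a slightly different route: after the same reduction to the simple case, it compares the two exact sequences
\[
\X\to\Z(\mathcal{V}(X)\cup\mathcal{D})\to A_{\dim X-1}(X)\to 0
\quad\text{and}\quad
\cox^\perp\cap\X\to\Z(\mathcal{D}\setminus\mathcal{D}_Y(X))\to\pic(X)\to 0
\]
(the first from Theorem \ref{theo-chow}, the second from the Picard description, using $PL(X)=L(X)$ since there is a single maximal cone), and reads off that $\pic(X)=A_{\dim X-1}(X)$ iff each $D\in\mathcal{V}(X)\cup\mathcal{D}_Y(X)$ admits a $\chi_D\in\X$ with $\chi_D(\rho(\nu_{D'}))=\delta_{D,D'}$. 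Your approach bypasses the $PL(X)/L(X)$ formalism and works directly from the Cartier criterion of Theorem \ref{prop-cartier}, with the explicit observation that the equations at any orbit $Y'$ are a subsystem of those at the closed orbit $Y$ (since $D\supset Y'$ closed implies $D\supset\overline{Y'}\supset Y$); this makes the reduction to surjectivity of $\X\to\Z^{\mathcal{D}_Y(X)\cup\mathcal{V}_Y(X)}$ very transparent. Both routes yield the same dual-basis condition; your version is more self-contained, the paper's situates the result in the general Picard exact sequence framework. One remark: you correctly note that surjectivity over $\Z$ is equivalent to the $\rho(\nu_D)$ extending to a $\Z$-basis of $\X^\vee$, which is slightly stronger than mere $\Z$-linear independence as literally stated in (\i); the paper's own proof also derives this stronger ``part of a basis'' condition, so the statement of (\i) should be read in that sense.
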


\begin{proof}
Since this is local we may assume that $X$ is simple with closed
orbit $Y$. By Theorem \ref{theo-chow}, we have an exact sequence
$\X\to\Z(\mathcal{V}(X)\cup\mathcal{D})\to A_{\dim X-1}(X)\to0$.
Since there is a unique cone in the fan, we have $PL(X)=L(X)$ thus
for the Picard group we have an exact sequence
$\cox^\perp\cap\X\to\Z(\mathcal{D}\setminus\mathcal{D}_Y(X))\to\pic(X)\to
0$, by Theorem \ref{theo-pic}. The groups $\pic(X)$ and $A_{\dim
X-1}(X)$ coincide if and only if for any
$D\in\mathcal{V}(X)\cup\mathcal{D}_Y(X)$ there exists $\chi_D\in\X$
with $\chi_D(\rho(\nu_{D'}))=\delta_{D,D'}$ which is the required
condition.
\end{proof}

\subsubsection{Comments}
The description of the Picard group of spherical varieties follows
\cite{brion-nombre-car}. See also \cite{timashev}. The above
characterisation of $\Q$-factorial spherical varieties can be found
in \cite{brion-lectures}. There is no general description of big
divisors. For $X$ complete, the cone of effective divisors is
convex polyhedral generated by the classes of $B$-stable divisors
(Corollary \ref{cor-polehedral}), but it is hard to give in general
explicit generators of this cone. Partial answers for effective
divisors on wonderful varieties or for big and effective divisors on
symmetric varieties are given in \cite{brion-magni} and
\cite{ruzzi-big}.


\subsection{Toroidal varieties}
\label{section-toro}

In this Section we assume $\char(k)=0$. This section is one of the
two sections where we consider a special class of spherical
varieties. There are several reasons for this. First, any spherical
variety has a toroidal open dense subset whose complement is in
codimension at least $2$. The computation of a canonical divisor for
toroidal varieties therefore leads to a computation of a canonical
divisor of any spherical variety. A second reason relies on the fact
that the $\Q$-factoriality criterion (Theorem \ref{thm-Q-fact})
turns out to be a smoothness criterion for toroidal varieties. This
then leads to an explicit equivariant resolution of singularities
for any spherical variety. Finally, using toroidal varieties, we
prove that the set of invariant valuations $\mathcal{V}$ is a
polyhedral convex cone.

\begin{defn}
A spherical variety $X$ is toroidal if $\mathcal{D}(X)$ is empty.
\end{defn}

\subsubsection{Local structure}

Let $X$ be a spherical variety and let $\Delta_X=\cup_{D\in
  \mathcal{D}}\overline{D}$. Denote by $P_X$ the stabiliser of
$\Delta_X$, it is easy to check that $P_X$ is also the stabiliser of
$BH/H$ and in particular it is a $G$-birational invariant of $X$.

\begin{prop}
\label{prop-str-tor}
 Let $X$ be a spherical variety. The following
are equivalent.

(\i) The variety $X$ is toroidal.

(\i\i) There exists a Levi subgroup $L$ of $P_X$ depending only on
$G/H$ and a closed subvariety $Z$ of $X\setminus \Delta_X$ stable
under $L$ such that the map
$$(P_X)_u\times Z\to X\setminus\Delta_X$$
is an isomorphism. The group $[L,L]$ acts trivially on $Z$ which is
a toric variety for a quotient of $L/[L,L]$. Furthermore any
$G$-orbit meets $Z$ along a unique $L$-orbit.
\end{prop}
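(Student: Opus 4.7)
The plan is to establish the equivalence by applying the Local Structure Theorem (Theorem \ref{theo-loc-str}) in characteristic $0$ to a $B$-semi-invariant section $\sigma$ whose non-vanishing locus is exactly $X \setminus \Delta_X$. The direction (ii) $\Rightarrow$ (i) comes essentially for free from the last clause of (ii): the statement that every $G$-orbit meets $Z \subset X \setminus \Delta_X$ forbids any $G$-orbit from being contained in some color closure $\overline{D}$, so $\mathcal{D}_Y(X) = \emptyset$ for every orbit $Y$ and $X$ is toroidal.

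For the forward direction (i) $\Rightarrow$ (ii), I first reduce to the simple case, where $X$ has a unique closed $G$-orbit $Y$: by Sumihiro's theorem (Theorem \ref{sumihiro}) and Theorem \ref{theo-char-simple} such simple $G$-stable opens cover $X$, and the local constructions will glue because $P_X$ and the conjugacy class of its Levi $L$ are $G$-birational invariants (indeed $P_X$ is also the stabiliser of the dense $B$-orbit $BH/H$ in $G/H$). In a simple toroidal piece the condition becomes $\mathcal{D}_Y(X) = \emptyset$, and the key construction is a single $B$-semi-invariant section $\sigma$ of a $G$-linearised line bundle with $\sigma|_Y \neq 0$ and whose divisor is supported on all of $\Delta_X$. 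The Local Structure Theorem produces $B$-semi-invariant sections nonvanishing on $Y$; since every $G$-stable prime divisor in a simple variety contains the closed orbit, any such section has divisor free of $G$-stable components, hence supported on colors of $G/H$. The set of colors being finite, multiplying finitely many such sections (one per color of $X$) produces $\sigma$ with the required vanishing locus. Its stabiliser in $G$ is the stabiliser of the $B$-stable divisor $\Delta_X$, which is exactly $P_X$.

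Applying Theorem \ref{theo-loc-str}(iii) to this $\sigma$ now yields a closed $L$-stable subvariety $Z \subset X_\sigma = X \setminus \Delta_X$, for $L$ the Levi of $P_X$ containing $T$, together with an isomorphism $(P_X)_u \times Z \to X \setminus \Delta_X$. The remark after the Local Structure Theorem shows that $Z$ is $L$-spherical. To see that $[L,L]$ acts trivially on $Z$, I would argue as follows: because the line $k\sigma$ is $P_X$-stable, the $T$-weight of $\sigma$ extends to a character of $P_X$ and therefore vanishes on $[L,L]$; the dual vector $v$ with $\sigma(v) = 1$ is thus $[L,L]$-fixed, making the corresponding base point of $Z$ an $L$-fixed point through the character $L \to L/[L,L]$, so the open $L$-orbit on $Z$ is a torus orbit. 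Combined with the observation that any $(B \cap L)$-stable non-$L$-stable prime divisor on $Z$ would extend through the isomorphism to a color of $X$ meeting $X \setminus \Delta_X$, contrary to the choice of $\sigma$, this forces $[L,L]$ to act trivially on all of $Z$ and identifies $Z$ with a toric variety for a quotient of $L/[L,L]$. The bijection between $L$-orbits on $Z$ and $G$-orbits on $X \setminus \Delta_X$ coming from the free $(P_X)_u$-action supplies the last assertion. The main obstacle I expect is the combined verification that one can choose a single $\sigma$ whose divisor picks up every component of $\Delta_X$ simultaneously and that the resulting slice $Z$ is genuinely toric, which requires the careful translation between colors of $X$ and $(B \cap L)$-stable divisors of $Z$ across the isomorphism.
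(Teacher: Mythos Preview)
Your proposal is correct and follows essentially the same route as the paper. The paper's proof is slightly more streamlined: rather than reducing to the simple case and building $\sigma$ as a product of sections (one per color), it observes directly that in the toroidal situation the reduced divisor $\Delta_X$ is Cartier with no $G$-orbit in its support, takes its canonical section $\eta$, and applies the Local Structure Theorem to $X_\eta = X\setminus\Delta_X$. This bypasses the gluing step and guarantees at once that the stabiliser of $[\eta]$ is exactly $P_X$ (your product $\sigma$ could in principle have unequal multiplicities on the colors, which needs a word to rule out a smaller stabiliser). For the triviality of the $[L,L]$-action, the paper argues that $(BH/H)\cap Z = (G/H)\cap Z$, whence $L = (B\cap L)H$ and the cited lemma of \cite{BLV} applies; your second argument (that a $(B\cap L)$-stable, non $L$-stable divisor in $Z$ would produce a color of $X$ meeting $X\setminus\Delta_X$) is a correct reformulation of the same fact, while your first argument via the weight of $\sigma$ and the base point does not by itself force $[L,L]$ to act trivially on all of $Z$ and should be dropped.
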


\begin{proof}
Assume that $X$ is toroidal, since $\Delta_X$ is Cartier and
globally generated, we may apply Theorem \ref{theo-loc-str} to
$\eta$ the canonical section of $\co_X(\Delta_X)$. For some closed
$L$-spherical variety $Z\subset X\setminus\Delta_X$, we have
$X_\eta=X\setminus\Delta_X\simeq (P_X)_u\times Z$. Furthermore
$B\cap L$ has finitely many orbits in $Z$ and one easily checks that
$(BH/H)\cap Z=(G/H)\cap Z$ thus $L=(B\cap L)H$ which implies that
$[L,L]$ acts trivially on $Z$ (see \cite[Section 3.4, Lemme]{BLV}).
We get that $Z$ is a toric variety for a quotient of $L/[L,L]$.

Conversely, any $G$-orbit of $X$ meets $Z$ and thus is not contained
in $\Delta_X$. It is therefore not contained in any $B$-stable non
$G$-stable divisor.
\end{proof}

\begin{remark}
\label{rema-orb}
  The previous result implies that there is a one to one
  correspondence between the $G$-orbits in $X$ and the $L$-orbits in
  $Z$: a $L$-orbit $Z'$ in $Z$ is mapped to $GZ'$ while a $G$-orbit
  $Y$ in $X$ is mapped to $Y\cap Z$.
\end{remark}

\begin{cor}
Assume that $X$ is toroidal. Then $X$ is smooth if and only if for
any $G$-orbit $Y$, the cone $\cox$ is generated by a basis of
$\X^\vee$.
\end{cor}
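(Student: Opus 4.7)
The plan is to reduce to the classical smoothness criterion for toric varieties via the local structure of toroidal varieties. By Proposition \ref{prop-str-tor}, there is a closed $L$-stable toric subvariety $Z\subset X\setminus\Delta_X$ (for some quotient torus $T_Z$ of $L/[L,L]$) and a $(P_X)_u$-equivariant isomorphism $(P_X)_u\times Z\simeq X\setminus\Delta_X$. Since $(P_X)_u$ is isomorphic to an affine space, $X\setminus\Delta_X$ is smooth if and only if $Z$ is. Moreover, by Remark \ref{rema-orb} every $G$-orbit of $X$ meets $Z$ nontrivially, so the $G$-translates of $X\setminus\Delta_X$ cover $X$; as $G$ acts by automorphisms, $X$ is smooth if and only if $Z$ is smooth.

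Next I would invoke the classical smoothness criterion for toric varieties: $Z$ is smooth if and only if for every $T_Z$-orbit $Z'\subset Z$, the associated cone $\sigma_{Z'}$ in the fan of $Z$ is generated by a subset of a $\Z$-basis of the cocharacter lattice $\cX^\vee(T_Z)$. To transport this to the spherical side, I would identify $\X(G/H)$ with $\cX(T_Z)$ as follows: restriction of $B$-semiinvariant rational functions on $G/H$ to $Z$ defines an injective map $\X(G/H)\to\cX(T_Z)$, and the local decomposition $(P_X)_u\times Z\simeq X\setminus\Delta_X$ together with the triviality of the $[L,L]$-action on $Z$ shows that this map is also surjective. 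Dually, $\X^\vee=\cX^\vee(T_Z)$.

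It then remains to match the cones. By Remark \ref{rema-orb}, the assignment $Y\mapsto Z':=Y\cap Z$ is a bijection between $G$-orbits in $X$ and $T_Z$-orbits in $Z$, and under it the $G$-stable divisors of $X$ containing $Y$ correspond bijectively to the $T_Z$-stable divisors of $Z$ containing $Z'$, with matching valuations via restriction to $Z$. Since $X$ is toroidal, $\mathcal{D}_Y(X)=\emptyset$, so $\cox$ is generated entirely by $\{\rho(\nu_D) : D\in\mathcal{V}_Y(X)\}$. Under the identification $\X^\vee=\cX^\vee(T_Z)$ these are precisely the primitive ray generators of $\sigma_{Z'}$, whence $\cox=\sigma_{Z'}$. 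The corollary follows by combining the three steps.

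The most delicate step is the last one, and in particular the integral identification $\X(G/H)=\cX(T_Z)$ together with the correspondence of primitive generators: a finite-index identification would only yield a $\Q$-statement, whereas the integral basis condition is essential for smoothness. Once this is verified, however, the remaining reductions are immediate from Proposition \ref{prop-str-tor} and Remark \ref{rema-orb}, and the toric smoothness criterion finishes the argument.
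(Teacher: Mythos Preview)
Your proof is correct and follows the same approach as the paper: reduce to the toric slice $Z$ via Proposition~\ref{prop-str-tor} and then invoke the standard toric smoothness criterion. The paper's proof is only two lines and simply asserts that $X$ ``has the singularities of a toric variety with the same cones'', so you have in fact supplied the details that the paper suppresses, including the integral identification $\X(G/H)\simeq\cX^\vee(T_Z)$ (which the paper records later, in the proof of Theorem~\ref{thm-cone}) and the matching of cones $\cox=\sigma_{Z'}$ via the orbit correspondence of Remark~\ref{rema-orb}.
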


\begin{proof}
The previous result implies that $X$ has the singularities of a toric
variety with the same cones as the cones
of $X$. The result
follows by standard result on toric varieties (see
\cite[Section 2.1, Proposition 1]{fulton-toric} or \cite[Theorem 1.10]{Oda}).
\end{proof}

\subsubsection{Valuation cone} Using toroidal embedding we may prove that
the cone $\mathcal{V}$ of invariant valuations is a polyhedral
convex cone.

\begin{prop}
\label{prop-tor}
There exists a complete toroidal embedding of $G/H$.
\end{prop}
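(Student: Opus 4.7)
The plan is to first produce some complete embedding of $G/H$ and then modify it to be toroidal.

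For the first step, I would invoke Sumihiro's theorem (Theorem \ref{sumihiro}) together with Nagata's completion theorem to obtain a complete $G$-equivariant embedding $X_0$ of $G/H$; alternatively, one constructs $X_0$ explicitly as the closure of $G/H$ inside $\p(V)$ for a suitable rational $G$-module $V$ with a $B$-eigenvector whose stabilizer contains (a finite cover of) $H$. By Theorem \ref{thm-class-fan} combined with the properness criterion, the colored fan $\F_0 := \F(X_0)$ has support equal to all of $\mathcal{V}(G/H)$.

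For the second step, I would refine $\F_0$ combinatorially to eliminate the colors while preserving the support. For each colored cone $(\mathcal{C},\mathcal{F})\in\F_0$ with $\mathcal{F}\neq\emptyset$, pick a valuation $v\in\mathcal{C}^\circ\cap\mathcal{V}$ (available by axiom (CC2)) and star-subdivide $\mathcal{C}$ at $v$. For a sufficiently generic choice of $v$, this splits the colors among the new maximal cones in such a way that a suitable combinatorial complexity invariant—for example the total number of incidences between colors and maximal cones—strictly decreases. Iterating the procedure finitely many times produces a strictly convex colored fan $\F$ with the same support as $\F_0$ but with empty color set on every cone. By Theorem \ref{thm-class-fan}, $\F$ is the fan of a complete embedding, which is toroidal by definition. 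Geometrically, each refinement step corresponds to a blow-up of a closed $G$-orbit followed by normalization, and the composite morphism $X \to X_0$ is proper and birational so that completeness is preserved.

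The main obstacle is the combinatorial termination of the refinement and the verification that at each stage one still has a bona fide colored fan in the sense of conditions (CF1) and (CF2). The essential input is axiom (CC2), which guarantees that the interior of any colored cone always meets $\mathcal{V}$, so that a valuation $v$ suitable for star-subdivision exists; the fan conditions for the subdivided collection follow from standard properties of star subdivisions, using that cones in $\F_0$ meet along common faces and that the chosen $v$ lies in the interior of a unique maximal cone.
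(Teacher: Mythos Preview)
Your second step has a genuine gap. Star subdivision of a colored cone $(\cC,\cF)$ at a valuation $v\in\cC^\circ\cap\mathcal{V}$ replaces $\cC$ by the cones spanned by $v$ together with the facets of $\cC$, but the extremal rays of $\cC$ coming from colors survive as extremal rays of the new cones. If $D\in\cF$ is a color with $\rho(\nu_D)\notin\mathcal{V}$ (as happens already for symmetric spaces, where $\rho(\mathcal{D})$ consists of simple coroots while $\mathcal{V}$ is the antidominant chamber), then axiom (CC1) forces any cone having $\rho(\nu_D)$ as an extremal ray to retain $D$ in its color set. So star subdivision cannot eliminate such a color; in fact your proposed invariant moves the wrong way: a single color ray in a simplicial cone of dimension $n$ lies in $n-1$ of the $n$ new maximal cones, so the number of incidences goes from $1$ to $n-1$.

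The natural combinatorial repair---replacing each $(\cC,\cF)$ by $(\cC\cap\mathcal{V},\emptyset)$---does produce a colorless fan with the same support, but only once one knows that $\mathcal{V}$ is a \emph{polyhedral} convex cone, so that each $\cC\cap\mathcal{V}$ is again a polyhedral cone satisfying (CC1). That polyhedrality is exactly Corollary~\ref{cor-cone}, whose proof in the paper uses Proposition~\ref{prop-tor}; invoking it here would be circular. The paper sidesteps the fan combinatorics entirely: it picks an $H$-invariant $f\in k[G]^{(B\times H)}$ vanishing along every color, takes the closure $X'$ of $G/H$ in $\p(W^\vee)$ for $W$ the $G$-span of $f$ (so no $G$-orbit of $X'$ is contained in $\div_0(f)$, making $X'$ toroidal), and then normalizes the closure of the diagonal in $X'\times X''$ for any complete embedding $X''$ to obtain a complete toroidal model.
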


\begin{proof}
Choose $f$ in
$k[G]^{(B\times H)}$ such that $f$ is $H$-invariant and vanishes on
the inverse image in $G$ of any $D\in\mathcal{D}$. Let $W$ be the
$G$-module spanned by $f$ and let $X'$ be the closure of the image
of the induced morphism $G/H\to\p(W^\vee)$. There is no $G$-orbit of
$X'$ contained in the divisor $\textrm{div}_0(f)$. If $X''$ is a
complete embedding of $G/H$, then the normalisation $X$ of the
closure of $G/H$ diagonally embedded in $X'\times X''$ gives the
desired embedding.
\end{proof}

\begin{cor}
\label{cor-cone} The set $\mathcal{V}$ is a polyhedral convex cone.
\end{cor}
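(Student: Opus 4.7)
The plan is to identify $\mathcal{V}$ with the support of the fan of a complete toroidal embedding and then upgrade this description to get convexity.

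First, invoke Proposition \ref{prop-tor} to pick a complete toroidal embedding $X$ of $G/H$. Being toroidal means $\mathcal{D}(X)=\emptyset$, so every colored cone $(\cC,\cF)\in\F(X)$ has $\cF=\emptyset$, and condition (CC1) forces each $\cC$ to be a polyhedral convex cone generated by finitely many elements of $\mathcal{V}$. Completeness of $X$ combined with the properness criterion in terms of supports (the theorem preceding the ``Structure of $G$-orbits'' subsection) gives $\supp(\F(X))=\mathcal{V}$, hence $\mathcal{V}\subseteq\bigcup_{\cC\in\F(X)}\cC$.

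Next I would establish the reverse inclusion $\bigcup\cC\subseteq\mathcal{V}$ by a subdivision argument. For any rational $v\in\cC$, adjoining the ray $\Q_{\geq 0}\cdot v$ and barycentrically subdividing $\cC$ yields a refined colored fan still satisfying (CF1) and (CF2); by Theorem \ref{thm-class-fan} this is the fan of a $G$-equivariant toroidal modification $X'\to X$ in which $v$ is, up to positive scalar, the valuation of a $G$-stable prime divisor, so $v\in\mathcal{V}$. Rational points are dense in $\cC$, and $\mathcal{V}$ is closed in $\X^\vee_\Q$ since limits of $G$-invariant valuations remain $G$-invariant. Hence $\cC\subseteq\mathcal{V}$, and thus $\mathcal{V}=\bigcup_{\cC\in\F(X)}\cC$ is a finite union of polyhedral convex cones glued into a fan.

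The main obstacle is convexity of this union. Given rational $\nu_1,\nu_2\in\mathcal{V}$, I would first realise both as valuations of $G$-stable prime divisors of a single complete toroidal embedding $\wt X$, obtained by normalising the closure of $G/H$ in a product $X_1\times X_2$ of toroidal embeddings realising $\nu_1$ and $\nu_2$ separately (the product construction used in the proof of Proposition \ref{prop-tor} preserves toroidality and completeness). A further equivariant subdivision of $\F(\wt X)$ then produces an embedding in which each rational convex combination $t\nu_1+(1-t)\nu_2$ with $t\in[0,1]\cap\Q$ appears as the valuation of a $G$-stable divisor, and hence lies in $\mathcal{V}$ by the previous step. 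Closedness of $\mathcal{V}$ promotes this to every real $t\in[0,1]$, yielding convexity and thus completing the proof that $\mathcal{V}$ is a polyhedral convex cone. The delicate point is that last subdivision step, which is where the geometric input on toroidal embeddings is essential: once the whole segment $[\nu_1,\nu_2]$ can be shown to lie in $\supp(\F(\wt X))$, the rest is combinatorics of fans.
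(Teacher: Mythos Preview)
Your argument contains a genuine circularity in both subdivision steps. To adjoin the ray $\Q_{\geq 0}\cdot v$ to a toroidal fan you must verify that this ray is itself a colored cone. Since $\cF=\emptyset$, condition (CC1) demands that the ray be generated by an element of $\mathcal{V}$, and condition (CC2) demands that its relative interior meet $\mathcal{V}$; both force $v\in\mathcal{V}$, which is precisely what you are trying to establish. The same circularity appears in your convexity step: inserting the ray through $t\nu_1+(1-t)\nu_2$ into the fan already presupposes that this vector lies in $\mathcal{V}$. Your final sentence essentially concedes this, since ``showing the segment lies in $\supp(\F(\wt X))$'' is, by your own earlier identification $\supp(\F(\wt X))=\mathcal{V}$, exactly the statement that the segment lies in $\mathcal{V}$.

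The paper proceeds in the opposite order: convexity of $\mathcal{V}$ is established \emph{first}, by a direct argument on $G$-invariant valuations (cited from Pauer \cite[Proposition 2.1]{pauer} or Knop \cite[Lemma 5.1]{knop}) that does not use fan combinatorics at all. Only then does one take a complete toroidal embedding $X$; convexity guarantees that each cone $\cox$, being generated by elements of $\mathcal{V}$, actually lies inside $\mathcal{V}$, and completeness gives $\mathcal{V}\subseteq\bigcup_Y\cox$. Thus $\mathcal{V}$ is a finite union of polyhedral cones and is convex, hence polyhedral. The moral is that convexity is the genuine analytic input here and cannot be bootstrapped from the Luna--Vust classification, since the very axioms (CC1)--(CC2) defining colored cones already refer to $\mathcal{V}$. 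Your closedness claim is also unjustified: $\mathcal{V}$ sits inside $\X^\vee_\Q$ via the map $\rho$, and it is not clear a priori why a limit of $\rho$-images of invariant valuations should again be the $\rho$-image of an invariant valuation.
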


\begin{proof}
First one has to prove that $\mathcal{V}$ is convex. For this result
we refer to \cite[Proposition 2.1]{pauer} or \cite[Lemma 5.1]{knop}.
Then pick $X$ a complete toroidal embedding. The union of the
finitely many polyhedral convex cones $\cox$ for $Y$ a closed
$G$-orbit is $\mathcal{V}$.
\end{proof}

\subsubsection{Resolution of singularities} As an application we get equivariant resolutions.

\begin{cor}
Let $X$ be a spherical variety, there exists a $G$-birational
morphism $\Xt\to X$ with $\Xt$ toroidal.
\end{cor}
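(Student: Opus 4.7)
The plan is to exhibit a colored fan $\widetilde{\mathcal{F}}$ with empty color set on each cone that maps to the colored fan $\mathcal{F}(X)$. By Theorem \ref{thm-class-fan} and the characterisation of $G$-equivariant morphisms of embeddings via maps of colored fans, such a fan corresponds to an embedding $\widetilde{X}$ of $G/H$ together with a $G$-equivariant morphism $\widetilde{X}\to X$ extending the identity on the open orbit (so birational), and the condition that every cone of $\widetilde{\mathcal{F}}$ has empty color set is precisely $\mathcal{D}(\widetilde{X})=\emptyset$, i.e. $\widetilde{X}$ toroidal.

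To build $\widetilde{\mathcal{F}}$, I would first invoke Corollary \ref{cor-cone}: the valuation cone $\mathcal{V}$ is a polyhedral convex cone. For each colored cone $(\mathcal{C},\mathcal{F})\in \mathcal{F}(X)$, the intersection $\mathcal{C}\cap\mathcal{V}$ is therefore a strictly convex polyhedral cone contained in $\mathcal{V}$. Next I would take a common polyhedral refinement of the finite family $\{\mathcal{C}\cap\mathcal{V}\}_{(\mathcal{C},\mathcal{F})\in \mathcal{F}(X)}$, producing a finite collection $\widetilde{\Sigma}$ of strictly convex polyhedral cones contained in $\mathcal{V}$ such that any two meet along a common face and each element of $\widetilde{\Sigma}$ is contained in some $\mathcal{C}$ of $\mathcal{F}(X)$. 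I then set $\widetilde{\mathcal{F}}=\{(\widetilde{\mathcal{C}},\emptyset)\mid \widetilde{\mathcal{C}}\in\widetilde{\Sigma}\}$.

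It then remains to check that $\widetilde{\mathcal{F}}$ is a strictly convex colored fan and maps to $\mathcal{F}(X)$. Condition (CC1) holds because each $\widetilde{\mathcal{C}}\subset\mathcal{V}$ is polyhedral hence generated by finitely many elements of $\mathcal{V}$; condition (CC2) holds since $\widetilde{\mathcal{C}}^\circ\subset\mathcal{V}$; (SCC) is immediate from strict convexity of $\widetilde{\mathcal{C}}$ and emptiness of $\mathcal{F}$; (CF1) holds because the subdivision is closed under faces and any face of $(\widetilde{\mathcal{C}},\emptyset)$ has empty color set by condition (c); (CF2) is automatic. For the morphism $\widetilde{\mathcal{F}}\to \mathcal{F}(X)$ given by the identity on $\X^\vee_\Q$, condition (CM1) holds by the way $\widetilde{\Sigma}$ was constructed and condition (CM2) is trivially satisfied since each colored cone of $\widetilde{\mathcal{F}}$ has empty color set.

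The main obstacle is the polyhedral refinement step: the cones $\mathcal{C}\cap\mathcal{V}$ need not meet along common faces (two colored cones of $\mathcal{F}(X)$ meet along a common colored face of $\mathcal{F}(X)$, but this face need not remain a face after intersecting with $\mathcal{V}$, because $\mathcal{V}$ cuts the cones transversally in general). This is a classical issue in polyhedral geometry and is solved by subdividing simultaneously using the finite collection of supporting hyperplanes of all the $\mathcal{C}\cap\mathcal{V}$. Once this refinement is available, the rest of the argument is a mechanical verification and an application of Theorem \ref{thm-class-fan}.
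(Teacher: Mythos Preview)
Your argument is correct, but you have introduced an unnecessary step and misidentified the ``main obstacle''. The paper's proof is a single line: replace each colored cone $(\cC,\cF)$ by $(\cC\cap\mathcal{V},\emptyset)$ (and close under faces). No subdivision is required.

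The reason is that the colored-fan axioms used here are (CF1) and (CF2) only; there is no separate ``two cones meet in a common face'' condition. For the collection of all faces of all $\cC\cap\mathcal{V}$, (CF1) holds by construction, and (CF2) follows directly from (CF2) for $\bF(X)$ once one observes that
\[
(\cC\cap\mathcal{V})^\circ\subset\cC^\circ.
\]
Indeed, if $\nu\in(\cC\cap\mathcal{V})^\circ$ lay on a proper face $F$ of $\cC$, then $F\cap\mathcal{V}$ would be a proper face of $\cC\cap\mathcal{V}$ (proper because $\cC^\circ\cap\mathcal{V}\neq\emptyset$ by (CC2)) containing $\nu$, contradicting $\nu\in(\cC\cap\mathcal{V})^\circ$. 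Now if $\nu\in\mathcal{V}$ lies in the relative interior of a face $E_i$ of $\cC_i\cap\mathcal{V}$ for $i=1,2$, let $F_i$ be the smallest face of $\cC_i$ containing $\nu$; then $\nu\in F_i^\circ\cap\mathcal{V}$, so $(F_i,\cdot)$ is a colored face and lies in $\bF(X)$ by (CF1). By (CF2) for $\bF(X)$ we get $F_1=F_2$, and then $E_1=E_2$ since both are the unique face of $F_1\cap\mathcal{V}$ containing $\nu$ in its interior.

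So your hyperplane subdivision works but is superfluous; the intersections $\cC\cap\mathcal{V}$ already fit together into a fan. Your approach has the virtue of being self-contained at the level of convex geometry, while the paper's exploits the specific axiom (CC2) to avoid any subdivision.
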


\begin{proof}
Replace any colored cone $(\cC,\cF)$ by
$(\cC\cap\mathcal{V},\emptyset)$.
\end{proof}

\begin{cor}
Any spherical variety has a $G$-equivariant toroidal resolution.
\end{cor}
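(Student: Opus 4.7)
The plan is to combine the previous corollary with the classical toric subdivision of fans. Let $X$ be a spherical variety. The previous corollary produces a toroidal $G$-birational model $\varphi_1\colon X_1\to X$ by replacing each colored cone $(\cC,\cF)\in\F(X)$ with $(\cC\cap\mathcal{V},\emptyset)$. Since this operation preserves the intersection of the support of the fan with $\mathcal{V}$, the properness criterion stated earlier in the section forces $\varphi_1$ to be proper. It therefore suffices to exhibit, for a toroidal variety $X_1$, a smooth toroidal $\Xt$ together with a proper $G$-birational morphism $\Xt\to X_1$.

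By the smoothness corollary that follows Proposition \ref{prop-str-tor}, a toroidal variety is smooth exactly when every cone $\cox$ in its fan is generated by part of a basis of the lattice $\X^\vee$. Thus the task reduces to refining the uncolored fan $\F(X_1)$, which is supported in the polyhedral convex cone $\mathcal{V}$ of Corollary \ref{cor-cone}, into an uncolored fan with the same support whose maximal cones are all unimodular with respect to $\X^\vee$. This is precisely the classical toric resolution of singularities problem for $\F(X_1)$: one first subdivides each cone barycentrically to obtain a simplicial fan, and then iteratively star-subdivides every non-unimodular cone at a suitable interior lattice ray, lowering the lattice index until every cone is smooth. The standard algorithm is available in \cite{fulton-toric}.

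Every ray produced during the subdivision is a positive rational combination of rays already present in $\F(X_1)$, so it remains in $\mathcal{V}$ by convexity, and the refined collection is again a strictly convex colored fan (with no colors) in $\X^\vee_\Q$. By Theorem \ref{thm-class-fan} it arises from a toroidal embedding $\Xt$ of $G/H$, and the refinement map of fans preserves supports and sends $\F(\Xt)$ into $\F(X_1)$; the morphism theorem and the properness criterion then yield a proper $G$-birational morphism $\Xt\to X_1$. Composing with $\varphi_1$ gives the desired $G$-equivariant toroidal resolution. The only nontrivial step is the fan subdivision, which is purely combinatorial and transfers from the toric case without modification thanks to the toric local model of Proposition \ref{prop-str-tor}; the one genuine check specific to the spherical setting is that all new rays stay inside the valuation cone $\mathcal{V}$, which is immediate from its convexity.
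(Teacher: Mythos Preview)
Your proof is correct and follows the same approach as the paper: first pass to a toroidal model via the previous corollary, then subdivide the fan as in the toric case (the paper cites \cite[Section 2.6]{fulton-toric} and \cite[Section 1.5]{Oda}). You have simply spelled out the details the paper leaves implicit, in particular the properness of $\varphi_1$ and the fact that the new rays remain in $\mathcal{V}$ so that the refined collection is again a valid colored fan.
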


\begin{proof}
Take a $G$-birational toroidal embedding dominating $X$. Subdivising its
fan as for toric varieties (see \cite[Section 2.6]{fulton-toric} or
\cite[Section 1.5]{Oda}) gives a resolution.
\end{proof}

\subsubsection{Canonical divisor}

Define the canonical sheaf $\omega_X$ on a normal variety $X$ by
extending the canonical sheaf of the smooth locus:
$\omega_X=i_*\omega_{X^{\rm reg}}$ where $i:X^{\rm reg}\to X$ is the
embedding of the smooth locus of $X$. A Weil divisor $K_X$ is called
canonical if $\cO_X(K_X)=\omega_X$. Denote by $\partial X$ the union
of the $G$-stable divisors.

\begin{thm}
\label{theo-can}
  There exists a canonical divisor $K_X$ of $X$ such that
$$-K_X=\sum_{X_i\in \mathcal{V}(X)}X_i+\sum_{D\in
    D(G/H)}a_D\overline{D}=\partial X+H$$
with $a_D$ positive integers and $H$ a globally generated divisor.
\end{thm}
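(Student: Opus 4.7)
The plan is to first establish the formula when $X$ is toroidal and then descend to general $X$. Using the equivariant toroidal resolution $\widetilde X\to X$ from the previous corollary, which is an isomorphism over the open $G$-orbit $G/H$, every exceptional divisor is $G$-stable; pushing forward a formula of the required shape on $\widetilde X$ therefore gives the formula on $X$, the $G$-stable divisors of $X$ matching the non-exceptional $G$-stable divisors of $\widetilde X$ and the colors $\overline D$ matching their strict transforms.

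For toroidal $X$, Proposition \ref{prop-str-tor} provides an isomorphism $X\setminus\Delta_X\simeq(P_X)_u\times Z$ with $Z$ a toric variety whose torus-invariant prime divisors correspond bijectively to the $G$-stable divisors $X_i$ of $X$ (Remark \ref{rema-orb}). The classical toric formula $K_Z=-\sum_i Z_i$, together with the fact that $(P_X)_u$ is an affine space, yields $K_{X\setminus\Delta_X}=-\sum_i X_i|_{X\setminus\Delta_X}$. Consequently $K_X+\sum_i X_i$ is supported on $\Delta_X=\bigcup_D \overline D$, and one may write $-K_X=\sum_i X_i+\sum_D a_D\overline D$ for some integers $a_D$.

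To identify the $a_D$ and check positivity, I would restrict to $G/H$, where no $G$-stable divisor contributes. Since $G/H$ has an open $B$-orbit, a nonzero $B$-semiinvariant rational top form $\omega$ on $G/H$ exists and is unique up to scalar, and $-\div(\omega)=\sum_D a_D D$ holds as divisors on $G/H$. The $B$-weight of $\omega$ equals the sum of the $B$-weights on $T_{x_0}(G/H)$ for $x_0\in BH/H$ in the open $B$-orbit; a classical computation shows it is a positive integral combination of roots in $R^+\setminus R^+_L$ (with $L$ the Levi of $P_X$). Using that $\omega$ is regular on the opposite big cell $U^-\cdot x_0$, which meets each color transversely, one then reads off $a_D>0$ from the local equations of the colors.

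For the global generation of $H=\sum_D a_D\overline D$, in the toroidal case there is a $G$-equivariant morphism $\pi:X\to G/P_X$ extending the canonical projection $G/H\to G/P_X$, and the colors of $X$ pull back from the Schubert divisors of $G/P_X$. A weight comparison (matching the coefficients from step three with those of the Borel--Weil formula for the anticanonical class of the flag variety) identifies $H=\pi^*(-K_{G/P_X})$, which is globally generated because $-K_{G/P_X}$ is ample on the projective rational homogeneous space $G/P_X$. The main obstacle I foresee is this last identification: justifying the extension of $\pi$ on any toroidal embedding and verifying that the $a_D$ produced by the weight computation match the multiplicities from $-K_{G/P_X}$, both of which rely on controlling the decomposition $X\setminus\Delta_X\simeq(P_X)_u\times Z$ and its compatibility with the parabolic fibration.
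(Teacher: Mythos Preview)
Your reduction to the smooth toroidal case and the use of Proposition~\ref{prop-str-tor} together with the toric formula for $K_Z$ match the paper's opening moves. Your proposed route to the positivity of the $a_D$ via a $B$-semi-invariant top form on $G/H$ differs from the paper's argument but is a legitimate alternative (it is essentially the approach behind the explicit formulas in \cite{brion-aus}), though the sketch you give would need sharpening.

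The global generation step, however, contains a genuine gap, and it lies earlier than the obstacle you flag. The ``canonical projection'' $G/H\to G/P_X$ you invoke need not exist: $P_X$ is the stabiliser of the open $B$-orbit $BH/H$, and in general $H\not\subset P_X$. For instance, take $G=\SL_2$ and $H=N_G(T')$ for a maximal torus $T'$ in general position with respect to $B$; then $P_X=B$, while $H$ contains an element interchanging the two $T'$-fixed points of $\P^1$, so $H\not\subset B$. Thus there is no morphism $\pi:G/H\to G/P_X$ along which to pull back $-K_{G/P_X}$, and your proposed identification $H=\pi^*(-K_{G/P_X})$ cannot even be formulated. The issue is not the extension of $\pi$ to a toroidal $X$, but the existence of $\pi$ on $G/H$ itself.

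The paper sidesteps this entirely by working with the infinitesimal action $\g\otimes\cO_X\to T_X$. Over $X\setminus\Delta_X$ its image is the free sheaf $T_X(-\log\partial X)$, so the kernel is locally free of rank $h=\dim H$ and yields a morphism $X\setminus\Delta_X\to\G(h,\g)$, $x\mapsto\h_x$. Since every $G$-orbit of a toroidal $X$ meets $X\setminus\Delta_X$, $G$-equivariance extends this to $\varphi:X\to\G(h,\g)$, giving an exact sequence $0\to\varphi^*S\to\g\otimes\cO_X\to T_X(-\log\partial X)\to0$. Taking top exterior powers, $\omega_X^{-1}(-\partial X)\simeq\varphi^*\cO_{\G(h,\g)}(1)$; since $\cO(1)$ is very ample on the Grassmannian, global generation of $H$ is immediate, and pulling back a $B$-stable hyperplane section produces the effective divisor $\sum_D a_D\overline D$ with $a_D>0$. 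One construction thus delivers both the positivity and the global generation at once, without any map to a flag variety.
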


\begin{proof}
Replacing $X$ by the union of $G$-orbits of codimension at most one,
we may assume that $X$ is toroidal and smooth. We want to study the
action map ${\rm Act}:\g\otimes\co_X\to T_X$ obtained from
differentiating the map $G\times X\to X$ given by the action of $G$
on $X$. First restrict the tangent bundle to
$X\setminus\Delta_X\simeq(P_X)_u\times Z$, we get
$T_{X\setminus\Delta_X}\simeq((\mathfrak{p}_X)_u\otimes\cO_{(P_X)_u})\times
T_Z$. Since $Z$ is a toric variety for a quotient $S$ of $L/[L,L]$,
if $\mathfrak{s}$ is the Lie algebra of $S$, we have that the image
of the action map for this toric variety is
$\mathfrak{s}\otimes\co_Z\simeq T_Z(-{\rm log}\partial Z)$ where
$\partial Z$ is the union of the $S$-stable divisors in $Z$ and
$T_Z(-{\rm log}\partial Z)$ is the logarithmic tangent bundle
obtained as subsheaf of $T_X$ of derivations of $\cO_X$ preserving
the ideal of $\partial Z$ (see \cite[Proposition 3.1]{Oda}). Because
of the correspondence between orbits in $X$ and in $Z$ (see Remark
\ref{rema-orb}) we get that the action map over $X\setminus\Delta_X$
has image $T_{X\setminus\Delta_X}(-{\rm log}\partial X)$ which is
free. Let $h=\dim H$ and $\G(h,\mathfrak{g})$ be the Gra\ss mann
variety of vector subspaces of dimension $h$ in $\mathfrak{g}$. The
kernel of the action map is locally free of rank
  $h$ and defines a morphism
  $X\setminus\Delta_X\to\G(h,\mathfrak{g})$ which extends the natural
  morphism $x\mapsto\mathfrak{h}_x$ for $x\in G/H$ and for
  $\mathfrak{h}_x$ the Lie algebra of the stabiliser of $x$. By
  $G$-equivariance we get a morphism $\varphi:X\to\G(h,\mathfrak{g})$
  and an exact sequence $0\to\varphi^*S\to\mathfrak{g}\otimes\cO_X\to
  T_X(-\log\partial X)\to0$, where $S$ is the tautological subbundle
  on $\G(h,\g)$. Taking the maximal exterior power we
  get an isomorphism $\varphi^*\cO_{\G(h,\mathfrak{g})}(1)\simeq
  \omega_X^{-1}(-\partial X)$ and the result follows after picking a
  well chosen $B$-stable divisor representing
  $\cO_{\G(h,\mathfrak{g})}(1)$.
\end{proof}

\subsubsection{Rigidity}

Toroidal varieties satisfy rigidity properties. It is well known
(see for example \cite[Theorem
  4.6]{kodaira-deformation}) that a smooth complex variety $X$ has no
deformation if $H^1(X,T_X)=0$. Write $S_X=T_X(-\log\partial X)$,
then the pair $(X,\partial X)$ has no deformation if $H^1(X,S_X)=0$.

\begin{thm}
\label{theo-vanis}
Let $X$ be a smooth spherical variety.

(\i) If $X$ is toroidal, then $H^i(X,S^*S_X)=0$ for $i>0$.

(\i\i) If furthermore $G/H$ is proper over an affine, then
  $H^i(X,\mathcal{L}\otimes S^*S_X)=0$ for $i>0$ and any globally
  generated line bundles $\mathcal{L}$.
\end{thm}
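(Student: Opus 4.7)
The plan is to express $S^*S_X$ as the pushforward of the structure sheaf of an auxiliary $G$-variety and then reduce to the vanishing results of Section \ref{section-struct}. Set $Z := \Spec_X(S^*S_X)$, the total space of the dual of $S_X$, i.e.\ of the logarithmic cotangent bundle $\Omega_X(\log \partial X)$; thus $Z = T^*X(\log \partial X)$. Since the projection $\pi : Z \to X$ is affine, $\pi_* \cO_Z = S^*S_X$ and $R^j\pi_*\cO_Z = 0$ for $j>0$, so the Leray spectral sequence (combined with the projection formula) gives
$$ H^i(X, S^*S_X) = H^i(Z, \cO_Z), \qquad H^i(X, \mathcal{L} \otimes S^*S_X) = H^i(Z, \pi^*\mathcal{L}). $$
Both (i) and (ii) are thus reduced to a vanishing statement on $Z$.

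The next step is to use the exact sequence $0 \to \varphi^*S \to \g \otimes \cO_X \to S_X \to 0$ constructed in the proof of Theorem \ref{theo-can}. Dualising exhibits $\Omega_X(\log\partial X)$ as a subbundle of $\g^\vee \otimes \cO_X$, hence provides a closed $G$-equivariant embedding $j : Z \hookrightarrow \g^\vee \times X$. The composition $\mu := \mathrm{pr}_1 \circ j : Z \to \g^\vee$ is the logarithmic moment map; it is $G$-equivariant for the coadjoint action, and $\G_m$-homogeneous of degree $1$ for the scaling action on the cotangent fibres, which makes $Z$ into a $G\times\G_m$-variety.

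The core of the argument is the following geometric claim: the image $\mu(Z)\subset \g^\vee$ is an affine $G$-stable subvariety, equal to the closure of the coadjoint orbit of $\h^\perp$, and $\mu:Z\to\mu(Z)$ is a proper $G\times\G_m$-equivariant morphism whose source carries enough of a spherical structure for Theorem \ref{thm-vanish} to apply. Properness of $\mu$ is the crucial input, and can be verified by stratifying $X$ by $G$-orbits and analysing $Z$ via Proposition \ref{prop-str-tor}: along each orbit one is reduced to the cotangent bundle of a toroidal fibration over a flag variety, where the moment map is proper by the parabolic induction picture. Sphericity of $Z$ as a $G\times\G_m$-variety can be established orbit by orbit in the same way, by combining the finitely many orbit strata with the finiteness of $B$-orbits in each cotangent fibre of a spherical homogeneous space. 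This is the main obstacle; it is the symplectic-geometric content of the theorem.

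Granting the claim, (i) follows from Theorem \ref{thm-vanish} applied to $\mu : Z \to \mu(Z)$: one has $R^i\mu_*\cO_Z = 0$ for $i>0$, and the affineness of $\mu(Z)$ gives $H^i(Z, \cO_Z) = 0$. For (ii), the extra hypothesis that $G/H$ is proper over an affine transfers to the statement that $Z$ itself is proper over an affine (via $\mu$ composed with the affinisation), and $\pi^*\mathcal{L}$ inherits global generation; the corollary of Theorem \ref{thm-vanish} then yields $H^i(Z, \pi^*\mathcal{L}) = 0$ and hence the desired vanishing of $H^i(X, \mathcal{L}\otimes S^*S_X)$.
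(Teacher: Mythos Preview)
Your reduction to the cohomology of $Z=\Spec_X(S^*S_X)$ and the use of the logarithmic moment map $\mu:Z\to\g^\vee$ is the right starting point and matches the paper's set-up for {\it (\i)}. The fatal gap is the claim that $Z$ is a $G\times\G_m$-spherical variety. This is false already on dimension grounds: $\dim Z=2\dim X$, while a $G\times\G_m$-spherical variety must satisfy $\dim Z\le\dim B+1$. Taking $X=\SL_n/B$ with $n\ge4$ gives $\dim Z=n(n-1)$ and $\dim B+1=\tfrac{n(n-1)}{2}+n$, so $\dim Z>\dim B+1$. The ``orbit by orbit'' argument you sketch does not rescue this: the cotangent fibre $\h^\perp$ at $eH$ is acted on by $H$ (not by $B$), and there is no reason for $B\times\G_m$ to have finitely many orbits on $Z$. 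Consequently Theorem~\ref{thm-vanish} is not available, and neither {\it (\i)} nor {\it (\i\i)} follows from your outline.

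The paper's proof of {\it (\i)} (following Knop) proceeds via the same moment map but replaces sphericity of $Z$ by two hard facts about the Stein factorisation $Z\to M_X\to\g^\vee$: that $M_X$ has rational singularities, and that the generic fibre of $Z\to M_X$ is unirational. One then invokes Koll\'ar's theorem on higher direct images rather than the spherical vanishing Theorem~\ref{thm-vanish}; this is genuinely deeper input, and there is no shortcut through sphericity. For {\it (\i\i)} the paper abandons the affine-bundle picture entirely: it passes to $\p(S_X^\vee)$, uses the hypothesis on $G/H$ to produce a big and nef class $\cO_{\p(S_X^\vee)}(1)\otimes p^*\cO_X(E)$, and concludes by Kawamata--Viehweg vanishing. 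Your strategy for {\it (\i\i)} inherits the same gap as in {\it (\i)} and would need to be replaced by an argument of this kind.
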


\begin{proof}
We only give a brief sketch of proof and refer to \cite[Theorem
  4.1]{knop-annals} for the first part and to \cite[Proposition
  3.2]{bien-brion} for the second part.

{\it (\i)} Let $L_X=\spec S^*S_X$ and consider the Stein factorisation
  $L_X\to M_X\to \mathfrak{g}^\vee$ of the moment map (see Section
\ref{section-convex} for more on the moment map). Then the hard part is to
prove that $M_X$ has rational singularities and that the fiber of $L_X\to M_X$ is unirational. The main result of \cite{kollar} finishes the proof.

{\it (\i\i)} The assumption implies the existence of an effective
  $\Q$-divisor $D$ containing $\partial X$ such that
  $\cO_{\p(S_X^\vee)}(1)\otimes p^*\cO_X(E)$ is big and nef with $p$ the
  morphism $p:\p(S_X^\vee)\to X$. Kawamata-Viehweg Theorem
  (see \cite{kawamata-matsuda-matsuki} or \cite{viehweg}) gives the
  result.
\end{proof}

For Fano varieties, this result implies a rigidity result (see
\cite[Proposition 4.2]{bien-brion}).

\begin{cor} Let $X$ be a smooth toroidal spherical Fano variety, then
  we have $H^i(X,T_X)=0$ for all $i>0$.
\end{cor}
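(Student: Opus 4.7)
The plan is to combine the short exact sequence
\[
0 \to S_X \to T_X \to \bigoplus_{X_j \in \mathcal{V}(X)} N_{X_j/X} \to 0,
\]
in which $S_X = T_X(-\log \partial X)$ and $N_{X_j/X} = \mathcal{O}_{X_j}(X_j)$, with the vanishing of Theorem \ref{theo-vanis}(i). Picking off the degree-one summand of $S^{*}S_X$ in that theorem gives $H^i(X, S_X) = 0$ for every $i > 0$, so the associated long exact sequence collapses to the isomorphism
\[
H^i(X, T_X) \;\cong\; \bigoplus_{X_j \in \mathcal{V}(X)} H^i(X_j, N_{X_j/X}) \qquad (i \geq 1).
\]

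For $i \geq 2$ the vanishing follows immediately from Akizuki--Nakano on the smooth Fano variety $X$: the identification $T_X \cong \Omega^{n-1}_X \otimes \omega_X^{-1}$ (with $n = \dim X$), combined with the ampleness of $\omega_X^{-1}$, kills $H^i(X, T_X)$ as soon as $i + (n-1) > n$. The delicate and genuinely rigid case is $i = 1$, for which the displayed isomorphism reduces the problem to showing $H^1(X_j, N_{X_j/X}) = 0$ for each $G$-stable prime divisor $X_j$.

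For this last step I would use the short exact sequence $0 \to \mathcal{O}_X \to \mathcal{O}_X(X_j) \to \mathcal{O}_{X_j}(X_j) \to 0$. Since $X$ is a complete spherical variety, the corollary of Theorem \ref{thm-vanish} applied to the trivial bundle yields $H^i(X, \mathcal{O}_X) = 0$ for $i > 0$, and the claim further reduces to $H^1(X, \mathcal{O}_X(X_j)) = 0$. I expect this to be the main obstacle, since $\mathcal{O}_X(X_j)$ need not be globally generated and Theorem \ref{thm-vanish} does not apply directly. The Fano hypothesis should enter through the decomposition $-K_X = \partial X + H$ of Theorem \ref{theo-can}: rewriting $\mathcal{O}_X(X_j) \cong \omega_X \otimes \mathcal{O}_X(X_j - K_X)$ and producing a big and nef representative of the class $X_j - K_X$ (for instance by perturbing with a small positive multiple of the globally generated divisor $H$, so that a suitable effective $\mathbb{Q}$-divisor in the class sits inside the ample cone) would allow one to conclude by Kawamata--Viehweg vanishing.
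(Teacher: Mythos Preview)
Your overall skeleton matches the paper's: the same short exact sequence, the same appeal to Theorem~\ref{theo-vanis} for $H^i(X,S_X)=0$, and the same reduction to $H^i(X_j,N_{X_j/X})=0$ for each boundary component $X_j$. The divergence is in how you handle this last vanishing, and here you make life much harder than necessary and leave the argument unfinished.

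The paper dispatches all $i>0$ in one stroke via adjunction: $N_{X_j/X}\cong\omega_{X_j}\otimes(\omega_X^{-1}\vert_{X_j})$. Since $X$ is Fano, $\omega_X^{-1}\vert_{X_j}$ is ample on the smooth divisor $X_j$, and ordinary Kodaira vanishing on $X_j$ gives $H^i(X_j,\omega_{X_j}\otimes\omega_X^{-1}\vert_{X_j})=0$ for every $i>0$. No case split on $i$, no passage back to $X$, no Kawamata--Viehweg.

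Your route instead separates $i\geq 2$ (via Akizuki--Nakano, which is fine but unnecessary) from $i=1$, and for the latter pushes the problem back to $H^1(X,\mathcal{O}_X(X_j))$ on $X$. You yourself flag this as ``the main obstacle'' and only sketch a strategy. That sketch can be completed --- for instance, take the klt pair $(X,(1-\epsilon)X_j)$ and note that $X_j-K_X-(1-\epsilon)X_j=\epsilon X_j+(-K_X)$ is ample for small $\epsilon>0$ since the ample cone is open, then apply the log version of Kawamata--Viehweg --- but as written your argument is incomplete, and the machinery invoked is heavier than the situation warrants. The idea you are missing is simply to stay on $X_j$ and read the normal bundle through adjunction, rather than climbing back up to $X$.
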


\begin{proof}
Write $(X_i)_{i\in[1,n]}$ for the irreducible components of
$\partial X$. If $\mathcal{N}_{X_i}$ is the normal bundle of $X_i$,
then $\mathcal{N}_{X_i}=\omega_X^{-1}\otimes\omega_{X_i}$. The exact
sequence $0\to S_X\to T_X\to\oplus_{i=1}^n\mathcal{N}_{X_i}\to 0$,
the above vanishing $H^i(X,S_X)=0$ and, since $\omega_X^{-1}$ is
ample, the Kodaira vanishing Theorem
$H^1(X_j,\omega_X^{-1}\otimes\omega_{X_i})=0$ give the vanishing.
\end{proof}

\subsubsection{Comments}
Proposition \ref{prop-str-tor} is taken from \cite{brion-pauer}. The
description of the set $\mathcal{V}$ as a cone was first proved in
\cite{brion-pauer} and extended to any $G$-variety in any
characteristic in \cite{knop-bewert}. There is an explicit formula
for the coefficients of the canonical divisor given in
\cite{brion-aus}. The first part of Theorem \ref{theo-vanis} is
proved in \cite{knop-annals} and the second part as well as the
rigidity of Fano regular varieties  can be found in
\cite{bien-brion}. These vanishing results were partly extended to
quasi-regular varieties in \cite{boris}\footnote{A quasi-regular
variety is a smooth spherical variety such that the components of
the boundary are smooth and
$\rho(\mathcal{D}(X))\subset\mathcal{V}$.}.


\subsection{Sober spherical varieties}
\label{section-sober}

In this section we assume $\char k=0$. This is the second section
where we consider special classes of spherical varieties. The first
reason for this is that sober varieties and especially symmetric
varieties and completions of algebraic groups played a leading role
in the development of the theory of spherical varieties and still
represent an important source of inspiration. The second reason is
that wonderful varieties \emph{i.e.} the smooth projective toroidal
simple sober varieties play a crucial role in the classification of
homogeneous spherical varieties.

\subsubsection{Structure of the valuation cone and little Weyl group}
\label{subsect-swg}

The set of $G$-invariant valuations $\mathcal{V}$ of $G/H$ is a
convex cone (see Corollary \ref{cor-cone}). We give more precise
results on its structure. First we characterise the vertex of this
cone. Let $\aut_G(G/H)=N_G(H)/H$ be the $G$-automorphism group of
$G/H$. We refer to \cite[Section 4 and Theorem 6.1]{knop} for a
proof.

\begin{prop}
\label{prop-cone-aut}
 The group $\aut_G(G/H)$ is a diagonalisable group. In particular, the
  connected component of the identity $\aut_G(G/H)^0$ is a central torus.

Furthermore we have $\dim \aut_G(G/H)=\dim(\mathcal{V}\cap
-\mathcal{V})$.
\end{prop}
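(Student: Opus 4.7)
The plan is in two parts: first, construct a faithful homomorphism from $\aut_G(G/H)$ into an algebraic torus via its action on $B$-semi-invariant rational functions; second, identify the image of the resulting cocharacter map with the lineality space $\mathcal{V}\cap -\mathcal{V}$ of the valuation cone.

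For diagonalisability, I would use that $k(G/H)^B=k$ (Theorem \ref{theo-char}) implies, by Rosenlicht, that each weight space $k(G/H)^{(B)}_\chi$ for $\chi\in\X$ is at most one-dimensional. Any $\phi\in\aut_G(G/H)=N_G(H)/H$ commutes with the $G$-action, in particular with $B$, so $\phi^*$ preserves these weight spaces and acts on each by a scalar $c_\phi(\chi)\in k^\times$. The assignment $\chi\mapsto c_\phi(\chi)$ is a character of $\X$, giving a group homomorphism $\Phi: \aut_G(G/H)\to T_\X := \Hom_\Z(\X, \G_m)$. Injectivity of $\Phi$ reduces to showing that the $B$-semi-invariants generate $k(G/H)$ as a field, which follows from a standard Lie--Kolchin argument applied to the $B$-submodule of $k(G/H)$ generated by an arbitrary rational function combined with $k(G/H)^B=k$. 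Hence $\aut_G(G/H)$ embeds into a torus, so is diagonalisable; its identity component is then a subtorus which centralises the $G$-action on $G/H$ by the very definition of $\aut_G$.

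For the dimension equality, differentiating $\Phi$ gives an injection $d\Phi: \Lie(\aut_G(G/H))\hookrightarrow \X^\vee\otimes_\Z k$, and I would identify its image with $(\mathcal{V}\cap -\mathcal{V})\otimes_\Z k$. One inclusion is direct: a cocharacter $\lambda: \G_m\to \aut_G(G/H)^0$ yields a $\G_m$-action on $G/H$ commuting with $G$, which extends equivariantly to any complete toroidal embedding (Proposition \ref{prop-tor}); after a suitable refinement of the fan, $\lambda$ has fixed points on some $G$-stable prime divisor $D$, and $\nu_D$ then defines a $G$-invariant valuation evaluating to $\langle d\lambda,\chi\rangle$ on $f\in k(G/H)^{(B)}_\chi$ (computed from $\lambda(t)^*f = t^{\langle d\lambda,\chi\rangle}f$). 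Running the same argument with $\lambda^{-1}$ puts $\pm d\lambda$ in $\mathcal{V}$, whence $d\lambda\in\mathcal{V}\cap -\mathcal{V}$. The reverse inclusion uses the classification of embeddings (Theorem \ref{thm-class-fan}, extended to non-strictly convex fans as noted after that theorem): given $v\in\mathcal{V}\cap -\mathcal{V}$, the non-strictly-convex colored fan with support the line $\Q v$ corresponds to a dominant $G$-equivariant morphism $X\to X_0$ with generic fibre $\G_m$, from which one extracts a central $\G_m$-subgroup of $\aut_G(G/H)$ whose differential is $v$.

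The main obstacle will be this reverse inclusion. Converting an abstract pair of opposite invariant valuations into an honest one-parameter subgroup of $G$-equivariant automorphisms requires the extension of Luna--Vust theory to non-strictly convex fans and a careful analysis showing that the resulting $\G_m$-action actually commutes with $G$ and descends from an element of $N_G(H)/H$. The diagonalisability half, by contrast, is essentially formal once Rosenlicht's theorem is in hand.
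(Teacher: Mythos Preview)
The paper does not supply its own proof here---it simply refers to Knop and to Brion--Pauer---so there is no in-house argument to compare against. Your overall architecture (embed $\aut_G(G/H)$ into the torus $T_\X$ via the action on $B$-eigenspaces, then match cocharacters with the lineality space of $\mathcal V$) is indeed the standard one. However, there is a genuine gap in your injectivity step.

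You claim that injectivity of $\Phi$ follows once ``the $B$-semi-invariants generate $k(G/H)$ as a field'', and that this in turn follows from Lie--Kolchin. Both assertions fail. First, the $B$-span of a rational function is in general \emph{not} finite-dimensional (e.g.\ $\G_a$ translating $1/x$ on $\mathbb A^1$), so Lie--Kolchin does not apply to an arbitrary $f\in k(G/H)$. Second, and more decisively, every $B$-semi-invariant is $U$-invariant, so the field they generate is contained in $k(G/H)^U$, which is typically a proper subfield. Concretely, take $G=\SL_2$ and $H=U$: then $G/H\cong\mathbb A^2\setminus\{0\}$, $k(G/H)=k(x,y)$, but $k(G/H)^{(B)}$ consists of the monomials $cy^n$ and generates only $k(y)$. (In this example $\Phi$ is nonetheless injective, since $N_G(U)/U=T$ acts faithfully on the line $ky$; so the conclusion is correct while your justification is not.)

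What actually has to be shown is that $\ker\Phi$ is trivial, and unwinding your own reduction one sees that $\ker\Phi=(U\cap N_G(H))H/H$: any $\phi\in\ker\Phi$ is represented, after translating into the open $B$-orbit and stripping off the toral part, by some $u\in U\cap N_G(H)$. So the real content is the statement $U\cap N_G(H)\subset H$. One clean way to see this in characteristic~$0$: if some $u\in U\cap N_G(H)$ lay outside $H$, the Zariski closure of $\langle u\rangle$ would give a $\G_a\subset N_G(H)$ with $\G_a\cap H=\{e\}$; setting $H'=\G_a\cdot H$ one gets a $G$-equivariant fibration $G/H\to G/H'$ with fibre $\G_a$, and since $\G_a\subset\ker\Phi$ all $B$-semi-invariants descend, so $\X(G/H)=\X(G/H')$ and also $P_{G/H}=P_{G/H'}$ (because $BH'=BH$). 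The local structure theorem (Proposition~\ref{prop-str-tor} applied to the homogeneous space itself) gives $\dim G/H=\dim G/P_{G/H}+\rk(G/H)$, and the same for $H'$, forcing $\dim G/H=\dim G/H'$, a contradiction. This is essentially the argument one finds in the references the paper cites.

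Your treatment of the dimension equality is on the right track and matches Knop's approach: the forward inclusion (cocharacters of $\aut_G(G/H)^0$ land in $\mathcal V\cap-\mathcal V$) can indeed be obtained by extending the $\G_m$-action to a suitable embedding, and the reverse inclusion via the correspondence between non-strictly-convex colored fans and $G$-equivariant quotient maps is exactly how Knop proceeds. Your honest flagging of that direction as the delicate one is well placed; the gap to repair is upstairs, in the diagonalisability half.
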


In particular the cone $\mathcal{V}$ is strictly convex if and only if $N_G(H)/H$ is finite.
If this is the case, the subgroup $H$ is called
\emph{sober}.

\begin{cor}
\label{cor-simple-compl} $H$ is sober if and only if $G/H$ has a
simple toroidal completion.
\end{cor}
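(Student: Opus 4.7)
The plan is to decode both sides of the equivalence through the Luna--Vust classification (Theorems \ref{theo-simple} and \ref{thm-class-fan}), using the preceding Proposition \ref{prop-cone-aut} to translate soberness into a convex-geometric condition, and Corollary \ref{cor-cone} to guarantee that $\mathcal{V}$ is already a finitely generated polyhedral cone.

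For the forward direction, assume $H$ is sober. By Proposition \ref{prop-cone-aut} the cone $\mathcal{V}$ is strictly convex, and by Corollary \ref{cor-cone} it is polyhedral, hence generated by finitely many of its own elements. Thus the pair $(\mathcal{V},\emptyset)$ satisfies (CC1) (vacuously on the color side), (CC2) (since the relative interior of $\mathcal{V}$ meets $\mathcal{V}$), and (SCC) (strict convexity plus $0\notin\rho(\emptyset)$). By Theorem \ref{theo-simple} this strictly convex colored cone is the fan of a unique simple spherical embedding $X$ of $G/H$; because the color set is empty we have $\mathcal{D}(X)=\emptyset$, so $X$ is toroidal, and because $\supp(\F(X))=\mathcal{V}\cap\mathcal{V}=\mathcal{V}(G/H)$ the completeness criterion after Theorem \ref{thm-class-fan} forces $X$ to be complete.

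For the converse, suppose $G/H$ admits a simple toroidal completion $X$. Since $X$ is simple, $\F(X)$ consists of a single maximal colored cone $(\mathcal{C},\mathcal{F})$ together with its faces; since $X$ is toroidal $\mathcal{F}=\emptyset$, so by (CC1) the cone $\mathcal{C}$ is generated by finitely many elements of $\mathcal{V}$ and hence $\mathcal{C}\subset\mathcal{V}$. Completeness gives $\supp(\F(X))=\mathcal{C}\cap\mathcal{V}=\mathcal{V}$, forcing $\mathcal{C}=\mathcal{V}$. Finally (SCC) requires $\mathcal{C}$ to be strictly convex, so $\mathcal{V}$ is strictly convex, which via Proposition \ref{prop-cone-aut} yields $\dim\aut_G(G/H)=\dim(\mathcal{V}\cap-\mathcal{V})=0$, i.e.\ $H$ is sober.

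There is no real obstacle here beyond carefully matching the four dictionary entries (toroidal $\leftrightarrow$ no colors, simple $\leftrightarrow$ one maximal cone, complete $\leftrightarrow$ support equals $\mathcal{V}$, SCC $\leftrightarrow$ strict convexity of $\mathcal{V}$); the content is entirely supplied by Proposition \ref{prop-cone-aut} and Corollary \ref{cor-cone}.
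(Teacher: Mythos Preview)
Your proof is correct and takes essentially the same approach as the paper's one-line argument ``Take $(\mathcal{V},\emptyset)$ as colored cone.'' You have simply spelled out in full the verifications that the paper leaves implicit: that $(\mathcal{V},\emptyset)$ satisfies (CC1), (CC2), (SCC) and the completeness criterion in the forward direction, and that conversely the single strictly convex colorless cone of a simple toroidal completion must equal $\mathcal{V}$.
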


\begin{proof}
Take $({\mathcal{V}},\emptyset)$ as colored cone.
\end{proof}

\begin{defn}
An embedding of $G/H$ with $H$ a sober subgroup is called sober.
\end{defn}

To describe the structure of the cone of valuations
$\mathcal{V}$ we may assume by Proposition \ref{prop-cone-aut} that
$H$ is sober.
In that case Brion \cite{brion-demazure} proved the following
result.

\begin{thm}
\label{thm-cone} The cone $\mathcal{V}$ is the fundamental domain of
the Weyl group $W_X$ of a root system in $\X^\vee$. In particular it
is simplicial.
\end{thm}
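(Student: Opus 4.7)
The plan is to follow Brion's approach via the moment map on a toroidal completion. As a first reduction, by Proposition \ref{prop-cone-aut} the linearity subspace $\mathcal{V}\cap(-\mathcal{V})$ corresponds to $\Lie \aut_G(G/H)^0$; replacing $H$ by $N_G(H)$ quotients out this subspace and reduces the statement to the sober case, in which $\mathcal{V}$ is strictly convex. Then by Corollary \ref{cor-simple-compl} I would pick a simple complete toroidal embedding $X$ of $G/H$ whose unique colored cone is $(\mathcal{V},\emptyset)$, with closed orbit $Y$.

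The main tool is the moment map $\mu:T^*X\to\g^*$ on the cotangent bundle. Its image $\overline{\mu(T^*X)}$ is a $G$-stable irreducible closed conical subvariety of $\g^*$, and by Chevalley restriction the categorical quotient $\overline{\mu(T^*X)}//G$ embeds in $\t^*/W$. Pulling back to $\t^*$ produces a closed subset which is a finite union of $W$-translates of a single rational polyhedral cone $\mathfrak{a}^*_X \subset \t^*$. One then defines the little Weyl group $W_X$ to be the subgroup of $W$ stabilizing $\mathfrak{a}^*_X$ setwise.

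The computational core of the argument is the identification of $\mathfrak{a}^*_X$ with (the dual of) $\mathcal{V}$. Here I would invoke the local structure theorem for toroidal varieties (Proposition \ref{prop-str-tor}): over the open subset $X\setminus\Delta_X\simeq (P_X)_u\times Z$, with $Z$ a toric variety for a quotient torus of a Levi factor $L$, the restriction of $\mu$ can be computed explicitly in terms of the action of this torus on $Z$. The rays of $\mathcal{V}$, which correspond to $G$-stable divisors coming from toric divisors in $Z$, then match under the symplectic pairing to explicit generators of $\mathfrak{a}^*_X$.

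The main obstacle is showing that $W_X$ is generated by reflections in $\X^\vee_\Q$, with $\mathcal{V}$ as a fundamental chamber. This is the hardest part and relies on the fact that generic $G$-orbits in $\overline{\mu(T^*X)}$ are coisotropic in $T^*X$ (equivalently, that the cotangent bundle of a spherical variety has the polarization property), together with a Galois-theoretic analysis comparing the invariant function field $k(\overline{\mu(T^*X)})^G$ with $k(\t^*)^{W_X}$ to produce the reflections across the facets of $\mathcal{V}$. Once $W_X$ is known to be a finite reflection group with $\mathcal{V}$ as fundamental domain, simpliciality is automatic, since every fundamental chamber of a finite reflection group on a rational vector space is a simplicial cone.
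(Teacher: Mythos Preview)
Your proposal does not follow the paper's own proof. The paper attributes the theorem to Brion \cite{brion-demazure} and sketches his original argument, which is quite different: Brion characterises fundamental domains of Weyl groups of root systems purely by the dihedral angles between facets of the cone (these must be $\pi/2$, $\pi/3$, $\pi/4$, or $\pi/6$). He reduces the verification to the rank-$2$ situation, shows it suffices to examine spherical varieties of rank at most $4$, and concludes by case-by-case inspection. The root system itself is then read off from the weight monoid of the toric slice $Z$ in the simple toroidal completion.

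What you sketch is essentially Knop's later programme (\cite{knop-weylgruppe}, \cite{knop-asymptotic}, \cite{knop-annals}), which the paper discusses immediately \emph{after} the proof as a more conceptual alternative. So your attribution ``Brion's approach via the moment map'' is inaccurate: the cotangent moment map definition of $W_X$ and the Galois/reflection argument are Knop's. Your outline of that route is broadly correct in spirit, and you rightly flag the hardest step; but be aware that the coisotropy property and the passage from the Galois group of the generically finite cover over $\mathfrak{t}/W$ to reflections across the facets of $\mathcal{V}$ are substantial results spread over several of Knop's papers, not facts one can simply invoke.

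The trade-off is clear. Brion's argument is elementary and self-contained once the reduction to low rank is accepted, but offers no intrinsic reason why $\mathcal{V}$ should be a Weyl chamber. Knop's approach is conceptual, extends to arbitrary $G$-varieties, and realises $W_X$ as a genuine monodromy group, at the cost of much heavier symplectic and invariant-theoretic machinery.
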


\begin{proof}
The main idea of the proof is to use the fact that fundamental
domains of the Weyl group of a root system are characterised by the
values of the angles between two edges of the cone (which can only
be $\pi/2$. $\pi/3$, $\pi/4$ or $\pi/6$). This is proved by
restriction to the rank $2$ case and Brion proves that for this it
is enough to consider spherical varieties of rank at most $4$. The
proof then follows by case by case inspection.

The root system can be explicitly described as follows. There is a
unique simple toroidal completion $X$ of $G/H$ (Corollary
\ref{cor-simple-compl}) and the Structure Theorem for $X$ (see
Proposition \ref{prop-str-tor}) gives an affine toric variety $Z$
for some torus $C$ which is a quotient of $T$ and with
$\cX(C)=\X(G/H)$. The generators of the monoid of all weights of
elements in $k[Z]^{(C)}$ yields the root system in $\cX(C)=\X(G/H)$.
\end{proof}

\begin{defn}The group $W_{G/H}$ is called the little Weyl group of $G/H$.
\end{defn}

As the above proof does not give much geometric information or
reason why this result should be true, Knop started a project to
give a more conceptual definition of the little Weyl group $W_{G/H}$
as well as a more conceptual proof of the fact that $\mathcal{V}$ is
fundamental domain for $W_{G/H}$. He achieved this and even
generalised most of the statements to any $G$-variety in a series of
papers \cite{knop-bewert}, \cite{knop-asymptotic}, \cite{knop-jams},
\cite{knop-weylgruppe}, \cite{knop-annals}. See also the text
\cite{knop-habil}. We will give a very brief review of his results
here.

Assume $k=\mathbb{C}$. First using the moment map, Knop defines a
little Weyl group in a geometric way and gives many properties of
its action. Here we consider the moment map in the complex setting
as follows. For $X$ a smooth $G$-variety let $T^\vee_X$ be the total
space of its cotangent bundle, then we have a map
$$\mu:T^\vee_X\to\g^\vee$$
defined by $(x,\xi)\mapsto(\eta\mapsto \xi(d_{e,x}\sigma(\eta,0)))$
where $\sigma:G\times X\to X$ is the action. We recover the real
moment map (see Subsection \ref{section-convex}) from this map since
the choice of an invariant hermitian form gives a section of the
cotangent bundle. Now we can consider the following fiber product
$$\xymatrix{{T}^\vee_X\times_{\mathfrak{t}/W}\mathfrak{t}\ar[rr]\ar[d] &&\mathfrak{t}\ar[d]\\
T^\vee_X\ar[r] & \g^\vee\simeq\g\ar[r]
&\g/\!\!/G\simeq\mathfrak{t}/W.\\}$$ The variety
${T}^\vee_X\times_{\mathfrak{t}/W}\mathfrak{t}$ is in general not
irreducible so pick an irreducible component $C$ dominating
$T^\vee_X$. The Weyl group acts on the irreducible components of
${T}^\vee_X\times_{\mathfrak{t}/W}\mathfrak{t}$.

\begin{thm}
The little Weyl group $W_X$ is the quotient
$N_W(C)/C_W(C)$. In other words, $W_X$ is the Galois group of the
covering $C\to T_X^\vee$.
\end{thm}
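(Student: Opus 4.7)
The plan is to break the identification into three parts: first establish that $C\to T^\vee_X$ is Galois, then identify its deck group with $N_W(C)/C_W(C)$ by pure Galois theory, and finally match this group with the little Weyl group $W_X$ constructed combinatorially as the reflection group having $\mathcal{V}$ as fundamental domain (Theorem~\ref{thm-cone}).

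First I would observe that $\mathfrak{t}\to\mathfrak{t}/W$ is a Galois cover with group $W$, étale over the complement of the discriminant. Forming the fiber product with the composition $T^\vee_X\to\g^\vee\simeq\g\to\g/\!\!/G\simeq\mathfrak{t}/W$ yields a generically étale cover $T^\vee_X\times_{\mathfrak{t}/W}\mathfrak{t}\to T^\vee_X$ on which $W$ acts by permuting the irreducible components. For an irreducible component $C$ dominating $T^\vee_X$, the setwise stabiliser in $W$ is $N_W(C)$ and the pointwise stabiliser is $C_W(C)$. A standard Galois-theoretic argument then identifies $C\to T^\vee_X$ as Galois with deck group $N_W(C)/C_W(C)$; this step is formal once one knows $C$ dominates $T^\vee_X$ and the construction is generically étale.

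Next I would relate $C$ to a natural linear subspace $\mathfrak{a}^\vee\subset\mathfrak{t}^\vee$. Using Sumihiro's Theorem and the Local Structure Theorem on a $B$-stable open affine subset of $X$, the cotangent bundle admits a natural section whose image under $\mu$ lies in a subspace $\mathfrak{a}^\vee\subset\mathfrak{t}^\vee$ of dimension $\rk(G/H)$, canonically identified with $\X(G/H)_\Q$. The map $C\to\mathfrak{t}$ factors generically through a $W$-translate of $\mathfrak{a}\subset\mathfrak{t}$, so the induced subgroup $N_W(C)/C_W(C)\subset\GL(\mathfrak{a})$ is automatically a finite reflection group on $\X^\vee_\Q$.

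The crux is matching this reflection group with $W_X$. By Theorem~\ref{thm-cone}, $W_X$ is the unique finite reflection group on $\X^\vee_\Q$ having $\mathcal{V}$ as fundamental domain. I would exhibit, for every wall of $\mathcal{V}$, an element of $N_W(C)/C_W(C)$ realising the corresponding reflection, and conversely show that every reflection in $N_W(C)/C_W(C)$ arises from a wall. The strategy is to reduce to the rank-one case: a wall of $\mathcal{V}$ corresponds to a one-parameter family of $G$-invariant valuations, which is obtained via an equivariant degeneration to a spherical subvariety of rank one; for such rank-one varieties both groups can be computed explicitly and shown to coincide. Once the generators are matched, the two reflection groups agree.

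The main obstacle will be step three: establishing the bijection between walls of $\mathcal{V}$ and reflections in $N_W(C)/C_W(C)$. This requires Knop's polar theory and a careful analysis of the cotangent bundle near codimension-one boundary orbits in a toroidal completion of $G/H$, controlling how the moment map degenerates along such orbits. Once the matching of generators is in place, the equality $W_X=N_W(C)/C_W(C)$ and its interpretation as the Galois group of $C\to T^\vee_X$ follow at once from the first two steps.
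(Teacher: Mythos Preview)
The paper does not prove this theorem. It is stated without proof as a summary of Knop's work (the surrounding text says the author will ``give a very brief review of his results here'' and then states the theorem with no proof environment following it, referring instead to \cite{knop-weylgruppe}, \cite{knop-asymptotic}, \cite{knop-jams}, \cite{knop-annals}). There is therefore no proof in the paper to compare your proposal against.

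On its own merits, your outline is structurally reasonable but underestimates where the content lies. Step one is indeed formal Galois theory, and step two is a sensible use of the Local Structure Theorem. Your step three, however --- matching walls of $\mathcal{V}$ with reflections in $N_W(C)/C_W(C)$ by ``reducing to the rank-one case'' --- is not how Knop proceeds and is closer in spirit to Brion's case-by-case proof of Theorem~\ref{thm-cone} (which in fact reduces to rank at most~$4$, not rank one). Knop's actual argument identifies the fundamental domain of the Galois group via the asymptotic behaviour of the moment map and a Harish-Chandra type homomorphism for $k[T^\vee_X]^G$, avoiding case analysis entirely. You honestly flag step three as ``the main obstacle'' requiring ``Knop's polar theory'', which is accurate; but at that point you are essentially invoking the very machinery whose output is the theorem you want to prove, so the proposal is more a roadmap to the literature than an independent proof.
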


\subsubsection{Automorphism free spherical varieties}

Recall that sober spherical varieties have a unique simple complete toroidal embedding $\bar X$. The following two problems are natural.
\begin{itemize}
\item[(1)] Give an explicit description of $\bar X$.
\item[(2)] Is $\bar X$ smooth?
\end{itemize}

\begin{defn}
A smooth simple complete toroidal embedding is called a wonderful
embedding.
\end{defn}

There are complete answers to these problems for automorphism free
spherical varieties \emph{i.e.} for $H=N_G(H)$. For $H$ satisfying
this condition, define the Demazure embedding $X_D$ as the closure
of the $G$-orbit of the Lie algebra $\h$ of $H$ in the Gra\ss mann
variety $\G(h,\g)$ with $h=\dim\h$. This is a non necessarily normal
completion of $G/H$. Brion \cite{brion-demazure} proves that the
inclusion of $G/H$ in $X_D$ extends to a morphism $\bar X\to X_D$
and that this morphism is the normalisation map. He conjectured that
this morphism is an isomorphism and that $X_D$ is smooth. This is
now verified.

\begin{thm}
If $H=N_G(H)$, then the variety $\bar X$ is smooth and $\bar X=X_D$.
\end{thm}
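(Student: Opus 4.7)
The plan is to establish smoothness of $\bar X$ first, and then identify it with $X_D$ via the morphism coming from the proof of Theorem \ref{theo-can}.

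Since $H = N_G(H)$ implies $\aut_G(G/H)$ is trivial, Proposition \ref{prop-cone-aut} shows that the valuation cone $\mathcal{V}$ is strictly convex, so $H$ is sober and $\bar X$ exists as the unique simple complete toroidal embedding of $G/H$ (Corollary \ref{cor-simple-compl}), with colored cone $(\mathcal{V}, \emptyset)$. To prove smoothness I would apply Proposition \ref{prop-str-tor}: the complement of $\Delta_X$ in $\bar X$ is isomorphic to $(P_X)_u \times Z$, where $Z$ is a simple affine toric variety whose cone equals $\mathcal{V}$. By Theorem \ref{thm-cone}, $\mathcal{V}$ is simplicial, being a fundamental chamber for the little Weyl group. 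The remaining key point, which is the combinatorial heart of the argument, is that the hypothesis $H = N_G(H)$ forces the primitive integral generators of the edges of $\mathcal{V}$ to form a $\Z$-basis of the dual weight lattice $\X^\vee$; granting this, $Z$ is smooth by the standard toric smoothness criterion, and hence so is $\bar X$.

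Once $\bar X$ is known to be smooth, the construction in the proof of Theorem \ref{theo-can} produces a $G$-equivariant morphism $\varphi \colon \bar X \to \G(h, \g)$ extending the tautological map $gH \mapsto \Ad(g)\h$ on the open orbit. By $G$-equivariance, $\varphi$ factors through the closure $X_D$ of the $G$-orbit of $\h$, and by the result of Brion recalled just before the statement the resulting map $\varphi \colon \bar X \to X_D$ is the normalization of $X_D$, hence finite and birational. To conclude that $\varphi$ is an isomorphism, which yields simultaneously $\bar X = X_D$ and (via smoothness of $\bar X$) smoothness of $X_D$, it suffices by Zariski's main theorem to verify that $\varphi$ is injective on closed points. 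Injectivity on the open orbit follows from the self-normalizing hypothesis, which forces the fiber of $gH \mapsto \Ad(g)\h$ over $\h$ to reduce to a single point; injectivity on deeper orbits reduces by $G$-equivariance to identifying isotropy Lie algebras of the $L$-orbits on the toric slice $Z$, a computation one can carry out using the structure provided by Proposition \ref{prop-str-tor}.

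The main obstacle is the combinatorial lattice statement highlighted in the first paragraph: deducing from $H = N_G(H)$ that the primitive edges of $\mathcal{V}$ are a $\Z$-basis of $\X^\vee$. This is a structural fact about spherical homogeneous spaces, relating the finite group $N_G(H)/H$ to the quotient of $\X(G/H)$ by the lattice spanned by the spherical roots; it requires integrality properties from the classification of spherical roots that go beyond the formal Luna--Vust and local structure arguments organizing the rest of the proof.
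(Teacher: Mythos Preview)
The paper does not prove this theorem: it simply records that smoothness of $\bar X$ is due to Knop \cite{knop-jams} and the identification $\bar X=X_D$ to Losev \cite{losev}. Your proposal is therefore an attempt to reconstruct arguments the survey itself does not give.

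For smoothness, your reduction via Proposition \ref{prop-str-tor} to the statement that the primitive edge vectors of $\mathcal V$ form a $\Z$-basis of $\X^\vee$ is correct, and you are right that this is the crux. But this is not a lemma one can fill in: it is essentially the main theorem of \cite{knop-jams}, whose proof relies on Knop's theory of the moment map and invariant collective motion developed in \cite{knop-weylgruppe,knop-asymptotic,knop-annals}. So your first paragraph is an accurate outline of where the difficulty lies, not a proof, as you acknowledge.

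The second half contains a genuine error. You claim that to upgrade the normalization $\varphi\colon\bar X\to X_D$ to an isomorphism it suffices, by Zariski's main theorem, to check injectivity on closed points. Zariski's main theorem requires the \emph{target} to be normal, which is precisely what is in question; and injectivity of a finite birational morphism does not imply it is an isomorphism (the normalization $\mathbb A^1\to\{y^2=x^3\}$ is bijective). So even if your orbit-by-orbit injectivity computation on the toric slice went through, it would not yield $\bar X=X_D$. Losev's proof in \cite{losev} proceeds differently, establishing normality of $X_D$ by methods specific to the Demazure embedding; this is an independent result and does not follow from the structural facts assembled in the survey.
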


The first assertion was proved by Knop in \cite{knop-jams} and the
second by Losev in \cite{losev}. Note that for $H$ a spherical
subgroup, its normaliser $N_G(H)$ is such that $G/N_G(H)$ is
automorphism free (see \cite[Lemma 30.2]{timashev}).

\begin{remark}
\label{rem-knop} Knop proves even more: if for all $n\in N_G(H)$
acting trivially on $\mathcal{D}$ we have $n\in H$, then $\bar X$ is
smooth.
\end{remark}

\subsubsection{Symmetric varieties}
\label{subsection-symmetric}
 Let $G$ be a reductive group and let $\theta$ be a group involution of
 $G$. A closed subgroup $H$ of $G$ such that $G^\theta\subset H\subset N_G(G^\theta)$ is called a symmetric
 subgroup. Vust \cite{vust1} proved that any symmetric subgroup is spherical.

\begin{defn}
\label{defi-symm} An embedding of $G/H$ with $H$ a symmetric
subgroup is called a symmetric variety.
\end{defn}

The classification of all embeddings is carried out in \cite{vust},
we review this briefly here. First, let $S$ be a torus in $G$
maximal for the property: $\theta(s)=s^{-1}$ for all $s\in S$. Let
$T$ be a maximal torus of $G$ containing $S$. This torus is stable
under $\theta$ which induces an involution on the root system $R$ of
$(G,T)$. The restricted root system $R_{G,\theta}$ is defined as
$$R_{G,\theta}=\{\a-\theta(\a)\ /\ \a\in R\}.$$
This is a root system. Furthermore, there exists a Borel subgroup
$B$ containing $T$ such that for $\a\in R^+$ we have $\theta(\a)=\a$
or $\theta(\a)\in R^-$. In that case $BH/H$ is dense in $G/H$ and
$S_{G,\theta}=\{\a-\theta(\a)\ /\ \a\ \textrm{simple root}\}$ is a
basis of $R_{G,\theta}$. Denote by $S_{G,\theta}^\vee$ the set of
simple coroots and by $C^\vee$ the dominant chamber of the dual root
system $R^\vee_{G,\theta}$.

\begin{thm}
Let $H$ with $G^\theta\subset H\subset N_G(G^\theta)$.

Then $\X=\cX(S/S\cap H)$, $\mathcal{V}=-C^\vee$ and
$\rho(\mathcal{D})=S_{G,\theta}^\vee$. Furthermore the fibers of the
map $\rho:\mathcal{D}\to S_{G,\theta}^\vee$ have at most two
elements.
\end{thm}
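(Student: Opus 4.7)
The plan is to establish the four assertions separately, using the density of $BH/H$ in $G/H$ and the tools from the preceding sections.

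For the weight lattice $\X=\cX(S/S\cap H)$, I would argue directly on the open $B$-orbit. Since $T^\theta\subset G^\theta\subset H$ and $T$ is isogenous to $T^\theta\cdot S$, the root-space decomposition of $B$ together with the choice of $B$ gives, set-theoretically, $B\cap H=(U\cap H)\cdot T^\theta\cdot(S\cap H)$. Hence any $f\in k(G/H)^{(B)}_\chi$, viewed on $B/(B\cap H)$, must be $U$-invariant and $T$-semiinvariant, forcing $\chi$ to factor through $T\to S/(S\cap H)$. Every character of $S/(S\cap H)$ is realised by an explicit semiinvariant pulled back from the quotient $S/(S\cap H)$, giving the equality.

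For the colors and $\rho$, I would go through the simple roots $\alpha$ of $G$, using that each color appears as a component of $\overline{P_\alpha\cdot BH/H}\setminus BH/H$ for a unique minimal parabolic $P_\alpha$. The analysis splits into three cases driven by $(\alpha,\theta(\alpha))$:
\begin{itemize}
\item If $\theta(\alpha)=\alpha$, then $P_\alpha$ is $\theta$-stable and meets $G^\theta$ in a Borel of its derived subgroup, so $P_\alpha\cdot BH/H=BH/H$ and no color is produced.
\item If $\theta(\alpha)=-\alpha$, one obtains a unique color $D_\alpha$ with $\rho(D_\alpha)=\alpha^\vee\in S^\vee_{G,\theta}$.
\item If $\theta(\alpha)\in R^-\setminus\{-\alpha\}$, then $\alpha$ and $\beta=-\theta(\alpha)$ are distinct simple roots swapped by $-\theta$; restricting to the rank-two Levi they generate and working out the $P_\alpha$- and $P_\beta$-orbit structure yields either one or two distinct colors, each with $\rho$-image a positive multiple of $(\alpha-\theta(\alpha))^\vee$.
\end{itemize}
In every case $\rho(D)\in S^\vee_{G,\theta}$; surjectivity follows by running through all $\theta$-orbits of simple roots, and the ``at most two'' bound on the fibers of $\rho$ is exactly what the third case produces.

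To identify $\mathcal{V}=-C^\vee$, I would invoke Theorem~\ref{thm-cone}: $\mathcal{V}$ is the fundamental chamber of the little Weyl group $W_X$ on $\X^\vee_\Q$. It remains to identify $W_X$ with $W(R_{G,\theta})$. For this, apply the local structure theorem to the unique simple toroidal completion of $G/N_G(G^\theta)$ (which exists by Corollary~\ref{cor-simple-compl} since $N_G(G^\theta)$ is sober) and compute the monoid of $C$-weights of the affine toric slice $Z$ appearing in the proof of Theorem~\ref{thm-cone}. A representation-theoretic computation on $k[G]^{(B\times G^\theta)}$ shows that this monoid is generated exactly by $S_{G,\theta}$, whence $W_X=W(R_{G,\theta})$ and $\mathcal{V}=-C^\vee$. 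The main obstacle is the case analysis producing (iii) and (iv): the fiber-of-size-two phenomenon for the ``complex'' pairs of simple roots rests on a delicate rank-two reduction, and the identification of the little Weyl group hinges on the explicit computation of $B$-semiinvariants on $G/G^\theta$.
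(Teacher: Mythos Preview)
The paper does not actually prove this theorem: it is stated as a summary of Vust's results, with the reference \cite{vust} given just before the statement. So there is no ``paper's own proof'' to compare against; your sketch is being measured against the standard arguments in the literature (Vust, De~Concini--Procesi, Brion).

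Your overall strategy --- weight lattice via the open $B$-orbit, colors via the minimal parabolics $P_\alpha$, valuation cone via the little Weyl group --- is the right shape, but the case analysis for the colors is misattributed. The fiber-of-size-two phenomenon does \emph{not} come from your ``complex'' case~(iii). When $\theta(\alpha)=-\beta$ with $\beta\neq\alpha$ simple, the color moved by $P_\alpha$ coincides with the color moved by $P_\beta$, so each such $\theta$-orbit of simple roots contributes exactly one color (think of the group case $G\times G/\Delta G$: every simple root is complex, yet each Bruhat divisor is a single color moved by both factors). The doubling occurs instead in your case~(ii), $\theta(\alpha)=-\alpha$: restricting $\theta$ to the $\alpha$-copy of $SL_2$, the fixed subgroup is either a maximal torus (giving two colors, a type-$T$ raise in the language of Proposition~\ref{prop-UNT}) or its normaliser (one color, a type-$N$ raise). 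The model example is $SL_2/T$ versus $SL_2/N(T)$. So your claim of a ``unique color'' in~(ii) is wrong, and the ``at most two'' bound should be extracted there, not in~(iii).

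For $\mathcal{V}=-C^\vee$, invoking Theorem~\ref{thm-cone} is logically permissible in the survey's order, but be aware that historically Brion's proof of that theorem rests on a case analysis that already presupposes the symmetric case; Vust's original argument constructs the $G$-invariant valuations directly via limits along one-parameter subgroups in $S$ and shows they exhaust the antidominant chamber, which is cleaner and avoids circularity.
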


Note that $\X$, $\mathcal{V}$ and $\rho(\mathcal{D})$ are fixed by
$G$ and $\theta$ but the set of colors $\mathcal{D}$ depends on the
subgroup $H$.

\begin{remark}
The classification of group involutions by Cartan \cite{cartan}
gives a classification of all symmetric subgroups. This yields a
complete classification of symmetric varieties via colored cones and
restricted root systems. The classification of spherical varieties
proposed by Luna in \cite{luna} is directly motivated by this
classification.
\end{remark}

\begin{example}
Any reductive group $G$ can be realised as a symmetric $G\times
G$-variety: $G\simeq G\times G/{\rm diag}(G)$ where ${\rm diag}(G)$
is the diagonal embedding of $G$. It is the fixed point subgroup of
the involution $\theta(x,y)=(y,x)$. In that case the restricted root
system is simply the root system of $G$.
\end{example}

When the group $G$ is adjoint, it is easy to check that the
symmetric varieties are automorphism free so that there exists a
unique smooth simple toroidal completion. These completions are
called \emph{complete symmetric varieties}. The existence of this
compactification was first proved in \cite{deCP}. The complete
symmetric varieties have very nice enumerative properties. The most
classical example being the variety of complete conics in the
cohomology ring of which the Steiner problem can be solved: there
are $3264$ conics tangent to $5$ given conics in general position.

\subsubsection{Wonderful varieties and classification}
\label{subsection-wonderful}

Wonderful varieties are generalisations of complete symmetric
varieties.

\begin{defn}
\label{def-wonder}
A $G$-variety $X$ is wonderful if the following properties hold.
\begin{itemize}
\item $X$ is smooth and projective.
\item $G$ has an open orbit $\Omega$ in $X$.
\item The boundary $\partial X=X\setminus\Omega$ is a divisor with simple normal crossing. The intersection of all irreducible components $(D_i)_{i\in[1,n]}$ of $\partial X$ is non empty.
\item The $G$-orbit closures are the partial intersections of the divisors $(D_i)_{i\in[1,n]}$.
\end{itemize}
\end{defn}
As the definition suggests, wonderful varieties are well understood
from the point of view of log-geometry. In particular wonderful
varieties are the complete log-homogeneous varieties with a unique
closed orbit. One can also characterise them as the log-homogeneous
and log-Fano varieties. We refer to
\cite{brion-log},\cite{brion-log2} for more on log-homogeneous
varieties. Luna \cite{luna2} proved that wonderful varieties are
spherical.

\begin{thm}
The wonderful varieties are the wonderful embeddings.
\end{thm}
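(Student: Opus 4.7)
The plan is to prove both inclusions. \textbf{Forward direction.} Let $X$ be a smooth simple complete toroidal embedding of $G/H$. Projectivity follows from Theorem \ref{sumihiro}, which makes simple embeddings quasi-projective, combined with completeness; the dense $G$-orbit $G/H$ supplies the required open orbit. For the SNC structure of $\partial X$, I would apply Proposition \ref{prop-str-tor}: on $X \setminus \Delta_X \cong (P_X)_u \times Z$ with $Z$ a smooth affine toric variety (the smoothness of $Z$ coming from that of $X$), the boundary is locally the SNC boundary of $Z$. Toroidality ensures every $G$-orbit meets $X \setminus \Delta_X$, so $G$-equivariance propagates SNC to all of $X$. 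The irreducible components of $\partial X$ are the $G$-stable prime divisors, which all contain the unique closed orbit $Y$ by simplicity, so their total intersection is non-empty. Finally, Theorem \ref{theo-simple}(ii) combined with the fact that the simplicial no-color cone of $X$ has rays in bijection with the boundary components yields the axiom that $G$-orbit closures are precisely the partial intersections $\bigcap_{i\in I}D_i$.

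\textbf{Reverse direction.} Let $X$ satisfy Definition \ref{def-wonder}. By Luna's result \cite{luna2} quoted above, $X$ is spherical, hence an embedding of some $G/H$. Simplicity follows because $\bigcap_{i=1}^n D_i$ is a non-empty $G$-orbit closure (by the fourth axiom) and is contained in every other partial intersection, so it is the unique minimal -- hence unique closed -- $G$-orbit. Smoothness, projectivity and completeness are given; sobriety of $H$ will follow from Corollary \ref{cor-simple-compl} once toroidality is established.

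The crux is therefore showing $\mathcal{D}_Y(X) = \emptyset$. My approach is to fix a $T$-fixed point $y \in Y$ (which exists since $Y = G/Q$ is parabolic-homogeneous) and analyze the normal space at $y$. The SNC hypothesis gives a $T$-equivariant decomposition $N_{Y/X,y} \cong \bigoplus_{i=1}^n N_{D_i/X,y}$ into $n$ lines with weights $\chi_1, \dots, \chi_n$. A $B$-stable prime divisor through $y$ corresponds to a $B$-stable hyperplane of $T_yX$ containing $T_yY$, equivalently to a $B$-stable hyperplane of $N_{Y/X,y}$; since $U$ acts trivially on any 1-dimensional $B$-stable quotient, this is just a $T$-stable hyperplane. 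If the $\chi_i$ are pairwise distinct, the only such hyperplanes are the $n$ coordinate ones $T_yD_j$, which are $G$-stable, so no color passes through $y$ and a fortiori no color contains $Y$. As a cross-check, Theorem \ref{thm-Q-fact}(i) together with Theorem \ref{theo-rk} applied to $Y$ forces $\rk(X) = |\mathcal{V}(X)| + |\mathcal{D}_Y(X)|$; distinctness of the $\chi_i$ is precisely the condition making $\rk(X) = n$ and hence $|\mathcal{D}_Y(X)| = 0$.

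The hard part will be the distinctness of these weights $\chi_i$, equivalently the identity $\rk(X) = n$: this encodes the linear independence of the spherical roots of $X$ and is genuine input from the deeper theory of wonderful varieties, not extractable from the local structure theorems alone. Everything else -- simplicity, SNC boundary, orbit lattice of partial intersections, sobriety -- follows smoothly from the local structure results and the Luna--Vust fan dictionary already developed in the preceding sections.
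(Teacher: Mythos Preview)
The survey does not supply its own proof here; the statement is recorded as a consequence of Luna \cite{luna2}. Your forward direction (wonderful embedding $\Rightarrow$ wonderful variety) is correct and proceeds along the expected lines via Proposition \ref{prop-str-tor} and Theorem \ref{theo-simple}.

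In the reverse direction you correctly reduce everything to $|\mathcal D_Y(X)|=0$, equivalently $\rk(X)=n$, and then declare this step to be unproven ``deeper input''. In fact it drops out of an orbit count once Luna's sphericity is granted. The SNC hypothesis together with $\bigcap_i D_i\neq\emptyset$ forces the $2^n$ partial intersections $\bigcap_{i\in I}D_i$ to be pairwise distinct (if $\bigcap_I D_i=\bigcap_J D_j$ with $I\neq J$, this set would lie in $\bigcap_{I\cup J}D_i$, of strictly larger codimension), so the fourth axiom yields exactly $2^n$ $G$-orbits. Smoothness and Theorem \ref{thm-Q-fact} make the single colored cone $(\cox,\mathcal D_Y(X))$ simplicial of dimension $n+m$ with $m:=|\mathcal D_Y(X)|$, its extremal rays being the $n$ invariant valuations $v_i\in\mathcal V$ together with the $m$ color images. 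Each subset $I\subset\{1,\dots,n\}$ produces a colored face $(\mathrm{cone}(v_i:i\in I),\emptyset)$: its relative interior lies in the convex cone $\mathcal V$, and no color ray sits on it since the $n+m$ rays are linearly independent. By Theorem \ref{theo-simple} these $2^n$ colored faces already exhaust the $G$-orbits. But the maximal colored face $(\cox,\mathcal D_Y(X))$, corresponding to $Y$ itself, has cone of dimension $n+m$; if $m>0$ it is distinct from the preceding $2^n$, contradicting the orbit count. Hence $m=0$ and $X$ is toroidal.

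Your tangent-hyperplane route has a separate flaw even granting distinct $\chi_i$: two irreducible divisors through $y$ may share a tangent hyperplane without coinciding (think of $\{x_j=0\}$ versus $\{x_j+x_1x_2=0\}$ when $\chi_j=\chi_1+\chi_2$), so the passage from ``$T_y\bar D$ equals some $T_yD_j$'' to ``$\bar D=D_j$'' is not justified. The counting argument above bypasses this entirely.
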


The importance of spherical varieties relies on the fact that their
classification is equivalent to the classification of spherical
varieties. Indeed, Luna in \cite{luna} defines, for $H$ a spherical
subgroup its \emph{spherical closure} $\bar H$ as the subgroup of
elements in $N_G(H)$ acting trivially on $\mathcal{D}$. He proves
that the classification of the spherical subgroups can be deduced
from the classifications of their spherical closure. The advantage
is that $\bar H$ is sober (since $(N_G(\bar H)/\bar H)^\circ$ leaves
$\mathcal{D}$ pointwise fixed) and by a result of Knop (see Remark
\ref{rem-knop}) the simple toroidal completion of $G/\bar H$ is
smooth therefore it is a wonderful variety.

The main problem is now to determine, for $X$ a wonderful variety
which spherical system \emph{i.e.} triples
$(\X^\vee(X),\mathcal{V}(X),\mathcal{D}(X))$ actually occur and to
describe all the varieties associated to this triple. Losev
\cite{losev} proved that for each triple, there exists at most one
wonderful variety $X$ having the corresponding triple of invariants.
In \cite{luna}, Luna replaces the triple by the triple
$(\X^\vee(X),\Sigma(X),\mathcal{D}(X))$ where $\Sigma_X$ is the set
of simple roots generating the cone $\mathcal{V}(X)$ (see Theorem
\ref{thm-cone}). In the case of symmetric varieties we recover the
restricted root system. Luna gives an axiomatic definition of
spherical systems and the problem is to construct a wonderful
variety associated to each abstract spherical system. Recently two
solutions, the first via combinatorics in \cite{bravi-pezza},
\cite{bravi-pezzb} and \cite{bravi-pezzc} and the second via
geometry in \cite{cupit} were proposed.

\subsubsection{Comments}

Proposition \ref{prop-cone-aut} was first proved in \cite[Section
5.2]{brion-pauer}, see \cite[Section 4 and Theorem 6.1]{knop} for a
characteristic free statement. The geometric description of the
little Weyl group $W_X$ was first given in \cite{knop-weylgruppe}.
The fact that the cone $\mathcal{V}$ is a fundamental domain for
$W_X$ is proved in \cite{knop-asymptotic}. Note that the set of
invariant valuations forms a cone $\mathcal{V}$ for any $G$-variety
as was proved by Knop in \cite{knop-bewert} over any algebraically
closed fields. Knop conjectures that $\mathcal{V}$ should be the
fundamental domain of a Weyl group also in positive characteristic.
This question remains open.


\section{Further geometric properties}


\subsection{Mori theory}
\label{section-mori}

There is a general framework for studying the birational geometry of
complex projective variety called Mori theory. The minimal model
program (MMP)
has been proved in dimension at most three with $\char k=0$ but is
still not available in full generality in higher dimension. Large
parts of it have been now realised for $\char k=0$ but very few
results are available in general for $\char k>0$.
In this section we will explain that the MMP works perfectly for
spherical varieties in all characteristics.

\subsubsection{Mori dream spaces} A conceptual reason for the MMP to
run perfectly can be stated as follows.

\begin{thm}
\label{thm-mds1} A projective spherical $\Q$-factorial variety is a
Mori dream space.
\end{thm}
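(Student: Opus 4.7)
The plan is to verify directly the Hu--Keel characterisation of a Mori dream space: a projective normal $\Q$-factorial variety $X$ is a MDS if and only if (a) $\pic(X)_\Q$ equals the Néron--Severi space, (b) $\mathrm{Nef}(X)$ is a rational polyhedral cone generated by finitely many semiample classes, and (c) there exist finitely many small $\Q$-factorial modifications $f_i\colon X\dashrightarrow X_i$, with each $X_i$ again satisfying (a) and (b), such that $\mathrm{Mov}(X)=\bigcup_i f_i^*\mathrm{Nef}(X_i)$.

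Conditions (a) and (b) are essentially already in hand. Part (a) follows from Corollary~\ref{cor-polehedral}\,(\i v) together with the finite-rank assertion of Corollary~\ref{cor-polehedral}\,(\i). For (b), Theorem~\ref{theo-pic} describes the nef cone as the cone of convex piecewise linear functions on $\bF(X)$, modulo linear ones, which is manifestly rational polyhedral with explicit finitely many generators; by Corollary~\ref{nef=gg} any nef divisor is globally generated, hence a fortiori semiample.

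The main work is (c). My idea is that the SQMs of $X$ correspond bijectively to the projective $\Q$-factorial colored fans $\bF'$ sharing with $\bF(X)$ both the support $\supp(\bF(X))$ and the set of rays, i.e.\ the collection of primitive generators coming from $\mathcal{V}\cup\rho(\mathcal{D})$. Indeed, a SQM $f\colon X\dashrightarrow X'$ is an isomorphism in codimension one, so it preserves the set of $B$-stable prime divisors, which by Theorem~\ref{thm-class-fan} are precisely the rays of the fan. The $\Q$-factoriality of $X'$ forces each maximal colored cone of $\bF'$ to be simplicial (Theorem~\ref{thm-Q-fact}), and projectivity demands the existence of a strictly convex piecewise linear support function (Theorem~\ref{theo-pic}). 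Since the set of candidate rays is finite and fixed, only finitely many such simplicial colored subdivisions can arise.

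To finish I would show that $\mathrm{Mov}(X)=\bigcup_i f_i^*\mathrm{Nef}(X_i)$. The space of piecewise linear functions on $\supp(\bF(X))$ supported on the fixed ray set carries a chamber decomposition indexed by the various simplicial subdivisions; by Theorem~\ref{theo-pic} these chambers are precisely the images of the ample cones of the different $X_i$. A movable class has no divisorial fixed component, so its piecewise linear representative cannot be forced to take a negative value on any ray, which places it in at least one closed chamber. The hard part will be the careful bookkeeping of the \emph{colors}: one must verify that colored faces glue properly under the birational transformations that cross chamber walls, so that each such wall-crossing corresponds to an honest divisorial contraction or flip between models of the same $G/H$. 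This should follow from the morphism dictionary of Subsection~\ref{section-class} together with a spherical analogue of Reid's toric MMP, but it is the step where the presence of colors (absent in the toric case) requires the most delicate argument.
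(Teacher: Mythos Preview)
Your approach via the original Hu--Keel definition (small $\Q$-factorial modifications and the movable cone decomposition) is quite different from the paper's, and considerably more laborious. The paper instead uses the equivalent Cox-ring characterisation: a projective normal $\Q$-factorial variety with $\pic(X)_\Q=N^1(X)_\Q$ is a Mori dream space if and only if the multi-section ring $R(X,(D_i)_{i\in[1,n]})$ is finitely generated for some basis $(D_i)$ of $N^1(X)$. With this formulation, parts (a) and (b) are as you say, and the entire content of (c) collapses into a single lemma (Lemma~\ref{elm-fg}): for any finite collection of Cartier divisors on a spherical variety, $R(X,(D_i))$ is finitely generated. The proof is short: the total space of $\bigoplus_i\cO_X(-D_i)$ is spherical for $G\times\G_m^n$, so one is reduced to showing that $H^0$ of the structure sheaf of a spherical variety is finitely generated, which follows from the fact that the monoid of weights of $B$-eigenfunctions is cut out by finitely many inequalities $\nu_D(f_\lambda)\geq 0$ and hence is finitely generated by Gordan's Lemma. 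No SQMs, no chamber decomposition, no color bookkeeping.

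Your route could in principle be made to work, but as written it has a genuine gap beyond the one you flag. The finiteness of candidate SQMs is fine (and indeed the paper uses exactly this observation later, in the proof of termination of flips). The problem is the identification $\mathrm{Mov}(X)=\bigcup_i f_i^*\mathrm{Nef}(X_i)$. Your claim that a movable class ``cannot be forced to take a negative value on any ray'' conflates movability with a positivity condition on the associated piecewise linear function; movability means the stable base locus has codimension $\geq 2$, which is not the same thing. What you actually need is that every movable class becomes nef on \emph{some} projective $\Q$-factorial model with the given rays and colors, and this requires producing, for each such class, a colored fan on which the corresponding function is convex and satisfies the color inequalities of Theorem~\ref{theo-pic}. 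In the toric case this is the GKZ secondary fan; in the spherical case the colors (which may collide under $\rho$, cf.\ Theorem~\ref{thm-Q-fact}) make this genuinely harder, and you have not supplied the argument. The Cox-ring route avoids this entirely.
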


Mori dream spaces were introduced by Hu and Keel \cite{HK} who
proved that for these varieties, the MMP works perfectly in all
characteristics (see in particular \cite[Proposition 1.11]{HK}).
Recall a definition of Mori dream spaces\footnote{Note
  that this is not the usual definition of Mori dream spaces, see
  \cite[Definition 1.10 and Theorem 2.9]{HK}.}.

\begin{defn}
(\i) Let $(D_i)_{i\in[1,n]}$ be a family of Cartier divisors. Then
we
  write
  $$R(X,(D_i)_{i\in[1,n]})=\bigoplus_{(r_i)_{i\in[1,n]}\in\Z^n}H^0(X,\cO_X(\raisebox{0pt}[\height][0pt]{$\sum$}_ir_iD_i)).$$
This is a $\Z^n$-graded ring.

(\i\i) A Mori dream space is a normal projective $\Q$-factorial
variety $X$ with $N^1(X)$ finitely generated and having a basis
$([D_i])_{i\in[1,n]}$ of $N^1(X)$ with $D_i$ Cartier for all $i$
and such that the ring $R(X,(D_i)_{i\in[1,n]})$ is finitely
generated.
\end{defn}

\begin{proofofthm}
By Corollaries \ref{cor-polehedral} and \ref{pic-sans-tors}
$\pic(X)=N^1(X)$ is torsion free and finitely generated.
Furthermore, the nef cone is a rational polyhedral convex cone (see
Theorem \ref{theo-pic}). Let $(D_i)_{i\in[1,n]}$ be a set of
generators of this group. The following lemma concludes the proof.
\end{proofofthm}

\begin{lemma}
\label{elm-fg}
  Let $(D_i)_{i\in[1,n]}$ be a collection of Cartier divisors on a
  spherical variety $X$, then $R(X,(D_i)_{i\in[1,n]})$ is finitely
  generated.
\end{lemma}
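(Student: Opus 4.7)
The plan is to reduce the finite generation of $R := R(X,(D_i)_{i\in[1,n]})$ to that of a finitely generated commutative semigroup, by passing to invariants under the unipotent radical $U$ of $B$.

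After replacing $G$ by a finite cover (which affects neither the hypothesis nor the conclusion), each $D_i$ may be assumed $G$-linearized, so that $R$ carries the structure of a $\Z^n$-graded rational $G$-algebra. I would then invoke Grosshans' theorem on connected reductive groups, which says that a rational $G$-algebra is finitely generated over $k$ if and only if its subring $R^U$ of $U$-invariants is finitely generated. It therefore suffices to show $R^U$ is finitely generated.

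The residual $T$-action on $R^U$ gives a bigraded decomposition
$$R^U = \bigoplus_{(r,\chi)\in\Z^n\times\X(G/H)} H^0(X,\cO_X(\textstyle\sum_i r_i D_i))^{(B)}_\chi.$$
By the multiplicity-free criterion of Theorem~\ref{theo-char}(v), each graded piece is at most one-dimensional. Writing $S\subset\Z^n\times\X(G/H)$ for the support of this decomposition, $S$ is a subsemigroup and the choice of a nonzero vector in each nonzero piece identifies $R^U$ with the semigroup algebra $k[S]$. The problem reduces to showing that $S$ is a finitely generated semigroup.

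To this end I would realize $S$ as the set of lattice points in a rational polyhedral convex cone, and conclude by Gordan's lemma. Writing each $D_i=\sum_D n_D^{(i)}[D]$ as a $\Z$-combination of the finitely many $B$-stable prime divisors of $X$ (Theorems~\ref{theo-char} and~\ref{theo-chow}), a nonzero $B$-semiinvariant section of weight $\chi$ in $H^0(X,\cO_X(\sum_i r_i D_i))$ is precisely an $f\in k(G/H)^{(B)}_\chi$ with $\div(f)+\sum_i r_i D_i\geq 0$. In the coordinates $(r,\chi)$ this is the finite system of linear inequalities
$$\rho(\nu_D)(\chi)+\sum_{i=1}^n r_i\, n_D^{(i)} \geq 0, \qquad D\in \mathcal{D}\cup\mathcal{V}(X),$$
cutting $S$ out of $\Z^n\times\X(G/H)$. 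The only delicate point is this last translation of a valuative non-negativity condition into explicit linear inequalities (using that a $B$-semiinvariant rational function is determined up to a scalar by its $T$-weight in $\X(G/H)$); once this is in place, everything else is formal from Grosshans' theorem, multiplicity-freeness, and Gordan's lemma.
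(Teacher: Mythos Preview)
Your proof is correct and follows essentially the same line as the paper's: reduce to $U$-invariants via Grosshans, use multiplicity-freeness to identify $R^U$ with a semigroup algebra, and then show the semigroup is the set of lattice points in a rational polyhedral cone cut out by the valuative inequalities $\nu_D(\cdot)\geq 0$ over the finitely many $B$-stable prime divisors, concluding by Gordan's lemma.

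The only organizational difference is that the paper first replaces $X$ by the total space $V$ of $\bigoplus_i\cO_X(-D_i)$, which is spherical for $G\times\G_m^n$ with $R(X,(D_i))=H^0(V,\cO_V)$, thereby reducing the multi-section ring to the single problem of showing $H^0(Y,\cO_Y)$ is finitely generated for any spherical variety $Y$. You instead keep the $\Z^n$-grading explicit and work with the semigroup $S\subset\Z^n\times\X(G/H)$ directly. These are two packagings of the same computation: your bigrading $(r,\chi)$ is exactly the weight lattice $\X(G\times\G_m^n/H')$ for the paper's auxiliary variety, and your linear inequalities are the same ones the paper writes down for $\Gamma$. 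Neither approach gains anything substantive over the other.
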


\begin{proof}
Consider the vector bundle $V$ over $X$ associated to the locally
free sheaf $\oplus_i\cO_X(-D_i)$. The variety $V$ is affine over $X$
and is spherical for the group $G\times\G_m^n$. Since
$R(X,(D_i)_{i\in[1,n]})=H^0(V,\cO_V)$ we are left to prove that for
$X$ spherical $H^0(X,\cO_X)$ is finitely generated.

We only have to prove that $H^0(X,\cO_X)^U$ is finitely generated
where $U$ is a maximal unipotent subgroup: in characteristic 0, if
$H^0(X,\cO_X)^U$ is finitely generated, then it generates a $G$-stable
subalgebra $R$. If $A$ if a complementary $G$-module then $A^U=0$
thus $A=0$. In positive characteristics, see \cite[Theorem
9]{grosshans}. But $H^0(X,\cO_X)^U$ is generated by
$H^0(X,\cO_X)^{(B)}$ and since there is a dense $B$-orbit each
eigenspace has dimension at most 1. Let $\Gamma$ be the monoid of
the weights of $H^0(X,\cO_X)^{(B)}$, we have to prove that $\Gamma$
is finitely generated.

Let $\X(\Gamma)$ the subgroup of $\cX(T)$ generated by $\Gamma$. Let
$\Omega$ be the dense $B$ orbit in $X$. The complement of $\Omega$
is a finite union of $B$-stable divisors
$(D)_{D\in\mathcal{D}\cup\mathcal{V}(X)}$. We have
$\Gamma=\{\lambda\in\X(\Gamma)\ /\ \nu_D(f_\lambda)\geq0\
\textrm{for} \ f_\lambda\in H^0(X,\cO_X)^{(B)}_\lambda\
\textrm{and}\ D\in\mathcal{D}\cup\mathcal{V}(X)\}$. Therefore
$\Gamma$ is a polyhedral convex cone and by Gordan's Lemma it is
finitely generated.
\end{proof}

\subsubsection{Mori program}
The following results, known as contraction Theorem, existence of
flips and termination of flips are standard consequences of the
above result. We give here sketches of proof in the spherical
variety setting.

Recall from Corollary \ref{cor-polehedral}, that the cone of
effective curves $NE(X)$ on a projective spherical variety $X$ is
convex polyhedral. For $f:X\to Y$ with $X$ projective, we denote by
$NE(X/Y)$ the cone of effective curves contracted by $f$.

\begin{thm}
\label{thm-mds} Let $X$ be a projective spherical variety.

(\i) For each face $F$ of $NE(X)$, there is a unique spherical
variety $X_F$ and a unique $G$-morphism $cont_F:X\to X_F$ with
connected fibers such that $F=NE(X/X_F)$.

(\i\i) The face $F$ generates the kernel of $(cont_F)_*:N_1(X)_\Q\to
N_1(X_F)_\Q$ and $N^1(X_F)_\Q$ can be identified to the orthogonal
of $F$ in $N^1(X)_\Q$.

(\i\i\i) Any morphism $\varphi:X\to Y$ with $Y$ projective such that
$F\subset NE(X/Y)$ factors through $cont_F$.
\end{thm}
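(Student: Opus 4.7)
The plan is to combine the polyhedrality of the nef cone (Theorem \ref{theo-pic}, Corollary \ref{cor-polehedral}) with the fact that nef and globally generated agree (Corollary \ref{nef=gg}) in order to realize the contraction as the Stein factorization of a morphism associated to a well chosen nef Cartier divisor. Concretely, given a face $F \subset NE(X)$, the dual face $F^\perp$ in the nef cone of $X$ is non-empty and open in the subspace it spans; pick a $G$-linearized Cartier divisor $D$ whose class lies in the relative interior of $F^\perp$, so that $D\cdot C = 0$ for $C\in F$ and $D\cdot C > 0$ for any $C\in NE(X)\setminus F$.

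By Corollary \ref{nef=gg} the divisor $D$ is globally generated, hence defines a $G$-equivariant morphism $\varphi_D \colon X \to \p(H^0(X,\cO_X(D))^\vee)$. I would take its Stein factorization $X \to X_F \to \varphi_D(X)$ and set $X_F$ to be the intermediate projective $G$-variety. Then $X_F$ is a normal $G$-variety which, being the dominant image of a spherical variety, has a dense $B$-orbit and finitely many $B$-orbits (the images of the $B$-orbits of $X$); by Theorem \ref{theo-char} it is spherical. The map $X \to X_F$ has connected fibers by construction, and a curve $C \subset X$ is contracted if and only if $D\cdot C = 0$, which by our choice of $D$ is equivalent to $[C] \in F$. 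This yields assertion (i) for the pair $(X_F, cont_F)$; uniqueness follows from the universal property in (iii).

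For (ii), the description $F = \Ker (cont_F)_*$ is the projection formula applied to $D$: the pullback along $cont_F$ identifies $N^1(X_F)_\Q$ with a subspace of $N^1(X)_\Q$ contained in $F^\perp$, while conversely any nef class on $X$ lying in $F^\perp$ is numerically trivial on every fiber of $cont_F$ and, after adding a sufficiently ample pullback from $X_F$, descends; an additional argument is needed to extend this from nef classes to arbitrary classes in $F^\perp$, which can be done using the polyhedrality of the cones and the fact that $N^1$ is generated by nef classes (Theorem \ref{theo-pic} expresses any class as a difference of ample ones on a Mori dream space). Finally (iii) follows by the standard rigidity argument: if $\varphi \colon X \to Y$ contracts $F$, then $\varphi^* H$ for $H$ ample on $Y$ is nef and vanishes on $F$, hence its class pulls back from $X_F$; the linear system on $X_F$ then separates points of $Y$ and yields the factorization.

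The main obstacle I expect is the identification $N^1(X_F)_\Q \simeq F^\perp$ in (ii): showing surjectivity requires knowing that every Cartier class on $X$ in $F^\perp$ is a pullback, and the cleanest route is to exploit the Mori dream space structure from Theorem \ref{thm-mds1} together with Lemma \ref{elm-fg} to construct $X_F$ directly as a GIT quotient of the Cox ring, which automatically produces the Picard group matching. If one prefers the divisorial construction above, the descent step must be argued carefully, using that $cont_F$ has connected fibers and $X_F$ is normal to invoke a rigidity lemma for line bundles trivial on fibers.
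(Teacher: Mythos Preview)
Your construction of $cont_F$ is essentially the paper's: both pick a nef Cartier divisor $D$ with $D^\perp \cap NE(X) = F$ (using polyhedrality of $NE(X)$) and build $X_F$ from it. The paper phrases this as $X_F = \proj\bigl(\bigoplus_n H^0(X,\cO_X(nD))\bigr)$, invoking Lemma \ref{elm-fg} for finite generation, while you take the Stein factorization of the morphism given by $|D|$; since $D$ is globally generated these agree. The paper explicitly skips (\i\i), writing ``we only construct the contraction and prove the factorisation property'', so your honest discussion of the descent issue already goes further than the text.

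For (\i\i\i) the paper takes a different and cleaner route than your line-bundle descent. Given $\varphi:X\to Y$ with $F\subset NE(X/Y)$, it sets $F'=NE(X/Y)$, observes that $F'/F$ is a face of $NE(X_F)$, contracts it via the already-established (\i) to get $Cont_{F'/F}:X_F\to (X_F)_{F'/F}$, and then notes that $Cont_{F'/F}\circ Cont_F$ coincides with the Stein factorization of $\varphi$. This avoids entirely the question of whether an arbitrary class in $F^\perp$ descends to $X_F$; your argument for (\i\i\i) as written depends on that descent (you pull back an ample $H$ from $Y$ and want it to come from $X_F$), so it is circular with the part of (\i\i) you flagged as the obstacle. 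The paper's iteration sidesteps this by only ever needing the existence of contractions, not the full identification $N^1(X_F)_\Q\simeq F^\perp$.
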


\begin{proof}
We only construct the contraction and prove the factorisation
property. Pick $D$ a Cartier divisor such that $D$ is positive on
$NE(X)\setminus F$ and vanishes on $F$. Then
$A=\oplus_nH^0(X,\cO_X(nD))$ is a finitely generated
normal $G$-algebra. Define $X_F=\proj(A)$.
This yields the contraction.

If $\varphi:X\to Y$ satisfies $F\subset NE(X/Y)$, then consider
$F'=NE(X/Y)$ and we have that $F'/F$ is a face of $NE(X_F)$. As
above there is a morphism $Cont_{F'/F}:X_F\to X'=(X_F)_{F'/F}$. Then
by unicity we have that $Cont_{F'}=Cont_{F'/F}\circ Cont_F$ and this
morphism is also the Stein factorisation of $\varphi$ concluding the
proof.
\end{proof}

\begin{thm}
Let $X$ be a $\Q$-factorial projective spherical variety and let $R$
be an extremal ray of $NE(X)$ such that $Cont_R:X\to X_R$ is an
isomorphism in codimension one.

Then there exists a unique $\Q$-factorial projective spherical
variety $X^+$ and a unique birational morphism $\varphi^+:X^+\to
X_R$ called the flip of $Cont_R$ such that
\begin{itemize}
\item $\varphi^+=Cont_{R^+}$ is the contraction of an extremal ray $R^+$ in
  $NE(X^+)$.
\item $\varphi^+$ is an isomorphism in codimension one.
\item The spaces $N^1(X)$ and $N^1(X^+)$ are identified via
  $\varphi^+\circ Cont_R^{-1}$ and the half-lines $R$ and $R^+$ are
  opposite in $N_1(X)$.
\end{itemize}
\end{thm}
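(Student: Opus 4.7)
The plan is to invoke the Mori Dream Space structure of Theorem~\ref{thm-mds1} together with finite generation of section rings (Lemma~\ref{elm-fg}), and to follow the Hu--Keel construction of flips for MDS, which becomes particularly transparent in the spherical setting.

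First I would choose a Cartier divisor $D$ on $X$ with $D\cdot R>0$; such a $D$ exists because Cartier classes are dense in $N^1(X)_\Q$ by Corollary~\ref{pic-sans-tors} together with $\Q$-factoriality. Then $-D$ satisfies $(-D)\cdot R<0$ and is not $Cont_R$-nef. I would define
$$X^+:=\proj_{X_R}\left(\bigoplus_{n\geq 0}(Cont_R)_*\cO_X(-nD)\right),$$
where the graded $\cO_{X_R}$-algebra is finitely generated: this can be checked locally on $X_R$, and reduces to Lemma~\ref{elm-fg} applied to the spherical open subset $Cont_R^{-1}(U)$ for any $G$-stable affine open $U\subset X_R$. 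The structural morphism gives the candidate $\varphi^+:X^+\to X_R$, projective by construction, and $\cO_{X^+}(1)$ is $\varphi^+$-ample and coincides (in codimension one) with the strict transform of $-D$.

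Next I would verify the listed properties. Normality of $X^+$ is inherited from $X$, and sphericity is automatic because $X^+$ is $G$-birational to the spherical variety $X$. Since $Cont_R$ is an isomorphism in codimension one, the pushforward $(Cont_R)_*\cO_X(-nD)$ agrees with the reflexive rank-one sheaf $\cO_{X_R}(-nD')$ (where $D'$ is the strict transform) outside a codimension-two subset of $X_R$; hence $\varphi^+$ is itself an isomorphism in codimension one. The extremal ray $R^+\subset NE(X^+)$ is defined as the class of any $\varphi^+$-contracted curve: by construction $\cO_{X^+}(1)$ is $\varphi^+$-ample, so $\varphi^+=Cont_{R^+}$ by Theorem~\ref{thm-mds}. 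The birational map $\varphi^+\circ Cont_R^{-1}:X\dashrightarrow X^+$ is an isomorphism in codimension one and hence identifies Weil divisor classes; combined with $\Q$-factoriality on both sides this yields the isomorphism $N^1(X)\cong N^1(X^+)$. The opposite-ray condition reflects the sign change of $D\cdot(\cdot)$ across the wall $R^\perp$: the same divisor $D$ pairs positively with $R$ on $X$ and negatively with $R^+$ on $X^+$. Uniqueness of $(X^+,\varphi^+)$ follows from the universal factorisation of Theorem~\ref{thm-mds}(\i\i\i) together with the fact that any other candidate lies in the same Mori chamber on the opposite side of the wall and is therefore canonically identified with the Proj above.

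The main obstacle is the verification of $\Q$-factoriality of $X^+$; this is the genuinely substantive use of the MDS structure. By \cite[Proposition~1.11]{HK}, finite generation of the Cox ring of a $\Q$-factorial projective variety implies that $\mathrm{Eff}(X)$ decomposes into rational polyhedral Mori chambers, each of which is the pullback of the nef cone of a projective $\Q$-factorial small modification of $X$, and wall-crossings across codimension-one walls bordering the nef cone give precisely the flips. Alternatively, in the spherical setting one may attempt a combinatorial proof via Theorem~\ref{thm-Q-fact} after describing the colored fan of $X^+$ explicitly; tracing how the extremal wall-crossing acts on the colored data of $X$ is the technical heart of such an argument.
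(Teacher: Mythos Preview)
Your construction is essentially the same as the paper's: both take a Cartier divisor negative on the ray $R$ (you call it $-D$, the paper calls it $D$) and define $X^+$ as the relative $\proj$ over $X_R$ of the graded sheaf $\bigoplus_n (Cont_R)_*\cO_X(nD)$, with finite generation reduced to Lemma~\ref{elm-fg} after localising on $X_R$. The paper's proof is a bare sketch that stops after this construction; your proposal goes further and supplies the verification of the stated properties (isomorphism in codimension one, identification of $N^1$, opposite rays, $\Q$-factoriality via the Hu--Keel MDS machinery), which the paper simply omits.
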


\begin{proof}
We will only construct the morphism $\varphi^+:X^+\to X_R$. Let $C$
be a curve in $X$ such that $[C]$ spans $R$ and let $D$ be a Cartier
divisor with $D\cdot C<0$. Then $A=\oplus_n(Cont_R)_*\cO_X(nD)$ is
finitely generated (use the Local Structure Theorem to pass from the
relative setting to the global setting and use Lemma \ref{elm-fg})
and up to taking some power of $D$ it is a normal $G$-algebra.
Define $X^+=\proj(A)$. This yields the flip.
\end{proof}

Let $D$ be a divisor and let $Cont_{R^+}:X^+\to X_R$ be the flip of
$Cont_R:X\to X_R$. We say that the flip is $D$-directed if $D\cdot
R<0$ and $D\cdot R^+>0$.

\begin{thm}
There is no infinite sequence of $D$-directed flips.
\end{thm}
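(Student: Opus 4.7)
The plan is to combine finiteness of the projective $\Q$-factorial spherical models appearing in the sequence with strict monotonicity extracted from the $D$-directedness hypothesis.

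First, I would observe that every $X_i$ in the sequence is a projective $\Q$-factorial spherical variety $G$-birational to $X$ via a small modification (i.e.\ an isomorphism in codimension one). Since small modifications are $G$-equivariant and preserve prime divisors, each $X_i$ shares with $X$ the same set of $B$-stable prime divisors; in particular $\mathcal{V}(X_i)=\mathcal{V}(X)$, and the set of colors is independent of $i$. By Theorem \ref{thm-class-fan}, each $X_i$ corresponds to a colored fan in $\X_\Q^\vee$ whose rays are drawn from the fixed finite set $\rho(\mathcal{D})\cup\rho(\mathcal{V}(X))$; by Theorem \ref{thm-Q-fact}, $\Q$-factoriality forces every cone of this fan to be simplicial; and the common support of the fans is fixed. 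Only finitely many such colored fans exist, so only finitely many $X_i$ can appear up to $G$-equivariant isomorphism.

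Second, I would extract strict monotonicity. By Theorem \ref{thm-mds1}, $X$ is a Mori dream space, so the movable cone of $X$ in $N^1(X)_\Q$ decomposes into finitely many Mori chambers, one chamber per $\Q$-factorial small modification $X\dashrightarrow X_i$, the chamber being identified with $\mathrm{Nef}(X_i)$ via the natural isomorphism $N^1(X)_\Q\simeq N^1(X_i)_\Q$ provided by the third bullet of the preceding flip theorem. The class $[D]$ is fixed along the sequence. A $D$-directed flip $X_i\dashrightarrow X_{i+1}$ corresponds to $[D]$ crossing a wall separating the chamber of $X_i$ from that of $X_{i+1}$, and the sign conditions $D\cdot R<0$ and $D\cdot R^+>0$ force this crossing to occur in a consistent direction pointing strictly out of the chamber of $X_i$. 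Hence no chamber can be visited twice, and together with the finiteness of chambers this forces the sequence to terminate.

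The delicate step will be making the ``consistent direction'' precise, i.e.\ verifying that $D$-directedness genuinely prevents returning to a previously visited Mori chamber. For this I would lean on the Hu--Keel framework for Mori dream spaces (\cite[Proposition 1.11]{HK}), whose hypotheses are furnished by Theorem \ref{thm-mds1} together with Lemma \ref{elm-fg}.
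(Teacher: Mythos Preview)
Your finiteness step coincides with the paper's: both argue that all the $X_i$ are embeddings of the same $G/H$ sharing the same $B$-stable and $G$-stable divisors, hence only finitely many colored fans (and thus finitely many models) can occur. Where you diverge is in the monotonicity step. The paper does not invoke the Mori chamber picture or Hu--Keel; instead it chooses a single $\Q$-factorial embedding $\Xt$ dominating all the $X_i$ and compares discrepancies. Writing the strict transform $D_{\Xt}=f^*D+\sum_i a_iE_i=(f^+)^*D^++\sum_i a_i^+E_i$ for a flip $X\dashrightarrow X^+$, one checks that $(f^+)^*D^+-f^*D$ is nef relative to the base $X_R$, and then the spherical-specific fact that relatively nef implies relatively globally generated (the relative form of Corollary~\ref{nef=gg}) forces $a_i^+\geq a_i$ with a strict inequality somewhere. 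Since the discrepancy vector is determined by the model, no model repeats.

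Your route is legitimate---Theorem~\ref{thm-mds1} feeds directly into \cite[Proposition~1.11]{HK}, which contains termination---but it is a black-box citation rather than an argument, and your ``consistent direction'' heuristic for why chambers are not revisited is not, on its own, a proof (a fixed class $[D]$ does not ``cross'' anything, and without more structure one could imagine the chambers circling around it). The paper's discrepancy computation is more hands-on and exploits a feature genuinely specific to spherical varieties, namely that nef line bundles are globally generated even in the relative setting; this is what makes the argument self-contained and characteristic-free without appealing to the general MDS machinery.
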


\begin{proof}
First remark that $X^+$ and $X$ are birational and isomorphic in
codimension one. In particular they share the same sets of
$B$-stable and $G$-stable divisors. Having these two sets fixed,
there are only finitely many possible fans for $X^+$
 and by Theorem \ref{thm-class-fan} this implies that there are only finitely many possible
embeddings of $G/H$ obtained by flips from $X$. In particular, there
exists a $\Q$-factorial embedding $\Xt$ of $G/H$ which dominates all
the flips obtained from $X$. Let $X^+$ be one of these flips, then
we have morphisms $f:\Xt\to X$ and $f^+:\Xt\to X^+$ and we claim
that
$$D_\Xt=f^*D+\sum_ia_i E_i={f^+}^*D^++\sum_ia_i^+ E_i$$
with $a_i^+\geq a_i$ and inequality for some index $i$ (here $D^+$
is the strict transform of $D$). This will conclude the proof. To
prove this assertion remark that ${f^+}^*D^+-f^*D$ is nef relatively
to the composition $\Xt\to X\to X_R$ and since any relatively nef
divisor is globally generated (this is a relative version of
Corollary \ref{nef=gg}) the result follows.
\end{proof}

\subsubsection{Comments}
The results of this section are taken from \cite{brion-mori} and
\cite{brion-knop}. The proof of Lemma \ref{elm-fg} is taken from
\cite{knop-hilbert}. Most of these results can be stated in a more
general context: all the results remain true for the relative
minimal model program over a $G$-spherical variety $S$. Furthermore,
these statements are true for varieties of complexity $1$. See
\cite{brion-knop} for these generalisations. In \cite{brion-mori}, a
more precise study of the cone of curves as well as the description
of part of the above results in terms of colored fans (see Section
\ref{section-class}) has been achieved.


\subsection{Frobenius splitting}
\label{sec-split}

In this section we prove that starting with a spherical variety in
characteristic $0$, its reductions modulo a prime $p$ are Frobenius
split for almost all $p$. We deduce different proofs of
Corollary \ref{cor-sing-rat} and
Theorem \ref{thm-vanish}.

\subsubsection{Existence of a splitting}
For a variety $X$ defined over an algebraic closed field $k$ of
positive characteristic $p$, we say that $X$ is Frobenius split if
the \emph{absolute} Frobenius morphism $F:X\to X$ has a section on
the level of structural sheaves \emph{i.e.} the morphism $\cO_X\to
F_*\cO_X$ has a section $\varphi:F_*\cO_X\to \cO_X$. A subvariety
$Y\subset X$ is compatibly split for $\varphi$ if its sheaf of
ideals $\cI_Y$ satisfies $\varphi(F_*\cI_Y)\subset\cI_Y$. We will
use the book \cite{BK} for further reference on Frobenius
splittings.

Let $X$ be spherical and defined over an algebraically closed field
of characteristic~$0$ and write $X_p$ for its reduction to an
algebraically closed field of characteristic $p$.

\begin{thm}
The variety $X_p$ is Frobenius split with splitting $\varphi$ for
all but finitely many $p$ and $\varphi$ compatibly splits all the
closed $G$-stable subvarieties.
\end{thm}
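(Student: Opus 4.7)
The plan is to reduce to a smooth toroidal setting and then invoke the standard Mehta--Ramanathan criterion for Frobenius splitting, using the explicit shape of the canonical divisor of a toroidal spherical variety given by Theorem \ref{theo-can}.

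First, combining the equivariant resolution of singularities that follows from Proposition \ref{prop-tor} with an equivariant compactification via Luna--Vust (Theorem \ref{thm-class-fan}), I would choose a $G$-equivariant proper birational morphism $\pi\colon \Xt\to \bar X$, with $\Xt$ a smooth complete toroidal spherical variety and $\bar X$ a complete embedding of $G/H$ containing $X$ as an open $G$-stable subset. Frobenius splittings restrict to open subsets and push forward along $\pi_p$ whenever $(\pi_p)_*\cO_{\Xt_p}=\cO_{\bar X_p}$, and compatibility with a subvariety $Y\subset \bar X_p$ survives pushforward whenever the restricted morphism still satisfies $(\pi_p)_*\cO_{\tilde Y_p}=\cO_{Y_p}$. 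Both conditions hold for all but finitely many $p$, inherited from the characteristic-zero rationality of singularities of all closed $G$-stable subvarieties (Corollary \ref{cor-sing-rat}). So the problem reduces to producing, for almost all $p$, a splitting of $\Xt_p$ compatible with every component of $\partial \Xt_p$.

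By Theorem \ref{theo-can}, one has $-K_{\Xt}=\partial \Xt + H$ with $H$ globally generated, and the toroidal smoothness of $\Xt$ (equivalently, log-homogeneity) makes $\partial \Xt=\sum_i X_i$ into a simple normal crossings divisor whose maximal intersection strata are the closed $G$-orbits. Rewriting the formula as $\omega_{\Xt}^{-1}(-\partial \Xt)\simeq\cO_{\Xt}(H)$, I would use global generation of $H$ in characteristic zero to pick a section $s\in H^0(\Xt,\omega_{\Xt}^{-1})$ with $\div(s)=\partial \Xt+H'$ for some effective $H'\in|H|$ avoiding a chosen point $y$ of a closed $G$-orbit; near $y$, the support of $\div(s)$ consists locally of a full transverse system of smooth branches. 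Spreading the datum $(\Xt,H,s,y)$ out over $\Spec\Z$, the reduction $s_p$ retains this local shape for almost all $p$. Then $s_p^{p-1}\in H^0(\Xt_p,\omega_{\Xt_p}^{1-p})$ produces, via the natural identification of that space with $\Hom(F_*\cO_{\Xt_p},\cO_{\Xt_p})$, a Frobenius splitting of $\Xt_p$ that compatibly splits each $X_i$ (see \cite[Ch.~1]{BK}): locally at $y$, $s_p^{p-1}$ is the $(p-1)$-th power of a product of local coordinates, which corresponds to the standard splitting of a regular local ring along the coordinate hyperplanes. Since $\Xt_p$ is toroidal, every closed $G$-stable subvariety is a component of some $\bigcap_{i\in I}X_i$, and intersections of compatibly split subvarieties remain compatibly split; so the splitting of $\Xt_p$ is compatible with all $G$-stable closed subvarieties, and pushing forward along $\pi_p$ and restricting to $X_p$ completes the argument.

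The step I expect to be the main obstacle is the uniform spreading-out over $\Spec\Z$: on a single cofinite subset of primes one must simultaneously preserve the global generation of $H$, the non-vanishing of $s$ at $y$, the identities $(\pi_p)_*\cO_{\Xt_p}=\cO_{\bar X_p}$ and $(\pi_p)_*\cO_{\tilde Y_p}=\cO_{Y_p}$ for each $G$-stable $\tilde Y\subset\Xt$, and the normality of $\Xt$ together with all $G$-stable subvarieties $Y$. Each ingredient individually follows from generic freeness once the whole configuration is packaged into a single flat integral model, but the bookkeeping required to exhibit one explicit exceptional finite set of primes is what absorbs most of the technical work.
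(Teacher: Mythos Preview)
Your reduction to the smooth complete toroidal case and your use of Theorem \ref{theo-can} are correct and match the paper. The gap is in the local verification of the splitting criterion. You want $\div(s)=\partial\Xt+H'$ to consist, near $y$, of a full system of $\dim\Xt$ transverse smooth branches. But the number of components of $\partial\Xt$ passing through $y$ is $\codim_{\Xt}Y$, where $Y$ is the closed $G$-orbit containing $y$; and you have explicitly chosen $H'$ to \emph{avoid} $y$. So unless $Y$ is a point (the toric case), $\div(s)$ has only $\codim_{\Xt}Y<\dim\Xt$ branches at $y$, and the coefficient of $(x_1\cdots x_n)^{p-1}$ in the local expansion of $s_p^{p-1}$ vanishes: your section is not a splitting.

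The paper closes exactly this gap by bringing in a second nontrivial input you have omitted: the closed orbit $Y$ is projective rational homogeneous, and flag varieties are known to be Frobenius split by some $\sigma^{p-1}$ with $\sigma\in H^0(Y_p,\omega_{Y_p}^{-1})$. Adjunction along the SNC boundary gives $\omega_Y^{-1}\simeq H\vert_Y$, and since $H$ is globally generated and $H^0(Y,\omega_Y^{-1})$ is an irreducible $G$-module in characteristic $0$, the restriction $H^0(X,H)\to H^0(Y,\omega_Y^{-1})$ is surjective for almost all $p$. One then lifts $\sigma$ to $\tau$, multiplies by the canonical section of $\partial\Xt$, and the $(p-1)$-th power splits $\Xt_p$ precisely because its residue along $\partial\Xt$ is $\sigma^{p-1}$ on $Y_p$. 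In other words, the missing $\dim Y$ ``branches'' at $y$ are not supplied by an SNC divisor at all, but by the splitting of the flag variety; this is an essential ingredient that your direct local argument cannot replace.
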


\begin{proof}
Since any spherical variety has a completion which itself has a
resolution by a complete toroidal variety, we may assume $X$ smooth
complete and toroidal (using \cite[Lemma 1.1.7 and 1.1.8]{BK}). In
particular $X_p$ is smooth for all except finitely many primes.
Since any closed $G$-variety is the intersection of $G$-stable
divisors, it is enough to prove that $X$ is split by a $(p-1)$-power
of a section of $\omega_X$ compatibly splitting all $G$-stable
divisors (see \cite[Proposition 1.2.1 and Theorem 1.4.10]{BK}).

Theorem \ref{theo-can} implies that for $Y$ a closed orbit in $X$,
we have $\omega_Y^{-1}=H\vert_Y$ with $H$ globally generated. Since
$Y_p$ is projective homogeneous  there exists $\sigma\in
H^0(Y_p,\omega_{Y_p}^{-1})$ such that $\sigma^{p-1}$ splits $Y$. But
in characteristic zero, $H^0(Y,\omega_Y^{-1})$ is irreducible and
$H$ globally generated therefore the map $H^0(X,H)\to
H^0(Y,\omega_Y^{-1})$ is surjective. This is still true for $X_p$,
$Y_p$ for $p$ large enough thus there is a lift $\tau$ of $\sigma$.
Multiply $\tau$ by the canonical section of $\partial X$ and take a
$(p-1)$-power to get the desired splitting.
\end{proof}

\begin{cor}
Let $X$ be a spherical variety and $Y$ a closed $G$-stable
subvariety, then for all but finitely many $p$ the variety $Y_p$ has
rational singularities.
\end{cor}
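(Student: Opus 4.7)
The plan is to reduce the statement to the preceding Frobenius-splitting theorem applied to a suitable equivariant resolution of $Y$. First, I would invoke the earlier result (in Section \ref{section-class}) that any closed $G$-stable subvariety of a spherical variety in characteristic zero is again spherical; thus $Y$ is itself a spherical variety. Next, by the corollary in Section \ref{section-toro}, $Y$ admits a $G$-equivariant toroidal resolution $\pi : \widetilde Y \to Y$, with $\widetilde Y$ smooth and spherical.

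Now I would apply the preceding theorem to $\widetilde Y$ (viewed as a spherical variety in its own right). This yields, for all but finitely many primes $p$, a Frobenius splitting $\varphi$ of $\widetilde Y_p$ that compatibly splits every closed $G$-stable subvariety of $\widetilde Y_p$. Since a toroidal resolution is constructed by subdividing the colored fan, its exceptional locus is a union of $G$-stable divisors, hence is compatibly split under $\varphi$. Normality of $Y$ in characteristic zero guarantees normality of $Y_p$ for almost all $p$, so $\pi_{p,*}\cO_{\widetilde Y_p} = \cO_{Y_p}$ for such $p$.

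The remaining step is to deduce that $R^i \pi_{p,*}\cO_{\widetilde Y_p} = 0$ for $i>0$, which together with the previous point shows that $\pi_p$ is a rational resolution and hence that $Y_p$ has rational singularities. I would invoke a standard theorem from Frobenius-splitting theory (e.g.~in the spirit of Mehta--van der Kallen, cf.\ Brion--Kumar, \cite{BK}): if $\widetilde Y_p$ is smooth and Frobenius split and $\pi_p : \widetilde Y_p \to Y_p$ is a proper birational morphism to a normal variety with the exceptional divisor compatibly split, then $R^i \pi_{p,*}\cO_{\widetilde Y_p} = 0$ for $i>0$. The compatibility hypothesis is precisely what the preceding theorem delivers.

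The main obstacle is this last step: checking that the splitting $\varphi$ produced by the preceding theorem satisfies exactly the compatibility hypotheses of the Frobenius-splitting vanishing theorem being cited. In practice this reduces to verifying that the exceptional locus of a toroidal resolution is $G$-stable (and therefore compatibly split by construction of $\varphi$), and that the resulting splitting is nonzero on the complement of the exceptional divisor so it genuinely descends to a rational resolution; both amount to tracing through the explicit construction of $\varphi$ in the proof of the previous theorem (the $(p-1)$-th power of a section of $\omega^{-1}_{\widetilde Y_p}$ built from the anticanonical decomposition $-K_{\widetilde Y} = \partial \widetilde Y + H$ of Theorem \ref{theo-can}).
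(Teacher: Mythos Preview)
Your overall strategy matches the paper's: reduce to a single spherical variety (you pass directly to $Y$, using that closed $G$-stable subvarieties are spherical; the paper works with $X$ and separately derives normality of $Y_p$), take a toroidal resolution, invoke the Frobenius splitting from the preceding theorem, and deduce the vanishing of higher direct images. Your spreading-out argument for the normality of $Y_p$ is fine and slightly shortcuts the paper, which instead proves normality via surjectivity of restriction maps and \cite[Lemma 3.3.3]{BK}.

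The gap is in the vanishing step. The theorem you invoke --- that a Frobenius splitting which compatibly splits the exceptional divisor of a proper birational morphism $\pi$ forces $R^i\pi_*\cO=0$ --- is not a standard result and is not what Mehta--van der Kallen prove. Compatible splitting of a closed subscheme controls restriction maps to that subscheme; it does not by itself yield relative vanishing for the structure sheaf. (The Mehta--van der Kallen/Grauert--Riemenschneider statement in \cite{BK} gives $R^i\pi_*\omega=0$ for a split smooth source, which is a different assertion.)

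What the paper actually does is observe that the splitting constructed in the previous theorem is a $(p-1)$-th power of a section of $\omega^{-1}$, and that by the anticanonical formula (Theorem \ref{theo-can}) one has $(1-p)K_{\widetilde{X}_p}\geq D$ for any fixed relatively ample $B$-stable divisor $D$ once $p$ is large. Hence the splitting is a $D$-splitting in the sense of \cite[Remark 1.4.2]{BK}. After reducing to $X$ affine (so that relatively ample becomes ample), the $D$-splitting vanishing of \cite[Theorems 1.2.8 and 1.3.14]{BK} gives $H^i(\widetilde{X}_p,\cO_{\widetilde{X}_p})=0$ for $i>0$, hence $R^i\pi_*\cO_{\widetilde{X}_p}=0$ and likewise $R^i\pi_*\omega_{\widetilde{X}_p}=0$. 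Your final paragraph touches the anticanonical decomposition but stops short of the decisive move: upgrading the splitting to a $D$-splitting for an ample $D$. That is the missing ingredient.
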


\begin{proof}
  It is enough to prove that $X_p$ has rational singularities for
  large $p$ and that $Y_p$ is normal for large $p$ since the $Y_p$ is
  again a spherical variety. We may assume $X$
  affine and consider a toroidal resolution $\pi:\Xt\to X$. Pick a
  relative ample $B$-stable divisor $D$. By Theorem \ref{theo-can}, we
  have $(1-p)K_{\Xt_p}\geq D$ for $p$ large. In particular the splitting
  which compatibly splits the closed $G$-stable subvarieties is a
  $D$-splitting by \cite[Remark 1.4.2]{BK}. Furthermore since
  $\pi$ is $G$-equivariant, the splitting vanishes on the exceptional
  locus. By \cite[Theorem 1.2.8 and 1.3.14]{BK},
we get $H^0(X_p,R^i\pi_*\cO_{\Xt_p})=H^i(\Xt_p,\cO_{\Xt_p})=0$ for
  $i>0$ thus $R^i\pi_*\cO_{\Xt_p}=0$ and $R^i\pi_*\omega_{\Xt_p}=0$ for
  $i>0$. This proves that $X$ has rational singularities. We
  also get, for $\Yt$ a closed $G$-stable subvariety of $\Xt$, surjections
  $H^0(\Xt,\pi^*\mathcal{L})\to H^0(\Yt,\pi^*\mathcal{L}\vert_\Yt)$ for
  $\mathcal{L}$ globally generated on $X$. By \cite[Lemma 3.3.3]{BK}
  this proves $\pi_*\cO_\Yt=\cO_Y$ for $Y=\pi(\Yt)$ proving the
  normality of the closed $G$-stable subvarieties in $X$.
\end{proof}

\begin{cor}
Assume that $X$ is proper over an affine, then for all but finitely
many $p$ and for $\mathcal{L}$ nef on $X$, we have
$H^i(X_p,\mathcal{L}_p)=0$ for $i>0$ and the map
$H^0(X_p,\mathcal{L}_p)\to H^0(Y_p,\mathcal{L}_p)$ is surjective for
all $Y$ closed and $G$-stable in $X$.
\end{cor}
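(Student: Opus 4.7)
The plan is to leverage the Frobenius splitting established in the preceding theorem and apply the standard cohomology vanishing for $D$-split schemes as developed in \cite{BK}. The first step is to upgrade the splitting $\varphi = \tau^{p-1}$ to a $D$-splitting, where $D$ is a $B$-stable Cartier divisor on $X$ that is ample relative to the affinization $f \colon X \to \Spec \Gamma(X, \cO_X)$. Such a $D$ exists by the relative analogue of Theorem \ref{theo-pic}. Writing $\omega_X^{-1} = \cO_X(\partial X + H)$ with $H$ a globally generated $B$-stable divisor by Theorem \ref{theo-can}, I would choose an effective representative $H_0$ of $H$ whose support dominates $D$ with sufficient multiplicity; for $p$ large, the divisor $(p-1)(\partial X + H_0)$ of $\tau^{p-1}$ then dominates $D$, so $\varphi$ is a $D$-splitting.

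Next, I would apply the standard Frobenius splitting trick. Tensoring the $D$-splitting $\cO_{X_p} \hookrightarrow F_* \cO_{X_p}(D)$ with the nef line bundle $\mathcal{L}_p$ and using the projection formula ($F^*\mathcal{L}_p = \mathcal{L}_p^p$) yields $\mathcal{L}_p \hookrightarrow F_*(\cO_{X_p}(D) \otimes \mathcal{L}_p^p)$, whence an inclusion $H^i(X_p, \mathcal{L}_p) \hookrightarrow H^i(X_p, \cO_{X_p}(D) \otimes \mathcal{L}_p^p)$. Since $D$ is relatively ample and $\mathcal{L}_p^p$ is nef, the tensor product $\cO_{X_p}(D) \otimes \mathcal{L}_p^p$ is relatively ample, and its higher cohomology on the Frobenius-split proper-over-affine $X_p$ vanishes by the standard iteration argument: embed into $H^i$ of successively higher Frobenius powers and apply Serre vanishing for relatively ample bundles. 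This yields $H^i(X_p, \mathcal{L}_p) = 0$ for $i > 0$.

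For the surjectivity of $H^0(X_p, \mathcal{L}_p) \to H^0(Y_p, \mathcal{L}_p\vert_{Y_p})$, consider the short exact sequence $0 \to \cI_{Y_p} \otimes \mathcal{L}_p \to \mathcal{L}_p \to \mathcal{L}_p\vert_{Y_p} \to 0$; it suffices to prove $H^1(X_p, \cI_{Y_p} \otimes \mathcal{L}_p) = 0$. Since $Y$ is compatibly $D$-split in $X$, the splitting restricts to $\cI_{Y_p} \hookrightarrow F_*(\cI_{Y_p}(D))$, and the same argument as above applied to $\cI_{Y_p} \otimes \mathcal{L}_p$ gives the vanishing; see \cite[Theorem 1.4.8]{BK} for the compatible-splitting version. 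The main obstacle will be the routine but careful verification that the \cite{BK} machinery, typically stated for projective schemes, extends to the proper-over-affine setting and that the $D$-splitting property indeed holds with $D$ relatively ample; both follow the same pattern as the proof of the preceding corollary.
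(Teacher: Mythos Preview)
Your approach is close in spirit to the paper's---both rest on the $D$-splitting machinery of \cite{BK}---but the paper takes a different route that sidesteps a gap in your argument. The paper passes to a toroidal resolution $\pi:\Xt\to X$, invokes the identities $\pi_*\cO_{\Xt_p}=\cO_{X_p}$ and $R^i\pi_*\cO_{\Xt_p}=0$ established in the preceding corollary, and then reduces (via Leray and projection formula) to the vanishing and surjectivity already proved on $\Xt$ in that same preceding proof. All the $D$-splitting work is done on the smooth toroidal $\Xt$, where the splitting is explicitly $\tau^{p-1}$ and Theorem~\ref{theo-can} applies directly.

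Your version works directly on $X$, but the assertion ``$\varphi=\tau^{p-1}$ for a section $\tau$ of $\omega_{X_p}^{-1}$'' is not what the preceding theorem gives you: that splitting is constructed on a smooth complete toroidal model and then pushed down to $X$ via \cite[Lemmas~1.1.7--1.1.8]{BK}. On a possibly singular $X$ the induced splitting is not a priori a $(p-1)$-th power, so your claim that $(p-1)(\partial X+H_0)\geq D$ makes $\varphi$ a $D$-splitting on $X$ needs an extra step (e.g.\ pushing down the $D$-splitting property from the resolution, or arguing via the trace description of $D$-splittings on normal varieties). This is fixable, but the paper's approach---keep the $D$-splitting on $\Xt$ and descend only the cohomological conclusions---is cleaner given what has already been established.
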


\begin{proof}
Pick a toroidal resolution $\pi:\Xt\to X$. By the previous result we
have $\pi_*\cO_{\Xt_p}=\cO_{X_p}$ and $R^i\pi_*\cO_{\Xt_p}=0$ for
$i>0$ so the results follow from the toroidal case explained in the
proof of the previous Corollary.
\end{proof}

\subsubsection{Comments}
Frobenius splitting techniques were first introduced by Mehta and
Ramanathan \cite{mehta}, \cite{mehta-ramanathan} to study Schubert
varieties. For some special classes of spherical varieties, more
precise results have been obtained. Any toric variety is Frobenius
split in any characteristic (see for example \cite{payne}). The
spherical $G\times G$-embeddings of a reductive group $G$ are also
Frobenius split in any characteristic and it can be proved that they
admit a rational resolution so that they are Cohen-Macaulay (see
\cite{strickland} or \cite[Chapter 6]{BK}). In \cite{tange}, R.
Tange explains how to extend these splitting results to some other
cases using generalised parabolic  induction techniques. More
generally, in any characteristic different from $2$, any embedding
of a symmetric variety (see Definition \ref{defi-symm}) is Frobenius
split compatibly with the closed $G$-stable subvarieties (this was
proved in \cite{deconcini-springer} for the Wonderful
compactification, it is easily extended to all embeddings, see
\cite{perrin-wahl}). This was used in \cite{perrin-wahl} to study
the Gau\ss\  map in positive characteristic for cominuscule
homogeneous spaces.

Some much more precise splitting results have been obtained by X. He
and J.F. Thomsen in \cite{HT1} and \cite{HT2}. In particular they
applied these results to describe the singularities of $B\times B$-orbit
closures in spherical $G\times G$-embeddings of a reductive group
$G$ (see also Subsection \ref{plus-loin-orbites}).

\subsection{Convex geometry}
\label{sec-conv}

In this section we assume $\char k=0$ and $X$ to be a projective. We
associate to $X$ several convex polytopes and see that many
geometric properties of $X$ can be described in terms of these
polytopes.

We prove that the moment map gives rise to another characterisation
of spherical varieties and that the classification by colored cones
can partially be translated into convex geometry of the image of the
moment map. Using more general convex polytopes we construct flat
deformation to toric varieties, give a smoothness criterion and
describe a subring of the cohomology ring of spherical varieties.

\subsubsection{Moment map}
\label{section-convex} Assume $k=\mathbb{C}$. For $V$ a $G$-module
and $X$ a projective $G$-variety equivariantly embedded in $\p(V)$,
there exists a moment map $\mu:X\to \ka^\vee$ where $\ka$ is the Lie
algebra of a maximal compact subgroup $K$ of $G$.
The map can be defined as follows. Let $\scal{\ ,\ }$ be an hermitian
$K$-invariant scalar product on
$V$, then $\mu(x)(\kappa)=i\frac{\scal{x,\kappa\cdot
    x}}{\scal{x,x}}$. If $T$ is a maximal torus of $G$ such that
$T_K=T\cap K$ is a maximal torus in $K$, then the dual
$\mathfrak{t}_K^\vee$ of the Lie algebra of $T_K$ is the subspace of
$T_K$-invariants in $\ka^\vee$. This subspace contains the dominant
chamber $\mathfrak{t}_{K,+}^\vee$. The following is a classical
result.

\begin{thm}
\label{theo-image-moment} $\mu(X)\cap\mathfrak{t}_{K,+}^\vee$ is
convex.
\end{thm}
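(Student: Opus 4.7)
The plan is to invoke Kirwan's non-abelian convexity theorem. The first step is to reformulate the claim: the moment map $\mu$ is $K$-equivariant for the coadjoint action on $\ka^\vee$, and every coadjoint $K$-orbit meets the positive chamber $\mathfrak{t}_{K,+}^\vee$ in exactly one point, so
\[
\mu(X)\cap\mathfrak{t}_{K,+}^\vee \;=\; \mu(X_+), \qquad X_+ := \mu^{-1}(\mathfrak{t}_{K,+}^\vee).
\]
The problem thereby reduces to showing that $\mu(X_+)$ is a convex polytope in $\mathfrak{t}_{K,+}^\vee$.

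If $X$ is singular I would either pass to a $K$-equivariant resolution $\widetilde{X}\to X$ (available since $\char k=0$) and transport the Fubini--Study K\"ahler form, recovering the image of the original moment map as a limit under perturbation, or stratify $X$ by $K$-orbit types into smooth Hamiltonian $K$-manifolds (Sjamaar--Lerman) and glue. Either reduction brings us to the case of a smooth projective Hamiltonian $K$-manifold.

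In the smooth case the two ingredients are: (a) the Atiyah--Guillemin--Sternberg theorem applied to the restriction of the action to the maximal torus $T_K$, giving that the image $\mu_T(X)\subset\mathfrak{t}_K^\vee$ of the torus moment map is a convex polytope, equal to the convex hull of the images of the (finitely many) $T_K$-fixed components; and (b) a connectedness statement for the fibers $\mu^{-1}(\xi)$ as $\xi$ ranges over $\mathfrak{t}_{K,+}^\vee$, obtained via Morse theory on the function $\|\mu\|^2$. The latter function is not Morse in the classical sense but is \emph{minimally degenerate} in Kirwan's sense: its critical set decomposes as a disjoint union of shifted symplectic submanifolds along which $\|\mu\|^2$ is locally constant, and this stratification forces $X_+$ to be connected. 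Combined with the Marle--Guillemin--Sternberg local normal form for $\mu$ near the boundary walls of $\mathfrak{t}_{K,+}^\vee$, one then concludes that $\mu(X_+)$ is a convex polytope.

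The main obstacle is step (b): the abelian AGS case is by now routine, but the Morse-theoretic connectedness argument underlying the non-abelian upgrade is technically delicate, especially across the walls of $\mathfrak{t}_{K,+}^\vee$ where the stabilizers jump. Handling singularities of $X$ introduces further bookkeeping but is essentially formal once the smooth Kirwan theorem is in hand.
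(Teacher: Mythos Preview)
Your outline is essentially Kirwan's original argument and is correct; the paper even cites Kirwan's paper as the source of the full result. However, the paper itself does not reprove the theorem via Morse theory on $\|\mu\|^2$. Instead it follows Brion's algebraic reinterpretation: one defines the weight polytope
\[
\Delta(X,\mathcal{L})=\{\lambda \mid \exists\, n:\ n\lambda\in\cX(T)\ \text{and}\ H^0(X,\mathcal{L}^{\otimes n})^{(B)}_{n\lambda}\neq 0\},
\]
observes that it is a convex polytope because the algebra of $U$-invariants $R(X,\mathcal{L})^U$ is integral and finitely generated, and then identifies $\Delta(X,\mathcal{L})$ with $\mu(X)\cap\mathfrak{t}_{K,+}^\vee\cap\cX(T)_\Q$ via Mumford's link between the moment map and GIT stability (Proposition~\ref{prop=p}). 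Your symplectic approach is more general (it applies to arbitrary compact Hamiltonian $K$-manifolds, not only projective varieties) but requires the delicate minimally-degenerate Morse theory and some care with singular $X$. Brion's route is purely algebro-geometric, handles singular projective $X$ without resolution, and---crucially for the rest of the paper---immediately exhibits the moment polytope in terms of the weights appearing in section spaces, which is exactly the description needed for the fan/polytope dictionary and the toric degeneration that follow.
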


In this context, Brion gave a new proof of this result by
reinterpreting the convex polytope
$\mu(X)\cap\mathfrak{t}_{K,+}^\vee$. We shall see that the polytope
contains many information on the geometry of the variety $X$.
Let $\mathcal{L}$ be an ample $G$-linearised line bundle on $X$ and let
$R(X,\mathcal{L})=\oplus_nH^0(X,\mathcal{L}^{\otimes n})$.

\begin{defn}
\label{def-moment-poly}
Set $\Delta(X,\mathcal{L})=\{\lambda\ /\ \textrm{$\exists\ n$\ :\ $n\lambda\in \cX(T)$ and $H^0(X,\mathcal{L}^{\otimes n})^{(B)}_{n\lambda}\neq0$}\}$.
\end{defn}

Because the algebra of $U$-invariants $R(X,\mathcal{L})^U$ is
integral of finite type, the above set $\Delta(X,\mathcal{L})$ is a
convex polytope (see for example \cite[Proposition
  2.1]{brion-moment}). Furthermore one easily checks that it spans an
affine space with vector space direction $\X_\Q$ in $\cX(T)_\Q$ (see
\cite[Proposition 1.2.3]{brion-lectures}).
Assume that $\mathcal{L}=\cO_{\p(V)}(1)$.

\begin{prop}
\label{prop=p} We have
$\Delta(X,\mathcal{L})=\mu(X)\cap\mathfrak{t}_{K,+}^\vee\cap
\cX(T)_\Q$.
\end{prop}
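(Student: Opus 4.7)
Plan. I would prove the two inclusions separately. As a preliminary reduction, passing from $\mathcal{L}$ to a power $\mathcal{L}^m$ (and re-embedding $X$ by the corresponding Veronese) multiplies the symplectic form, hence the moment map, by $m$, while $\Delta(X,\mathcal{L}^m)=m\Delta(X,\mathcal{L})$ by Definition~\ref{def-moment-poly}. Thus in checking equality on rational points I may assume $\lambda\in\cX(T)$ is an integral dominant character and work with a section of $\mathcal{L}$ itself.

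For $\Delta(X,\mathcal{L})\subseteq\mu(X)\cap\mathfrak{t}_{K,+}^\vee$: a nonzero $\sigma\in H^0(X,\mathcal{L})^{(B)}_\lambda$ is a $B$-semi-invariant linear form of weight $\lambda$ on $V$, and such a form vanishes on every $T_K$-weight subspace $V_\mu\subset V$ with $\mu\neq\lambda$. Pick $x\in X$ with $\sigma(\widehat{x})\neq0$ and write $\widehat{x}=\sum v_\mu$ in weight components; then $v_\lambda\neq0$. Choosing a one-parameter subgroup of $T$ on which $\lambda$ takes strictly larger values than the other weights appearing in $\widehat{x}$, the point $[v_\lambda]$ arises as a $T$-fixed limit in $\overline{T\cdot x}\subseteq X$. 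A direct computation from the defining formula of $\mu$ at the $T_K$-weight vector $v_\lambda$ gives $\mu([v_\lambda])=\lambda$, and $\lambda\in\mathfrak{t}_{K,+}^\vee$ since it is dominant.

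For the reverse inclusion, let $\lambda\in\mu(X)\cap\mathfrak{t}_{K,+}^\vee\cap\cX(T)_\Q$ and pick $x\in X$ with $\mu(x)=\lambda$. The strategy is to invoke Kempf--Ness on the shifted linearisation $\mathcal{L}^n\otimes\chi_{-n\lambda}$, defined on a suitable cover of $G$ (or equivalently on the affine cone $\widehat{X}\subset V$ with the enlarged $G\times\G_m$-action). Its moment map is $n\mu-n\lambda$, which vanishes at $x$, so Kempf--Ness forces $x$ to be semistable for the shifted linearisation: there exists a $G$-invariant section in $H^0(X,\mathcal{L}^n\otimes\chi_{-n\lambda})^G$ not vanishing at $x$, i.e.\ a nonzero element of $H^0(X,\mathcal{L}^n)^{(B)}_{n\lambda}$, giving $\lambda\in\Delta(X,\mathcal{L})$.

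The main obstacle is the reverse inclusion: it rests on the full Kempf--Ness theorem (the nontrivial equivalence between vanishing of the shifted moment map on a $K$-orbit and algebraic semistability under $G$), plus the bookkeeping needed to extend $\lambda$ to an honest character of a suitable group when $G$ is semisimple and hence $\cX(G)$ is trivial. The forward inclusion, by contrast, is a direct weight-space computation using the formula for $\mu$ at a $T_K$-weight vector.
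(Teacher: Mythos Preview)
Your reverse inclusion has a genuine gap. Twisting a $G$-linearisation by a character $\chi_{-n\lambda}$ only makes sense if $n\lambda$ is a character of $G$ (or of a cover), but a dominant weight of $T$ extends to a character of a cover of $G$ only when it is trivial on the semisimple part. No ``bookkeeping'' repairs this: for $G$ semisimple and $\lambda\neq0$ there is simply no $G$-linearised line bundle $\mathcal{L}^n\otimes\chi_{-n\lambda}$, so Kempf--Ness cannot be applied to it. If instead you shift only the $T$-linearisation, Kempf--Ness for $T$ produces a $T$-weight vector of weight $n\lambda$ in $H^0(X,\mathcal{L}^n)$, not a $B$-semi-invariant; these differ exactly when $n\lambda$ occurs as a non-highest weight of some irreducible summand. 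The paper's remedy is the standard nonabelian shifting trick: replace the (nonexistent) character twist by the product $X\times G/P_\lambda$ polarised by $\mathcal{L}^{\otimes n}\boxtimes\cO_{G/P_\lambda}(1)$. By Borel--Weil, $G$-invariants of this product are precisely $\Hom_G(V(n\lambda),H^0(X,\mathcal{L}^{\otimes n}))$, and Mumford's result (the appendix to \cite{ness}) gives the equivalence $\lambda/n\in\mu(X)\iff$ this product has a (semi)stable point, which handles both inclusions at once.

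Your forward argument also needs care. With the usual conventions a $B$-semi-invariant $\sigma\in V^\vee$ of weight $\lambda$ is nonzero only on the weight space $V_{-\lambda}$, not $V_\lambda$; and even granting this, the one-parameter-subgroup limit picks out the $(-\lambda)$-component of $\widehat{x}$ only if $-\lambda$ is extremal among the $T$-weights occurring in $\widehat{x}$. When $V$ is reducible this can fail (e.g.\ $G=\SL_2$, $V=V(1)\oplus V(2)$, $\lambda=1$: the weight $-1$ lies strictly inside the weight polytope $[-2,2]$), so $[v_{-\lambda}]$ need not arise as a $T$-limit of $x$. One can rescue the conclusion, but not by the bare weight computation you give; the uniform product argument above avoids this difficulty.
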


\begin{proof}
The main idea is that for $\lambda$ dominant, results of Mumford
(see \cite[Appendix]{ness}) imply that the weight $\lambda/n$ lies
in $\mu(X)$ if and only if $X\times G/P_\lambda$ has a stable point
with respect to the polarisation $\mathcal{L}^{\otimes
  n}\otimes\cO_{G/P_\lambda}(1)$ (here $P_\lambda$ is the parabolic
subgroup stabilising $\lambda$). This in turn is equivalent to the
existence of a non-trivial invariant and therefore a non trivial map
$V_{m\lambda}\to H^0(X,\mathcal{L}^{\otimes mn})$ proving the
result.
\end{proof}

We now characterise spherical varieties thanks to the moment map.

\begin{prop}
\label{prop-moment}
Let $X$ be a normal $G$-variety, then $X$ is spherical if and only if
for each $x\in X$, the fibre $\mu^{-1}(\mu(x))$ is an orbit under the
stabiliser $K_{\mu(x)}$ of $\mu(x)$.
\end{prop}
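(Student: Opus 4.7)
The plan is to reformulate the statement via the Kirwan--Ness theorem and then reduce it to a dimension count linking sphericality to the structure of moment map fibres. By $K$-equivariance of $\mu$, the inclusion $K_{\mu(x)}\cdot x\subset\mu^{-1}(\mu(x))$ is automatic. The key reformulating tool is the Kirwan--Ness theorem, which asserts that for any $x\in X$ with $X$ a projective Hamiltonian $G$-manifold one has
\[
G\cdot x\cap\mu^{-1}(\mu(x))=K_{\mu(x)}\cdot x.
\]
Thus the stated condition is equivalent to the separation property that each fibre of $\mu$ lies in a single $G$-orbit; this reformulation handles the subtle passage between the real-symplectic $K$-action and the complex-algebraic $G$-action.

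For the forward direction, assume $X$ is spherical. By Theorem \ref{theo-char} every $G$-orbit $Y=G\cdot x$ in $X$ is itself spherical, since $Y$ inherits finitely many $B$-orbits from $X$. The essential input is then a dimension identity of Brion (from his 1987 paper on the image of the moment map):
\[
\tfrac{1}{2}\bigl(\dim_{\mathbb{R}}\mu^{-1}(\xi)-\dim_{\mathbb{R}}K_\xi\cdot x\bigr)=c(G\cdot x),
\]
for generic $x\in Y$ with $\xi=\mu(x)$. Sphericality of $Y$ gives $c(Y)=0$, so $\mu^{-1}(\xi)\cap Y=K_\xi\cdot x$ for generic $x\in Y$. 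Combined with Kirwan--Ness applied to each of the finitely many $G$-orbits whose closure meets the fibre $\mu^{-1}(\xi)$, and with the same dimension argument on the smaller boundary orbits, this yields $\mu^{-1}(\xi)=K_{\xi}\cdot x$ for every $x\in X$: two distinct $G$-orbits cannot contribute simultaneously to a single fibre of $\mu$ without violating the dimension bound.

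The reverse direction is the contrapositive of the same dimension identity applied at the open $G$-orbit $G/H$: if the fibre condition holds there, then $\dim_{\mathbb{R}}\mu^{-1}(\xi)=\dim_{\mathbb{R}}K_\xi\cdot x$ at a generic $x$ forces $c(G/H)=0$, i.e.\ $X$ is spherical. The main obstacle is establishing the dimension identity rigorously: it requires matching the complex-algebraic $B$-orbit structure on $X$ (which measures complexity) with the real-symplectic $K_\xi$-orbit structure on moment-map fibres, which is the substance of Brion's paper and of Guillemin--Sternberg's work on the moment map and multiplicities. Once this formula and Kirwan--Ness are in place, together with the care needed to propagate a \emph{generic} statement of the fibre equality to a \emph{pointwise} statement for every $x$, the remainder of the argument is dimensional bookkeeping.
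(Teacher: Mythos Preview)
Your approach is genuinely different from the paper's, and it has a real gap.

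The paper argues via Kirwan's theorem identifying the symplectic reduction $\mu^{-1}(-\lambda/n)/K_{-\lambda/n}$ with the GIT quotient $(X\times G/P_\lambda)/\!\!/G$ for the polarisation $\mathcal{L}^{\otimes n}\otimes\cO_{G/P_\lambda}(1)$. This quotient is always connected. If $X$ is spherical, the invariant ring has one-dimensional graded pieces (multiplicity-freeness), so the GIT quotient is finite, hence a single point; thus $\mu^{-1}(\xi)$ is one $K_\xi$-orbit for \emph{every} $\xi$. Conversely, if every symplectic reduction is a point then so is every GIT quotient, forcing $\dim\Hom_G(V_\lambda,H^0(X,\mathcal{L}^{\otimes n}))\le 1$ for all $\lambda$, which is sphericality by Theorem~\ref{theo-char}. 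The point is that the GIT description works uniformly in $\lambda$, so no genericity issue arises.

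Your argument, by contrast, rests on Brion's dimension identity, which is a statement about \emph{generic} fibres. You acknowledge that propagating this to all $x$ requires ``care'', but you do not supply it, and the dimension formula alone will not do this: over special values $\xi$ the fibre can jump in dimension, and nothing in your argument controls that jump. More seriously, your claim that ``two distinct $G$-orbits cannot contribute simultaneously to a single fibre without violating the dimension bound'' is unsupported. If $\mu^{-1}(\xi)$ met two $G$-orbits $Y_1,Y_2$, each intersection could perfectly well be a single $K_\xi$-orbit of the correct dimension; there is no dimensional contradiction. What actually rules this out is that $\mu^{-1}(\xi)/K_\xi$ is \emph{connected} (Kirwan) and, under sphericality, \emph{discrete} --- and establishing discreteness for every $\xi$, not just generic ones, is exactly what the GIT/multiplicity-free argument provides and what your dimension count does not. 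So the missing idea is precisely the connectedness-plus-finiteness input coming from the symplectic/GIT comparison; without it the forward direction is incomplete.
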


\begin{proof}
We keep the notation of the previous proof. A result of Kirwan
\cite{kirwan} implies: for $\lambda$ dominant, with $\lambda/n\in
\mu(X)$, the quotient $\mu^{-1}(-\lambda/n)/K_{-\lambda/n}$ is
homeomorphic to the GIT quotient $(X\times G/P_\lambda)/\!/G$ (with
polarisation $\mathcal{L}^{\otimes n}\otimes\cO_{G/P_\lambda}(1)$ as
in the proof of Proposition \ref{prop=p}). In particular this
quotient is connected. If $X$ is spherical, then the GIT quotient is
finite and thus reduced to a point. Conversely, if the quotient of
the fiber is a point, so is the GIT quotient thus the algebra
$R(X\times G/P_\lambda,\mathcal{L}^{\otimes
n}\otimes\cO_{G/P_\lambda}(1))^G$ is of dimension 1. Its degree 1
piece $\Hom(V_\lambda,H^0(X,\mathcal{L}^{\otimes n}))^G$ is thus of
dimension at most 1 proving that $X$ is spherical by Theorem
\ref{theo-char}.
\end{proof}

We describe the connection between the colored fan of $X$ and the
convex polytope $\Delta(X,\mathcal{L})$. Let $\sigma\in
H^0(X,\mathcal{L})^{(B)}$, we can write
$$\div(\s)=\sum_{D\in\mathcal{V}(X)\cup\mathcal{D}}n_DD,$$
with $n_D\geq0$. Let $\lambda_\sigma$ be the $B$-weight of $\s$.

\begin{prop}
$\Delta(X,\mathcal{L})=\{\lambda_\s+\xi\ /\
\textrm{$\scal{\rho(\nu_D),\xi}+n_D\geq0$ for $D\in
  \mathcal{V}(X)\cup\mathcal{D})$}\}$
\end{prop}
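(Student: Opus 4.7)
The plan is to prove both inclusions by using division by the fixed section $\s$ to convert $B$-semiinvariant sections of powers of $\mathcal{L}$ into $B$-semiinvariant rational functions, whose valuations are controlled by $\rho$, and then to exploit the fact that $B$-semiinvariant rational functions of a given weight are unique up to scalar because $X$ is spherical (Theorem \ref{theo-char} (i)).

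First I would fix $\lambda$ dominant with $n\lambda \in \cX(T)$ and suppose $\tau \in H^0(X,\mathcal{L}^{\otimes n})^{(B)}_{n\lambda}$ is nonzero. Setting $f = \tau/\s^n \in k(X)^\times$, the function $f$ is $B$-semiinvariant of weight $n(\lambda-\lambda_\s)$. Writing $\xi = \lambda - \lambda_\s$, we have $n\xi \in \X(G/H)$, and by the defining property of $\rho$ one has $\nu_D(f) = \scal{\rho(\nu_D),\, n\xi}$ for every $B$-stable prime divisor $D$. Since $\tau$ is $B$-semiinvariant, its divisor of zeros is $B$-stable, hence supported on $\mathcal{V}(X)\cup\mathcal{D}$, so the condition $\tau \in H^0(X,\mathcal{L}^{\otimes n})$ is equivalent to $\nu_D(f) + n\, n_D \geq 0$ for all $D \in \mathcal{V}(X)\cup\mathcal{D}$. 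Dividing by $n$ gives $\scal{\rho(\nu_D),\xi} + n_D \geq 0$, so $\lambda = \lambda_\s + \xi$ belongs to the right-hand side.

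For the converse inclusion, I would start with $\xi \in \X_\Q(G/H)$ satisfying the inequalities, pick $n>0$ with $n\xi \in \X(G/H)$, and invoke sphericity to produce an essentially unique $f \in k(X)^{(B)}_{n\xi}$; by the same formula $\nu_D(f) = \scal{\rho(\nu_D), n\xi} \geq -n\, n_D$. Then $\tau := f\cdot\s^n$ is a $B$-semiinvariant rational section of $\mathcal{L}^{\otimes n}$ of weight $n(\lambda_\s+\xi)$, whose divisor $\div(\tau) = \div(f) + n\,\div(\s) = \sum_D(\nu_D(f) + n\, n_D)\,D$ is effective on every $B$-stable prime divisor and therefore effective everywhere (its support is automatically $B$-stable). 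Hence $\tau \in H^0(X,\mathcal{L}^{\otimes n})^{(B)}_{n(\lambda_\s + \xi)}$ is nonzero, so $\lambda_\s + \xi \in \Delta(X,\mathcal{L})$.

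There is no real obstacle: the argument is purely formal once one has (a) the one-dimensionality of $k(X)^{(B)}_\chi$ for each weight $\chi \in \X$, and (b) the elementary observation that a $B$-semiinvariant rational section has $B$-stable divisor. The only point requiring some care is ensuring that $n$ can be chosen uniformly so that both $n\xi$ lies in the lattice $\X$ and the corresponding $B$-semiinvariant rational function exists, but this is immediate from the finite generation of $\X$ and the definition of $\Delta(X,\mathcal{L})$ as a $\Q$-polytope.
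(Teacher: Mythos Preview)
Your proof is correct and follows exactly the same approach as the paper: write any $B$-semiinvariant section of $\mathcal{L}^{\otimes n}$ as $\sigma^n f$ for some $f\in k(X)^{(B)}$, and translate effectivity of its divisor into the stated inequalities via $\nu_D(f)=\scal{\rho(\nu_D),n\xi}$. The paper simply states this in one sentence, while you have spelled out both inclusions and the role of sphericity in the converse direction.
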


\begin{proof}
This comes from the fact that a section of $H^0(X,\mathcal{L}^{\otimes
  n})$ is of the form $\sigma^nf$ for some $f\in k(X)$ and defines an
effective divisor.
\end{proof}

Some of the fan geometry can be translated into convex geometry of the
polytope  $\Delta(X,\mathcal{L})$. We only state the following
result.

\begin{thm}
\label{thm-poly=cone} Let $Y$ be a $G$-orbit $Y$.

(\i) $\Delta(\overline{Y},\mathcal{L})$ is a face of
$\Delta(X,\mathcal{L})$.

(\i\i) $\cox$ is the dual cone:
  $\cox=\{\nu\in\X^\vee_\Q\ /\ \scal{\nu,\xi}\geq0\,\ \forall\xi\in\Delta(\overline{Y},\mathcal{L})\}$.

(\i\i\i) $\mathcal{D}_{Y}(X)= \{D\in\mathcal{D}\ /\ \scal{\rho(\nu_D),
    \xi-\lambda_\sigma}+n_D=0,\  \forall\xi\in
  \Delta(\overline{Y},\mathcal{L})\}$.

(\i v) The map $Y\mapsto \Delta(\overline{Y},\mathcal{L})$ is a
  bijection from the $G$-orbits onto the set of faces of
  $\Delta(X,\mathcal{L})$ whose interior meets $\mathcal{V}$.
\end{thm}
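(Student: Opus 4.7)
\emph{Plan.} Fix a section $\sigma\in H^0(X,\mathcal{L})^{(B)}$ with $\div(\sigma)=\sum_D n_D D$. The proposition immediately preceding the theorem writes $\Delta(X,\mathcal{L})-\lambda_\sigma$ as the polyhedron cut out by the half-spaces $H_D=\{\xi\in\X_\Q : \scal{\rho(\nu_D)}{\xi}+n_D\geq 0\}$, one for each $D\in\mathcal{V}(X)\cup\mathcal{D}$. My plan is to prove (i) and (iii) in one stroke by identifying $\Delta(\overline{Y},\mathcal{L})-\lambda_\sigma$ with the face $F_Y$ of this polyhedron where the inequalities indexed by $D\in\mathcal{V}_Y(X)\cup\mathcal{D}_Y(X)$ are exactly the tight ones. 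Statement (ii) is then pure polar duality applied at $F_Y$, and (iv) follows by comparison with the Luna--Vust classification (Theorems \ref{theo-simple} and \ref{thm-class-fan}).

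For (i) and (iii), a section $\sigma'=f\sigma^n\in H^0(X,\mathcal{L}^n)^{(B)}$ of weight $n(\lambda_\sigma+\xi)$ corresponds to $f\in k(G/H)^{(B)}_{n\xi}$ with $\div(f)+n\div(\sigma)\geq 0$, which is precisely the inequalities $\scal{\rho(\nu_D)}{\xi}+n_D\geq 0$. Since $\div(\sigma')$ is $B$-invariant, its irreducible components are $B$-stable divisors in $\mathcal{V}(X)\cup\mathcal{D}$; hence $\sigma'|_{\overline{Y}}\not\equiv 0$ if and only if no divisor of $\mathcal{V}_Y(X)\cup\mathcal{D}_Y(X)$ appears in $\div(\sigma')$, equivalently $\scal{\rho(\nu_D)}{\xi}+n_D=0$ for all such $D$. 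In characteristic $0$, Serre vanishing gives the surjectivity of $H^0(X,\mathcal{L}^n)\twoheadrightarrow H^0(\overline{Y},\mathcal{L}^n|_{\overline{Y}})$ for $n$ large, and semisimplicity of $G$-modules yields a $G$-equivariant splitting; together with $\dim H^0(\overline{Y},\mathcal{L}^n|_{\overline{Y}})^{(B)}_\mu\leq 1$ (sphericity of $\overline{Y}$, Theorem \ref{theo-char}), this guarantees that every $B$-eigensection on $\overline{Y}$ lifts to one on $X$ of the same weight, after possibly replacing $n$ by a multiple. This identifies $\Delta(\overline{Y},\mathcal{L})-\lambda_\sigma$ with $F_Y$ and delivers (i) and (iii) at once.

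The cone $\cox$ is by construction spanned by the $\rho(\nu_D)$ with $D\in\mathcal{V}_Y(X)\cup\mathcal{D}_Y(X)$, namely the inward conormals of the facets of $\Delta(X,\mathcal{L})-\lambda_\sigma$ that are active on $F_Y$, so Farkas' lemma identifies $\cox$ with the inner normal cone of $F_Y$; undoing the translation by $\lambda_\sigma$ yields (ii). Injectivity in (iv) is then automatic: the face determines $(\cox,\mathcal{D}_Y(X))$, hence $Y$, by Theorem \ref{theo-simple}. For surjectivity, a face $F$ of $\Delta(X,\mathcal{L})$ whose normal cone has relative interior meeting $\mathcal{V}$ gives, by the same recipe, a colored cone satisfying (CC1) and (CC2), which must already appear in $\F(X)$ by Theorem \ref{thm-class-fan}, hence comes from a $G$-orbit. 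The main obstacle is the lifting step in (i)+(iii): one must ensure that every $B$-eigensection on $\overline{Y}$ extends to a $B$-eigensection of the same weight on the ambient $X$, and this is exactly where the characteristic $0$ hypothesis is needed, through semisimplicity promoting the scheme-theoretic surjectivity of restriction into a $B$-equivariant splitting.
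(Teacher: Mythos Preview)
The paper does not supply its own proof of this theorem; in the Comments at the end of \S\ref{sec-conv} it simply refers the reader to \cite[Proposition 5.3.2]{brion-lectures}. Your outline follows the standard route found there, so in that sense you have correctly reconstructed the intended argument.

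One point, however, deserves more care. Your restriction/lifting argument shows that $\Delta(\overline{Y},\mathcal{L})-\lambda_\sigma$ coincides with the locus $F_Y\subset\Delta(X,\mathcal{L})-\lambda_\sigma$ on which the inequalities indexed by $\mathcal{V}_Y(X)\cup\mathcal{D}_Y(X)$ become equalities; this gives (\i). But (\i\i\i) is a two-sided statement, and your sentence ``delivers (i) and (iii) at once'' only yields the inclusion $\mathcal{D}_Y(X)\subset\{D\in\mathcal{D}:\scal{\rho(\nu_D),\,\cdot\,}+n_D\equiv 0\text{ on }F_Y\}$. For the reverse inclusion you must exhibit, for each color $D\notin\mathcal{D}_Y(X)$, a $B$-eigensection that restricts nontrivially to $\overline{Y}$ yet vanishes along $\overline{D}$. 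This is not automatic: recall (cf.\ \S\ref{subsection-symmetric}) that $\rho$ need not be injective on $\mathcal{D}$, so two distinct colors may define the same supporting hyperplane, and one must rule out that one contains $Y$ while the other does not. The missing step uses ampleness a second time: since $\mathcal{L}|_{\overline{Y}}$ is ample, $\dim F_Y=\rk(\overline{Y})=\rk(X)-\dim\cox$ by Theorem~\ref{theo-rk}, so the annihilator of the direction space of $F_Y$ is exactly the span of $\cox$; one then invokes the strict inequalities of Theorem~\ref{theo-pic}(\i\i) on the simple piece $X_{Y,G}$ to separate the constants $n_D$ for colors inside and outside $\mathcal{D}_Y(X)$.

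The same omission propagates. Your Farkas step in (\i\i) equates $\cox$ with the cone generated by \emph{all} normals active on $F_Y$, but you have only shown that the generators of $\cox$ are among them, not that there are no others. Likewise, your surjectivity argument in (\i v) produces from a face $F$ a pair satisfying (CC1)--(CC2) and asserts it lies in $\F(X)$ ``by Theorem~\ref{thm-class-fan}''; that theorem classifies embeddings by fans but does not say an abstractly constructed colored cone must already occur in the fan of $X$. What is needed is that the normal fan of $\Delta(X,\mathcal{L})$, restricted to $\mathcal{V}$ and decorated with colors as in (\i\i\i), \emph{is} $\F(X)$ --- and this is precisely the content of (\i\i) and (\i\i\i) once the missing inclusion is supplied.
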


\subsubsection{Deformation to toric varieties}
\label{sec-def}

There is a general framework steaming from convex geometry which
leads to the construction of flat deformation of projective
varieties to toric varieties. This was first initiated by Okounkov
\cite{okounkov1} and \cite{okounkov2} and developed in several
directions (see \cite{lazarsfeld-mustata}, \cite{anderson}). For
spherical varieties, this was first developed by Kaveh
\cite{kaveh-sp-spherique} and Alexeev and Brion \cite{brion-alexeev}
inspired in many aspects by the work of Caldero \cite{caldero} on
toric degeneration of projective rational homogeneous spaces.

\begin{defn}
Let $\Gamma$ be a semigroup in $\N\times\Z^d$, we define the cone $C(\Gamma)$
as the cone in $\R\times\R^d$ generated by $\Gamma$ and the Okounkov body as
  $\Delta(\Gamma)=C(\Gamma)\cap ({1}\times R^d)$.
\end{defn}

\noindent Choose a total ordering which respects addition on $\Z^d$,
a $\Z^d$-valuation on a field $K$ is a map
$\nu:K\setminus\{0\}\to\Z^d$ such that
$\nu(f+g)\geq\min(\nu(f),\nu(g))$ and $\nu(fg)=\nu(f)+\nu(g)$.

\begin{example}
\label{ex-flag-val} Let $X$ be a projective variety of dimension $n$
and $(Y_i)_{i\in[1,n]}$ a complete flag of subvarieties with $Y_i$
normal of dimension $i$, then we may define a valuation
$\nu_{Y_\bullet}:k(X)\to\Z^n$ by the order of functions on the
subvarieties $Y_i$: $\nu_{Y_\bullet}(f)=({\rm
ord}_{Y_1}(f),\cdots,{\rm
  ord}_{Y_n}(f))$.
\end{example}

\begin{example}
\label{ex-val} Starting with a line bundle $\mathcal{L}$ on $X$ and
$\nu$ a $\Z^d$-valuation on $k(X)$, we may define the corresponding
Newton-Okounkov body.
Define the semigroup
$$\Gamma=\Gamma_\nu(X,\mathcal{L})=\{(k,\nu(f))\in\N\times\Z^d\ /\ \textrm{for
} f\in
  H^0(X,\mathcal{L}^{\otimes k})\}.$$
The cone $C(\Gamma)$ and the
  Okounkov body $\Delta(\Gamma)$ will be in this case denoted by
  $C_\nu(X,\mathcal{L})$ and $\Delta_\nu(X,\mathcal{L})$.
\end{example}

\begin{example}
\label{ex-poids}
The moment body $\Delta(X,\mathcal{L})$ is an Okounkov body for the
semigroup obtained using the weights of $B$-semiinvariant functions by
setting
$$\Gamma(X,\mathcal{L}) =
\{(k,\lambda)\in\N\times\cX(T)\ /\  H^0(X,
\mathcal{L}^{\otimes k})^{(B)}_\lambda\neq0\}.$$
\end{example}

Let $\Gamma$ be a semigroup in $\N\times\Z^d$ and let
$R=\oplus_nR_n$ be a graded ring with a filtration
$(\cF_{\gamma})_{\gamma\in\Gamma}$ indexed by $\Gamma$ satisfying
the following conditions:
\begin{itemize}
\item for $\gamma\leq\gamma'$, we have $\cF_\gamma\subset\cF_{\gamma'}$
\item $\cF_\gamma\cdot\cF_{\gamma'}\subset\cF_{\gamma+\gamma'}$.
\end{itemize}
Denote by $\textrm{Gr}_{\cF}(R)$ the graded algebra associated to
the filtration. Denote by $\cF_{\leq\gamma}$ (resp. $\cF_{<\gamma}$)
the union of the $\cF_{\gamma'}$ with $\gamma'\leq\gamma$ (resp.
$\gamma'<\gamma$). We say that the filtration has leaves of
dimension one if $\dim\cF_{\leq\gamma}/\cF_{<\gamma}\leq1$.

\begin{example}
Starting with a valuation coming from a flag of subvarieties as in
Example \ref{ex-flag-val}, then the filtration obtained from the
lexicographical order on $\Z^n$ by $\cF_\gamma=\{f\ /\
\nu(f)\geq\gamma\}$ has leaves of dimension one (see \cite[Lemma
1.3]{lazarsfeld-mustata} ).
\end{example}

\begin{thm}
Assume that $\Gamma$ is finitely generated.

(\i) Then there exists a
finitely generated subalgebra $\mathcal{R}\subset R[t]$ such that
\begin{itemize}
\item $\mathcal{R}$ is flat over $k[t]$;
\item $\mathcal{R}[t^{-1}]\simeq R[t,t^{-1}]$;
\item $\mathcal{R}/t\mathcal{R}\simeq\textrm{Gr}_\cF(R)$.
\end{itemize}

(\i\i) If furthermore the filtration has leaves of dimension one and if
$\Gamma$ contains all the rational points $(\N\times\Z^d)\cap
C(\Gamma)$ of the cone $C(\Gamma)$,
then the limit $\proj(\textrm{Gr}_\cF(R))$ is normal and is the toric
variety associated to the convex polytope $\Delta(\Gamma)$.
\end{thm}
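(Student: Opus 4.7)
The strategy is a Rees-algebra construction, preceded by a reduction of the $\Gamma$-indexed filtration to an $\N$-indexed one. Since $\Gamma$ is a finitely generated subsemigroup of the strictly convex cone $\N\times\Z^d$, I would first choose a group homomorphism $\ell:\Z\times\Z^d\to\Z$ which is strictly positive on $\Gamma\setminus\{0\}$ and strictly order-preserving on a finite generating set of $\Gamma$. The coarser $\N$-filtration $\cG_n=\sum_{\ell(\gamma)\leq n}\cF_\gamma$ then has the same associated graded as $\cF$: strict order-preservation forces $\cG_n/\cG_{n-1}\simeq\bigoplus_{\ell(\gamma)=n}\cF_{\leq\gamma}/\cF_{<\gamma}$, so $\textrm{Gr}_\cG(R)\simeq\textrm{Gr}_\cF(R)$ as graded rings.

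For part (i), I would then form the Rees-type subalgebra
\[
\mathcal{R}=\bigoplus_{n\geq 0}\cG_n\cdot t^n\subseteq R[t].
\]
Finite generation follows from that of $\Gamma$: lifts $f_i\in\cF_{\gamma_i}$ of the generators $\gamma_i$, together with $t$ and $\cG_0$, generate $\mathcal{R}$ as a $k$-algebra. Flatness over $k[t]$ is immediate since $\mathcal{R}$ is torsion-free and $k[t]$ is a PID. The isomorphism $\mathcal{R}[t^{-1}]\simeq R[t,t^{-1}]$ follows because every $r\in R$ lies in some $\cG_n$, so $r=(rt^n)\cdot t^{-n}\in\mathcal{R}[t^{-1}]$. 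Finally, the identity $t\mathcal{R}=\bigoplus_{n\geq 1}\cG_{n-1}t^n$ yields $\mathcal{R}/t\mathcal{R}\simeq\textrm{Gr}_\cG(R)\simeq\textrm{Gr}_\cF(R)$.

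For part (ii), the leaves-of-dimension-one hypothesis endows $\textrm{Gr}_\cF(R)$ with a canonical $k$-basis $(e_\gamma)_{\gamma\in\Gamma}$, with one nonzero element per $\gamma$. The compatibility $\cF_\gamma\cdot\cF_{\gamma'}\subseteq\cF_{\gamma+\gamma'}$ forces $e_\gamma\cdot e_{\gamma'}=c_{\gamma,\gamma'}\cdot e_{\gamma+\gamma'}$ for some $c_{\gamma,\gamma'}\in k$; once one knows $c_{\gamma,\gamma'}\neq 0$, rescaling the basis gives an isomorphism $\textrm{Gr}_\cF(R)\simeq k[\Gamma]$ of graded algebras. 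Under the saturation assumption $\Gamma=(\N\times\Z^d)\cap C(\Gamma)$, the semigroup $\Gamma$ is normal affine, so $k[\Gamma]$ is a normal ring, and the standard toric construction identifies $\proj(k[\Gamma])$ (graded by the first coordinate) with the polarised toric variety attached to $\Delta(\Gamma)=C(\Gamma)\cap(\{1\}\times\R^d)$.

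The main obstacle is the nondegeneracy $c_{\gamma,\gamma'}\neq 0$: without it one only obtains $\textrm{Gr}_\cF(R)$ as a proper quotient of $k[\Gamma]$, not the full semigroup algebra, and the limit could fail to be normal. This is not a formal consequence of the abstract hypotheses on $\cF$, but it holds in all intended applications (Examples \ref{ex-flag-val}, \ref{ex-val}, \ref{ex-poids}), where the ``leading term'' map $R\setminus\{0\}\to\Gamma$ is multiplicative by construction; combined with the saturation of $\Gamma$, this forces $\textrm{Gr}_\cF(R)$ to be the normal toric ring $k[\Gamma]$, yielding the stated normality and toric identification of the special fibre.
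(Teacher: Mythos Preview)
Your overall strategy for (\i) --- reduce the $\Gamma$-filtration to an $\N$-filtration via a linear form and then take the Rees algebra --- is exactly what the paper does. The difference lies in how the linear form is chosen, and this is where your argument has a gap.

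You claim that if $\ell$ is strictly positive on $\Gamma\setminus\{0\}$ and ``strictly order-preserving on a finite generating set of $\Gamma$'', then $\textrm{Gr}_\cG(R)\simeq\textrm{Gr}_\cF(R)$. This does not follow. The given total order on $\Gamma$ (e.g.\ lexicographic) cannot in general be realised by a linear form $\ell:\Z^{1+d}\to\Z$; there will be pairs $\gamma<\gamma'$ in $\Gamma$ with $\ell(\gamma)\geq\ell(\gamma')$, and then $\cF_{<\gamma'}\not\subset\cG_{\ell(\gamma')-1}$, so your decomposition of $\cG_n/\cG_{n-1}$ breaks down. The paper gets around this by first choosing a finite \emph{presentation}: generators $\overline{f}_i$ of $\textrm{Gr}_\cF(R)$ with lifts $f_i$ to $R$, and relations $g_k=\overline{g}_k+(\text{lower terms})$ in $R$ whose leading parts $\overline{g}_k$ present $\textrm{Gr}_\cF(R)$. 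Only \emph{finitely many} degrees occur among the monomials of the $g_k$, and the linear form $\pi$ is chosen (via \cite[Lemma 5.2]{anderson}) to separate just those. The $\N$-filtration is then defined through monomials in the $f_i$, and its associated graded agrees with $\textrm{Gr}_\cF(R)$ by construction of the presentation, not by a comparison of abstract filtrations. Your Rees algebra is the right object; the choice of $\ell$ must be tied to a presentation of $\textrm{Gr}_\cF(R)$, not merely to semigroup generators of $\Gamma$.

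A related point: your finite-generation argument for $\mathcal{R}$ (``lifts $f_i\in\cF_{\gamma_i}$ of the generators $\gamma_i$, together with $t$ and $\cG_0$'') tacitly assumes one algebra generator per semigroup generator $\gamma_i$, which only makes sense under the dimension-one-leaves hypothesis of (\i\i). In (\i) the pieces $\cF_{\leq\gamma}/\cF_{<\gamma}$ may be large, and one must instead start from finite generation of $\textrm{Gr}_\cF(R)$, as the paper does.

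On (\i\i) you are actually more careful than the paper. You correctly note that ``leaves of dimension one'' does not by itself force $\textrm{Gr}_\cF(R)\simeq k[\Gamma]$: one also needs the products $e_\gamma e_{\gamma'}$ to be nonzero. The paper asserts this identification without comment; your remark that it holds in the intended applications (filtrations arising from valuations, where $\nu(fg)=\nu(f)+\nu(g)$ makes the leading-term map multiplicative) is the right justification.
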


\begin{proof}
{\it (\i)} This proof is now classical, we follow \cite{caldero} and
\cite{brion-alexeev}. The ring $\textrm{Gr}_\cF(R)$ is finitely
generated. Pick generators $(\overline{f}_i)$ of degree $\gamma_i$
lift them in $(f_i)$ in generators of $R$. Consider
the morphism of graded rings $S=k[x_i]\to R$, $x_i\mapsto
f_i$.
Pick generators $\overline{g}_k$ of degree $\nu_k$ of the kernel. We
have $\overline{g}_k(f_i)\in R_{<\nu_k}$ and we may find $g_k\in
\overline{g}_k+S_{<\nu_k}$ with $g_k(f_i)=0$. Since the associated
graded map is an isomorphism, so is the map $S/(g_k)\to R$. Pick a
linear map $\Z\times\Z^d\to\Z$ such that $\pi$ is positive on $\gamma_i-\nu_k$
and the generators of $\Z\times \Z^d$ (for the existence see \cite[Lemma
  5.2]{anderson}). Define $R_{\leq j}$ as the span of the monomials in
$f_i$ whose degree has value at most $j$ under the map $\pi$. Then
the ring $\mathcal{R}=\oplus_nR_{\leq j}t^j$ satisfies the
conditions.

{\it (\i\i)} The filtration being with leaves of dimension one, the
limit algebra $\textrm{Gr}_\cF(R)$ is the algebra $k[\Gamma]$ of the
semigroup $\Gamma$. Furthermore, the condition on the semigroup is
classically equivalent to the normality (see \cite[Section 2.1
Proposition 2]{fulton-toric} or \cite[Proposition 1.2]{Oda}).
\end{proof}

We may now apply this result to spherical varieties. This is based on
two main facts. First the multiplicity-free property of spherical
varieties, second combinatorial properties of dual canonical
bases.

Let $X$ be a projective spherical variety and let $\mathcal{L}$ be
an ample $G$-linearised line bundle. Let
$R=R(X,\mathcal{L})=\oplus_nH^0(X,\mathcal{L}^{\otimes n})$. This
ring is multiplicity-free \emph{i.e.}
\begin{equation}
\label{eq-R}
R_n=\bigoplus_{(n,\lambda)\in\Gamma(X,\mathcal{L})}
V(\lambda),
\end{equation}
with
$V(\lambda)$ the representation of highest weight $\lambda$ and
$\Gamma(X,\mathcal{L})$, the semigroup defined in Example
\ref{ex-poids}.

The dual canonical basis $(v_{\lambda,\phi})$
is a basis of $V(\lambda)$ for all $\lambda$. Furthermore there exists
a so called \emph{string} parametrisation by elements in $\Z^d$ (with
$d=\dim U$) satisfying the following conditions (see
\cite{littelman-canonique}). Let
$\Gamma_{can}=\{(\lambda,\phi)\in\cX(T) \times
\Z^d\ /\ \exists\ v_{\lambda,\phi}\neq0\}$.

\begin{thm}
$\Gamma_{can}$ is the intersection of a polyhedral cone
  $\cC_{can}$ with $\cX(T)\times\Z^d$.
\end{thm}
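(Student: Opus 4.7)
The plan is to deduce this statement from Littelmann's explicit description of string polytopes in \cite{littelman-canonique}. First I would fix once and for all a reduced expression $w_0 = s_{i_1} s_{i_2} \cdots s_{i_d}$ for the longest element of the Weyl group $W$. This choice determines, via the Kashiwara crystal operators $\tilde e_{i_k}$, the string parametrization of the dual canonical basis: to each non-zero $v_{\lambda,\phi}$ one associates $\phi = (\phi_1, \ldots, \phi_d)\in \Z_{\geq 0}^d \subset \Z^d$, where $\phi_k$ is the maximal integer such that $\tilde e_{i_k}^{\phi_k}\tilde e_{i_{k-1}}^{\phi_{k-1}} \cdots \tilde e_{i_1}^{\phi_1}$ is non-zero on the corresponding crystal basis vector.

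Next I would invoke Littelmann's main theorem (the ``string polytope'' theorem): for every dominant weight $\lambda$, the set
\[
S_\lambda = \{\phi \in \Z^d \mid v_{\lambda,\phi}\neq 0\}
\]
is precisely the set of integer points of a rational convex polytope $P_\lambda \subset \R^d$, and $P_\lambda$ is cut out by a \emph{finite} collection of inequalities whose coefficients are \emph{linear} functions of the pair $(\lambda,\phi)$. Crucially, the list of inequalities (and their functional form) is independent of $\lambda$: only the constants depend linearly on $\lambda$ through the fundamental weight coordinates, while the non-negativity of the $\phi_k$ contributes the remaining half-spaces.

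Once this is in place, the conclusion is formal. Let $\cC_{can} \subset \cX(T)_\R \times \R^d$ be the set of pairs $(\lambda,\phi)$ satisfying all of Littelmann's inequalities together with $\phi_k \geq 0$. Since these are finitely many linear homogeneous inequalities in $(\lambda,\phi)$, the set $\cC_{can}$ is a rational polyhedral convex cone, and by construction
\[
\Gamma_{can} = \cC_{can} \cap \bigl(\cX(T)\times \Z^d\bigr),
\]
where on the dominant chamber side we use the identification of $\lambda\in\cX(T)$ with its image in $\cX(T)_\R$, and we recall that $V(\lambda)$ is zero unless $\lambda$ is dominant (a condition encoded by linear inequalities on $\lambda$ alone that are part of the cone description).

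The main obstacle is not in assembling the cone -- that step is bookkeeping -- but in establishing the string polytope theorem itself, i.e.\ that $S_\lambda$ is exactly $P_\lambda \cap \Z^d$ with $P_\lambda$ defined by linear inequalities depending linearly on $\lambda$. I would not reprove this here and would simply cite \cite{littelman-canonique}; the argument there proceeds by reducing, via the path model or an inductive analysis along the reduced word, to the explicit Berenstein--Zelevinsky type inequalities governing the action of crystal operators, and it is this reduction that carries all the combinatorial content.
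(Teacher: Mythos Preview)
Your proposal is correct and follows exactly the paper's approach: the paper does not prove this theorem but simply attributes it to Littelmann \cite{littelman-canonique}, and you have spelled out precisely how the statement follows from the string polytope theorem there. Your handling of the dominance condition on $\lambda$ is also appropriate; in Littelmann's description the $\lambda$-inequalities together with the non-negativity conditions already force dominance, so no separate step is really needed, but including it explicitly does no harm.
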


Let $\Delta_{can}$ be the Okounkov body of $\Gamma_{can}$. The
multiplicative properties of the dual canonical basis (proved in
\cite{caldero}) imply that the set
$$\Gamma_{str}=\{(n,\lambda,\phi)\in\N\times\cX(T)\times\Z^d\ /\ (n,\lambda)\in\Gamma(X,\mathcal{L}),\ (\lambda,\phi)\in\Gamma_{can}\}$$
is a semigroup called the string semigroup of $R$. We have an
isomorphism of graded module
\begin{equation}
\label{eq-R-mod}
R\simeq k[\Gamma_{str}]
\end{equation}
 and the
previous results easily imply.

\begin{cor}
Let $X$ be a projective spherical variety with $\mathcal{L}$ ample,
then there exists a flat deformation to a toric variety $X_0=\proj
k[\Gamma_{str}]$
\end{cor}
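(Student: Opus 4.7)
The plan is to apply the general flat-degeneration theorem stated just above to the section ring $R=R(X,\mathcal{L})$, equipped with a filtration indexed by $\Gamma_{str}$ that is engineered out of the dual canonical basis and the multiplicity-free decomposition (\ref{eq-R}). Choose a total order on $\N\times\cX(T)\times\Z^d$ refining the semigroup order and compatible with addition (e.g.\ lexicographic after embedding into $\Z^{d+r+1}$). For $\gamma\in\Gamma_{str}$ define $\cF_\gamma\subset R$ to be the span of all dual canonical basis vectors $v_{\lambda,\phi}$ of $V(\lambda)\subset R_n$ whose index $(n,\lambda,\phi)$ satisfies $(n,\lambda,\phi)\leq\gamma$. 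By the very construction of this basis and (\ref{eq-R}), each successive quotient $\cF_{\leq\gamma}/\cF_{<\gamma}$ is at most one-dimensional, so the filtration has leaves of dimension one, and its associated graded algebra is identified with $k[\Gamma_{str}]$ via (\ref{eq-R-mod}).

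The first thing to check is that $(\cF_\gamma)$ is a multiplicative filtration, i.e.\ $\cF_\gamma\cdot\cF_{\gamma'}\subset\cF_{\gamma+\gamma'}$. This is precisely the content of Caldero's multiplicative property of the dual canonical basis along the string parametrisation, which was already invoked in defining $\Gamma_{str}$ as a semigroup: the product of two basis vectors has a leading term which is again a basis vector whose string index is the sum of the two string indices, and lower order terms lie strictly below in the chosen total order. I will verify this is indeed what is needed to land in $\cF_{\gamma+\gamma'}$.

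Next I would check the hypothesis of finite generation of $\Gamma_{str}$. The semigroup $\Gamma(X,\mathcal{L})$ is finitely generated because $R^U$ is an integral algebra of finite type (already used in the discussion of $\Delta(X,\mathcal{L})$ and in Lemma \ref{elm-fg}), while $\Gamma_{can}$ is finitely generated since it is the intersection of the rational polyhedral cone $\cC_{can}$ with the lattice $\cX(T)\times\Z^d$, by Gordan's lemma. Since $\Gamma_{str}$ is the fibre product of these two finitely generated semigroups over $\cX(T)$, it is itself finitely generated. Moreover, by construction $\Gamma_{str}$ is saturated in $\N\times\cX(T)\times\Z^d$, which gives access to the normality/toric conclusion in part (ii) of the degeneration theorem.

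With these ingredients the flat-degeneration theorem produces a finitely generated $k[t]$-flat graded subalgebra $\mathcal{R}\subset R[t]$ with $\mathcal{R}[t^{-1}]\simeq R[t,t^{-1}]$ and $\mathcal{R}/t\mathcal{R}\simeq\mathrm{Gr}_\cF(R)\simeq k[\Gamma_{str}]$. Taking $\mathrm{Proj}$ relative to $\A^1=\mathrm{Spec}\,k[t]$ yields a flat family $\mathcal{X}\to\A^1$ whose fibre over $t\neq 0$ is $X$ and whose fibre over $t=0$ is $X_0=\mathrm{Proj}\,k[\Gamma_{str}]$, a toric variety for the torus $T\times\G_m^d$ acting through the weights of $\Gamma_{str}$. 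The main obstacle is the multiplicativity of the filtration: everything else is either combinatorial bookkeeping or a direct application of the degeneration theorem, but the compatibility with multiplication is where the specific structure of spherical varieties (via Caldero's theorem on the dual canonical basis) is essential.
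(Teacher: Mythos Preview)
Your approach is exactly what the paper intends: the paper gives no proof beyond ``the previous results easily imply'', and you have correctly identified and assembled those results --- the degeneration theorem, the one-dimensional leaves coming from the basis property, and Caldero's multiplicativity as the crucial input making $(\cF_\gamma)$ a ring filtration and $\Gamma_{str}$ a semigroup.

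There is one soft spot. The claim that $\Gamma_{str}$ is finitely generated ``since it is the fibre product of two finitely generated semigroups over $\cX(T)$'' is not a valid general principle: fibre products of finitely generated commutative monoids need not be finitely generated. Likewise, its saturation is true but not ``by construction''. The clean fix is to establish saturation first and then get finite generation for free via Gordan's lemma. Since $X$ is normal and $\mathcal{L}$ is ample, the section ring $R$ is a normal domain; hence $R^U=R\cap\operatorname{Frac}(R)^U$ is normal as well, and as $R^U\simeq k[\Gamma(X,\mathcal{L})]$ for spherical $X$, the monoid $\Gamma(X,\mathcal{L})$ is saturated, i.e.\ equal to the lattice points in its rational polyhedral cone. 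Since $\Gamma_{can}=\cC_{can}\cap(\cX(T)\times\Z^d)$ is saturated by Littelmann's theorem, it follows that $\Gamma_{str}$ consists of all lattice points in the rational polyhedral cone
\[
\{(t,\lambda,\phi)\ :\ (t,\lambda)\in C(\Gamma(X,\mathcal{L})),\ (\lambda,\phi)\in\cC_{can}\},
\]
and Gordan's lemma then yields finite generation and saturation simultaneously. With this correction your argument is complete.
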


\begin{cor}
\label{cor-sing-rat-def} Projective\footnote{This statement is also
true for affine spherical varieties. Indeed, in the affine case, the
same arguments lead to the existence of a deformation to an affine
normal toric variety.} spherical varieties have rational
singularities.
\end{cor}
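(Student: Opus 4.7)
The plan is to deduce rational singularities of $X$ from the toric degeneration just constructed, combined with the classical fact that normal toric varieties have rational singularities. The main vehicle is Elkik's theorem on the openness of the rational-singularity locus in a flat family.

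First, I would pass from the ring-theoretic statement above to a geometric one: the flat $k[t]$-algebra $\mathcal{R}$ gives a flat projective morphism $\pi : \proj(\mathcal{R}) \to \spec k[t] = \mathbb{A}^1$ whose fiber over $t=0$ is $X_0 = \proj k[\Gamma_{str}]$, a normal projective toric variety, and whose fibers over $t \neq 0$ are all isomorphic to $X$ (via the isomorphism $\mathcal{R}[t^{-1}] \simeq R[t,t^{-1}]$ in the preceding theorem). Second, I would invoke two ingredients: (a) every normal toric variety has rational singularities, so in particular $X_0$ does; (b) by Elkik's theorem on deformations of rational singularities (Invent.\ Math.\ 47 (1978)), the locus $U \subset \mathbb{A}^1$ of points $t$ for which the fiber $\pi^{-1}(t)$ has rational singularities is open. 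Since $0 \in U$, some (in fact most) $t \neq 0$ also lies in $U$, and the corresponding fiber is isomorphic to $X$; this forces $X$ to have rational singularities. The Cohen-Macaulay conclusion is then automatic in characteristic~$0$.

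The main technical point is verifying the hypotheses of Elkik's theorem, most delicately the normality of the special fiber $X_0$. This is precisely the assertion that the semigroup $\Gamma_{str}$ is saturated, which is ensured by the leaves-of-dimension-one hypothesis and the fact, established in the preceding theorem, that $\Gamma_{str}$ contains all integer points of the cone it generates. If one prefers to bypass Elkik, an alternative route is to fix a $G$-equivariant resolution $\widetilde{\mathcal{X}} \to \proj(\mathcal{R})$ of the total space, observe that the vanishing $R^{i}\widetilde{\pi}_{*}\mathcal{O}_{\widetilde{\mathcal{X}}} = 0$ for $i>0$ holds on the special fiber (as $X_0$ has rational singularities), and transfer the vanishing to the generic fiber via cohomology and base change together with semicontinuity; normality of $X$ itself is part of the definition of a spherical variety, so this suffices.
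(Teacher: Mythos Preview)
Your proposal is correct and follows essentially the same approach as the paper: the paper's proof simply cites that toric varieties have rational singularities and then invokes Elkik's stability result for rational singularities under deformation. Your write-up is more detailed (spelling out the passage to the geometric family, the normality of $X_0$, and an alternative via base change), but the core argument is identical.
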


\begin{proof}
Since toric varieties have rational singularities (see \cite[Section
3.5 last Proposition]{fulton-toric}, \cite[Corollary 3.9]{Oda}),
this follows from the stability of rational singularities under
deformations (see \cite{elkik}).
\end{proof}

\subsubsection{Okounkov bodies and degree of line bundles}
\label{subsection-okounkov}

After Okounkov \cite{okounkov1} and La\-zarsfeld and Musta{\c t}\u a
\cite{lazarsfeld-mustata}, Kaveh and Khovanskii study, in a series of
papers \cite{KK-annals},\cite{KK2}, the relationship between
the geometry of varieties (in particular the growth of sections of
line bundles) with the convex geometry of some semigroups in
$\N\times\Z^d$. In particular they prove the following approximation
result (see \cite[Theorem 1.13]{KK-annals}).

\begin{thm}
Let $\Gamma$ be a semigroup in $\N\times\Z^d$, let $\Delta$ be the
corresponding Okounkov body and let $H_\Gamma(k)$ be
the number of elements in $\Gamma\cap(\{k\}\times\Z^d)$.

(\i) Then
$H_\Gamma(k)\sim\textrm{vol}(\Delta)k^q$
where $q=\dim\Delta$ and $\textrm{vol}(\Delta)$ is the volume of the convex
body $\Delta$\footnote{The volume is normalised so that the volume of
  the quotient $\R\times\R^d/\Z\times\Z^d$ is 1.}.

(\i\i) Let $F$ be a polynomial function of degree $n$ on $\R^d$ and let $f$
be its homogeneous part of degree $n$, then
$$\lim_{k\to\infty}\frac{\sum_{(k,x)\in\Gamma}F(x)}{k^{n+q}}=\int_{\Delta}f(x)dx$$
where $dx$ is the Lebesgue measure on $\Delta$.
\end{thm}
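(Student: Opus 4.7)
The plan is to reduce both statements to classical Ehrhart-type asymptotics for the number of lattice points in dilations of convex polytopes, via an approximation of $\Gamma$ by finitely generated sub-semigroups whose Okounkov bodies exhaust $\Delta$.

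First I would treat the case where $\Gamma$ is finitely generated and the cone $C(\Gamma)$ has full dimension $1+q$ with $\Gamma = C(\Gamma)\cap(\N\times\Z^d)$ (i.e., $\Gamma$ is saturated). Then $H_\Gamma(k)$ is exactly the number of lattice points in the $k$-th dilate of $\Delta$ (intersected with the appropriate affine lattice), and standard Ehrhart theory gives $H_\Gamma(k) = \operatorname{vol}(\Delta)\,k^q + O(k^{q-1})$, with the normalization of volume as stated. For part (ii) in this finitely generated saturated setting, the sum $\sum_{(k,x)\in\Gamma} F(x)$ is a Riemann sum for $F$ over $k\Delta$ relative to the lattice, so after dividing by $k^{n+q}$ only the top-degree homogeneous part $f$ survives in the limit, yielding $\int_\Delta f(x)\,dx$.

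Next I would handle the general semigroup by a sandwich argument. The key ingredient is that for any $\epsilon>0$, there exists a finitely generated sub-semigroup $\Gamma_\epsilon \subset \Gamma$ whose Okounkov body $\Delta_\epsilon$ approximates $\Delta$ from inside in the sense that $\operatorname{vol}(\Delta\setminus\Delta_\epsilon)<\epsilon$, and such that the saturation $\widetilde{\Gamma}_\epsilon$ satisfies $\widetilde{\Gamma}_\epsilon\subset\Gamma$ up to finitely many missing points. This requires exhibiting enough generators: one takes finitely many elements of $\Gamma$ whose projections to $\{1\}\times\R^d$ (after rescaling) form an $\epsilon$-net in $\Delta$, together with elements witnessing full dimension of the cone; Gordan's lemma then ensures the saturated sub-semigroup has the correct rational polyhedral Okounkov body. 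Applying the finitely generated case to upper and lower approximations and letting $\epsilon\to 0$ yields (i), and applying the same sandwich (controlling $\sum F$ by positivity of $f$ plus lower-order estimates) gives (ii).

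The main obstacle will be the approximation step: one needs to show that the lattice points of $\Gamma$ in the slice $\{k\}\times\Z^d$ asymptotically fill a positive fraction of the lattice points in $k\Delta$, even though $\Gamma$ itself need not be saturated or finitely generated. Equivalently, one must rule out "sparsity phenomena" where $\Gamma$ meets each slice in far fewer points than its cone would allow. The standard way around this is Khovanskii's theorem on finitely generated semigroups, which says that $\Gamma \cap (\{k\}\times\Z^d)$ agrees with the full lattice-point set of the cone slice outside a uniformly bounded neighborhood of the boundary of $C(\Gamma)$, at least after enlarging $\Gamma$ by a finitely generated piece with the same cone. Once this is in hand, the leading-order count is insensitive to the boundary correction (which is $O(k^{q-1})$), and both (i) and (ii) follow by passing to the limit in the sandwich estimate.
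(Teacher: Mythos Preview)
The paper does not supply its own proof of this theorem: it is quoted as \cite[Theorem 1.13]{KK-annals} and the text moves immediately to applying it. So there is no argument in the paper to compare against.

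Your sketch is essentially the strategy of Kaveh--Khovanski\u\i\ themselves: handle finitely generated semigroups first via Khovanski\u\i's structure theorem (a finitely generated semigroup generating the full lattice contains every lattice point of its cone outside a bounded neighbourhood of the boundary), so that the slice count differs from an Ehrhart count only by $O(k^{q-1})$; then sandwich a general $\Gamma$ between finitely generated sub-semigroups whose Okounkov bodies exhaust $\Delta$. Your identification of Khovanski\u\i's theorem as the key input, and of the ``sparsity'' issue as the main obstacle it resolves, is on target.

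One caveat worth recording: the survey's statement suppresses hypotheses present in \cite{KK-annals} (notably that $\Gamma$ generate a group of full rank, or the analogous condition fixing the correct lattice for the volume normalisation). Your argument implicitly uses this when you say the slice of $\Gamma$ ``agrees with the full lattice-point set of the cone slice'' away from the boundary; without such a hypothesis $\Gamma$ could lie in a proper sublattice and the stated asymptotic would be off by an index factor. This is an omission in the survey's informal statement, not a defect in your approach.
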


Let $X$ be spherical of dimension $n$. We can apply this result in a
straightforward way using the description of the ring
$R(X,\mathcal{L})$ for $\mathcal{L}$ an ample line bundle. Indeed,
Equation (\ref{eq-R}) and Equation (\ref{eq-R-mod}) give
$$\dim H^0(X,\mathcal{L}^{\otimes
  k})=\sum_{(k,\lambda)\in\Gamma(X,\mathcal{L})}\dim V(\lambda)
\sim\textrm{vol}(\Delta_{str})k^{\dim\Delta_{str}}.$$ The function
$\dim V(\lambda)$ is a polynomial function on the characters. It is
given by the formula
$F(\lambda)=\prod_{\a>0}\frac{\scal{\a,\rho+\lambda}}{\scal{\a,\rho}}$.
Set
$f(\lambda)=\prod_{\a>0}\frac{\scal{\a,\lambda}}{\scal{\a,\rho}}$.

\begin{cor}
\label{cor-deg}
We have
$\deg\mathcal{L}=n!\textrm{vol}(\Delta_{str})=n!
\int_{\Delta(X,\mathcal{L})}f(x)dx.$
\end{cor}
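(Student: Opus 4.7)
The plan is to combine the two equations labeled (\ref{eq-R}) and (\ref{eq-R-mod}) with the Kaveh--Khovanski\u\i\ approximation theorem cited above (and the Weyl dimension formula) applied in two different ways to the same Hilbert function $h(k) = \dim H^0(X,\mathcal{L}^{\otimes k})$. Since $\mathcal{L}$ is ample, the asymptotic Riemann--Roch formula gives
\[
h(k) \;\sim\; \frac{\deg\mathcal{L}}{n!}\, k^{n}.
\]
I would use this as the ``known'' asymptotic and read off the two integral expressions from it.

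For the first equality, I would apply part (i) of the Kaveh--Khovanski\u\i\ theorem to the string semigroup $\Gamma_{str}\subset \N\times\cX(T)\times\Z^d$. By the isomorphism $R(X,\mathcal{L})\simeq k[\Gamma_{str}]$ from (\ref{eq-R-mod}), the counting function of $\Gamma_{str}$ in degree $k$ is exactly $h(k)$. The theorem then yields $h(k)\sim \textrm{vol}(\Delta_{str})\, k^{\dim\Delta_{str}}$. Comparing with the Riemann--Roch asymptotic forces $\dim\Delta_{str}=n$ and $\textrm{vol}(\Delta_{str}) = \deg\mathcal{L}/n!$, which is the first equality.

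For the second equality, I would instead use (\ref{eq-R}) to write
\[
h(k) \;=\; \sum_{(k,\lambda)\in\Gamma(X,\mathcal{L})} \dim V(\lambda) \;=\; \sum_{(k,\lambda)\in\Gamma(X,\mathcal{L})} F(\lambda),
\]
where $F$ is the Weyl dimension polynomial of degree $|R^+|$, whose top-degree homogeneous part is $f$. Now apply part (ii) of the Kaveh--Khovanski\u\i\ theorem to the semigroup $\Gamma(X,\mathcal{L})\subset \N\times\X$ with Okounkov body $\Delta(X,\mathcal{L})$ (the moment polytope from Definition \ref{def-moment-poly}, which is an Okounkov body by Example \ref{ex-poids}). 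This gives
\[
h(k) \;\sim\; k^{|R^+|+\dim\Delta(X,\mathcal{L})}\int_{\Delta(X,\mathcal{L})} f(x)\,dx.
\]
Comparing once more with $h(k)\sim \deg\mathcal{L}\cdot k^n/n!$ yields $|R^+| + \dim\Delta(X,\mathcal{L}) = n$ (consistent with the rank of the spherical variety being $n-|R^+|$) and the desired formula $\deg\mathcal{L} = n!\int_{\Delta(X,\mathcal{L})} f(x)\,dx$.

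The only non-routine point is the verification that the semigroups $\Gamma_{str}$ and $\Gamma(X,\mathcal{L})$ really do satisfy the hypotheses of the Kaveh--Khovanski\u\i\ theorem (finite generation, and that they generate the claimed cones asymptotically); for $\Gamma(X,\mathcal{L})$ this is the standard fact that $R(X,\mathcal{L})^U$ is finitely generated (used already in Definition \ref{def-moment-poly}), and for $\Gamma_{str}$ it follows from (\ref{eq-R-mod}) together with the polyhedrality of $\Gamma_{can}$. The dimension count $\dim\Delta_{str}=n$ is the main bookkeeping step, but once ampleness of $\mathcal{L}$ is invoked it is forced by Riemann--Roch, so there is no substantive obstacle.
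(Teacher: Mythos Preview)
Your argument is exactly the one the paper sketches in the paragraph preceding the corollary: apply part (i) of the Kaveh--Khovanski\u\i\ theorem to $\Gamma_{str}$ via (\ref{eq-R-mod}), apply part (ii) to $\Gamma(X,\mathcal{L})$ via (\ref{eq-R}) and the Weyl dimension polynomial, and compare both with the asymptotic Riemann--Roch growth $h(k)\sim \deg(\mathcal{L})\,k^n/n!$.

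One caveat: your parenthetical assertion that $|R^+|+\dim\Delta(X,\mathcal{L})=n$, equivalently that the rank of the spherical variety equals $n-|R^+|$, is not correct in general. The Local Structure Theorem gives $n=\dim(P_X)_u+\rk(X)$, and $\dim(P_X)_u<|R^+|$ whenever the parabolic $P_X$ is strictly larger than $B$ (e.g.\ already for $X=G/P$ with $P$ non-Borel). In those cases the top homogeneous part $f$ of the Weyl polynomial vanishes identically on the affine span of $\Delta(X,\mathcal{L})$ (some factor $\langle\alpha,\cdot\rangle$ with $\alpha$ a root of the Levi is constant on that span, hence zero after homogenisation), so part (ii) only yields the trivial statement $h(k)=o(k^{|R^+|+\rk(X)})$. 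The precise degree formula, as in Brion's original paper, uses the product over the roots of $(P_X)_u$ rather than over all of $R^+$; with that adjustment your dimension count and the comparison go through. The survey's presentation glosses over this point, so your write-up is faithful to it, but the identity you state should be replaced by $\dim(P_X)_u+\rk(X)=n$.
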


\subsubsection{Smoothness criterion}
\label{subsection-smooth}

The same type of arguments were used by Brion \cite{brion-lissite}
to give a smoothness criterion for spherical varieties. Since this
is a local problem, we may assume that $X$ is simple with closed
orbit $Y$. The main idea is to compute the multiplicity of the ideal
of $Y$\footnote{The multiplicity of an ideal $I$ in a local ring $A$
is
  the highest term in $k$ of the function $\textrm{lg}(A/I^n)$.} using an
integral on a simplex. Let $\cox$ be the cone associated to $X$ and
let $\cC_Y(X)$ be its dual. Let $(f_i)$ be generators of $\cC_Y(X)$
and define
$$\cC(Y)=\cC_Y(X)\setminus\bigcap_i(f_i+\cC_Y(X)).$$
Recall the definition of the function $f$ above.

\begin{thm}
The multiplicity of the ideal of $Y$ is given by
$$c!\int_{\cC(Y)}f(x)dx$$
where $c$ is the codimension of $Y$ in $X$.
\end{thm}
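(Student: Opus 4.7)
The plan is to compute the Samuel multiplicity $\mult(I_Y)$ by following the Okounkov-body / Kaveh-Khovanski\u{i} strategy developed in the previous subsections, in the same spirit as Corollary \ref{cor-deg} computed the degree of a line bundle. First I would reduce to the case where $X$ is affine and simple with unique closed orbit $Y$, by passing to an affine $B$-stable open neighborhood of $Y$ (using Sumihiro's Theorem and the Local Structure Theorem); this is harmless since the multiplicity is local at the generic point of $Y$. By multiplicity-freeness (Theorem \ref{theo-char}), the coordinate ring then decomposes as a $G$-module $A = k[X] = \bigoplus_{\lambda \in \Gamma} V(\lambda)$, indexed by a finitely generated semigroup $\Gamma \subset \X$ of $B$-weights of $A^{(B)}$, which by the description in Section \ref{sec-div} is cut out by the inequalities $\langle \rho(\nu_D), \lambda \rangle \geq 0$ for the $B$-stable divisors $D$ of $X$.

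Next I would translate the ideal $I_Y$ and its powers into combinatorial terms. By Theorem \ref{theo-simple}, the $B$-stable divisors containing $Y$ correspond to the edges of $\cox$, and the dual generators $(f_i)$ generate $\cC_Y(X)$. Since $I_Y$ is $G$-stable, a $B$-semi-invariant $f_\lambda$ belongs to $I_Y$ iff $\lambda \in f_i + \Gamma$ for some $i$; iterating, the weights of $(I_Y^k)^{(B)}$ form the semigroup ideal
$J_k = \bigcup_{(i_1,\ldots,i_k)}(f_{i_1}+\cdots+f_{i_k}+\Gamma).$
The key combinatorial observation is that, asymptotically in $k$, the complement $\Gamma\setminus J_k$ is, up to lower-order boundary corrections, the set of lattice points in the dilated region $k\cdot\cC(Y)$, where $\cC(Y)$ is exactly the bounded fundamental region defined in the statement.

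Finally, one applies the Weyl dimension formula $\dim V(\lambda) = F(\lambda) = \prod_{\alpha>0}\langle\alpha,\rho+\lambda\rangle/\langle\alpha,\rho\rangle$, whose top-degree part is $f(\lambda)$, together with the Kaveh-Khovanski\u{i} asymptotic stated immediately before this theorem. After localizing at the generic point of $Y$, which factors out the directions of $\X$ orthogonal to $\cC_Y(X)$ (so that the Hilbert-Samuel function acquires the correct Krull degree $c = \dim\cC_Y(X) = \codim_X Y$), one obtains
$$\mathrm{lg}(\co_{X,Y}/I_Y^k) = \frac{k^c}{c!}\cdot c!\int_{\cC(Y)}f(x)\,dx + O(k^{c-1}),$$
and Samuel's formula then yields $\mult(I_Y) = c!\int_{\cC(Y)}f(x)\,dx$.

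The main obstacle will be the second step, and specifically the precise matching between the complement $\Gamma\setminus J_k$ and the dilated region $k\cdot\cC(Y)$, together with the control of the boundary correction terms so that only the top-degree part $f$ of $F$ contributes to the leading asymptotic. The localization step, which converts the global sum (which would involve the unbounded ``directions parallel to $Y$'' in $\X$) into the genuine codimension-$c$ Hilbert-Samuel polynomial of $\co_{X,Y}$, is the other delicate point and rests on the precise multiplicative structure of the decomposition $A = \bigoplus_{\lambda\in\Gamma}V(\lambda)$ together with the fact that inverting the $B$-semi-invariants not vanishing on $Y$ kills exactly the lineality of $\cC_Y(X)$.
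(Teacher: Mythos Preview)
The paper does not actually prove this theorem; it only records the statement and attributes it to Brion \cite{brion-lissite}, after remarking that ``the same type of arguments'' as for the degree formula (Corollary~\ref{cor-deg}) apply. Your overall strategy --- decompose $k[X]$ and $k[X]/I_Y^k$ into highest weight modules via multiplicity-freeness, plug in the Weyl dimension polynomial, and extract the leading asymptotic via the Kaveh--Khovanski\u{i} theorem --- is exactly the intended one and matches Brion's original approach.

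Two points deserve sharpening. First, your description of the weights of $I_Y^k$ as $J_k=\bigcup_{i_1,\dots,i_k}(f_{i_1}+\dots+f_{i_k}+\Gamma)$ only gives a lower bound: since the product $V(\lambda)\cdot V(\mu)$ in $k[X]$ may contain summands other than the Cartan component $V(\lambda+\mu)$, the weight set of $I_Y^k$ can be strictly larger than $J_k$. What saves the argument is that these extra weights lie in a region of strictly smaller asymptotic volume, so they do not affect the leading term; but this has to be argued (for instance by comparing with the associated graded, or by passing to the horospherical contraction). Relatedly, your assertion that $\Gamma\setminus J_k$ is asymptotically $k\cdot\cC(Y)$ is the crux, and with the paper's literal definition of $\cC(Y)$ this identification is not straightforward (already in the toric case $\mathbb{A}^2$ the set $\cC_Y(X)\setminus\bigcap_i(f_i+\cC_Y(X))$ is unbounded); you should check carefully against Brion's original formulation of the region of integration.

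Second, for the passage from the global length $\dim_k(k[X]/I_Y^k)$ to the Hilbert--Samuel function of the \emph{local} ring $\cO_{X,Y}$, a cleaner route than your ``localization step'' is to invoke the Local Structure Theorem (Theorem~\ref{theo-loc-str}) at the outset: write $X_\sigma\simeq (P_X)_u\times Z$ with $Z$ an affine $L$-spherical slice, so that the multiplicity along $Y$ equals the multiplicity of the slice $Z$ along $Y\cap Z$. This removes the unipotent directions cleanly and lets you run the weight-counting argument on $Z$, where the bookkeeping is more transparent.
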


\begin{cor}
The variety $X$ is smooth if and only if the cone $\cox$ is generated
by a basis of $\X^\vee$ and $\cC(Y)$ is a simplex of volume
$1/\codim(Y)!$ for the form $f(x)dx$.
\end{cor}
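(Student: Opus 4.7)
The plan is to combine the multiplicity formula of the preceding theorem with a standard criterion for regularity of a Cohen-Macaulay local ring. First I would reduce to the case where $X$ is simple with unique closed orbit $Y$: smoothness is local and every spherical variety is covered by $G$-stable simple open subvarieties. For such a simple $X$, since the smooth locus is open and $G$-stable and since every $G$-orbit contains $Y$ in its closure, smoothness of $X$ is equivalent to smoothness along $Y$, and thus to regularity of the local ring $\co_{X,Y}$ at the generic point of $Y$. This local ring has Krull dimension $c=\codim_X Y$.

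In characteristic $0$, Corollary \ref{cor-sing-rat} ensures that $X$ has rational singularities, hence is Cohen-Macaulay, so $\co_{X,Y}$ is a reduced Cohen-Macaulay local ring. For such a ring, a classical result of Nagata asserts that regularity is equivalent to the Hilbert-Samuel multiplicity of its maximal ideal $I_Y$ being equal to $1$. Combining with the preceding theorem yields
$$X \text{ is smooth} \iff c!\int_{\cC(Y)} f(x)\,dx = 1.$$

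It then remains to translate this numerical identity into the stated combinatorial-convex conditions. The direction $\Leftarrow$ is immediate: if $\cC(Y)$ is a simplex of volume $1/c!$ for the form $f(x)\,dx$, then $c!\int_{\cC(Y)} f(x)\,dx = 1$ by definition. For the converse, smoothness implies local factoriality, so by Theorem \ref{thm-Q-fact} the family $(\rho(\nu_D))$ for $D\in\mathcal{V}_Y(X)\cup\mathcal{D}_Y(X)$ is $\Z$-linearly independent in $\X^\vee$; I would then use the finiteness of $\int_{\cC(Y)} f(x)\,dx$ together with the dimension formula $\dim\cox=\rk X-\rk Y$ from Theorem \ref{theo-rk} to upgrade this to the statement that these elements in fact form a $\Z$-basis of $\X^\vee$. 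Once $\cC_Y(X)$ is simplicial with dual basis generators $(f_i)$, one computes directly that $\cC(Y)=\cC_Y(X)\setminus\bigcap_i(f_i+\cC_Y(X))$ is a simplex, and the equality $c!\int f(x)\,dx=1$ pins down its $f$-volume as $1/c!$.

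The main obstacle lies in this last implication. Upgrading $\Z$-linear independence of the $\rho(\nu_D)$ (which only says that $\cox$ is simplicial) to the full statement that they form a basis of $\X^\vee$ requires exploiting the numerical equality carefully, and showing that $\cC(Y)$ is an honest simplex rather than just a region with the correct $f$-weighted integral demands a careful analysis of the shape of $\cC(Y)$ combined with the positivity of the polynomial $f$ on the interior of the dominant chamber. This is precisely the step where one must use the full strength of smoothness, and not merely local factoriality.
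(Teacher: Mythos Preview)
The survey does not supply a proof of this corollary beyond the reference to \cite{brion-lissite}; it is presented as a direct consequence of the multiplicity formula. Your approach is precisely the intended one: since $X$ has rational singularities (Corollary~\ref{cor-sing-rat}) the local ring $\co_{X,Y}$ is Cohen--Macaulay, Nagata's criterion makes its regularity equivalent to Hilbert--Samuel multiplicity~$1$, and the preceding theorem identifies that multiplicity with $c!\int_{\cC(Y)}f(x)\,dx$.

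Your handling of the combinatorial translation in the forward direction can be shortened, and the obstacle you flag is smaller than you fear. Section~\ref{sec-conv} carries the standing hypothesis that $X$ is projective, so the closed orbit $Y$ is complete and hence has rank~$0$; Theorem~\ref{theo-rk} then forces $\dim\cox=\rk X$. Once local factoriality (Theorem~\ref{thm-Q-fact}(\i)) tells you that the generators $\rho(\nu_D)$ of $\cox$ are $\Z$-linearly independent, this dimension count immediately makes them a full $\Z$-basis of $\X^\vee$ --- no appeal to finiteness of the integral is needed. With $\cox$ generated by a lattice basis, the dual cone $\cC_Y(X)$ and hence the region $\cC(Y)$ are determined purely by the construction, so there is no separate issue of proving $\cC(Y)$ ``is a simplex''; the volume condition is then nothing more than a restatement of multiplicity~$=1$. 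In short, the step you identify as the main obstacle largely dissolves once you invoke the projectivity hypothesis rather than the finiteness of the integral.
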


\subsubsection{Intersection theory}
\label{subsection-inter}

Kaveh explained in \cite{kaveh-intersection} how to describe the
subring of the cohomology ring $H^*(X,\R)$ generated by the Picard
group $\pic(X)$ thanks to the formula for the degree of any line
bundle. The main observation is the following result.

\begin{thm}
Let $A=\sum_{i=0}^nA_i$ be a commutative, finite dimensional graded
$k$-algebra, generated in degree 1, satisfying Poincar{\'e} duality and
such that $A_0=A_n=k$.

Let $(H_i)_{i\in[1,r]}$ be a basis of $A_1$ and define the polynomial
$P$ by $P(x_1,\cdots, x_r)=\left(\sum_ix_iH_i\right)^n\in A_n$. Then
the algebra $A$ is isomorphic to $k[Y_1,\cdots,Y_r]/I$ where $I$ is
the ideal
$$I=\left\{f(Y_1,\cdots,Y_r)\ /\ f\left(\frac{\partial}{\partial
  x_1},\cdots,\frac{\partial}{\partial x_r}\right)\cdot P=0\right\}.$$
\end{thm}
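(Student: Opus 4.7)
The plan is to show that the kernel of the surjection $\pi: k[Y_1,\ldots,Y_r]\to A$ sending $Y_i\mapsto H_i$ equals the ideal $I$. Since $A$ is generated in degree $1$ and the $H_i$ span $A_1$, this $\pi$ is a well-defined graded surjection, so an isomorphism $A\simeq k[Y_1,\ldots,Y_r]/\ker\pi$ is automatic and I just need $\ker\pi=I$. Both ideals are homogeneous, so it suffices to treat a homogeneous $f\in k[Y_1,\ldots,Y_r]$ of degree $d$. If $d>n$ both conditions are trivially satisfied ($f(H)\in A_d=0$, and $f(\partial/\partial x)P=0$ since $\deg_x P=n$), so I focus on $0\le d\le n$.

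The key computation is the identity
$$
f(\partial/\partial x_1,\ldots,\partial/\partial x_r)\cdot \Big(\sum_j x_j H_j\Big)^n \;=\; \frac{n!}{(n-d)!}\,\Big(\sum_j x_j H_j\Big)^{n-d}\,f(H_1,\ldots,H_r),
$$
which is immediate for monomials $f=Y_{i_1}\cdots Y_{i_d}$ by the Leibniz rule and extends by linearity. Viewed as a polynomial in the $x_j$'s with coefficients in $A_n\cong k$, this expression vanishes iff all its coefficients vanish, i.e.\ iff $f(H_1,\ldots,H_r)\cdot H_{j_1}\cdots H_{j_{n-d}}=0$ in $A_n$ for every multi-index $(j_1,\ldots,j_{n-d})$.

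Because $A$ is generated in degree $1$, the degree $(n-d)$ monomials in the $H_j$'s span $A_{n-d}$. Thus the previous condition is equivalent to $f(H_1,\ldots,H_r)\cdot a=0$ for every $a\in A_{n-d}$. Now Poincaré duality enters: the multiplication pairing $A_d\times A_{n-d}\to A_n\cong k$ is nondegenerate, so this is equivalent to $f(H_1,\ldots,H_r)=0$ in $A_d$, that is, $f\in\ker\pi$. Combining these equivalences gives $I=\ker\pi$, and hence $A\cong k[Y_1,\ldots,Y_r]/I$.

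The main conceptual point, and the only non-formal step, is the use of Poincaré duality to transform ``annihilates every $a\in A_{n-d}$'' into ``is itself zero''. Everything else (the differential identity, reduction to homogeneous parts, and the spanning property of monomials) is formal. This is a version of Macaulay's inverse systems / apolarity for standard graded Gorenstein Artinian algebras, and the argument sketched above is essentially the standard one in that framework.
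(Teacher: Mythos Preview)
Your argument is correct. The paper does not actually supply a proof of this theorem: it is stated as a known fact, attributed in the surrounding text to Kaveh \cite{kaveh-intersection}, and used without justification. So there is nothing in the paper to compare against beyond the bare statement.

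For completeness, one small point worth making explicit: the factor $\dfrac{n!}{(n-d)!}$ in your key identity, as well as the multinomial coefficients appearing when you expand $(\sum_j x_jH_j)^{n-d}$, must be nonzero for the equivalence ``$f(\partial/\partial x)P=0 \Leftrightarrow H^{\alpha}f(H)=0$ for all $|\alpha|=n-d$'' to go through. This is guaranteed by the standing hypothesis $\char k=0$ of the section, but it is the one place where that hypothesis is genuinely used, so it would not hurt to flag it. Otherwise the proof is clean and is exactly the standard apolarity argument, as you note.
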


Let $X$ be a spherical projective variety of dimension $n$ and fix
$(H_i)_{i\in[1,r]}$ a basis of $\pic(X)$. For $\mathcal{L}$ an ample
line bundle, write $\mathcal{L}=\sum_ix_iH_i$ and define
$P(x_1,\cdots,x_r)=\deg(X,\mathcal{L})=n!\textrm{vol}(\Delta_{str})$.

\begin{cor}
Assume that the subalgebra of $H^*(X,\R)$ generated by $\pic(R)$
satisfies Poincar{\'e} duality, then this algebra is isomorphic to
$\R[Y_1,\cdots,Y_r]/I$ where $I$ is the ideal
$$I=\left\{f(Y_1,\cdots,Y_r)\ /\ f\left(\frac{\partial}{\partial
  x_1},\cdots,\frac{\partial}{\partial x_r}\right)\cdot P=0\right\}.$$
\end{cor}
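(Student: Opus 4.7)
The plan is to apply the algebraic theorem stated just above directly to the subalgebra $A \subset H^*(X,\R)$ generated by $\pic(X)$, using Corollary \ref{cor-deg} to identify the polynomial $P$ of the theorem with the degree function $\mathcal{L} \mapsto \deg(X,\mathcal{L})$. The content is essentially a matter of checking hypotheses and making the right identifications, since the abstract algebraic statement does the main work.

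First I would verify that the subalgebra $A$ satisfies the hypotheses of the preceding theorem. Grade $A$ by $A_k = A \cap H^{2k}(X,\R)$. Then $A$ is commutative and finite dimensional, $A_0 = \R$, and $A$ is generated in degree~$1$ since by definition its generators come from $\pic(X) \subset H^2(X,\R) = A_1$. Poincar{\'e} duality for $A$ is the hypothesis of the corollary. To see $A_n = \R$ (with $n = \dim X$), pick an ample line bundle $\mathcal{L}$: then $\mathcal{L}^n$ is a positive multiple of the point class, so $A_n$ is nonzero, and Poincar{\'e} duality together with $A_0 = \R$ forces $\dim A_n = 1$.

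Next I would identify the polynomial $P$. Fix the basis $(H_i)_{i \in [1,r]}$ of $\pic(X)$, and for $\mathcal{L} = \sum_i x_i H_i$, the product $\mathcal{L}^n \in A_n$ is a real multiple of the fundamental class. That multiple is precisely the self-intersection number $\deg(X,\mathcal{L})$. For $x = (x_i)$ in the ample cone this equals $n!\,\textrm{vol}(\Delta_{str}) = n!\int_{\Delta(X,\mathcal{L})} f(y)\,dy$ by Corollary \ref{cor-deg}, and since both sides are polynomial in $x$, the identity extends to all of $\R^r$. Thus $P(x_1,\ldots,x_r) = \deg(X,\mathcal{L}) = \left(\sum_i x_i H_i\right)^n$ as required by the theorem.

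With $A$ satisfying all the hypotheses and $P$ identified as above, the preceding theorem yields directly the isomorphism $A \simeq \R[Y_1,\ldots,Y_r]/I$ with $I$ the ideal of differential operators annihilating~$P$, which is the statement of the corollary. I do not see a serious obstacle: the only nontrivial input is the explicit expression for $P$ via the Okounkov body, which is Corollary \ref{cor-deg}, and the only verification requiring a moment's thought is that the ample-cone formula for the degree indeed extends to a polynomial identity on all of $\R^r$ (this is automatic because $\mathcal{L} \mapsto \mathcal{L}^n$ is polynomial on $N^1(X)_\R$ and the ample cone is Zariski dense).
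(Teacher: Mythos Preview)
Your proposal is correct and follows exactly the approach the paper intends: the paper gives no separate proof of this corollary, treating it as immediate from the preceding theorem once $P(x_1,\ldots,x_r)$ is defined as $\deg(X,\mathcal{L}) = n!\,\textrm{vol}(\Delta_{str})$ in the paragraph before the statement. Your write-up simply makes explicit the hypothesis checks (generation in degree~$1$, $A_0=A_n=\R$, identification of $P$ with the intersection polynomial via Corollary~\ref{cor-deg}) that the paper leaves to the reader.
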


This result in particular generalises Borel's presentation of the
cohomology ring of complete flag varieties \cite{borel} and the description by
\cite{khovanskii-pukhlikov} of the cohomology ring of toric
varieties.

\subsubsection{Comments} Theorem \ref{theo-image-moment} was initiated by Guillemin
and Steinberg \cite{gui-steinI}, \cite{gui-stein2} and proved in
full generality in \cite{kirwan}. Proposition \ref{prop-moment} was
proved in a more general setting by Atiyah \cite{atiyah}. We follow
the proof of Brion \cite{brion-moment}. The results on the moment
map are mainly taken from \cite{brion-moment}. The proof of Theorem
\ref{thm-poly=cone} can be found in \cite[Proposition
5.3.2]{brion-lectures}.

Our presentation of toric degenerations follows
\cite{brion-alexeev}, \cite{anderson} and \cite{KK2}. In a recent
preprint \cite{kaveh-crystal}, Kaveh recovers the parametrisation of
the dual canonical basis via Okounkov type valuations as in Example
\ref{ex-flag-val} and recovers this way the multiplicative property
of the dual canonical basis. Note also that Alexeev and Brion
constructed moduli spaces for spherical varieties and some of their
deformations in \cite{AB}.
See also the survey \cite{brion-hilbert}.

The formula in Corollary \ref{cor-deg} for the degree was first
proved in \cite{brion-nombre-car} and recovered using general
Okounkov bodies techniques in \cite{KK2}. The smoothness criterion
appears in \cite[Th{\'e}or{\`e}me and Corollaire
4.1]{brion-lissite}.

The comparison between the polytope ring and Schubert calculus for
homogeneous spaces of type $A$ have been pursued in
\cite{kirichenko} and \cite{valentina-evgeny}.

The connection between spherical varieties and symplectic geometry
is illustrated by the solution of Delzant's conjecture. In
\cite{delzant}, Delzant conjectured that, if $K$ is a compact Lie
group, the symplectomorphic class of a multiplicity-free Hamiltonian
$K$-manifold (multiplicity-free means here that $K$ has a dense
orbit in the preimage of any coadjoint orbit by the moment map) is
determined by the convex polytope $\mu(X)\cap\mathfrak{t}_{K,+}^\vee$
(obtained as the
intersection of the image of the moment map with the dominant
chamber) and by its general stabiliser (the stabiliser of a general
point in $\mu^{-1}(\mu(X)\cap\mathfrak{t}_{K,+}^\vee)$). Knop
\cite{knop-jams2} proved that this conjecture is equivalent to the
fact that affine spherical varieties are determined by their weight
monoid. This in turn was proved by Losev
\cite{losev2},\cite{losev4}. Furthermore Knop describes in
\cite{knop-jams2}, building on a classification of smooth affine
spherical varieties in \cite{knop-van}, which moment polytopes and
general stabilisers actually occur. We refer to \cite{delzant2},
\cite{losev3} and \cite{knop-jams2} for more details on this topic.


\subsection{$B$-orbits}
\label{sec-B-orb}

An interesting combinatorial object associated to any spherical
variety $X$ is the set $B(X)$ of $B$-orbit closures in $X$. As for
homogeneous spaces, this set has two natural orderings: Bruhat and
weak Bruhat orders.

Bruhat order is given by inclusion.
The weak Bruhat order on $B(X)$ is defined as follows. The maximal
elements for the weak order are the closures of the $G$-orbits. If
$Y\in B(X)$ is not $G$-stable, then there exists a minimal parabolic
subgroup $P$ containing $B$ such that $PY\neq Y$. We say that $P$
\emph{raises} $Y$ to $PY$ and write $Y\preccurlyeq PY$. These are
the covering relations of the weak Bruhat order.

\subsubsection{Graph $\Gamma(X)$}
For a covering relation $Y\preccurlyeq PY$, we may consider the
proper morphism $\pi:P\times^BY\to PY$.

\begin{prop}
\label{prop-UNT}
One of the following occurs.
\begin{itemize}
\item Type $U$: $PY=Y\cup Y'$ and $\pi$ is birational.
\item Type $N$: $PY=Y\cup Y'$ and $\pi$ has degree 2.
\item Type $T$: $PY=Y\cup Y'\cup Y''$ for $Y''\in B(X)$ with $\dim
  Y''=\dim Y$ and $\pi$ is birational.
\end{itemize}
\end{prop}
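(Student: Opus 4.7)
The plan is to analyse the proper surjective morphism $\pi \colon P \times^B Y \to PY$. Since $PY \neq Y$ and $P/B \cong \p^1$, one has $\dim PY = \dim Y + 1 = \dim(P\times^B Y)$, so $\pi$ is generically finite of some degree $d \geq 1$; proving the trichotomy reduces to computing $d$ and identifying the $B$-orbit closures of $PY$ of dimension $\dim Y$. I would exploit the $B$-equivariant fibration $P \times^B Y \to P/B$, for which $B$ has exactly two orbits on $P/B \cong \p^1$: the fixed point $[B]$ and the open affine line $\A^1$. The fiber $E := \{[B]\}\times Y$ maps isomorphically onto $Y \subset PY$ under $\pi$, and the dense $B$-orbit of $P\times^B Y$ lies over $\A^1$ and maps onto the dense $B$-orbit of $PY$, whose closure is by definition $Y'$.

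Next I would compute $d$ by working over a generic point of $(PY)^\circ$, say $s\cdot y^0$ where $s$ is the simple reflection defining $P$ and $y^0$ generates the open $B$-orbit $Y^0 \subseteq Y$. A short calculation yields $\pi^{-1}(sy^0) \cong \{pB \in P/B : p^{-1} s y^0 \in Y\}$; combined with the Bruhat decomposition $P = B \sqcup BsB$ and the dimension equality $\dim P_{y^0} = \dim(B \cap P_{y^0})$ forced by $\dim PY = \dim Y + 1$, this shows $d \in \{1,2\}$. The case $d = 2$ is type $N$: $\pi$ is generically $2$-to-$1$, the divisor $E$ maps (with ramification) onto $Y$, and $(PY)^\circ$ together with $Y^0$ are the only $B$-orbits of $PY$ of dimension $\geq \dim Y$, giving $PY = Y \cup Y'$.

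When $d = 1$, $\pi$ is birational and the dichotomy between types $U$ and $T$ is governed by the position of $sy^0$. If $sy^0 \in Y^0$, then $Py^0 = By^0 \cup BsBy^0 = Y^0 \cup (PY)^\circ$ and no further $B$-orbit closure of dimension $\dim Y$ arises, giving type $U$ with $PY = Y \cup Y'$. Otherwise $B\cdot sy^0$ is a new $B$-orbit of dimension $\dim Y$, whose closure supplies a third orbit closure $Y''$ with $\dim Y'' = \dim Y$, and one obtains type $T$. The main obstacle will be ruling out exotic configurations (degrees $d \geq 3$, or further codimension-one $B$-orbit closures in $PY$); my plan is to reduce to the case where $P$ has semisimple rank one via the local structure Theorem \ref{theo-loc-str}, and then to invoke the classification of spherical $P$-varieties of rank at most one for such $P$ (cf.\ Akhiezer's classification recalled in Subsection \ref{comments-class}), which leaves exactly the three listed types.
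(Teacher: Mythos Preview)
Your overall direction is sound but there is a genuine gap, and the fix you propose is much heavier than what is needed.

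The gap is your choice of test point. You write ``working over a generic point of $(PY)^\circ$, say $s\cdot y^0$'', but $sy^0$ is \emph{not} generic in $PY$ except in type~$U$. Indeed, the $B$-orbit of $sy^0$ corresponds to the $P_{y^0}$-orbit of $sB$ in $P/B\cong\p^1$; in type~$N$ the image of $P_{y^0}$ swaps $eB$ and $sB$, so $B\cdot sy^0=B\cdot y^0=Y^0$, while in type~$T$ the image fixes both points and $B\cdot sy^0$ is the new orbit $Y''^0$. In neither case does the fibre over $sy^0$ compute the generic degree $d$. Your subsequent dichotomy ``$sy^0\in Y^0$ or not'' therefore does not separate $U$ from $T$ either: it separates $N$ and $T$ on one side from $U$ on the other, which is not the split you want.

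The paper's argument avoids all of this and makes your appeal to Akhiezer's classification unnecessary. Take $x$ general in the dense $B$-orbit of $Y$ and let $P_x$ be its stabiliser in $P$. The bijection between $B$-orbits in $P\cdot x$ and $P_x$-orbits on $P/B\cong\p^1$ reduces everything to the image $K$ of $P_x$ in $\aut(P/B)\cong\textrm{PGL}_2$. Since $X$ is spherical there are finitely many $B$-orbits in $P\cdot x$, hence finitely many $K$-orbits on $\p^1$, so $K$ is positive-dimensional; since $(P_x)^0=(B_x)^0\subset B$, the identity component $K^0$ is solvable. The positive-dimensional subgroups of $\textrm{PGL}_2$ with solvable identity component are exactly: those containing a maximal unipotent (one fixed point plus the open orbit, type~$U$), a maximal torus (two fixed points plus the open orbit, type~$T$), and the normaliser of a maximal torus (one orbit of size two plus the open orbit, type~$N$). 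This gives the trichotomy directly, and the degree of $\pi$ is read off as $[P_x:B_x]$, equal to the size of the $K$-orbit of $eB$, hence $1$, $1$, $2$ respectively. No local structure theorem and no rank-one classification are needed.
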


\begin{proof}
For $x$ general in $Y$, write $P_x$ for the stabiliser of $x$ in
$P$. If $\cO$ is the dense $B$-orbit in $Y$, the orbits in $P\cO$
are in bijections with the orbits of $P_x$ in $P/B$. Considering the
image of $P_x$ in $\aut(P/B)$ there are three cases: the image
contains the maximal unipotent subgroup (type $U$), the image is the
normaliser of a maximal torus (type $N$) or the image is a maximal
torus (type $T$).
\end{proof}

\begin{defn}
The graph $\Gamma(X)$ has $B(X)$ as set of vertices and has a
$P$-edge of type $U$, $T$ or $N$ for each covering relation
$Y\preccurlyeq PY$ of this type.
\end{defn}

\begin{example}
For projective rational homogeneous spaces, the graph $\Gamma(X)$ is
connected and has only edges of type $U$.
\end{example}

Some geometric properties of the orbit closures can be seen on the
graph $\Gamma(X)$.

\begin{cor}
\label{coro-non-normal} Assume that $P_1$ and $P_2$ raise $Y$ to
$Y_1$ and $Y_2$ with type $U$ or $T$ and $N$ respectively and that
$P_2$ raises $Y_1$ to $Y_3$ with type $U$ or $T$, then $Y_3$ is not
normal along $Y_2$.
\end{cor}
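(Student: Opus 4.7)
The plan is to argue by contradiction: assuming $Y_3$ is normal along $Y_2$, I would derive an incompatibility between the proper birational raising map $\pi\colon P_2\times^B Y_1 \to Y_3$ (birational because $P_2$ raises $Y_1$ to $Y_3$ of type $U$ or $T$) and its restriction to the closed subvariety $P_2\times^B Y$, which is the type-$N$ raising map $P_2\times^B Y\to Y_2$ of degree $2$.

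First I would note that the closed $B$-stable inclusion $Y\hookrightarrow Y_1$ induces a closed embedding $P_2\times^B Y\hookrightarrow P_2\times^B Y_1$, and that the restriction of $\pi$ to $P_2\times^B Y$ is precisely the raising map $\pi'\colon P_2\times^B Y\to P_2Y=Y_2$. By Proposition \ref{prop-UNT}, $\pi'$ has degree $2$, so there is a dense open subset $V\subset Y_2$ such that for every $v\in V$ the fiber $(\pi')^{-1}(v)$ consists of two distinct points; these also appear as two distinct points of $\pi^{-1}(v)$.

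Next, under the assumption that $Y_3$ is normal along $Y_2$, I would pick a normal open neighborhood $U$ of $Y_2$ in $Y_3$ and apply Stein factorization to the proper birational morphism $\pi^{-1}(U)\to U$: its finite factor is birational and hence an isomorphism by normality of $U$, so $\pi$ has connected fibers over every point of $Y_2$. Combined with the previous step, the fiber $\pi^{-1}(v)$ over a general $v\in Y_2$ is then connected and contains at least two distinct points, hence must be positive-dimensional.

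Finally, a dimension count closes the argument. Positive-dimensional general fibers over $Y_2$ force $\dim\pi^{-1}(Y_2)\geq\dim Y_2+1$. On the other hand, birationality of $\pi$ together with $Y_2\subsetneq Y_3$ makes $\pi^{-1}(Y_2)$ a proper closed subvariety of the irreducible variety $P_2\times^B Y_1$ of dimension $\dim Y_1+1=\dim Y_3=\dim Y_2+1$, so $\dim\pi^{-1}(Y_2)\leq\dim Y_2$. This is the desired contradiction. The main technical subtleties are the properness of $\pi$ (which follows by factoring $P_2\times^B Y_1\hookrightarrow P_2\times^B X\simeq P_2/B\times X$ and projecting to $X$, using that $P_2/B\simeq\p^1$ is proper) and invoking Zariski's main theorem only along $Y_2$ rather than globally, both of which are routine.
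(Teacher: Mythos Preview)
Your proof is correct and follows the same approach as the paper: both use that $\pi\colon P_2\times^B Y_1\to Y_3$ is proper birational while its restriction $P_2\times^B Y\to Y_2$ has disconnected general fibers, and then invoke Zariski's main theorem. The paper's proof is terse and leaves the final contradiction implicit; your dimension count (connected fibers containing two distinct points must be positive-dimensional, forcing $\dim\pi^{-1}(Y_2)\geq\dim Y_2+1$, against $\dim\pi^{-1}(Y_2)\leq\dim Y_2$) is exactly the missing detail, and your justification of properness is a welcome addition.
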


\vskip -0.8 cm \hskip 1 cm 

\centerline{ \begin{tikzpicture}
   \node (Y3) at (9,8)  {$Y_3$};
  \node (Y1) at (9,7) {$Y_1$};
  \node (Y2) at (11,7)  {$Y_2$};
  \node (Y) at (10,6)  {$Y$};
    \draw[double] (Y) -- node {\tiny{2}} (Y2);
    \draw (Y1) -- node {\tiny{2}} (Y3);
    \draw (Y) -- node {\tiny{1}} (Y1);
\end{tikzpicture}}

\centerline{Graph $\Gamma(X)$}

\centerline{\begin{tikzpicture}
  \node (0) at (2,8.2) {};
  \node (1) at (3,7.5)  {};
  \node (2) at (4,7.5) {};
  \node[anchor=west] (3) at (4,7.5)  {\small{$P_1$-edges of type $U$ or $T$}};
\node (4) at (3,7)  {};
  \node (5) at (4,7) {};
  \node[anchor=west] (6) at (4,7)  {\small{$P_2$-edges of type $U$ or $T$}};
  \node (7) at (3,6.5)  {};
  \node (8) at (4,6.5) {};
  \node[anchor=west] (9) at (4,6.5)  {\small{$P_2$-edges of type $N$}};
    \draw[double] (7) -- node {\tiny{2}} (8);
    \draw (4) -- node {\tiny{2}} (5);
    \draw (1) -- node {\tiny{1}} (2);
\end{tikzpicture}}


\medskip

\begin{proof}
The morphism $P_2\times^BY_1\to Y_3$ is birational while its
restriction $P_2\times^BY\to Y_2$ has non connected fibres. Zariski
main Theorem gives the conclusion.
\end{proof}

\begin{example}
\label{exam-non-normal} Take $G={\rm Sp}_4$ and $H$ the normaliser
of the Levi subgroup of the maximal parabolic subgroup associated to
the simple long root. Then $G/H$ contains a configuration of
$B$-orbits as in the above corollary (see \cite[Example
1]{brion-com-math-helv}).
\end{example}

\subsubsection{Normality criterion}

As Corollary \ref{coro-non-normal} shows, in some cases, the
existence of edges of type $N$ is the graph $\Gamma(X)$ prevents some
orbit closures from being normal. If there is no edge of type $N$,
Brion proved the following general result.

\begin{defn}
A variety $Y\in B(X)$ is called multiplicity-free if there is no
edge of type $N$ in the full subgraph of $\Gamma(X)$ with vertices
the elements $Y'\succcurlyeq Y$.
\end{defn}

\begin{thm}
\label{thm-normal} Let $Y\in B(X)$ be multiplicity-free and assume
that for $Y'\succcurlyeq Y$, the variety $Y'$ contains a $G$-orbit
if and only if $Y'=GY$. If $GY$ is normal, Cohen-Macaulay or has a
rational resolution, then so does $Y$.
\end{thm}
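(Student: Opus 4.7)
The plan is to argue by descending induction on the length of a covering chain from $Y$ up to $GY$ in the weak Bruhat order on $B(X)$; the base case $Y=GY$ is the hypothesis. For the inductive step, the second hypothesis forces $Y\neq GY$ to be non-$G$-stable, so some minimal parabolic $P\supseteq B$ raises it to a strictly larger closure $PY$. By the multiplicity-free assumption, Proposition \ref{prop-UNT} rules out the type-$N$ case, so the natural morphism $\pi\colon Z:=P\times^B Y\to PY$ is proper and birational.

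I would first check that both hypotheses descend from $Y$ to $PY$: the upper set of $PY$ in $\Gamma(X)$ sits inside that of $Y$ and so contains no type-$N$ edges, and every $Y'\succcurlyeq PY$ is $\succcurlyeq Y$, so it contains a $G$-orbit only when $Y'=G\cdot PY=GY$. By induction $PY$ then has the requested property, and the problem reduces to transferring it from $PY$ through $\pi$ to $Z$, and then from $Z$ to $Y$.

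The transfer $Z\rightsquigarrow Y$ is painless: the second projection $Z\to P/B\cong\PP^1$ is a Zariski-locally trivial fibration with fibre $Y$, so normality, Cohen-Macaulayness and existence of a rational resolution all propagate in both directions between $Z$ and $Y$. The harder transfer $PY\rightsquigarrow Z$ is a case analysis of $\pi$: in type $U$, $\pi$ is an isomorphism over the open $B$-orbit of $PY$, and a $P$-equivariant application of the Local Structure Theorem \ref{theo-loc-str} to the $P$-variety $PY$ identifies $\pi$ with a birational contraction satisfying $R\pi_*\cO_Z=\cO_{PY}$; in type $T$, the extra orbit $Y''$ of dimension $\dim Y$ makes $\pi$ a small birational contraction, which one would handle via Kempf's theorem \cite{kempf} (to descend rational singularities) together with standard preservation results for Cohen-Macaulayness under small proper birational contractions between normal varieties.

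The main obstacle I anticipate is this type-$T$ descent across $\pi$: one must show that $Z$ does not acquire a non-normal locus along the exceptional divisor, which is precisely the failure mode of Corollary \ref{coro-non-normal}. This is where the multiplicity-free hypothesis is essential --- the absence of type-$N$ edges anywhere in the upper-set subgraph of $Y$ is exactly what prevents the \'etale-local model of $\pi$ from exhibiting the double-cover behaviour of a type-$N$ raising, which would otherwise obstruct the descent. Once this transfer is in place, the induction closes and the three statements of the theorem follow in parallel.
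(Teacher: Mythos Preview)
Your inductive scheme is natural, but the step ``transfer normality from $PY$ through $\pi$ to $Z=P\times^BY$'' does not work, and the difficulty is already present in type $U$, not only in type $T$. Normality does not ascend along a proper birational morphism: knowing that $PY$ is normal and $\pi$ proper birational gives $\pi_*\cO_Z=\cO_{PY}$ by Zariski's main theorem, and one even has $R^i\pi_*\cO_Z=0$ for $i>0$, but none of this constrains the singularities of the \emph{source} $Z$. Your proposed tools (Local Structure Theorem, Kempf's theorem, small-contraction arguments) all produce information about $\pi_*$ and $R^i\pi_*$, which flows the wrong way. The same objection applies verbatim to Cohen--Macaulayness and rational resolutions.

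The paper avoids this by never attempting to show $Z$ is normal. It works instead with the normalization $\nu:\tilde Y\to Y$ and the cokernel sheaf $\cF$ of $\cO_Y\to\nu_*\cO_{\tilde Y}$, and considers \emph{all} minimal parabolics $P$ raising $Y$ simultaneously. For each such $P$, both $P\times^BY\to PY$ and $P\times^B\tilde Y\to PY$ are proper birational onto the normal-by-induction variety $PY$ (birationality is exactly where the absence of type-$N$ edges is used), so Zariski's main theorem forces $\pi_*(P\times^B\cF)=0$. The key lemma---inaccessible to a single-$P$ argument like yours---is that this vanishing for \emph{every} $P$ raising $Y$ forces $\supp\cF$ to be $G$-stable; since by hypothesis $Y$ contains no $G$-orbit, $\cF=0$ and $Y$ is normal. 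The Cohen--Macaulay and rational-resolution statements are handled by parallel sheaf-theoretic arguments in \cite{brion-multi-free}.
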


\begin{proof}
We only sketch the proof of the normality and refer to
\cite{brion-multi-free} for the rest of the proof. We shall need the
following two results.

For $Y\in B(X)$ and $P$ raising $Y$ write $\pi:P\times^B Y\to PY$.
For any sheaf $\cF$ on $P\times^BY$, we have $R^i\pi_*\cF=0$ for
$i>1$ and $R^1\pi_*\cO_{P\times^B Y}=0$ (see for example \cite[Page
294]{brion-com-math-helv}).

For $\cG$ a sheaf on $Y$, write $P\times^B\cG$ the corresponding
sheaf induced on $P\times^BY$. Then if $\cG$ is invariant under the
stabiliser of $Y$ in $G$ and if $\pi_*(P\times^B\cF)=0$ for any $P$
raising $Y$, then $\Supp\cG$ is $G$-invariant (see for example
\cite[Lemma 8]{brion-com-math-helv}).

Assume that $G\cdot Y$ is normal. We prove by induction on the
codimension in $GY$ that $Y$ is normal. Let $\nu:Z\to Y$ be the
normalization and let $\cF$ be the cokernel of the map
$\cO_Y\to\nu_*\cO_Z$. The sheaf $\cF$ is invariant under stabiliser
of $Y$ in $G$. The previous assertions imply that we have an exact
sequence
$$0\to\pi_*\cO_{P\times^B Y}\to\pi_*(P\times^B \nu)_*\cO_{P\times^B Z}
\to\pi_*(P\times^B\cF) \to0$$ if $P$ raises $Y$. But then $P\cdot Y$
is normal and both morphisms $\pi$, $\pi\circ (P\times^B\nu)$ are
proper and birational. Thus, by Zariski's main Theorem the first two
terms are isomorphic and $\pi_*(P\times^B\cF)=0$. We conclude that
$\supp\cF$ is $G$-invariant thus $\cF=0$ since $Y$ contains no
$G$-orbit.
\end{proof}

\begin{remark}
In characteristic $0$, the variety $GY$ is always normal,
Cohen-Macaulay and has a rational resolution.
\end{remark}

The above result and proof is valid for any $G$-variety $X$, see
\cite{brion-multi-free}. This has a very nice application to
subvarieties of homogeneous spaces that we explain in the next
subsection.

\subsubsection{Multiplicity-free classes}
\label{sec-mult}

A natural geometric question for any (smooth) variety $X$ and a
homology class $\a\in H_*(X,\Z)$ is to ask whether this class can be
represented by a subvariety $Y$ of $X$ \emph{i.e.} with $\a=[Y]$ (or
more generally by a linear combination of homology classes of
subvarieties). This is in general a very hard problem since the map
$A_*(X)\to H_*(X)$ is not surjective. This does not occur for smooth
spherical varieties, see Corollary \ref{cor-chow-lisse}. The
following are also classical questions: which cohomology classes are
represented by smooth subvarieties? More generally, given a
cohomology class, what can we say on the varieties representing this
class?

We focus now on the case where $X$ is a projective rational
homogeneous space. For $\a\in H_*(X,\Z)$ a Schubert class, the
problem of determining the (irreducible or smooth) subvarieties $Y$
with $[Y]=\a$ is very classical (see \cite[Section 5.17]{borel-h},
\cite{hironaka}, \cite{hart}). We only have partial answers. For
curves this is completely solved in \cite{perrin-aif}. Very precise
answers are given in the case of compact Hermitian spaces in
\cite{bryant}, \cite{hong}, \cite{coskun}, \cite{robles-the} and
\cite{robles}. In particular, for many Schubert classes, the
varieties representing these classes have to be singular.

More generally, it is natural to ask wether the cohomology class
$[Y]$ of a subvariety $Y$ of $X$ imposes some geometric conditions
on the variety. For examples, codimension conditions on $Y$ or
intersections properties of $[Y]$ may impose geometric conditions on
$Y$.  (in particular simple connectedness results and results on the
Picard group, see \cite{arrondo-caravantes}, \cite{barth},
\cite{barth-larsen}, \cite{debarre}, \cite{faltings},
\cite{fulton-lazarsfeld}, \cite{hartshorne}, \cite{cras},
\cite{indag}, \cite{sommese-vandeven}.

The normality result of Theorem \ref{thm-normal} implies the
following striking result. A cohomology class $\a\in H^*(X,\Z)$ is
called mulitplicity free if it is a non negative linear combination
of Schubert classes with coefficients at most $1$.

\begin{thm}
\label{thm-mfre} Any subvariety $Y$ in $X$ whose cohomology class is
multiplicity-free is normal and Cohen-Macaulay. Furthermore for any
nef line bundle $\mathcal{L}$ on $X$ the map $H^0(X,\mathcal{L})\to
H^0(Y,\mathcal{L})$ is surjective and $H^i(X,\mathcal{L})=0$ for
$i>0$.
\end{thm}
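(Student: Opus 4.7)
The plan is to apply Theorem \ref{thm-normal} by realizing $Y$ (or rather a $B$-orbit closure birational to $Y$) inside a suitably chosen spherical $G$-variety, and to translate the multiplicity-freeness of the cohomology class $[Y]$ into the combinatorial multiplicity-freeness required by that theorem. The natural ambient is $Z = G/B \times X$ with the diagonal $G$-action: this is a $G$-variety with finitely many $B$-orbits (indexed by pairs of Bruhat cells subject to a relative position, by Bruhat decomposition), hence spherical by Theorem \ref{theo-char}. Inside $Z$ I would form the variety $\widetilde{Y}$ obtained as the closure of the $B$-orbit through $(eB, y_0)$ for a generic $y_0 \in Y$, together with its $G$-saturation, so that the projection $\pi_2: \widetilde{Y} \to X$ lands in $Y$ and is birational onto $Y$.

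The next step is to show that every $B$-orbit closure $\widetilde{Y}'$ of $Z$ with $\widetilde{Y}' \succcurlyeq \widetilde{Y}$ is multiplicity-free in the sense of Theorem \ref{thm-normal}, i.e.\ no edge of type $N$ appears in the corresponding subgraph of $\Gamma(Z)$. This is where the hypothesis on $[Y]$ enters: a type-$N$ edge $\widetilde{Y}' \preccurlyeq P\widetilde{Y}'$ gives a finite map $P\times^B \widetilde{Y}' \to P\widetilde{Y}'$ of degree $2$ (Proposition \ref{prop-UNT}), and pushing this forward to $X$ would contribute a factor of $2$ to the coefficient of some Schubert class in $[\pi_2(P\widetilde{Y}')] = [Y]$, contradicting the assumption that $[Y]$ is a sum of distinct Schubert classes each with coefficient one. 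The condition that proper intermediate $\widetilde{Y}'$ contain no $G$-orbit is verified by noting that the $G$-orbits of $Z$ correspond (via the first projection) to $B$-double cosets and can be tracked combinatorially. Theorem \ref{thm-normal} then yields that $\widetilde{Y}$ is normal, Cohen--Macaulay, and admits a rational resolution; these properties descend to $Y = \pi_2(\widetilde{Y})$ by Zariski's Main Theorem since $\pi_2$ is proper birational, together with $R^i(\pi_2)_* \mathcal{O}_{\widetilde{Y}} = 0$ for $i>0$ (coming from the rational resolution statement combined with the smoothness of $G/B$).

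For the cohomological statements, the ambient $X = G/P$ is itself spherical, so Theorem \ref{thm-vanish} (or the subsequent corollary for nef line bundles) gives $H^i(X,\mathcal{L}) = 0$ for $i>0$ when $\mathcal{L}$ is nef, since nef equals globally generated on spherical varieties (Corollary \ref{nef=gg}). Applying the same machinery to $\widetilde{Y}$ (itself a spherical subvariety once its normality and $B$-orbit structure are known, cf.\ Theorem \ref{theo-char}) and pushing forward along the rational resolution yields $H^i(Y, \mathcal{L}|_Y) = 0$ for $i>0$. The surjection $H^0(X,\mathcal{L}) \twoheadrightarrow H^0(Y,\mathcal{L}|_Y)$ then follows from the long exact sequence of the ideal sheaf $\mathcal{I}_Y$ together with the vanishing $H^1(X,\mathcal{I}_Y \otimes \mathcal{L}) = 0$, which one obtains either by Frobenius splitting compatibility (in positive characteristic, lifting from Schubert-variety splittings as in Section \ref{sec-split}) or by Grauert--Riemenschneider-type vanishing on the rational resolution of $\widetilde{Y}$ (in characteristic zero).

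The main obstacle is making precise the identification of $\widetilde{Y}$ inside $Z$ so that the combinatorial multiplicity-freeness exactly captures the cohomological one: the correspondence between type-$N$ edges above $\widetilde{Y}$ in $\Gamma(Z)$ and doubled Schubert coefficients in $\pi_{2*}[\widetilde{Y}] \in H^*(X)$ must be checked in detail, and one must verify that the $G$-orbit condition in Theorem \ref{thm-normal} indeed holds for the specific $\widetilde{Y}$ chosen — this is the crux that makes the argument of Brion work and is what requires the careful choice of ambient spherical variety.
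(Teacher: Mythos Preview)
Your ambient variety $Z = G/B \times X$ with diagonal $G$-action is \emph{not} spherical in general: already for $X = G/B$ the open $G$-orbit is isomorphic to $G/T$, and $G/T$ has a dense $B$-orbit only when $G$ is a torus. This is not in itself fatal, since Theorem~\ref{thm-normal} is valid for arbitrary $G$-varieties, but it shows your appeal to Theorem~\ref{theo-char} is misplaced. More seriously, your construction of $\widetilde{Y}$ does not do what you want: under the diagonal action the $B$-orbit through $(eB,y_0)$ is simply $\{eB\}\times B y_0$, whose closure has no reason to project birationally to $Y$ unless $Y$ is already a $B$-orbit closure in $X$; and passing to the $G$-saturation produces a $G$-stable subvariety, destroying the link with $Y$ that you need.

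The paper takes a cleaner route that sidesteps all of this. The ambient $G$-variety is $G$ itself under left translation; one writes $X = B\backslash G$ and pulls $Y$ back to the left-$B$-stable subvariety $p^{-1}(Y)$ under $p:G\to B\backslash G$. A minimal parabolic $P\supset B$ raises $p^{-1}(Y)$ exactly when the projection $Y\to\pi_P(Y)$ (with $\pi_P:B\backslash G\to P\backslash G$) is generically finite, and the degree of $P\times^B p^{-1}(Y)\to P\cdot p^{-1}(Y)$ equals the degree $d$ of $Y\to\pi_P(Y)$. A single application of the projection formula to Schubert classes shows that $d$ divides every coefficient of $[Y]$, so the multiplicity-free hypothesis forces $d=1$ and there are no type-$N$ edges. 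The crucial gain over your $Z$ is that $G$ has a \emph{unique} $G$-orbit, so the hypothesis in Theorem~\ref{thm-normal} about intermediate $Y'$ not containing $G$-orbits is automatic; in your product $Z$ there are many $G$-orbits and this verification would be genuine work. For the cohomological assertions the paper does not argue via ideal-sheaf vanishing as you suggest, but rather observes that the same mechanism yields a flat degeneration of $Y$ to a reduced Cohen--Macaulay union of Schubert varieties, from which surjectivity and higher vanishing follow directly.
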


\begin{proof}
We deal with the case $X=G/B$ or more precisely $B\backslash G$ the
quotient by the action by left multiplication. The general case is
similar. Consider the quotient map $p:G\to X$, the variety
$p^{-1}(Y)$ is $B$-stable (for the action of $B$ on $G$ by left
multiplication). Let $P$ be a minimal parabolic subgroup and
consider $\pi_P:B\backslash G\to P\backslash G$. Then $P$ raises
$p^{-1}(Y)$ if and only if $Y\to\pi_P(Y)$ is generically finite.
Furthermore the fibers of this map identifies with the fibers of the
morphism $P\times^Bp^{-1}(Y)\to p^{-1}(Y)$. Let $d$ be their common
degree and let $\a$ be a Schubert class such that $\a=\pi_P^*\beta$
with $\beta$ a Schubert class in $P\backslash G$. Let $\a'$ be the
unique Schubert class such that ${\pi_P}_*\a'=\beta$. Projection
formula gives
$$\int_X[Y]\cdot\a=d\int_X[\pi_P^{-1}(\pi_P(Y))]\cdot\a'$$
and the multiplicity-free assumption of $[Y]$ implies $d=1$. In
particular $p^{-1}(Y)$ is a multiplicity-free subvariety in the
sense of spherical varieties (and $G$-varieties). Since $G$ has a
unique $G$-orbit, Theorem \ref{thm-mfre} implies that $p^{-1}(Y)$
and thus $Y$ is normal and Cohen-Macaulay.

A similar argument implies the existence of flat deformations to a
reduced Cohen-Macaulay union of Schubert varieties proving the
cohomology statements.
\end{proof}

\subsubsection{Comments}
\label{plus-loin-orbites}

Proposition \ref{prop-UNT} was first proved in \cite{RS1} for
symmetric varieties and extends readily to the general case. Brion
studied the graph $\Gamma(X)$ in great details in
\cite{brion-com-math-helv}. The proof of Theorem \ref{thm-normal} is
taken from \cite{brion-multi-free}.

The geometry of $B$-orbit closures in spherical varieties is far
from being completely understood. It has its origin in the
importance of the geometry of Schubert varieties in projective
rational homogeneous space. A combinatorial description of
$\Gamma(X)$ as well as of the weak and strong orders for symmetric
varieties was achieved in \cite{RS1} and \cite{RS2}.

F. Knop in \cite{K2} defines an action of the Weyl group $W$ of $G$
on $B(X)$ and recovers the little Weyl group (see Subsection
\ref{subsect-swg}) as a subgroup of the stabiliser in $W$ of $X$
seen as an element of $B(X)$. N. Ressayre gives in
\cite{ressayre-knop} invariants characterising the orbits of the
Weyl group action on $\Gamma(X)$. In \cite{ressayre-minimal} he
gives a classification and several equivalent definitions of the
spherical varieties for which $W$ acts transitively on the
$B$-orbits within the same $G$-orbit. These varieties are called
spherical varieties of minimal rank and are also characterised by
the fact that $\Gamma(X)$ has only edges of type $U$.

In general it is not known when the closures of $B$-orbits are
normal. Normality and stronger results (such as globally
F-regularity) have been proved by He and Thomsen \cite{HT1} for
closures of $B\times B$-orbits in equivariant compactifications of
reductive groups (see also \cite{HT2} for more results). However,
even for symmetric varieties the closures of $B$-orbits are not
normal in general (see Example \ref{exam-non-normal}). The normality of
$B$-orbit closures is proved for some products of projective
rational homogeneous spaces for a simply-laced semisimple group $G$
which are spherical in \cite{piotr}.


\addtocontents{toc}{\protect\setcounter{tocdepth}{1}}
\section{Examples} \label{section-example}

In this section we give some examples of spherical varieties.

\subsection{Toric varieties} Recall that a toric variety is a
normal variety $X$ with a dense orbit of a torus $T$. Toric
varieties are spherical.

\subsection{Projective rational homogeneous spaces} For $G$ reductive
and $P$ a parabolic subgroup, the quotient $G/P$ is a projective
rational homogeneous space. Bruhat decomposition implies that they
are spherical varieties.

\subsection{Horospherical varieties} Horospherical varieties form
the simplest class of spherical varieties containing both toric
varieties and projective rational homogeneous spaces. Horospherical
varieties are the embeddings of homogeneous spaces $G/H$ such that
$H$ contains a maximal unipotent subgroup $U$ of $G$. Bruhat
decomposition implies their sphericity. Another characterisation of
horospherical varieties is the equality $\mathcal{V}=\X^\vee$ (see
for example \cite[Corollary 6.2]{knop}). This simplifies the
combinatorics involved in the classification by colored cones. For
example, the toroidal horospherical varieties are the locally
trivial fibrations $G\times^PY$ over a projective rational
homogeneous space $G/P$ with fiber a variety $Y$ toric for a
quotient of a maximal torus $T$ of $P$ (see \cite{boris-these} for
more precise results on horospherical varieties).

\subsection{Spherical representations} Another very natural class
of examples can be obtained by looking at rational representations
of the group $G$. In particular, a complete classification of
spherical representations has been obtained in \cite{kac} (for
irreducible representations) and in \cite{leahy} and
\cite{benson-rat} in general.

\subsection{Products of homogeneous spaces} It is a classical
problem to ask which product of projective rational homogeneous
spaces $\prod_iG/P_i$ has a dense $G$-orbit. This is solved in
\cite{popov2} if all the parabolic subgroup agree and for $G$ of
type $A$ or $C$ in \cite{MWZ} and \cite{MWZ2}. The more restricted
problem of describing the products of projective rational
homogeneous spaces $\prod_iG/P_i$ which are spherical is solved.
Indeed, using the Structure Theorem \ref{theo-loc-str} this problem
reduces to the classification of spherical representations. The case
of maximal parabolic subgroups was solved in \cite{litt-cone}, the
general case can be found in \cite[Corollaries 1.3.A-G]{stembridge}.
Note that the case of complexity $1$ has recently been solved in
\cite{ponomareva}.

\addtocontents{toc}{\protect\setcounter{tocdepth}{2}}

\providecommand{\bysame}{\leavevmode\hbox to3em{\hrulefill}\thinspace}
\providecommand{\MR}{\relax\ifhmode\unskip\space\fi MR }
\providecommand{\MRhref}[2]{%
  \href{http://www.ams.org/mathscinet-getitem?mr=#1}{#2}
}
\providecommand{\href}[2]{#2}

\end{document}